\documentclass[a4paper]{amsart}
\usepackage[utf8]{inputenc}
\usepackage{amscd,amsthm,amsfonts,amssymb,esint}
\usepackage{amsmath}
\usepackage[colorlinks=true]{hyperref}
\usepackage{fullpage}
\usepackage[all]{xy}
\usepackage{mathdots}

    \newcommand{\BC}{{\mathbb {C}}}

    \newcommand{\BQ}{{\mathbb {Q}}} \newcommand{\BR}{{\mathbb {R}}}

     \newcommand{\BZ}{{\mathbb {Z}}}

    \newcommand{\CC}{{\mathcal {C}}}

    \newcommand{\CM}{{\mathcal {M}}} \newcommand{\CN}{{\mathcal {N}}}
    \newcommand{\CO}{{\mathcal {O}}} 
     \newcommand{\CR}{{\mathcal {R}}}
    \newcommand{\CS}{{\mathcal {S}}} 
     
    \newcommand{\CW}{{\mathcal {W}}}

     \newcommand{\fp}{{\mathfrak{p}}}

     \newcommand{\fz}{{\mathfrak{z}}}

    \newcommand{\Ad}{{\mathrm{Ad}}}

    \newcommand{\der}{{\mathrm{der}}}
    \newcommand{\diag}{{\mathrm{diag}}}

     \newcommand{\GL}{{\mathrm{GL}}}
    
    \newcommand{\Hom}{{\mathrm{Hom}}}
    
    \newcommand{\id}{{\mathrm{id}}}\renewcommand{\Im}{{\mathrm{Im}}}

    \newcommand{\Ker}{{\mathrm{Ker}}}

    \newcommand{\reg}{{\mathrm{reg}}}

    \newcommand{\SL}{{\mathrm{SL}}}
     
    \newcommand{\Sp}{{\mathrm{Sp}}}

    \newcommand{\Stab}{{\mathrm{Stab}}}

    \newcommand{\Vol}{{\mathrm{Vol}}}

\newcommand{\matrixx}[4]{\begin{pmatrix}
#1 & #2 \\ #3 & #4
\end{pmatrix} }        

    \newcommand{\wt}{\widetilde}
    \newcommand{\wh}{\widehat}

    \newcommand{\ov}{\overline}

    \newcommand{\sk}{\medskip}
    \newcommand{\lra}{\longrightarrow}
    \newcommand{\ra}{\rightarrow} 
    \newcommand{\bs}{\backslash}
    
    \newcommand{\s}{\sk\noindent}

    \theoremstyle{plain}
    \newtheorem{thm}{Theorem}[section] \newtheorem{cor}[thm]{Corollary}
    \newtheorem{lem}[thm]{Lemma}  \newtheorem{prop}[thm]{Proposition}
     \newtheorem{defn}[thm]{Definition}

\theoremstyle{remark} \newtheorem{remark}[thm]{Remark}
\theoremstyle{remark} 
\theoremstyle{remark} \newtheorem{example}[thm]{Example}

    \newcommand{\etale}{\'{e}tale~}

    \numberwithin{equation}{section}

     \renewcommand{\Ad}{ {\mathrm{Ad}}}
     \newcommand{\rel}{ {\mathrm{rel}}}

     \newcommand{\Kl}{{\mathrm{Kl}}}

\begin{document}

\title{Bessel Distributions and Kloosterman Sums}

\author{Li Cai} 
\address[L.C.]{Academy for Multidisciplinary Studies, Beijing National Center for Applied Mathematics, Capital Normal University, Beijing, 100048, People's Republic of China
} 
\email{caili@cnu.edu.cn}

\author{Jingsong Chai} 
\address[J.C.]{School of Mathematics, Physics and Finance, Anhui Polytechnic University, Wuhu, Anhui, 241000, People's Republic of China
} 
\email{jingsongchai@hotmail.com}

\author{Yadi Liu} 
\address[Y.L.]{Academy for Multidisciplinary Studies, Beijing National Center for Applied Mathematics, Capital Normal University, Beijing, 100048, People's Republic of China
} 
\email{2220502124@cnu.edu.cn}

\begin{abstract}
Let $G$ be a split reductive group over a $p$-adic field. 
In this paper, we prove that Bessel distributions are regular for all generic representations on $G$ provided that certain Kloosterman sums for any Levi subgroups of $G$ have nontrivial bounds. For this, we give germ expansions of Kloosterman integrals for $G$. As an application, the regularity is proved
for groups of type $A$.
\end{abstract}

\subjclass[2020]{Primary:11L05, 22E50; Secondary: 22E35, 11F70}         

\keywords{
Whittaker models, Bessel distributions, Kloosterman sums}

\maketitle
\tableofcontents

\section{Introduction}

\subsubsection{Bessel distributions} Let $G$ be a reductive group over a $p$-adic field $F$. 
Denote by $\CS(G)$ the space of Schwartz functions on $G$. Let
$\pi$ be an irreducible smooth admissible representation on $G$ with $\pi^*$ denoting its linear dual. Let $\pi^\vee$ be the contragredient 
representation of $\pi$, that is, the smooth part of $\pi^*$. The group $G$ then acts on all these spaces and these actions
induce actions of $\CS(G)$. Note that for any $\ell \in (\pi^\vee)^*$ and $f \in \CS(G)$, $\pi^\vee(f)\ell$ is smooth so that 
$\pi^\vee(f)\ell \in (\pi^\vee)^\vee = \pi$. For any $\ell_1 \in \pi^*$ and $\ell_2 \in (\pi^\vee)^*$, the relative character 
associated to $\ell_1$ and $\ell_2$ is defined as
\[ B_{\ell_1,\ell_2}(f) = \ell_1(\pi^\vee(f)\ell_2), \quad f \in \CS(G).\]
	
The relative character is a basic object in the relative Langlands program \cite{SV17}. Such relative characters arise naturally in the relative trace formula and local harmonic analysis of spherical varieties. Usually, the linear functionals $\ell_1$ and $\ell_2$
are given to be invariant for a specific subgroup $H$ of $G$.


It is a fundamental problem to determine whether the relative character $B_{\ell_1,\ell_2}$ is regular in the sense that there exists a (unique) 
smooth function $j$ defined on a dense subset of $G$, which is locally integrable on $G$, such that 
\[B_{\ell_1,\ell_2}(f) = \int_G j(g)f(g)dg, \quad f \in \CS(G).\]

Consider the
group case, that is $H$ embeds into $G = H^2$ diagonally. For suitable $H$-invariant linear functionals $\ell_1,\ell_2$ on an irreducible admissible representation $\pi = \sigma \boxtimes \sigma^\vee$ on $G$, the relative character $B_{\ell_1,\ell_2}$ is essentially the character of $\sigma$.
The regularity for (usual) characters of irreducible admissible representations is the celebrated work of Harish-Chandra \cite{HC70,HC99}. 
The regularity gives an injection from  
representations on $H$ to conjugate invariant smooth functions on the regular locus of $H$ which are locally integrable on $H$.

When $\ell_1$ and $\ell_2$ are $H$-invariant linear functionals, where $H$ is the subgroup of fixed points of an involution of $G$, this kind of relative characters are studied in the work of Rader-Rallis \cite{RR}, which shows that the restriction of $B_{\ell_1,\ell_2}$ to the set of regular elements is given by a smooth function \cite[Corollary 5.2]{RR}. But often this smooth function is not locally integrable, and $B_{\ell_1,\ell_2}$ is not regular \cite[Section 7]{RR}. It is an interesting and subtle question to find regular relative characters \cite{Guo, Ha}.

In this paper, we shall focus on the Whittaker case. Let $G$ be a split reductive group over $F$. Let $B = AN$ be 
a Borel subgroup of $G$ with $A$ a maximal torus of $G$ and $N$ the unipotent radical of $B$. Denote by $X^*(A)$ the characters on $A$
and $\Delta \subset X^*(A)$ the set of simple roots. Let $\psi$ be a generic character on $N$, in other words, $\psi|_{N_\alpha} \not=1$
for all $\alpha \in \Delta$ with $N_\alpha$ the corresponding root subgroup of $N$. Consider the space $\Hom_N(\pi,\psi)$ of Whittaker
functionals. The dimension of this space is less than one. Assume $\pi$ is generic (with respect to $\psi$), that is, there is a nonzero Whittaker functional
$\ell_1 \in \Hom_N(\pi,\psi)$. The contragredient representation $\pi^\vee$ is generic (with respect to $\psi^{-1}$) and we take a nonzero Whittaker functional $\ell_2 \in \Hom_N(\pi^\vee,\psi^{-1})$. The relative character $B_{\ell_1,\ell_2}$ is also called the Bessel distribution for $\pi$.  

By the multiplicity one property, the regularity of Bessel distributions is independent of the choice of Whittaker functionals.
By the works of Baruch, the regularity is known for $G = \GL_2$ and $\GL_3$ \cite{Ba97, Ba04}. Moreover, for the general case (that is, $G$ is a quasi-split group over a local field $F$), Baruch \cite{Ba01} showed that the restriction of Bessel distributions to the open Bruhat cell is given by a smooth function (Theorem \ref{restriction}). Once the regularity holds, it
gives an injection from  
$\psi$-generic representations on $G$ to bi-$(N,\psi)$-invariant smooth functions on the open Bruhat cell of $G$ which are locally integrable on $G$ (Proposition \ref{prop-FLO}).

We believe that the regularity of Bessel distributions holds for the general case.

\subsubsection{Kloosterman sums}

Let $G$ be  a split reductive group with a Borel subgroup $B=AN$. Let $\psi$ be a generic character on $N$, $\dot{w_0}$ is a representative for the longest Weyl element $w_0 \in W$ 
and $K$ an open compact subgroup of $G$ with $\psi|_{N \cap K} = 1$. Consider the Kloosterman sum with respect to a triple $(\psi,\dot{w_0},K)$ (See Definition \ref{defn-sys-Kl})
\[\mathrm{Kl}(a) =\Kl(a;\psi,\dot{w_0},K) = \sum_{x \in X_K(\dot{w_0}a)} \psi(u(x))\psi(u'(x)), \quad a \in A.\]
Here, the Kloosterman set
\[X_K(\dot{w_0}a) = (N\cap K)\bs \left(N\dot{w_0}a N \cap K \right)/(N\cap K)\]
is endowed with the following two maps
\[u: X_K(\dot{w_0}a) \ra (N\cap K)\bs N, \quad u': X_K(\dot{w_0}a) \ra N/(N\cap K).\]
If $[n_1\dot{w_0}an_2] \in X_K(\dot{w_0}a)$ with $n_1, n_2 \in N$ and $a \in A$, then 
\[u([n_1\dot{w_0}an_2]) = [n_1], \quad u'([n_1\dot{w_0}an_2]) = [n_2].\]

\begin{example}
Consider the case $G = \GL_2$. Take $\psi\left[ \matrixx{1}{x}{0}{1} \right] = \psi_F(x)$ for some nontrivial additive
character $\psi_F$ on $F$ with $\psi_F|_{\CO} = 1$, $\dot{w_0} = \matrixx{0}{-1}{1}{0}$ and $K = \GL_2(\CO)$ the maximal
open compact subgroups of $G$.
Consider those $a = \matrixx{\alpha}{0}{0}{\alpha^{-1}} \in A$ with $v(\alpha) = r > 0$. Then 
\[\mathrm{Kl}(a) = \sum_{x \in \CO^\times/1+\fp^r} \psi_F\left( \frac{x+x^{-1}}{\alpha} \right).\]
In this case, we have the Weil bound (see Corollary \ref{classical kloosterman sum}), that is, there is a constant $C$ (depending only on $F$) such that
\[|\mathrm{Kl}(a) \cdot \delta(a)^{1/4}| = |\mathrm{Kl}(a)|\cdot q^{-r/2} \leq C.\]
Here, $\delta$ is the modulus character of $B$.
\end{example}

The Kloosterman sum $\mathrm{Kl}$ is said to have a nontrivial bound (See Definition \ref{defn-sys-Kl}) if there exists $\varepsilon>0$ and $C>0$ such that
\[\left|\mathrm{Kl}(a)\cdot\delta^{1/2-\varepsilon}(a)\right| \leq C, \quad a \in A^\der\]
where $A^\der$ is the maximal torus for the derived group $G^\der$ of $G$.

In the above, the exponent $1/2-\varepsilon$ of $\delta$ for any $\varepsilon > 0$ is viewed as a nontrivial bound while the exponent $1/2$  is viewed as the trivial bound. This follows from the counting formula for the cardinality of the Kloosterman set $X(\dot{w_0}a)$ by 
D\k{a}browski-Reeder \cite{DR98}. See Example \ref{special filtration} for more details.

\subsubsection{Main Results}

Now, we can state the main results. Choose the generic character $\psi = \psi_0$ given before Definition \ref{relevant with respect to psi_0}. 
For each subset $I \subset \Delta$, denote by $M_I$ the standard Levi subgroup associated to $I$, $N_{M_I}$ the unipotent radical for the Borel subgroup $B \cap M_I$ of $M_I$ and we fix a representative $\dot{w_I}$ of the longest Weyl element $w_I \in W_I$. 

\begin{thm}[Theorem \ref{main theorem}]\label{main-intro}
     Assume that for any $I \subset \Delta$, there exists an
     admissible open compact subgroup $K_I$ of $M_I$ such that the Kloosterman sum on  $M_I$  with respect to the triple  $(\psi_0|_{N_{M_I}}^{-1}, \dot{w_I}, K_I)$
    has a nontrivial bound. Then, the regularity for the Bessel distribution on $\pi$ holds for any irreducible generic representation $\pi$ on $G$.
\end{thm}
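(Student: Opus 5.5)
We follow Baruch's strategy for $\GL_2$ and $\GL_3$. By Theorem \ref{restriction}, the restriction of $B_{\ell_1,\ell_2}$ to the open Bruhat cell $N\dot{w_0}AN$ is given by a smooth bi-$(N,\psi)$-equivariant function $j_\pi$. Write $B_\pi=B_{\ell_1,\ell_2}$. Regularity then splits into two claims:
\begin{enumerate}
\item[(a)] $j_\pi$ is locally integrable on all of $G$;
\item[(b)] the distribution $B_\pi - j_\pi\,dg$, which is supported on the complement of the big cell, vanishes.
\end{enumerate}
We prove both by induction on the semisimple rank, using the Bruhat stratification $G=\bigsqcup_w NwAN$. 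The hypothesis is made for every Levi $M_I$, and not just for $G$, precisely so that the induction can pass to Levi subgroups.

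\textbf{Step 1: reduction to a Kloosterman integral.} Take $f\in\CS(G)$ supported near a point of a smaller cell, and project it to a bi-$(N,\psi)$-equivariant function on $A$ or on the smaller cells. Then $B_\pi(f)$ becomes an integral of $j_\pi$ against a Kloosterman integral
\[\int_{N\times N} f(n_1\dot{w_0}an_2)\,\psi^{-1}(n_1n_2)\,dn_1\,dn_2.\]
For $f$ the characteristic function of a small open compact $K$, this orbital integral is a normalized Kloosterman sum $\Kl(a;\psi,\dot{w_0},K)$.

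\textbf{Step 2: germ expansion of Kloosterman integrals.} As $a$ approaches the wall attached to $I\subset\Delta$, we show that the Kloosterman integral of $f$ is a finite sum of terms. Each term is a Kloosterman integral on the Levi $M_I$, with the transversal variables frozen, multiplied by an explicit power of $\delta$. We prove this by Bruhat decomposition near the relevant Weyl element $w_0w_I$: relevance of $w$ for $\psi_0$ forces the only contributing cells to be the $w_0w_I$, with $w_I$ the longest element of $W_I$. The choice $\psi=\psi_0$ and the admissibility of the $K_I$ are what make the Levi factors of the degenerate integrals exactly the Kloosterman sums for the triples $(\psi_0|_{N_{M_I}}^{-1},\dot{w_I},K_I)$.

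\textbf{Step 3: local integrability.} We bound $j_\pi$ on $\dot{w_0}A$ using Whittaker function asymptotics (the Casselman–Jacquet expansion of $j_\pi$ near each wall). In the integral $\int j_\pi(\dot{w_0}a)\,\cdot\,\Kl\text{-germ}\,\delta^{-1}(a)\,da$, the trivial bound $\delta^{1/2}$ on the Kloosterman sum makes the integral borderline divergent, logarithmically at worst. The assumed bound $\delta^{1/2-\varepsilon}$ on each Levi gives a power saving in each wall direction, so every germ term converges absolutely. Summing over $I$ gives local integrability, hence (a).

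\textbf{Step 4: vanishing of the singular part.} The difference $B_\pi-j_\pi\,dg$ is a bi-$(N,\psi)$-equivariant distribution supported on lower cells $NwAN$ with $w\neq w_0$. Only relevant $w=w_0w_I$ support such distributions. On these cells the distribution factors through a Bessel-type distribution of a Jacquet module for $M_I$, tensored with transversal derivatives. Finiteness of $A$-finite exponents and homogeneity under the center of $M_I$ would force it to have a specific $A$-type. Comparing with the germ expansion of Step 2 shows that the regular part already accounts for all such homogeneous components, so the difference is zero.

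We expect the main obstacle to be Step 2: making the germ expansion uniform, and identifying the leading coefficients exactly with Kloosterman sums on $M_I$ with the right normalization, in particular controlling conjugation by $\dot{w_I}$ and the choice of $K_I$. Step 4 relies on the same expansion.
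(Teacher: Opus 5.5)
Your proposal has a genuine gap in Step 3: the Kloosterman bound is applied on the wrong side of the pairing. Local integrability of $j_\pi$ means $\int_G|j_\pi|\,|f|\,dg<\infty$ for $f\ge 0$. Unfolding over the big cell, this is an integral over $A$ of $|j_\pi(\dot{w_0}a)|$, times the modulus factor, against the \emph{non-oscillatory} orbital integral $\CO_{|f|}(a)=\int_{N\times N}|f(n_1\dot{w_0}an_2)|\,dn_1\,dn_2$. For $f=1_{K_0}$ this is, up to volume, the cardinality of the Kloosterman set, which is $\delta^{-1/2}(a)$ up to polynomial factors (Theorem \ref{DR1998}). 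The Kloosterman integral of the test function only satisfies $|I(a,f)|\le\CO_{|f|}(a)$, so a power saving for it points in the useless direction. At best it shows that the integral expressing $B_\pi(f)$ converges; it does not make $j_\pi$ locally integrable.

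The saving therefore has to be proved for $j_\pi$ itself. Your proposed source for its size, a ``Casselman--Jacquet expansion near each wall'', does not exist in a usable form. Near the lower cells, $j_\pi$ is governed by the $\pi$-independent Kloosterman germs; for supercuspidal $\pi$ there are no Jacquet-module exponents at all. The saving comes from cancellation in those germs, which no exponent expansion detects.

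The missing idea is that $j_\pi$ is \emph{locally a Kloosterman integral}. Baruch's $j_\pi^0$ equals the Lapid--Mao regularized integral $j_\pi$ (Theorem \ref{thm-chai}). On each compact set, $j_\pi(g)=I_\omega((w^0)^{-1}g,f)$ for some $f\in\CS(G)$ (Theorem \ref{locally orbital integral 2}). After removing the central integration, use the germ expansion $I(a,f)=\sum_J(K^J_\emptyset*\omega_J)(a)$ with admissible-level germs $K^J_\emptyset(a)=I(a,1_{w_J^0K_J})$. These equal $\Kl_J((w_J^0)^{-2}a)$ up to a constant, so the hypothesis gives $|I(a,f)|\ll\delta^{-1/2+\varepsilon}(a)$. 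Pairing this with the D\k{a}browski--Reeder count gives the convergent sum $\sum_{r\in\BZ_{\geq 0}^m}q^{-2\varepsilon\sum_i r_i}P(r)$ (Proposition \ref{prop-delta}). This is where the trivial bound is genuinely borderline.

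Step 4 is not an argument as it stands. The center $A_I$ of $M_I$ does not preserve $\psi$ unless $I=\Delta$, so there is no homogeneity to exploit. No finiteness property of $B_\pi-j_\pi\,dg$ has been established. Over a $p$-adic field, distributions supported on a closed set carry no transversal derivatives. The paper closes this step (Proposition \ref{locally integrable implies regular}) with two inputs. First, both $B_\pi$ and $j_\pi\,dg$ are finite under the Bernstein center $\mathfrak{z}$ (Chai). Second, by Aizenbud--Gourevitch--Sayag and Aizenbud--Gourevitch--Kemarsky, a $\mathfrak{z}$-finite bi-$(N,\psi)$-equivariant distribution supported off the open Bruhat cell vanishes.

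Finally, your Step 2 has the wrong shape for the germ expansion. It reads $I(w_I^0a,f)=\sum_{J\supset I}(K_I^J*\omega_J)(a)$, a convolution along the isogeny $A_J\times A_I^J\to A_I$ with $\omega_J\in\CS(A_J)$, and it contains no explicit powers of $\delta$. The passage to Levi subgroups happens inside this expansion, not through an induction on semisimple rank.
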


Here, for a split group $G$, an open compact subgroup
$K$ of $G$ is called admissible if for any
$z$ in the center of $G^\der$ and $n  \in N$ with $zn \in K$, we have
$z = 1$ and $n \in K$ (See Definition \ref{defn-adm}).

\begin{thm}[Theorem \ref{thm-type-A} and Theorem \ref{thm-type-C_2}]\label{thm-type-A-intro}
    If $G$ is a split group of type $A$ or $C_2$, then the regularity for Bessel distributions holds for any 
    irreducible generic representation on $G$. 
\end{thm}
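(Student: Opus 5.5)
The plan is to deduce the statement from Theorem \ref{main-intro}. That reduces everything to checking the hypothesis: for each $I\subset\Delta$ we must produce an admissible open compact subgroup $K_I$ of $M_I$ for which the Kloosterman sum attached to $(\psi_0|_{N_{M_I}}^{-1},\dot{w_I},K_I)$ has a nontrivial bound. When $G$ is split of type $A$ (respectively $C_2$), every standard Levi $M_I$ has derived group whose root system is a product of type $A$ factors (respectively is $\emptyset$, $A_1$, or $C_2$ itself). So it suffices to treat Levi subgroups whose derived groups are of these types.

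The first step is to reduce the bound for $M_I$ to its simple factors. The Kloosterman set and the character $\psi_0$ only involve $N_{M_I}$ and $a\in A^{\der}_{M_I}$. I will choose $K_I$ as a product of principal-congruence-type subgroups adapted to the almost-simple factors of $M_I^{\der}$, enlarged by the torus, and check that it is admissible in the sense of Definition \ref{defn-adm}. With this choice $\Kl(a)$ factors as a product over the factors, and $\delta_{B\cap M_I}$ factors as well. A nontrivial bound for each factor then gives one for $M_I$, by taking the minimum of the $\varepsilon$'s and the product of the constants. Central isogeny issues (for example $\SL_n$ versus $\GL_n$ versus $\mathrm{PGL}_n$) are absorbed by the admissibility condition and by restricting to $A^\der$.

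The second step is the type $A_{n-1}$ bound, which I would model on $\GL_n$ with $K=\GL_n(\CO)$ (or a suitable conjugate making $\psi_0$ trivial on $N\cap K$). Here the Kloosterman sums are the classical Friedberg/Stevens/Dąbrowski–Reeder sums for the long Weyl element. For $n=2$ the Weil bound of Corollary \ref{classical kloosterman sum} gives exponent $1/4<1/2$. For general $n$ I would use Stevens' decomposition/Dąbrowski–Fisher-type stationary phase estimates for $p$-adic Kloosterman sums for $\GL_n$. Alternatively, I would run an induction: fibre the Kloosterman set over the $\GL_{n-1}$ Bruhat data via the Plücker coordinates, so that the sum over one fibre is a one-variable sum bounded by square-root cancellation (Weil, or elementary stationary phase for prime-power moduli). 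That fibre sum saves a power $q^{-c\, v}$, where $v$ is a valuation growing with $-\log\delta(a)$, and it yields $|\Kl(a)|\le C\,\delta(a)^{-1/2+\varepsilon}$ once compared with the Dąbrowski–Reeder count of $|X(\dot{w_0}a)|$ mentioned in Example \ref{special filtration}.

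The third step, for $C_2$, is analogous: use an explicit parametrization of $N\dot{w_0}aN\cap \Sp_4(\CO)$ and reduce the sum to iterated one-variable exponential sums, again obtaining square-root cancellation in at least one variable.

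I expect the main obstacle to be uniformity in $a$. The trivial bound $\delta^{-1/2}$ is essentially the cardinality, and one needs a saving that is a positive power of $\delta(a)$ uniformly, including degenerate directions where $a$ is large along only some simple coroots. In those directions the fibre sums can have no cancellation. My plan there is to split $A^\der$ into cones according to which simple-root valuations dominate. On each cone I would find a single coordinate whose exponential sum is a genuine Kloosterman/Salié sum of modulus comparable to $\delta(a)^{-c}$, and then bound all other coordinates trivially. That gives $\varepsilon=c/2$ on each cone, and I would take the minimum over the finitely many cones.
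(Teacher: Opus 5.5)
Your overall reduction is essentially the paper's: you apply Theorem \ref{main-intro}, then pass from $M_I$ to its simple factors by products, isogenies and a central enlargement. The gap is in the compact subgroups you actually bound.

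Theorem \ref{main-intro} requires each $K_I$ to be \emph{admissible} (Definition \ref{defn-adm}): no $zn$ with $1\neq z\in Z^{\der}$ and $n\in N$ may lie in $K_I$. This condition is essential. It is exactly what gives $I(w_J^0z,1_{w_J^0K_J})=\Vol(N_{M_J}\cap K_J)\,\delta_e(z)$ on the finite group $A_J^J$, so that Kloosterman integrals of $1_{w_J^0K_J}$ serve as Shalika germs (Proposition \ref{prop-germ-adm}).

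In your second and third steps you compute with $\GL_n(\CO)$ (or a conjugate) and $\Sp_4(\CO)$. These contain $\mu_n$ and $\{\pm1\}$, so they are not admissible, already for $n=2$. Restricting $a$ to $A^{\der}$ does not repair this, since $Z^{\der}\subset A^{\der}$. For $G=\SL_n$ or $\Sp_4$, the hypothesis with $I=\Delta$ already demands an admissible subgroup of the simply connected group itself. So the maximal-level results you invoke, namely the stratification of \cite{DR98} and the bound of \cite{BM24}, do not verify the hypothesis.

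If instead you keep the principal congruence subgroups of your first step, the stratification is unavailable and you face Stevens' counting problem. The paper carries that count out only for $\Sp_4$ (Proposition \ref{stevens}, Lemma \ref{estimate on kloosterman set}).

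The idea the paper supplies for type $A$ is the subgroup $K$ of Example \ref{GL_n example}, whose last row is $\equiv(0,\dots,0,1)$ mod $\fp^\ell$. It uses the reduced word for $w_0$ in which $s_{\alpha_{n-1}}$ occurs exactly once. Then every factor $b_{\alpha_i}(\cdot)$ with $i\le n-2$ lies in $K$. The stratification therefore survives with only the extra constraints $m_{1,n-1}\ge\ell$ and $c_{1,n-1}\in1+\fp^\ell$ (Lemma \ref{complete system of coset representatives}). That single variable is handled by the restricted Weil bound (Proposition \ref{GL2 kloosterman sum}).

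Second, the analytic core is only gestured at, and your cone strategy indexes the saving by the wrong data. For fixed $a$, the Kloosterman set is a disjoint union of strata $\underline{m}$, i.e. decompositions of $\lambda_a$ into positive coroots. The variable $c_{ij}$ that produces cancellation must be chosen per stratum, not per cone in $a$.

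Moreover, the $c_{ij}$-sum is $\sum_x\psi_F(xA_{ij}+x^{-1}B_{ij})$, where $A_{ij}$ and $B_{ij}$ depend on neighbouring variables. This sum saves nothing when $A_{ij}$ and $B_{ij}$ are small, and the size of $a$ does not prevent that.

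The paper gets uniformity in three moves:
\begin{enumerate}
\item It averages $\min\{1,|B_{ij}|^{-1/2}\}$ over the neighbouring variable $c_{i,j-1}$ (Lemma \ref{linear average}).
\item This yields a saving of $q^{-m_{ij}/2}$ at a cost of $q^{(n-1)\mu_{ij}}$ (Lemma \ref{refined bound}).
\item For each stratum it picks an index with $m_{ij}-2(n-1)\mu_{ij}\ge\varepsilon_nM_*$, and absorbs the polynomially many strata into the bound.
\end{enumerate}
A uniform nontrivial bound for $\GL_n$ Kloosterman sums, for all $n$ and over a general $p$-adic field, is not an off-the-shelf consequence of stationary phase. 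Without an argument of this kind, your plan does not yet produce the required power saving.
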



As we mentioned before, the regularity is known for $G = \GL_2$ and
$\GL_3$ by Baruch. The case $G = \GL_n(\BQ_p)$ is considered by
Miao \cite{Mia24}. Here, we obtain the regularity for $\GL_n$ with
general $p$-adic fields. Our method is different from \cite{Mia24} (See
Section \ref{sec-intro-kloosterman}).


\subsubsection{Nontrivial bounds for Kloosterman sums}\label{sec-intro-kloosterman}


There are at least two approaches to obtain the nontrivial
bound for Kloosterman sums for higher rank groups. Both
are considered in this paper.

The first approach is based on the stratification of D\k{a}browski-Reeder \cite{DR98} for Kloosterman sets.
Such a stratification decomposes the Kloosterman sum on $G$ 
into a sum of nested Kloosterman sums on $\GL_2$. 
One can then apply the Weil bound on $\GL_2$ to obtain the non-trivial bound. This approach was firstly considered by Blomer-Man \cite{BM24} for $G = \GL_n$ and maximal $K$. It is generalized in \cite{Lin26} for general Weyl elements.

For our application, note that the stratification in \cite{DR98} requires the open compact subgroup $K$ of $G$ is maximal which is not admissible in general (unless the center of $G^\der$ is trivial). It is an interesting question to find admissible groups suitable for this stratification. 

Inspired by the result in \cite[Section 4.4]{Lin26} for the level $\Gamma_0(q)$, for $G = \GL_n$,
we consider the following subgroup
\[K=\left\{g=\begin{pmatrix}
    g' & v\\
    z & t  
\end{pmatrix}\in\GL_n(\CO)\Big|g' \in M_{n-1,n-1}(\CO), v \in M_{n-1,1}(\CO), z \in M_{1,n-1}(\fp^\ell),  t\in1+\fp^\ell\right\}.\]
Here, $\ell$ is a sufficiently large positive integer such that $(1+\fp^\ell) \cap \mu_n = 1$ where
$\mu_n = Z \cap G^\der$ is the group of $n$th roots of unity. For such $\ell$,  $K$ is admissible.

Such an admissible group $K$ has the following nice 
property for the stratification. The stratification is 
based on a product of maps
\[b_\alpha: F \lra G\]
where $\alpha$ are all the simple roots occuring in a
fixed reduced representation for the longest
Weyl element $w_0$.  For $G = \GL_n$, there is a unique simple reflection in a reduced expression of 
$w_0$ that occurs exactly once. For our case, we
can choose
\[w_0=(s_{\alpha_1}\cdots s_{\alpha_{n-1}})(s_{\alpha_1}\cdots s_{\alpha_{n-2}})\cdots(s_{\alpha_1}s_{\alpha_2})s_{\alpha_1}\]
so that the simple reflection $s_{\alpha_{n-1}}$ for the simple root $\alpha_{n-1}$ is the special one. Then  the image of $b_{\alpha_i}$
is contained in $K$ except when $i = n-1$ (See Lemma \ref{Bruhat decomposition}). This property of $K$ enables us to refine the Blomer-Man argument for the maximal open compact subgroup by analyzing only the difference in the $\SL_2$-block corresponding to $b_{\alpha_{n-1}}$ (See the proof of 
Lemma \ref{lifting modulus}). 

This gives the non-trivial
bound for Kloosterman sums on $\GL_n$ with the special
level $K$ (See Theorem \ref{main theorem GL_n} and Corollary \ref{congruence subgroup gamma}).
The non-trivial bound for any split group of type A then follows readily. Applying Theorem \ref{main-intro}, we obtain the regularity for split groups of type $A$ (Theorem 
\ref{thm-type-A-intro}).

The second approach is from  Stevens \cite{Ste87}. Consider the principal congruence groups $K_d$ (Example \ref{special filtration}) which are admissible for large $d$. By considering the action
of the split torus $A_d = A \cap K_d$ on the Kloosterman set $X_d(\dot{w_0}a)$ given by $K_d$, the associated Kloosterman sum  can be written in
terms of Kloosterman sums on $\GL_2$ (Proposition \ref{stevens}). 
Applying the Weil bound for the Kloosterman sums on $\GL_2$ (Proposition
\ref{GL2 kloosterman sum}), the study is reduced to a counting
problem for the Kloosterman set (Corollary \ref{corollary of Stevens' method}).  In fact, in \cite{Ste87}, only the
case $G = \GL_n$ is considered. But it is not hard to generalize to arbitrary split groups. 

The counting is usually based on the Pl\"ucker coordinates. It
is given by matrix coefficients of fundamental representations $\sigma$ on $G$ and looks like
\[N \bs G \hookrightarrow F^{\sum_\sigma \dim\sigma}.\]
The counting is then reduced to the study of the family of conditions given by the Pl\"ucker coordinates which can be
very complicated for higher rank groups.  

For Stevens' approach, see \cite{Ste87} for $G = \GL_3$,
\cite{GSW21} for $G = \GL_4$, \cite{Man22,Man24} for $G = \Sp_4$ with $d=0$ and \cite{Mia24} for $G = \GL_n$ with $F = \BQ_p$.  Here, we consider the case $G = \Sp_4$ with large
enough $d$ (Theorem \ref{Sp_4 nontrivial bound}) which gives the regularity for groups of type
$C_2$.

\subsubsection{The local integrability}

The proof of Theorem \ref{main-intro} is based on several important previous works. Precisely, 
\begin{itemize}
	\item As we have mentioned above, Baruch showed that the restriction of the Bessel distribution for a generic representation $\pi$ to the open Bruhat cell is given by a
		smooth function $j_\pi^0$ (See Theorem \ref{restriction}). In particular, if $j_\pi^0$ is locally integrable, then
		the regularity for the Bessel distribution of $\pi$ holds.
	\item  By the works of Lapid-Mao and Chai, the function $j_\pi^0$ equals to another function $j_\pi$, which is given by 
		certain regularized integral of Whittaker functions over $N$ (See Theorem \ref{thm-regularized} and Theorem \ref{thm-chai}).
	\item  Lapid-Mao proved that $j_\pi$ is locally given by a Kloosterman integral (See Theorem \ref{locally orbital integral 2}). 
\end{itemize}

Therefore, the local integrability of Kloosterman integrals implies the regularity of Bessel distributions.

To give the definition of Kloosterman integrals and its germ expansion, we fix a system of representatives 
$\{w_I^0 \in N_G(A)\}_{I \subset \Delta}$ of $\{w_I \in W\}_{I \subset \Delta}$  which is relevant (with respect to $\psi_0$) 
in the sense that for each $I \subset \Delta$ 
    \begin{itemize}
        \item  $(w_I^0)^2\in A_I = Z(M_I)$.
        \item  $\psi_0\left(\Ad(w^0 w_I^0)u\right)=\psi_0(u)$ for any $u \in N_{M_I}$ with $w^0 = w_\Delta^0$.
    \end{itemize}
In fact, the Tits representatives of Weyl elements give such a system of representatives (See Lemma \ref{relevant weyl elements}).  

Consider the following action of $N \times N$ on $G$
\[g \cdot (n_1,n_2) = \ov{n_1}^{-1} g n_2, \quad \ov{n_1} = (w^0)^{-1}n_1w^0 \in \ov{N}.\]

Denote by $(N\times N)_g$ the stabilizer of $g$. An element $g \in G$ is called relevant if $\psi\left(n_1^{-1}n_2\right) = 1$ 
for any $(n_1,n_2) \in(N\times N)_g$. We have the following Bruhat decomposition for relevant elements (See Lemma \ref{relevant elements})
\[G_{\mathrm{rel}}=\bigsqcup_{I \subset\Delta}\ov{N} w_I^0 A_I N.\]

For a Schwartz function $f \in \CS(G)$ and $g \in G_{\mathrm{rel}}$ relevant, the Kloosterman integral is defined as
\[I(g,f)=\int_{(N\times N)_g\backslash N\times N}f(g \cdot (n_1,n_2))\psi^{-1}(n_1^{-1}n_2)dn_1dn_2.\]

\begin{remark}
    For the $GL_n$ case, Jacquet \cite{Jac16} considers the following
    action
    \[g \cdot (n_1,n_2)= n_1^{t} g n_2\]
    with $n_1^t$ the transport of $n_1$. 
    For general split groups, Lapid-Mao \cite{LM13} considers
    \[g \cdot (n_1,n_2) = n_1^{-1} g n_2.\]
    Our convention is close to the one in \cite{LM13}. In fact,
    our Kloosterman integral and the one in \cite{LM13} are equal
    up to the translate of  $w^0$ (See the proof of
    Theorem \ref{locally orbital integral 2}).
\end{remark}

Based on the counting formula by D\k{a}browski-Reeder for Kloosterman sets mentioned above, the Kloosterman integral $I(\cdot,f)$
is local integrable if  there exist some $\varepsilon > 0$ and $C>0$, such that  
\[ \Big|I(a,f)\delta^{\frac{1}{2}-\varepsilon}(a)\Big| \leq C, \quad a \in A.\] 
See Proposition \ref{prop-delta} for details.

\subsubsection{The Shalika germ expansion}
We consider the germ expansion for Kloosterman integrals.
This is a generalization of the work of  Jacquet-Ye \cite{JY99} and Jacquet \cite{Jac16} for $\GL_n$ to general
split groups.

To state the germ expansion, we need to introduce more notations.  Let $I \subset J$ be two subsets of $\Delta$.  
Consider the intersection
\[A_I^J = A_I \cap M_J^\der.\]
It is called the transverse torus in \cite{CST17} as there
is an isogeny (See Proposition \ref{prop-transverse})
\[A_J \times A_I^J \lra A_I, \quad (b,c) \mapsto bc.\]

For $G = \GL_n$, Jacquet defines $A_I^J$ as the subset
of $A_I$ cut out by certain sub-determinants. In fact, 
the two ways are the same (Remark \ref{remark-GL_n}).

A system of Shalika germs for Kloosterman integrals (See Definition \ref{defn-germ})  is a family of  functions 
\[\{K_I^J \in C^\infty(A_I^J)\}_{I \subset J \subset \Delta}\] 
such that $K_I^I=\delta_e$ for any $I$ and for each  $f \in \CS(G)$, there
    exist $\{\omega_J \in \CS(A_J)\}_{J \subset \Delta}$ such that for any $I \subset \Delta$
    \[I(w_I^0a,f)=\sum_{I \subset J}\left(K_I^J*\omega_{J}\right)(a), \quad
    a \in A_I\]
    with 
    \[\left(K_I^J*\omega_{J}\right)(a) = \sum_{a=bc, b \in A_I^J, c \in A_J} K_I^J(b)\omega_J(c).\]

We now give the result for the Shalika germ expansion
for Kloosterman integrals. This result may stand on its own.

\begin{thm}(Theorem \ref{shalika})
\label{shalika-intro}
There exists a system of Shalika germs for Kloosterman integrals on $G$ which is unique in a suitable sense.
\end{thm}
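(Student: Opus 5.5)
We construct the germs by induction on the poset of subsets $I \subset \Delta$, following Jacquet--Ye and Jacquet for $\GL_n$. Throughout we use the relevant Bruhat decomposition $G_{\rel}=\bigsqcup_{I}\ov{N}w_I^0A_IN$ and its closure relations. The cell for $J$ lies in the closure of the cell for $I$ exactly when $I \subset J$, with the open cell $I=\emptyset$ corresponding to $w^0$ itself.

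\emph{Step 1: the smallest cell as a base case.} For $I=\Delta$ the cell $\ov{N}w_\Delta^0A_\Delta N$ is closed in $G_{\rel}$. We therefore set $\omega_\Delta(a)=I(w_\Delta^0a,f)$, which is Schwartz on $A_\Delta$ by the usual orbital-integral argument, and $K_\Delta^\Delta=\delta_e$.

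\emph{Step 2: localization.} Given $f$, we decompose $f$, using partitions of unity on the $N\times N$-stable open sets $U_I=\bigsqcup_{J\subset I}\ov{N}w_J^0A_JN$, into pieces supported near each stratum. For $f$ supported in the open set where the cell of $J$ is closed, $I(w_I^0a,f)$ vanishes for $I\not\subset J$.

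\emph{Step 3: define the germ $K_I^J$ via a transverse slice.} By the isogeny $A_J\times A_I^J\to A_I$ (Proposition \ref{prop-transverse}), a neighbourhood of $w_J^0A_J$ in $\ov{N}w_J^0A_JN$ has a transverse slice. The slice is modelled on $M_J$: one reduces the Kloosterman integral on $G$ near $w_J^0c$ to a Kloosterman integral on the Levi $M_J$, for the element $w_I^0(w_J^0)^{-1}$ times $b\in A_I^J$, with the function replaced by a "descent" of $f$ to $M_J$.

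Concretely, we use the Levi decomposition $N=N_{M_J}U_J$. We integrate $f$ over the unipotent radical $U_J$ (and its opposite) with the character $\psi$, which is trivial enough on $U_J$ by relevance of the representatives $w_I^0$. This produces a function $f_J$ on $M_J$ with $I_G(w_I^0bc,f)=I_{M_J}(w_I^0(w_J^0)^{-1}b\cdot c,f_J)$. Since $A_J$ is central in $M_J$ and $A_I^J\subset M_J^\der$, the problem becomes the germ at the identity-like element $w_J^{0}$ of $M_J^\der$. We then define $K_I^J(b)$ as $I_{M_J}(w_I^0 b, \phi_J)$ for a fixed $\phi_J$ supported near $w_J^0$ with $I_{M_J}(w_J^0,\phi_J)=1$. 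This is exactly the analogue of Jacquet's definition. Subtracting $\sum_J K_I^J*\omega_J$ then removes the contribution of every cell in the closure, and the remainder is supported on the open cell. The open cell is then handled by Step 1 in reverse: there $I(w_\emptyset^0a,f)=\omega_\emptyset(a)$ is Schwartz.

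\emph{Step 4: uniqueness.} If two systems exist, their difference gives germ relations $\sum_{I\subset J}(D_I^J*\omega_J)=0$ for all $f$. Choosing $f$ so that $\omega_J$ is an arbitrary Schwartz function supported near $e$, and inducting on $|J\setminus I|$, forces $D_I^J=0$ modulo functions of compact support near the identity. This is the ``suitable sense'' of uniqueness: germs are determined up to Schwartz functions, or as germs near the boundary.

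\emph{Main obstacle.} The main obstacle is Step 3, the descent identity. One must show that the $U_J$-integration converges and is Schwartz. One must also show that the stabilizers $(N\times N)_{w_I^0a}$ decompose compatibly with $N_{M_J}\times U_J$, and that the representatives $w_I^0$ and $w_J^0$ interact correctly through $(w_I^0)^2\in A_I$ and the relevance condition $\psi_0(\Ad(w^0w_I^0)u)=\psi_0(u)$. The character twist on $U_J$ must be shown to be trivial on the relevant stabilizer. Finally, the isogeny $A_J\times A_I^J\to A_I$ is not an isomorphism, so its finite kernel must be tracked; this is why the convolution is written as a sum over factorizations $a=bc$.
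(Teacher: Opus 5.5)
Your outline follows Jacquet's strategy, as the paper does, but the step carrying all the content is asserted rather than proved.

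In Step 3 you subtract $\sum_J K_I^J*\omega_J$ only as functions on $A_I$, and then say the remainder ``is supported on the open cell''. For intermediate $J$, however, $\omega_J$ is not even defined until $f$ has been replaced by something supported in $\Omega_J$. A partition of unity cannot do that. The union $\bigcup_{I'\subsetneq\Delta}\Omega_{I'}$ misses the closed cell, and also the Bruhat cells of Weyl elements lying in no proper $W_{I'}$; already for $\GL_3$ these are the cells of $s_1s_2$ and $s_2s_1$, none of which is relevant. Your $U_I$ are also not open in $G$ for $I\neq\emptyset$.

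What is needed is the paper's layer-by-layer mechanism. For $f\in\CS(M_J)$, the subtraction is realized by a test function: $f_2(g)=\sum f_0(g_1)\,\omega_J^f(c)$, summed over $g=g_1c$ with $g_1\in M_J^\der$ and $c\in A_J$. Here $\omega_J^f(c)=I(w_J^0c,f)$ and $I(w_J^0a,f_0)=\delta_e(a)$ on $A_J^J$, so that $I(w_I^0a,f_2)=(K_I^J*\omega_J^f)(a)$ for all $I\subset J$ at once. Then $f-f_2$ has vanishing orbital integrals along $w_J^0A_JN_{M_J}$. The localization Lemma \ref{decomposition of functions} (Jacquet's Lemmas 2.2--2.4) lets you replace it by a function supported in $\bigcup_{|J\setminus I'|=1}\Omega_{I'}\cap M_J$. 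Part (2) of that lemma is exactly the statement that the non-relevant cells carry no $(N\times N,\psi_0)$-equivariant contribution, which your sketch never addresses. One then:
\begin{itemize}
\item transfers each piece to $M_{I'}$ (Proposition \ref{transform orbital integrals});
\item peels off the closed cell of $M_{I'}$;
\item lifts back with support in $\Omega_{I''}$ (the paper cuts off with the $N\times N$-invariant matrix coefficients $\Delta_i$ of Lemma \ref{properties of Delta_i});
\item iterates down to $I=\emptyset$.
\end{itemize}
That each $\omega_I$ is Schwartz is the output of this iteration. The descent identity you single out as the main obstacle is the routine part.

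Step 4 also fails as written. The $\omega_J$ depend on the chosen system, so from $\sum_J K_I^J*\omega_J=\sum_J H_I^J*\omega'_J$ you cannot pass to $\sum_J D_I^J*\omega_J=0$ with a common $\omega_J$. One argues triangularly instead. First, $\omega_\Delta=\omega'_\Delta=I(w^0a,f)$. Hence for $|\Delta\setminus I|=1$ one gets $(H_I^\Delta-K_I^\Delta)*\omega_\Delta=\omega_I-\omega'_I\in\CS(A_I)$ for every $\omega_\Delta\in\CS(A_\Delta)$. Taking $\omega_\Delta$ to be the characteristic function of a small neighbourhood of $e$ forces $H_I^\Delta-K_I^\Delta\in\CS(A_I^\Delta)$, and one continues downward.

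Moreover, the sense of uniqueness you propose, ``up to Schwartz functions'', is wrong once $|J\setminus I|\ge 2$. The correct statement is the paper's: every system has the form $H_I^J=\sum_{I\subset I_1\subset J}K_I^{I_1}*t_{I_1}^J$ with $t_{I_1}^J\in\CS(A_{I_1}^J)$ and $t_I^I=\delta_e$. The intermediate terms $K_I^{I_1}*t_{I_1}^J$ with $I\subsetneq I_1\subsetneq J$ are in general not Schwartz, since $K_I^{I_1}$ is not compactly supported.
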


Once we have generalized the setting to arbitrary split groups, especially the action of $N^2$ on $G$, the Bruhat decomposition
for relevant elements and the transverse torus, the proof
of Theorem \ref{shalika-intro} is an analogue of the one  for $\GL_n$.

By the germ expansion, 
the local integrability of Kloosterman sums is
controlled by the germ functions $\{K_\emptyset^J\}_J$. In fact, from the proof of Theorem \ref{shalika}, we observe that
the germ function can be expressed as Kloosterman integrals
for admissible groups. Precisely, for a family of admissible subgroups $K_J \subset M_J$
with $J \subset \Delta$, there exists a
system of Shalika germs $\{K_I^J\}_{I \subset J}$ such that (Proposition \ref{prop-germ-adm})
\[K_I^J(a) \stackrel{.}{=} I(w_I^0a,1_{w_J^0K_J}), \quad a\in A_I^J.\]
Now, Theorem \ref{main-intro} follows.

\subsubsection{Future directions}
There are several directions one may pursue further after this work. The first one is the regularity of Bessel distributions on general split reductive group $G$. In Theorem \ref{main-intro}, we have reduced the regularity of Bessel distributions attached to irreducible generic representations to nontrivial bounds of  Kloosterman sums for admissible subgroups. We discussed the groups of type $A$ and $C_2$ based on this reduction, and the general case will be considered in our future work.

One may use degenerate Whittaker functional to replace non-degenerate Whittaker functional and consider the corresponding Bessel distributions. In this case, the result of Baruch (Theorem \ref{restriction}) is also true, that is, when restricted to the open Bruhat cell, the Bessel distribution is given by a smooth function. But this smooth function may fail to be locally integrable, and the regularity does not hold (See \cite[Remark 2.4]{Ba01}). The failure of this local integrability may be revealed by looking at the corresponding Shalika germ expansion.



\s{\bf Acknowledgements.} L. Cai is supported by the National Key R\&D Program of China No.
2023YFA1009702 and National Natural Science Foundation of
China, No. 12371012. J. Chai is supported by National Natural Science Foundation of China, No. 12571010 and No. 12671039.

\section{Kloosterman integrals}

In this section, we consider the germ expansion of Kloosterman integrals for split reductive groups. 

\subsection{The germ expansion}

\subsubsection{}\label{notations}

Let $F$ be a $p$-adic field, with ring of integers $\CO$. $|\cdot|$ is the $p$-adic absolute value on $F$ with $v$ the additive valuation. Let $q$ be the cardinality of its residue field $\mathbb{F}_q$. Use $\varpi$ to denote a fixed uniformizer of $F$, and let $\fp$ be the maximal ideal of $\CO$.

Let $G$ be a split (connected) reductive group over a $p$-adic field $F$. We shall make no distinction between an algebraic group and its $F$-points. We fix a Borel subgroup $B=AN$, where $A$ is a maximal split torus of $G$ and $N$ is the unipotent radical of $B$. The center of $G$ is denoted by $Z$. Denote by $n$ the rank of $A$.

Denote by $W = N_G(A)/A$ the Weyl group of $G$ with $N_G(A)$ the normalizer of $A$ in $G$.  Denote by $w_0\in W$ the longest Weyl element. We have the Bruhat decomposition
\[G=\bigsqcup_{w\in W}\overline{N}wAN, \quad \overline{N}:=w_0^{-1}Nw_0.\] 

\subsubsection{}

Denote by $X^*(A):=\Hom(A,F^\times)$ the group of $F$-rational characters of $A$. The Weyl group $W$ acts on $X^*(A)$ by $w \cdot x = x \circ \Ad(w)$, for
any $w \in W$ and $x \in X^*(A)$. Sometimes, we also write $w(x) = w \cdot x$.

Let $\Phi \subset X^*(A)$ be the set of roots of $G$ with respect to $A$. 
Then $\Phi=\Phi^+\sqcup\Phi^-$, where $\Phi^+$ is the set of positive roots and $\Phi^-$ is the set of negative roots
with respect to $B$. For each root $\alpha \in \Phi$, denote by $N_\alpha$ the
root subgroup of $\alpha$.

Denote by $\Delta=\{\alpha_i\}_{1 \leq i \leq m} \subset \Phi^+$, for some $m \leq n$,  the set of simple roots of $G$. For each simple root $\alpha \in \Delta$, let $A_\alpha = \Ker(\alpha)^0 \subset A$ and  $G_\alpha:=Z_G(A_\alpha)$ be its centralizer in $G$. Then $G_\alpha$ is a split reductive group with $A\subset G_\alpha$ a maximal torus. The derived group of $G_\alpha$ has rank $1$ so that the Weyl group $W(G_\alpha,A) \subset W$ of
$G_\alpha$ is isomorphic to $\BZ/2\BZ$. Denote by
$s_\alpha$ the generator of $W(G_\alpha,A)$.

Denote by $X_*(A):=\Hom(F^\times ,A)$ the group of cocharacters of $A$.
Let $\Phi^\vee \subset X_*(A)$ be the set of coroots of $G$ and 
$\Delta^\vee$ be the set of simple coroots. Then $\Phi^\vee=\Phi^{\vee,+}\sqcup\Phi^{\vee,-}$, where $\Phi^{\vee,+}$ is the set of positive coroots and $\Phi^{\vee,-}$ is the set of negative coroots with respect to $B$.

There exists a  perfect pairing
\[\left<\cdot,\cdot\right>\colon X^*(A)\times X_*(A)\to\BZ\]
given as follows: for any $\alpha \in X^*(A)$ and $\lambda \in X_*(A)$,
$\alpha\left(\lambda(t)\right) = t^{\langle \alpha,\lambda \rangle}$ for
any $t \in A$. We have the following decomposition
\[X^*(A)_\BQ=X^*(Z)_\BQ\bigoplus\BQ\Phi, \quad X_*(A)_\BQ=X_*(Z)_\BQ\bigoplus\BQ\Phi^\vee,\]
such that under the pairing $\langle \cdot,\cdot \rangle$,
$X_*(Z)_\BQ=(\BQ\Phi)^\bot$ and $X^*(Z)_\BQ=(\BQ\Phi^\vee)^\bot$.

For each 
$\alpha \in \Delta$, the simple coroot $\alpha^\vee \in \Delta^\vee$ is the unique cocharacter of $A$ such that
\[s_\alpha(x)=x-\left<x,\alpha^\vee\right>\alpha, \quad x\in X^*(A).\]


\subsubsection{}
The standard parabolic subgroups can be described in terms of subsets of $\Delta$. 
For each subset $I$ of $\Delta$, there is a unique standard parabolic subgroup $P_I$ of $G$ such that
\[P_I=\bigsqcup_{w\in W_I}BwB\]
where $W_I$ is the subgroup of $W$ generated by $s_\alpha,\alpha\in I$. 
We have the Levi decomposition 
\[P_I=M_IN_I, \quad N_I=\prod_{\alpha\in\Phi^+\backslash\Phi_I}N_\alpha, \quad
\Phi_I=\BZ I\cap\Phi.\] 
The group $W_I$ is the Weyl group for the Levi subgroup $M_I$. For each subset $I \subset \Delta$, denote by $w_I$ the longest Weyl element in $W_I$. Moreover, every standard parabolic subgroup $P$ of $G$ is of the form $P_I$ for a unique subset $I$ of $\Delta$.

The Levi subgroup $M_I$ has the Borel subgroup $B_{M_I} = B \cap M_I$.
It admits the Levi decomposition $B_{M_I} = AN_{M_I}$ with the unipotent subgroup
\[N_{M_I} = N \cap M_I = \prod_{\alpha \in \Phi^+ \cap \Phi_I} N_\alpha.\]
Denote by 
\[N_I^- = \prod_{\alpha \in \Phi^- \setminus \Phi_I} N_\alpha,
\quad N_{M_I}^- = w_I^{-1}N_{M_I}w_I = \prod_{\alpha \in \Phi^- \cap \Phi_I}
N_\alpha.\]
In particular,
\[N_IN_{M_I} = N, \quad N_I^-N_{M_I}^- = \ov{N}.\]

Denote by $A_I$ the center of $M_I$. Then $A_I=\bigcap_{\alpha\in I}\Ker(\alpha)$ (\cite[Proposition 21.7]{Mil17}) and 
$M_I$ is the centralizer of $A_I^\circ$ in $G$ (\cite[Proposition 21.91]{Mil17}). 

Let $I \subset J$ be two subsets in $\Delta$. Then
$A_I \supset A_J$ and $M_I \subset M_J$.

\subsubsection{}

We fix a non-trivial additive character $\psi_F$ on $F$. We also fix a family of isomorphisms 
\[x_\alpha: F \stackrel{\sim}{\lra} N_\alpha, \quad \alpha\in\Phi.\]
In particular, any element $u \in N$ can be written as
\[u=\prod_{\alpha\in\Phi^+}x_\alpha(u_\alpha), \quad u_\alpha\in F.\]

A character $\psi$ on $N$ is called \emph{generic} if its restriction to each  $N_\alpha$ with $\alpha \in \Delta$ is nontrivial. By \cite[page 15]{Kal22}, we have $N/[N,N]\cong\prod_{\alpha\in\Delta}N_\alpha$. In particular, any generic character is in the form of 
\[\psi(u)=\prod_{\alpha\in\Delta}\psi_F(c_\alpha u_\alpha), \quad c_\alpha \in F^\times, \quad \alpha \in \Delta.\]

We shall only consider the following generic character
\[\psi_0(u)=\prod_{\alpha\in\Delta}\psi_F(u_\alpha).\]

\begin{defn}\label{relevant with respect to psi_0}
    A system of representatives $\{w_I^0 \in N_G(A)\}_{I \subset \Delta}$ of $\{w_I \in W\}_{I \subset \Delta}$ is called relevant with respect to $\psi_0$ if it satisfies the following conditions: 
    \begin{enumerate}
        \item For any $I\subset\Delta$, we have $(w_I^0)^2\in A_I$.
        \item For any $I\subset\Delta$, we have
    $\psi_0\left(\Ad(w^0_\Delta w_I^0)u\right)=\psi_0(u)$ for any $u \in N_{M_I}$.
    \end{enumerate}
\end{defn}

\begin{lem}\label{relevant weyl elements}
    There exists a system of representatives $\{w_I^0 \in N_G(A)\}_{I \subset \Delta}$ which is relevant with respect to $\psi_0$.
    \begin{proof}
        We shall prove that the Tits representatives $\{\wt{w_I}\}_I$ of $\{w_I\}_I$ are relevant to $\psi_0$.
        We refer to Section 5 of \cite{AV16} for the definition and basic properties of Tits representatives. For each $w \in W$
        and $\alpha \in \Phi$, there exists a constant 
        $\kappa_\alpha(\wt{w})\in F^\times$ such that
        \[\Ad(\wt{w})x_\alpha(u_\alpha)=x_{w\cdot\alpha}(\kappa_\alpha(\wt{w})u_\alpha), \quad u_\alpha\in F.\]

        First, we prove that \[\psi_0\left(\Ad(\wt{w_0w_I})u\right)=\psi_0(u), \quad u \in N_{M_I}.\]
        For $u \in N_{M_I}$, write $u=\prod_{\alpha\in\Phi_I^+}x_\alpha(u_\alpha),u_\alpha\in F$. Then
        \[\Ad(\wt{w_0w_I})u=\prod_{\alpha\in\Phi_I^+}
    \Ad(\wt{w_0w_I})x_\alpha(u_\alpha)
            =\prod_{\alpha\in\Phi_I^+}x_{(w_0w_I)\cdot\alpha}(\kappa_{\alpha}(\wt{w_0w_I})u_\alpha).\]
        Hence
        \[\psi_0\left(\Ad(\wt{w_0w_I})u\right)=\prod_{\alpha\in S}\psi_F\left(\kappa_{\alpha}(\wt{w_0w_I})u_\alpha\right), \quad S=\left\{\alpha\in\Phi_I^+ \Big|(w_0w_I)\cdot\alpha\in\Delta\right\}.\] 
        It is clear that $I \subset S$. We claim that $I=S$. Indeed, if $\alpha\in S$, then $(w_0w_I)\cdot\alpha\in\Delta$ and $w_I\cdot\alpha\in-\Delta$. Moreover, we have $w_I\cdot\alpha\in-\Delta\cap\Phi_I^-=-I$, this implies $\alpha\in I$. Therefore
        \[\psi_0\left(\Ad(\wt{w_0w_I})u\right)=\prod_{\alpha\in I}\psi_F\left(\kappa_{\alpha}(\wt{w_0w_I})u_\alpha\right).\]
        For any $\alpha \in \Delta$ and $w \in W$, if $w \cdot \alpha \in \Delta$, then $\kappa_\alpha(\wt{w}) = 1$ (See \cite[Proposition 9.3.5]{Spr98}). We have $\kappa_{\alpha}(\wt{w_0w_I})=1$ and \[\psi_0\left(\Ad(\wt{w_0w_I})u\right)=\psi_0(u), \quad u \in N_{M_I}.\]
        
         By \cite[Lemma 5.4]{AV16}, one has $\wt{w_I}^2\in A_I$. Denote by $\ell$ the length function on $W$. For any $w\in W$, by \cite[Proposition 2.3.2 (ii)]{BB05}, we have $\ell(w_0)=\ell(w_0w)+\ell(w)$. In particular, $\ell(w_0) = \ell(w_0w_I) + \ell(w_I)$.  Since the Tits
         representative $\wt{w}$ of $w \in W$ is independent of 
         the choice of  reduced representations of $w$, we have
         \[\wt{w_0}=\wt{w_0w_I}\cdot\wt{w_I}\]
         by $\ell(w_0) = \ell(w_0w_I) + \ell(w_I)$.
         
         Therefore, we have 
        \[\psi_0(u) = \psi_0\left(\Ad(\wt{w_0}\wt{w_I}^{-1})u\right)
         = \psi_0\left(\Ad(\wt{w_0}\wt{w_I})u\right)
        , \quad u\in N_{M_I}.\]
    \end{proof}
\end{lem}

In the following, we fix a representative $\{w_I^0 \in N_G(A)\}_{I \subset \Delta}$ of $\{w_I \in W\}_{I \subset \Delta}$ which is relevant with respect to $\psi_0$.
We shall write $w^0 = w^0_\Delta$ for simple.

Consider the following action of $N\times N$ on $G$
\[g \cdot (u_1,u_2) = \ov{u_1}^{-1} g u_2, \quad \ov{u_1} = (w^0)^{-1} u_1w^0 \in \ov{N}.\]
Denote by $(N\times N)_g$  the stabilizer of $g$ with respect to the action of $N\times N$.

An element $g \in G$ is called {\em relevant} with respect to $w^0$ and $\psi_0$ if $\psi_0\left(u_1^{-1}u_2\right) = 1$ for any $(u_1,u_2) \in(N\times N)_g$.
Denote by $G_\rel= G_{_\rel}^{w^0}$ the set of relevant elements in $G$. In particular, $w^0$ is also a relevant element. 
\begin{lem}\label{relevant elements}
    We have
    \[G_{\rel}=\bigsqcup_{I \subset\Delta}\ov{N} w_I^0 A_I N.\]
\end{lem}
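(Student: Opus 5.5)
Since the action of $N\times N$ on $G$ is the twisted two-sided action $g\mapsto \ov{u_1}^{-1}gu_2$, relevance is constant along orbits. So I would start from the Bruhat decomposition $G=\bigsqcup_{w\in W}\ov{N}\dot w AN$ and reduce to elements $g=\dot w a$, with $\dot w\in N_G(A)$ a representative of $w$ and $a\in A$. The plan has two parts: compute the stabilizer of $\dot wa$, then decide when $\psi_0(u_1^{-1}u_2)$ is trivial on it.

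\textbf{The stabilizer.} Write $\ov{u_1}=(w^0)^{-1}u_1w^0$. The condition $\ov{u_1}^{-1}\dot wa u_2=\dot wa$ says
\[ u_2=(\dot wa)^{-1}\ov{u_1}(\dot wa)\in N\cap (\dot wa)^{-1}\ov{N}(\dot wa).\]
Equivalently, $u_1\in N\cap \Ad(w^0\dot w a)N$ and $u_2=\Ad((\dot wa)^{-1}(w^0)^{-1})u_1$. Set $\sigma=w_0w$. The group $N\cap \sigma N\sigma^{-1}$ is the product of the root subgroups $N_\beta$ with $\beta>0$ and $\sigma^{-1}\beta>0$. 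Thus the stabilizer is isomorphic to $N_\sigma:=N\cap\sigma N\sigma^{-1}$ via
\[ u_1\mapsto (u_1,\Ad((w^0\dot wa)^{-1})u_1).\]

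\textbf{When the character is trivial.} Relevance means that
\[ u\mapsto \psi_0(u)^{-1}\,\psi_0\bigl(\Ad((w^0\dot w a)^{-1})u\bigr)\]
is trivial on $N_\sigma$. Since $\psi_0$ only sees simple root coordinates, I would first show that this forces the following: for every simple root $\beta\in\Delta$ with $\sigma^{-1}\beta>0$, the root $\sigma^{-1}\beta$ is again simple. Indeed, if $\sigma^{-1}\beta$ is not simple, take $u\in N_\beta$. Its image under $\Ad((w^0\dot wa)^{-1})$ lies in $N_{\sigma^{-1}\beta}$, on which $\psi_0$ is trivial. So $\psi_0(u)$ must be trivial for all $u\in N_\beta$, contradicting genericity.

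Next I would apply the standard combinatorial lemma: let $I=\{\gamma\in\Delta:\sigma\gamma\in\Delta\}$, and suppose $\sigma^{-1}$ sends every simple root on which it is positive to a simple root. Then $\sigma=w_0w_I$, that is, $w=w_I$. To prove it, write $w=w_0\sigma$ and note that $w$ maps $\Delta\setminus I$ into negative roots while mapping $I$ to $-I$. I expect this identification of $w$ to be the main obstacle, since it needs a careful Weyl-group argument. The cleanest route seems to be: $\sigma^{-1}$ permutes $\Phi^+\cap\sigma\Phi^+$, which is closed, and one shows that this set is $w_0\Phi_I^+$, so that $\sigma$ is of minimal length in its coset in $W/W_I$ and sends $I$ to $\Delta$. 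Counting lengths then gives $\sigma=w_0w_I$.

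\textbf{Conditions on $a$.} Now take $w=w_I$ and $\dot w=w_I^0$, so the stabilizer is $\Ad(w^0w_I^0)$ applied to $N_{M_I}$ twisted by $a$. By condition (2) of Definition~\ref{relevant with respect to psi_0}, $\psi_0\circ\Ad(w^0w_I^0)=\psi_0$ on $N_{M_I}$. Combined with condition (1), the relevance condition reduces to
\[\psi_0(\Ad(a^{-1})v)=\psi_0(v) \quad\text{for all } v\in N_{M_I},\]
after rewriting $\Ad((w^0w_I^0a)^{-1})$ restricted to $\Ad(w^0w_I^0)N_{M_I}$ as $\Ad(a^{-1})\circ\Ad((w^0w_I^0)^{-1})$ and using $(w_I^0)^2\in A_I$. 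This holds exactly when $\alpha(a)=1$ for all $\alpha\in I$, that is, when $a\in A_I$ by the description $A_I=\bigcap_{\alpha\in I}\Ker\alpha$. Conversely, for $a\in A_I$ every step reverses, so these elements are relevant.

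Finally, disjointness of the union follows from the disjointness of the Bruhat cells, because the $w_I$ are distinct elements of $W$.
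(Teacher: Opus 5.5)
Your proof is the paper's proof of Lemma~\ref{relevant elements for Levi}(2) with $J=\Delta$, but run in mirror image. The paper finds a non-simple $\alpha>0$ with $\sigma^{-1}\alpha\in\Delta$. It then uses the stabilizer elements $(x_\alpha(t),x_{\sigma^{-1}\alpha}(ct))$ and takes the Weyl-group input from \cite[Lemma 89]{Ste68}. You instead use a simple $\beta$ with $\sigma^{-1}\beta$ positive and non-simple. So you need the dual statement: if $\tau=\sigma^{-1}$ satisfies $\tau(\Delta)\subset\Delta\cup\Phi^-$, then $w=w_I$ for some $I$. The reduction to $\dot wa$, the stabilizer computation, the contradiction with genericity, and the determination of $a\in A_I$ from condition (2) are all fine. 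Condition (1) is not even needed there.

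The one step that fails as written is your sketch of that Weyl-group lemma, which you rightly flag as the crux. There are four problems.
\begin{enumerate}
\item For $w=w_I$ one has $w(\Delta\setminus I)\subset\Phi^+$, not $\Phi^-$. Also, $w(I)=-I$ is not available before you know $w=w_I$.
\item The set $\Phi^+\cap\sigma\Phi^+$ consists of positive roots, so it cannot be $w_0\Phi_I^+\subset\Phi^-$. The correct target is $\sigma\Phi_I^+$.
\item $\sigma^{-1}$ does not permute this set in general.
\item The inclusion $\Phi^+\cap\sigma\Phi^+\subset\sigma\Phi_I^+$, which is the entire content of the lemma, is only asserted.
\end{enumerate}

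Here is a correct argument. Let $J=\{\beta\in\Delta:\tau\beta\in\Delta\}$, so $J=\sigma(I)$. Clearly $\tau(\Phi_J^+)\subset\Phi^+$.

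Now take $\alpha=\sum_\delta n_\delta\delta\in\Phi^+\setminus\Phi_J$. The terms with $\delta\in J$ contribute a non-negative combination of $\tau(J)\subset\Delta$. Every $\tau\delta$ with $\delta\in\Delta\setminus J$ is a negative root. Pick $\delta_0\notin J$ with $n_{\delta_0}>0$. Since $\tau\delta_0\notin\Phi_{\tau(J)}=\tau(\Phi_J)$, it has a strictly negative coefficient at some simple root $\epsilon\notin\tau(J)$. All contributions to the $\epsilon$-coefficient of $\tau\alpha$ are then $\le 0$, and one is $<0$, so $\tau\alpha<0$.

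Hence the inversion set of $w_0\tau$ is $\Phi_J^+$, which is the inversion set of $w_J$. So $w_0\tau=w_J$, and $w=w_0\sigma=w_0w_Jw_0=w_{-w_0J}$, where $-w_0J$ is exactly your $I$. Alternatively, you can deduce the lemma from the form of \cite[Lemma 89]{Ste68} quoted in the paper, applied to $w^{-1}$ and transported by $-w_0$.
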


We shall prove a generalized version of this Lemma 
(See Lemma \ref{relevant elements for Levi}).

\subsubsection{}
Following Cogdell-Shahidi-Tsai \cite[Section 5.2.7]{CST17}, we consider the transverse torus
\[A_I^J =A_I\cap M_J^{\operatorname{der}},\]
where $M_J^{\operatorname{der}}$ is the derived subgroup of $M_J$ (Note that in general $A_I^J$ is not a torus). In particular, $A_\emptyset^\Delta=A\cap G^{\operatorname{der}}$ is the maximal torus of the derived subgroup $G^{\operatorname{der}}$. 


\begin{prop}\label{prop-transverse}
    We have the following isogeny map
    \[A_J\times A_I^J\to A_I,\quad(b,c)\to bc.\]
    \begin{proof}
        Any connected reductive group $G$ can be written as $G=Z(G)\cdot G^{\operatorname{der}}$ and  there is an isogeny map $f_G:  Z(G)\times G^{\operatorname{der}}\to G$. In particular, for every $J\subset\Delta$, we have an isogeny map
        \[f_{M_J}: A_J\times M_J^{\operatorname{der}}\to M_J,\quad (b,c)\mapsto bc.\]
        The map $A_J \times A_I^J \ra A_I$ is 
        the restriction of $f_{M_J}$ to 
         $A_J\times(A_I\cap M_J^{\operatorname{der}})$. 
         In particular, the kernel of the map is finite. 
        
        Now we prove that the map $A_J \times A_I^J \ra A_I$ is surjective (as algebraic groups). For any 
        $a \in A_I$, $a \in M_I \subset M_J$. We may
        write $a = bc$ with $b \in A_J$ and $c \in M_J^\der$. We have
        \[c = ab^{-1} \in A_I \cap M_J^\der = A_I^J.\]
        Therefore, the map is surjective and it is an 
        isogeny map.

    \end{proof}
\end{prop}

\begin{cor}\label{lem-finite}
Let $I \subset J$ be two subsets of $\Delta$, we have $A_I^J\cap A_J=A_J^J$ and the set $A_{J}^{J}$ is finite. For any $a \in A_I$, there are only finitely many  $b\in A_I^J$ and $c\in A_J$ such that $a=bc$.
\begin{proof}
    By the above Proposition, the kernel of the map $A_J\times A_I^J\to A_I$ is finite. This is equivalent to the fiber of every element $a$ in $A_I$ is finite, and equivalent to $A_J^J=A_I^J\cap A_J$ is finite.
\end{proof}
\end{cor}

Now we give a characterization of $A_I^J$ via the character of $A$.
\begin{lem}\label{lem-image-res}
    Fix $J\subset\Delta$. Consider the restriction map
    \[\operatorname{res}\colon X^*(M_J)\to X^*(A),\quad\chi\mapsto\chi|_{A}.\]
    Then we have
    \[\operatorname{res}(X^*(M_J))=\{\lambda\in X^*(A):\left<\lambda,\alpha^\vee\right>=0,\forall \alpha\in J\}.\]
    \begin{proof}
        
        Suppose $\lambda\in X^*(M_J)$, then $\lambda$ is trivial on the derived subgroup $M_J^\der$.  For any $\alpha \in J$, $\Im\alpha^\vee \in M_J^\der$. Hence, $\left<\lambda,\alpha^\vee\right>=0$
        

        Assume $\lambda\in X^*(A)$ such that $\left<\lambda,\alpha^\vee\right>=0$ for all $\alpha\in J$. Then $\lambda$ is orthogonal to  $\BZ\Phi_J^\vee=X_*(A\cap M_J^\der)$. Hence, $\lambda$ is trivial on $A\cap M_J^\der$.  We
        have a surjective morphism of algebraic groups
        \[\pi: A \lra M_J/M_J^\der.\]
        It induces an isomorphism
        \[A/(A\cap M_J^\der)\cong M_J/M_J^\der.\]
        In particular, $\lambda$ is the restriction
        of a character on $M_J$.

    \end{proof}
\end{lem}
\begin{prop}
    For any $I\subset J\subset\Delta$, we have
    \[A_I^J=\left\{a\in A_I\Big| \lambda(a)=1,\forall\lambda\in X^*(A),\left<\lambda,\alpha^\vee\right>=0,\forall\alpha\in J\right\}.\]
    \begin{proof}
        This follows from Lemma \ref{lem-image-res} as $M_J^\der=\bigcap_{\lambda\in X^*(M_J)}\ker\lambda$. 
    \end{proof}
\end{prop}
\begin{remark}\label{remark-GL_n}
    In \cite{Jac16}, for the case $G = \GL_n$, Jacquet considers the functions $\Delta_i(g) = \Delta_i(g_{i,i})$, $1 \leq i \leq n$ where the 
    sub-matrix $g_{i,i}$ of $g \in G$ is formed with the first $i$ rows and the first $i$ columns of $g$. The function $\Delta_i$ is a matrix coefficient for the
    representation corresponding to the dominant weight $\lambda_i = e_1+\cdots+e_i,1\leq i\leq n$.  
    Jacquet considers
    \[
    \begin{aligned}
    A_I^J &=\left\{a\in A_I\Big| \Delta_i(w_I^0a)=\Delta_i(w_J^0) \text{ for all $i$ with } \Delta_i(w_J^0)\neq 0\right\} \\
    &= \left\{a\in A_I\Big| \lambda_i(a)=
    \frac{\Delta_i(w_J^0)}{\Delta_i(w_I^0)} \text{ for all $1 \leq i \leq n-1$ with  $\alpha_i \not\in J$ and $i=n$} \right\}. 
    \end{aligned}
    \]
    Here, $w_I^0,I\subset\Delta$ denotes a representative for longest Weyl element $w_I\in W_I$. With the choice of $w_I^0$ as the Tits representative of $w_I$, we have $\Delta_i(w_J^0)/\Delta_i(w_I^0)=1$. Hence, this definition agrees with ours.
\end{remark}

\subsubsection{}
We denote by $\CS(G) = C_c^\infty(G)$ the space of Schwartz functions on $G$.

\begin{defn}\label{defn-Kl-G}
The Kloosterman integrals for $G$ with respect to $w^0$ and $\psi_0$ are defined as follows
\[I(g,f)=\int_{(N\times N)_g\backslash N\times N}f\left(g \cdot (u_1,u_2)\right)\psi_0^{-1}\left(u_1^{-1}u_2\right)du_1du_2, \quad g \in G_\rel, \quad f \in \CS(G).\]
\end{defn}

Note that if $g \in\ov{N}B$, then $(N\times N)_g = 1$ so that $g$ is relevant. Moreover, $I(\cdot,f)$ is smooth on $\ov{N}B$. 

For any $I\subset J\subset\Delta$, $f_1 \in C^\infty(A_I^J)$
and $f_2 \in C^\infty(A_J)$, consider the following function on
$A_I$
\[(f_1 * f_2)(a) = \sum_{(b,c)} f_1(b)f_2(c).\]
Here, the sum runs over all pairs $(b,c)$ with $b \in A_I^J$
and $c \in A_J$ such that $a = bc$. If
no such pair $(b,c)$ with $a = bc$, then we just set $(f_1 * f_2)(a) = 0$. By Corollary \ref{lem-finite}, the above sum is in fact
a finite sum.

\begin{defn}\label{defn-germ}
A {\em system of Shalika germs} for Kloosterman integrals in Definition \ref{defn-Kl-G}  is a family of  functions 
\[\{K_I^J \in C^\infty(A_I^J)\}_{I \subset J \subset \Delta}\] 
such that
\begin{itemize}
    \item $K_I^I=\delta_e$ for any $I$. Note that by Corollary
    \ref{lem-finite}, the set $A_I^I$ is finite and contains
    the identity $e$ of $G$.
    \item For each Schwartz function $f \in \CS(G)$, there
    exist Schwartz functions $\{\omega_J \in \CS(A_J)\}_{J \subset \Delta}$ such that for any $I \subset \Delta$
    \[I(w_I^0a,f)=\sum_{I \subset J}\left(K_I^J*\omega_{J}\right)(a), \quad
    a \in A_I.\]
\end{itemize}
\end{defn}
\begin{remark}
    The functions $\omega_J$  depend on the system of Shalika germs $\{K_I^J\}$ and $f$.
\end{remark}

\begin{thm}[The Shalika germ expansion for Kloosterman integrals]\label{The Shalika germ expansion}
    \label{shalika}
    Let $G$ be a  split reductive group over $F$. There exists a system of Shalika germs for Kloosterman integrals on $G$. The systems of Shalika germs satisfy the following uniqueness property. If $\{K_I^J\}_{I \subset J}$ is a system of Shalika germs, and 
    $\{t_I^J \in \CS(A_I^J)\}_{I \subset J}$ is a family of
    Schwartz functions with $t_I^I = \delta_e$ for all $I$,
    then the functions
    \[H_I^{J}=\sum_{I\subset I_1\subset J}K_I^{I_1}*t_{I_1}^{J}\]
    form another system of Shalika germs. Moreover, all systems of Shalika germs are obtained in this way from a given system.
\end{thm}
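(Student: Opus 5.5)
Following Jacquet--Ye and Jacquet for $\GL_n$, I will construct the germs by induction on $|J\setminus I|$ using a stratification of $G_\rel$. For each $J\subset\Delta$ let $\Omega_J=\bigsqcup_{I\supset J}\ov{N}w_I^0A_IN$. By Lemma \ref{relevant elements} and the closure relations of Bruhat cells, $\Omega_J$ is open in $G_\rel$ and $\ov{N}w_J^0A_JN$ is closed in $\Omega_J$; the orbits $\ov{N}w_I^0A_IN$ are the relevant parts of the double cosets. The key local input is the following. For $f\in\CS(G)$ supported in a small neighbourhood of a point of $w_J^0A_J$, the Kloosterman integral on the smaller strata, $I(w_I^0a,f)$ for $I\subset J$, factors through the Levi $M_J$. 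Concretely, write $g=\ov{u}\,m\,u'$ with $\ov u\in N_J^-$, $m\in M_J$, $u'\in N_J$. The $N_J\times N_J$ part of the integral is then unfolded: it is free, because $w^0w_J^0$ normalizes the relevant pieces. Condition (2) of Definition \ref{relevant with respect to psi_0} makes the remaining character on $N_{M_J}$ agree with $\psi_0|_{N_{M_J}}$. The outcome is that $I(w_I^0a,f)$ equals a Kloosterman integral on $M_J$ of a Schwartz function $f_J\in\CS(M_J)$, and this $f_J$ is supported near $w_J^0A_J$.

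Next I use $M_J=A_J\cdot M_J^\der$ (Proposition \ref{prop-transverse}). Near $w_J^0A_J$ the function $f_J$ is a finite sum of products $\phi(c)\,1_{w_J^0K_J}(h)$, with $c\in A_J$, $h\in M_J^\der$, and $K_J$ a small (admissible) open compact subgroup. The Kloosterman integral on $M_J$ is $A_J$-equivariant. Together with the finiteness in Corollary \ref{lem-finite}, this gives $I(w_I^0a,f)=(K_I^J*\omega)(a)$ for $a\in A_I$, where I define
\[K_I^J(b)=I(w_I^0b,1_{w_J^0K_J}),\qquad b\in A_I^J,\]
and $\omega\in\CS(A_J)$. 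For $I=J$, admissibility forces $K_J^J$ to be $\delta_e$ up to normalization. Smoothness of $K_I^J$ on $A_I^J$ holds because $I(\cdot,f)$ is smooth on each stratum.

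The global expansion is obtained by descending induction on the strata. Given $f$, set $\omega_\Delta(a)=I(w^0a,f)$. This lies in $\CS(A_\Delta)$, since the orbit map on the closed stratum $\ov Nw^0A_\Delta N$ is free and $f$ has compact support. Subtract $\sum K_I^\Delta*\omega_\Delta$ from the Kloosterman integrals; this is realized as $I(\cdot,f-f')$, where $f'$ is built from $\omega_\Delta$ near $w^0_\Delta A_\Delta$ as in the local step, using a partition of unity on $w^0A_\Delta$. The difference then has Kloosterman integral vanishing on the stratum of $\Delta$. By an orbital-integral version of the Bernstein localization principle, it can be replaced by a function supported away from that stratum, i.e. on the open complement. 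I then repeat the argument for the next strata $J$ in order of decreasing $|J|$, producing $\omega_J$ at each stage.

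For uniqueness, first check directly that $H_I^J=\sum_{I\subset I_1\subset J}K_I^{I_1}*t_{I_1}^J$ is again a system of germs: given $\omega_J$ for $K$, define $\omega'_{I_1}$ by solving the triangular system $\omega_{I_1}=\sum_{J\supset I_1}t_{I_1}^J*\omega'_J$. This system is invertible because $t^I_I=\delta_e$, and the convolutions are associative via Proposition \ref{prop-transverse}. Conversely, suppose $H$ is another system. Apply both expansions to the test functions $1_{w_J^0K_J}\cdot\phi$ and compare them stratum by stratum, starting from the top, to extract Schwartz $t_{I_1}^J$. Here I expect the difference of germs to be compactly supported because it lies in the span of integrals of Schwartz functions. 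The main obstacle I foresee is the localization step: showing that a function whose Kloosterman integrals vanish on a closed stratum can be modified to be supported off that stratum without changing its integrals. My first attempt will be to prove directly that the map $f\mapsto I(\cdot,f)|_{\text{stratum}}$ is onto $\CS(A_J)$ with kernel spanned by functions supported on the open complement modulo $(N\times N,\psi_0)$-coinvariants, using Bernstein--Zelevinsky exact sequences for $\ell$-sheaves.
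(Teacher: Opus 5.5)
Your architecture matches the paper's. You transfer to $M_J$ through $\Omega_J\cong N_J^-\times M_J\times N_J$ (Proposition \ref{transform orbital integrals}), take germs $K_I^J=I(w_I^0\,\cdot\,,1_{w_J^0K_J})$ with $K_J$ admissible (Proposition \ref{prop-germ-adm}), subtract $f'(g)=\sum_{g=hc}1_{w_J^0K_J}(h)\,\omega_J(c)$, localize, and run a descending induction (Proposition \ref{prop-decomp}). As written, however, two steps are wrong.

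First, your second paragraph claims that every $f_J$ supported near $w_J^0A_J$ is a finite sum of $\phi(c)1_{w_J^0K_J}(h)$, and hence satisfies $I(w_I^0a,f)=(K_I^J*\omega)(a)$ for all $I\subset J$. This is false. Near $w_J^0A_J$ a Schwartz function involves many small cosets $xK'$, not only $w_J^0K_J$. Already for $\GL_2$, a function supported near $w^0$ but off the closed cell $\ov Nw^0AN$ has $\omega=0$, while its integrals on the open cell do not vanish. The identity holds only for the constructed $f'$; the remainder $f-f'$ is exactly what produces the terms $K_I^{J'}*\omega_{J'}$ with $J'\subsetneq J$. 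Your third paragraph in fact proceeds this way, so restrict the local statement to $f'$.

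Second, your stratification is upside down. We have $\overline{\ov NwAN}=\bigsqcup_{w'\geq w}\ov Nw'AN$, and $w_I\geq w_J$ iff $I\supset J$. So $\bigsqcup_{I\supset J}\ov Nw_I^0A_IN$ is the relevant part of $\overline{\ov Nw_JAN}$, and the $J$-stratum is open in it, not closed. The open set you need is the paper's $\Omega_J=\bigsqcup_{w\in W_J}\ov NwAN$, whose relevant part is $\bigsqcup_{I\subset J}\ov Nw_I^0A_IN$.

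The ingredient you leave implicit is the non-relevant part of $G$. $G_\rel$ is not locally closed in $G$: for $\GL_2$, points of $w^0Z$ are limits of non-relevant $w^0a$ with $a\notin Z$. So the Bernstein--Zelevinsky localization cannot be run on strata of $G_\rel$. It must be run over the Bruhat cells of $G$, in order of decreasing length. It must also include the vanishing of the $\psi_0$-twisted coinvariants on $\ov Nw_I^0(A\setminus A_I)N$ and on every cell $\ov NwAN$ with $w\notin\{w_I\}$; these orbits have unipotent stabilizers on which $\psi_0$ is nontrivial. Without this you cannot reach the next stage. $\omega_J=I(w_J^0\,\cdot\,,f)$ is guaranteed to lie in $\CS(A_J)$ only after all cells $\ov NwAN$ with $w>w_J$, relevant or not, have been removed, since only then is the $J$-stratum closed in the support. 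This is Lemma \ref{decomposition of functions}(1),(2), which the paper takes from Jacquet. With it, your organization entirely on $G$ is a legitimate variant: it avoids transferring the remainder to $M_{I'}$ and lifting back with the cut-offs $\phi\circ\Delta_i$.

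Finally, the paper does not prove uniqueness; it cites \cite{JY96}. Your first half is fine. The converse, however, should not rest on compact support of germ differences, which fails in general: $H_I^J-K_I^J$ contains terms $K_I^{I_1}*t_{I_1}^J$, and $K_I^{I_1}$ is in general not compactly supported on $A_I^{I_1}(r)$. Argue instead as follows. For a fixed system $H$, the coefficients $\omega_{I_1}$ of $f$ are forced by descending recursion, because $H_I^I=\delta_e$. Apply this to a lift $\tilde f\in\CS(\Omega_J)$ of $\Vol(N_{M_J}\cap K_J)^{-1}1_{w_J^0K_J}$ (Lemma \ref{compact supported functions on M_I}). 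This gives $\omega_{I_1}=0$ for $I_1\not\subset J$, and for the system $K$ of Proposition \ref{prop-germ-adm} it gives $K_I^J=\sum_{I\subset I_1\subset J}H_I^{I_1}*t_{I_1}^J$. Here $t_{I_1}^J=\omega_{I_1}|_{A_{I_1}^J}$ is Schwartz and $t_J^J=\delta_e$. Then invert this unitriangular system, and compose to start from an arbitrary given system.
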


We shall give more properties of the germ expansion in
Section \ref{sec-prop-sha}.

\subsection{The proof}

Here, we give a proof of Theorem \ref{shalika}. We have fixed a
system of representatives $\{w_I^0 \in N_G(A)\}_I$ of
$\{w_I \in W\}_I$ relevant with respect to $\psi_0$.

For any $J\subset\Delta$, consider the following action of $N_{M_J}\times N_{M_J}$ on $M_J$
\[g \cdot (n_1,n_2) = \ov{n_1}^{-1} g n_2, \quad \ov{n_1} = (w_J^0)^{-1}n_1w_J^0 \in N_{M_J}^-.\]
Denote by $(N_{M_J}\times N_{M_J})_g$ the stabilizer of $g$ with respect to the action of $N_{M_J}\times N_{M_J}$.

An element $g \in M_J$ is called {\em relevant} with respect to $w_J^0$ and $\psi_0$ if $\psi_0\left(n_1^{-1}n_2\right) = 1$ for any $(n_1,n_2) \in(N_{M_J}\times N_{M_J})_g$.
Denote by  $(M_J)_{_\rel}^{w_J^0}$ the set of relevant elements in $M_J$. In particular, $w_J^0$ is also a relevant element.

The following lemma generalizes Lemma \ref{relevant elements}. 

\begin{lem}\label{relevant elements for Levi}
    A system of representatives $\{w_I^0 \in N_G(A)\}_{I \subset \Delta}$ of $\{w_I\}_{I \subset \Delta}$ relevant with respect to $\psi_0$ has the following properties:
    \begin{enumerate}
        \item For any subset $I\subset J$ and $n\in N_{M_I}$, we have
              \[\psi_0(\Ad(w_J^0w_I^0)n)=\psi_0(n).\]
        \item For any subset $J\subset\Delta$, we have
        \[(M_J)_{\rel}^{w_J^0}=\bigsqcup_{I \subset J}N_{M_J}^- w_I^0 A_I N_{M_J}.\]
    \end{enumerate}
    \begin{proof}
    (1) As $(w^0)^2 \in Z$, we have
    \[\psi_0(\Ad(w_J^0w_I^0)n)=\psi_0(\Ad(w_J^0w^0)\Ad(w^0w_I^0)n).\]
    Denote by $n_1=\Ad(w^0w_I^0)n$. Since $\Ad(w^0w_I^0)N_{M_I}\subset\Ad(w^0w_J^0)N_{M_J}$, there exists $n_2\in N_{M_J}$ such that 
    \[n_1=\Ad(w^0w_J^0)n_2=\Ad(w^0(w_J^0)^{-1})n_2.\]
    This implies that
    \[\Ad(w_J^0w^0)n_1=n_2.\]
    Therefore, as $\{w_I^0\}$ are relevant to $\psi_0$, we have
    \[\psi_0(\Ad(w_J^0w_I^0)n)=\psi_0(\Ad(w_J^0w^0)n_1)=\psi_0(n_2) =\psi_0(\Ad(w^0w_J^0)n_2) =\psi_0(n_1)
        =\psi_0(\Ad(w^0w_I^0)n)
        =\psi_0(n).\]

    (2) Since $(M_J)_{\rel}^{w_J^0}$ is left $N_{M_J}^-$-invariant and right $N_{M_J}$-invariant, for $g\in (M_J)_{\rel}^{w_J^0}$, we may assume $g\in wA$ with $w\in W_J$. We prove that $w=w_I$ for some $I \subset J$. If $w$ is not of the above form, then by \cite[Lemma 89]{Ste68} there exists $\alpha\in\Phi_J^+\backslash J$ such that $\beta=(w_Jw)^{-1}\cdot\alpha\in J$. Then $(N_{M_J}\times N_{M_J})_g$ would contain $\{(x_\alpha(t),x_\beta(ct)):t\in F\}$ for some $t\in F^\times$, in contradiction to the condition $g\in(G)_{\rel}^{w_J^0}$.
 
    Next we show that $g\in w_I^0A_I$. Fix $a\in A$, and for any $n_2\in N_{M_I}$, set
    \[n_1=\Ad(w_J^0w_I^0)\Ad(a)n_2\in N_{M_J}.\]
    Then
    \[(w_J^0)^{-1}n_1w_J^0w_I^0an_2^{-1}=w_I^0a, \quad
    (n_1,n_2) \in (N_{M_J}^2)_{w_I^0a}.\]
    If $w_I^0a$ is relevant, by (1) we have
    \[\psi_0(n_2)=\psi_0(n_1)=\psi_0\left(\Ad(w_J^0w_I^0)\Ad(a)n_2\right)=\psi_0(\Ad(a)n_2).\] 
    Since $n_2\in N_{M_I}$ is arbitrary, we must have $a\in A_I$.

    Now suppose $a\in A_I$ and $(n_1,n_2)\in(N_{M_J}\times N_{M_J})_{w_I^0a}$, then
    \[(w_J^0)^{-1}n_1^{-1}w_J^0w_I^0an_2=w_I^0a.\]
    This implies $an_2a^{-1}=(w_J^0w_I^0)^{-1}n_1(w_J^0w_I^0)\in N_{M_J}\cap(w_J^0w_I^0)^{-1}N_{M_J}(w_J^0w_I^0)$. For any $w \in W(M_J)$, 
    \[N_{M_J} \cap w^{-1}N_{M_J} w = \prod_{\alpha \in \Phi_J^+ \cap w^{-1}(\Phi_J^+)} N_{M_J,\alpha}.\]
    As $\Phi_I^+ = \Phi_J^+ \cap (w_J^0w_I^0)^{-1}(\Phi_J^+)$, we have
    \[N_{M_J}\cap(w_J^0w_I^0)^{-1}N_{M_J}(w_J^0w_I^0) = N_{M_I}.\]
    Since $a\in A_I$, we have $n_2\in N_{M_I}$, $an_2a^{-1} = n_2$
    and $n_1=\Ad(w_J^0w_I^0)n_2$.
    Therefore, by (1)
    \[\psi_0(n_1)=\psi_0(\Ad(w_J^0w_I^0)n_2)=\psi_0(n_2).\]
\end{proof}
\end{lem}

Consider the Kloosterman integral for Levi subgroups of $G$. For $J\subset\Delta$, the Kloosterman integrals for $M_J$ with respect to $w_J^0$ and $\psi_0$ are defined as follows
\[I(g,f)=\int_{(N_{M_J}\times N_{M_J})_g\backslash N_{M_J}\times N_{M_J}}f\left(g \cdot (n_1,n_2)\right)\psi_0^{-1}\left(n_1^{-1}n_2\right)dn_1dn_2, \quad g \in (M_J)_\rel^{w_J^0},\quad f \in \CS(M_J).\]

We transfer Kloosterman integrals on $G$ for functions supported
on the following open subset $\Omega_J$ of $G$ to Kloosterman integrals on $M_J$.

For each subset $J\subset \Delta$, consider the cell
\[\Omega_J:= \bigsqcup_{w \in W_J} \ov{N} wA N.\]

\begin{lem}\label{decomposition of omega_w}
    The set $\Omega_{J}$ is open in $G$ and the map
    \[N_J^-\times M_J\times N_J \lra \Omega_J, \quad (n_1,m,n_2)\mapsto n_1mn_2\]
    is an isomorphism of analytic varieties over $F$.
\end{lem}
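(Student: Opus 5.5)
The plan is to rewrite $\Omega_J$ in terms of the Bruhat decomposition of $M_J$ and then compare it with the big cell $N_J^- P_J$.

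\textbf{Step 1 (openness).} First I will show that $\Omega_J = N_J^- P_J$, where $P_J = M_J N_J$. Recall $P_J = \bigsqcup_{w\in W_J} BwB$. The Bruhat decomposition of $M_J$ with respect to $B_{M_J}=AN_{M_J}$ reads
\[M_J=\bigsqcup_{w\in W_J} N_{M_J}^- wAN_{M_J}.\]
Multiplying on the left by $N_J^-$ and on the right by $N_J$, and using $N_J^-N_{M_J}^-=\ov{N}$ and $N_{M_J}N_J = N$ (both $N_J$ and $N_J^-$ are normalized by $M_J$), we get
\[N_J^- M_J N_J = \bigsqcup_{w \in W_J}\ov{N}wAN = \Omega_J.\]
Hence $\Omega_J = N_J^- P_J$. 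This is the standard open "big cell" attached to the opposite parabolic $P_J^- = M_J N_J^-$. Its openness follows from the fact that the multiplication map $N_J^-\times P_J\to G$ has bijective differential at the identity, since $\mathfrak g = \mathfrak n_J^-\oplus\mathfrak m_J\oplus \mathfrak n_J$. Translating by elements of $N_J^-$ on the left and $P_J$ on the right shows the image is open and the map is étale everywhere. Equivalently, $\Omega_J$ is the $\ov N\times B$-orbit-union of the cells for $w\in W_J$. Since $w\le w'$ with $w\in W_J$ in the Bruhat order on $\ov N$-cells forces $w'\in W_J$, the set $\Omega_J$ is a union of cells closed under generalization, so it is open. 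I will give the differential argument, as it is cleaner.

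\textbf{Step 2 (injectivity).} Suppose $n_1mn_2=n_1'm'n_2'$. Then $(n_1')^{-1}n_1 = m'n_2'n_2^{-1}m^{-1}\in N_J^-\cap P_J$. This intersection is trivial: it is a unipotent subgroup normalized by $A$ whose Lie algebra is $\mathfrak n_J^-\cap\mathfrak p_J=0$. Alternatively, use uniqueness in the Bruhat decomposition of $G$ together with the fact that $P_J\cap\ov N = N_{M_J}^-$, which meets $N_J^-$ trivially. So $n_1=n_1'$. Then $mn_2=m'n_2'$, and $M_J\cap N_J=1$ gives $m=m'$ and $n_2=n_2'$.

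\textbf{Step 3 (analytic isomorphism).} The map is a bijective morphism of algebraic varieties (in fact $N_J^-\times P_J\cong N_J^-P_J$ is an isomorphism of schemes by the standard big-cell theorem, e.g.\ Borel's or Milne's results on $\Omega=U^-P$). Combining Step 1 with Step 2, it is a bijective local analytic isomorphism on $F$-points, hence an isomorphism of analytic manifolds. The inverse is analytic because the inverse function theorem applies at every point.

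The main obstacle is justifying openness and the everywhere-étaleness cleanly. I would cite the big-cell theorem for the pair $(P_J^-,P_J)$, namely that $R_u(P_J^-)\times P_J\to G$ is an open immersion, which gives everything at once. In that case only the identification $\Omega_J=N_J^-M_JN_J$ from Step 1 needs proof.
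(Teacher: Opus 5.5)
Your proposal is correct and follows the paper's proof. The paper likewise identifies $\Omega_J=N_J^-M_JN_J$ using the Bruhat decomposition of $M_J$ together with $N_J^-N_{M_J}^-=\ov{N}$ and $N_{M_J}N_J=N$, and gets injectivity from $N_J^-\cap P_J=\{1\}$. The paper does not argue openness or the analytic-isomorphism claim explicitly, so your differential argument (bijectivity at the identity from $\mathfrak g=\mathfrak n_J^-\oplus\mathfrak m_J\oplus\mathfrak n_J$, transported by left and right translations), or equivalently the big-cell theorem, fills in what the paper leaves implicit rather than changing the route.
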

    \begin{proof}
        We have the following Bruhat decomposition for the standard Levi subgroup $M_J$ of the standard parabolic subgroup $P_J$
        \[M_J=\bigsqcup_{w\in W_J}N_{M_J}^-wAN_{M_J}.\]
         The set
        \[N_J^-M_JN_J=\bigsqcup_{w\in W_J}N_J^-N_{M_J}^-wAN_{M_J}N_J=\bigsqcup_{w\in W_J}\overline{N}wAN = \Omega_J.\]

        If $n_1 m n_2 = n_1' m' n_2'$ with $n_1, n_1' \in N_J^-$, $m,m' \in M_J$ and $n_2,n_2' \in N_J$, then
        \[(n_1')^{-1}n_1 = m'n_2'(n_2)^{-1}m^{-1} \in N_J^-\cap P_J.\]
        As $N_J^-\cap P_J = \{1\}$, this gives $n_1 = n_1'$ and also
        $m = m'$, $n_2 = n_2'$.
    \end{proof}
The following lemma is easy to see.
\begin{lem}\label{compact supported functions on M_I}
    Let $f\in\CS(\Omega_J)$. We set
    \[h(m)=h_f(m)=\int_{N_{J}^-\times N_{J}}f(n_1^{-1}mn_2)\psi_0^{-1}(\ov{n_1}^{-1}n_2)dn_1dn_2,\quad m\in M_J.\]
    Here, $\ov{n} = (w^0)^{-1}nw^0$.
    Then $h\in\CS(M_J)$ and every $h\in\CS(M_J)$ can be obtained in this way.
\end{lem}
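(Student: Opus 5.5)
Using Lemma \ref{decomposition of omega_w}, I will identify $\Omega_J$ with $N_J^-\times M_J\times N_J$ as $F$-analytic manifolds. Then $\CS(\Omega_J)\cong \CS(N_J^-)\otimes\CS(M_J)\otimes\CS(N_J)$, with algebraic tensor products of locally constant compactly supported functions, and Haar measure on $G$ restricted to $\Omega_J$ corresponds to a product of Haar measures up to a smooth density. The map $f\mapsto h_f$ is then a partial integration over the unipotent factors, with the character $\psi_0^{-1}(\ov{n_1}^{-1}n_2)$ inserted.

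First I check that $h_f\in\CS(M_J)$. Take $f\in\CS(\Omega_J)$. Its support is compact in $\Omega_J$, so under the isomorphism it lies in $C_1\times C_M\times C_2$ with $C_1\subset N_J^-$, $C_M\subset M_J$ and $C_2\subset N_J$ compact. Write $f(n_1^{-1}mn_2)=F(n_1^{-1},m,n_2)$, where $F$ is locally constant with compact support on the product. Then $h_f(m)$ is an integral of $F$ over the compact set $C_1^{-1}\times C_2$ against a continuous character, so it is finite and vanishes for $m\notin C_M$. For local constancy, note that a compactly supported locally constant $F$ on a product of totally disconnected spaces is invariant under some open compact $U_M\subset M_J$ acting in the $m$-variable, uniformly in $(n_1,n_2)$. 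This holds because $F$ is a finite linear combination of products of characteristic functions of compact open sets. Hence $h_f$ is right $U_M$-invariant, and so $h_f\in\CS(M_J)$.

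Next, surjectivity. Given $h\in\CS(M_J)$, I choose $\phi_1\in\CS(N_J^-)$ and $\phi_2\in\CS(N_J)$ supported on small compact open subgroups $U_1\subset N_J^-$ and $U_2\subset N_J$. I need the character $n_1,n_2\mapsto\psi_0^{-1}(\ov{n_1}^{-1}n_2)$ to be trivial on $U_1\times U_2$. This is possible because $\ov{n_1}=(w^0)^{-1}n_1w^0\in N$ and $\psi_0$ is continuous, so it is trivial on some neighbourhood of $1$ in $N$; shrinking $U_1$ and $U_2$ achieves this. Then I set $f(n_1 m n_2)=1_{U_1}(n_1^{-1})h(m)1_{U_2}(n_2)$. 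This lies in $\CS(\Omega_J)$ by Lemma \ref{decomposition of omega_w}, and $h_f=\Vol(U_1)\Vol(U_2)h$. Rescaling $f$ then gives $h$.

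The only point needing care is the measure normalisation and the order of variables in $n_1^{-1}mn_2$. Both are harmless, since the statement only concerns Haar measures on $N_J^-$ and $N_J$ and the identification is a homeomorphism, so I expect no real obstacle.
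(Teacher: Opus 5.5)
Your proof is correct. The paper gives no proof of this lemma and only calls it ``easy to see''. Your argument is exactly the routine check being left to the reader: transport through the isomorphism $N_J^-\times M_J\times N_J\cong\Omega_J$ of Lemma~\ref{decomposition of omega_w}, support and local constancy from $\CS(\Omega_J)\cong\CS(N_J^-)\otimes\CS(M_J)\otimes\CS(N_J)$, and surjectivity via a rescaled $f=1_{U_1}\otimes h\otimes 1_{U_2}$. Here $U_1,U_2$ are compact open subgroups on which $n_1\mapsto\psi_0(\ov{n_1})$ and $n_2\mapsto\psi_0(n_2)$ are trivial.

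As a small bonus, your explicit preimage has support in $N_J^-\,\mathrm{supp}(h)\,N_J$. So it lies in $\CS(\Omega_I)$ whenever $h\in\CS(\Omega_I\cap M_J)$ with $I\subset J$. This directly gives the support refinement that the paper later obtains using the functions $\Delta_i$ in the proof of Theorem~\ref{shalika}.
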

    
\begin{prop}\label{transform orbital integrals}
    Suppose $f_J\in\CS(\Omega_{J})$, then we have
    \[I(w_I^0a,f_J)=I(w_I^0a,h_{f_J}), \quad I \subset J, \quad  a \in A.\]
    Conversely, suppose $h\in\CS(M_J)$, then there exists $f_h\in\CS(\Omega_J)$ such that
    \[I(w_I^0a,h)=I(w_I^0a,f_h), \quad I \subset J, \quad a \in A.\]
    \begin{proof}
        Consider the integral
        \begin{align*}
        I(w_{I}^0a,f_{J})=\int_{(\overline{N}\times N)_{w_I^0a}\backslash(\overline{N}\times N)}f_J(u_1^{-1}w_I^0au_2)\psi_0^{-1}(\overline{u}_1^{-1}u_2)du_1du_2.
        \end{align*}
        
        We claim that
        \[ (\bar{N} \times N)_{w_I^0a} = (N_{M_J}^- \times N_{M_J})_{w_I^0 a}.\]
        In fact, if $(u_1'n_1)^{-1} w_I^0a (u_2'n_2) = w_I^0a$
        for $u_1' \in N_{M_J}^-$, $u_2' \in N_{M_J}$, $n_1 \in N_J^-$ and $n_2 \in N_J$, then 
        \[ n_1^{-1} \left((u_1')^{-1} w_I^0 a u_2'\right) n_2
        = w_I^0a \in \Omega_J \cong N_J^- \times M_J \times N_J.\]
        Therefore,
        \[n_1 = n_2 = 1, \quad (u_1')^{-1} w_I^0 a u_2' = w_I^0a.\]
        The claim now holds. It implies that
        \[\left[(N_{M_J}^- \times N_{M_J})_{w_I^0a} \bs
        (N_{M_J}^- \times N_{M_J})\right] \times (N_J^- \times N_J) 
        \stackrel{\sim}{\lra} (\ov{N} \times N)_{w_I^0 a}
        \bs \ov{N} \times N.\]
        
        We have
        \begin{align*}
        I(w_{I}^0a,f_{J})&=\int_{(\overline{N}\times N)_{w_I^0a}\backslash(\overline{N}\times N)}f_J(u_1^{-1}w_I^0au_2)\psi_0^{-1}(\overline{u}_1^{-1}u_2)du_1du_2\\
        &=\int_{(N_{M_{J}}^-\times N_{M_{J}})_{w_I^0a}\backslash (N_{M_{J}}^-\times N_{M_{J}})}\left(\int_{N_{J}^-\times N_{J}}f_J(n_1^{-1}u_1^{-1}w_I^0au_2n_2)\psi_0^{-1}(\overline{u_1n_1}^{-1}u_2n_2)dn_1dn_2\right)du_1du_2\\
        &=\int_{(N_{M_{J}}^-\times N_{M_{J}})_{w_I^0a}\backslash (N_{M_{J}}^-\times N_{M_{J}})}h_{f_J}(u_1^{-1}w_I^0au_2)\psi_0^{-1}((w^0)^{-1}u_1^{-1}w^0u_2)du_1du_2,
        \end{align*}
        where the function $h_{f_J} \in \CS(M_J)$ is  the one given in Lemma \ref{compact supported functions on M_I}.
       
        Since $(w^0)^2\in Z$, for $u_1\in N_{M_J}^-$ we have \[\psi_0(\Ad((w^0)^{-1})u_1)=\psi_0(\Ad(w^0)u_1)=\psi_0(\Ad(w^0w_J^0)\Ad((w_J^0)^{-1})u_1)=\psi_0(\Ad((w_J^0)^{-1})u_1)\] as $w_J^0$ is relevant. Hence, we have
        \begin{align*}
            I(w_{I}^0a,f_{J})&=\int_{(N_{M_{J}}^-\times N_{M_{J}})_{w_I^0a}\backslash (N_{M_{J}}^-\times N_{M_{J}})}h_{f_J}(u_1^{-1}w_I^0au_2)\psi_0^{-1}((w_J^0)^{-1}u_1^{-1}w_J^0u_2)du_1du_2\\
            &=\int_{(N_{M_{J}}\times N_{M_{J}})_{w_I^0a}\backslash (N_{M_{J}}\times N_{M_{J}})}h_{f_J}((w_J^0)^{-1}
            u_1^{-1} w_J^0w_I^0au_2)\psi_0^{-1}(u_1^{-1}u_2)du_1du_2 \\
            &= I(w_I^0a,h_{f_J}).
        \end{align*}

        The second assertion comes from the second part of Lemma \ref{compact supported functions on M_I}.
    \end{proof}
\end{prop}
We expect that only relevant parts of Bruhat cells contribute to the orbital integrals. All other cells should make no contribution. The following lemmas analyze the contributions from such non-relevant cells.
\begin{lem}\label{decomposition of functions}\
    \begin{enumerate}
        \item Denote by $\Omega$ the complement of the closed set $N_{M_I}^-w_I^0AN_{M_I}=w_I^0AN_{M_I}$ in $M_I$. Let $f \in \CS(M_I)$ such that the orbital integrals $I(w_I^0a,f)$ vanish for all $a\in A_I$.  Then, there is $f_0\in \CS(\Omega)$ such that $I(g,f)=I(g,f_0)$.
        \item Suppose that $\Omega_0\subset \Omega$ are open sets of $M_I$ which are invariant under the action of $N_{M_I}\times N_{M_I}$ and $A$. Suppose further the difference $\Omega-\Omega_0$ contains no set of the form $N_{M_I}^-w_J^0AN_{M_I}$ with $J\subset I$. Then for any $f\in \CS(\Omega)$, there is $f_0\in \CS(\Omega_0)$ such that $I(g,f)=I(g,f_0)$.
    \end{enumerate}
    \begin{proof}
        The proof is the same as \cite[Lemma 2.2, 2.3, 2.4]{Jac16}. 
    \end{proof}
\end{lem}

Next, we prove a proposition on the decomposition of functions. The corresponding result for $\GL_n$ is contained in the proof of \cite[Proposition 2.1]{Jac16}.
\begin{prop}\label{prop-decomp}
    Fix a subset $J\subset\Delta$. There exist functions $K_{I}^{J}\in C^\infty(A_{I}^{J}),I\subset J$ satisfying
    the following condition. For any $f\in\CS(M_J)$, there exists a function $\omega_J\in \CS(A_J)$ (depends on $f$) such that
    \[f=\sum_{I':|J\backslash I'|=1}f_{I'}+f_2+f_3,\]
    where 
    \begin{itemize}
    \item for each $I' \subset J$ with $|J \setminus I'| = 1$, $f_{I'}\in \CS(\Omega_{I'}\cap M_J)$;
    \item $f_2\in\CS(M_J)$ satisfies
    $I(w_I^0a,f_2)=(K_I^{J}*\omega_{J})(a)$ 
    for any $I\subset J$ and $a\in A_I$;
    \item $f_3\in\CS(M_J)$ satisfies $I(w_I^0a,f_3)=0$ for all $I\subset J$ and $a\in A_I$.
    \end{itemize}
    \begin{proof}
        First we have $w_J^0A_J\cap M_J^\der=w_J^0A_{J}\cap w_J^0M_J^{\der}=w_J^0(A_J\cap M_J^\der)=w_J^0A_J^J$ is a finite set. The subsets $w_J^0aN_{M_J}$ with $a\in A_{J}^{J}$ are closed and disjoint. Note that $(N_{M_J}\times N_{M_J})_{w_J^0a}\backslash(N_{M_J}\times N_{M_J})\simeq N_{M_J}$, so we have
        \[I(w_J^0a,f)=\int_{N_{M_J}}f(w_J^0au)\psi_0^{-1}(u)du,\]
        for $a\in A_J,f\in\CS(M_J)$. Thus, there exists $f_0\in\CS(M_J)$ such that for $a\in A_{J}^{J}$,
        \[I(w_J^0a,f_0)=\delta_e(a).\]
        For $I\subset J$, we define a function $K_{I}^{J}$ on $A_{I}^{J}$ by
        \[K_{I}^{J}(a)=I(w_I^0a,f_0).\]
        In particular, we have $K_{J}^{J}=\delta_e$.

        Let $f\in\CS(M_J)$, we define a function $\omega_{J}^f$ on $A_{J}$ by the formula
        \[\omega_{J}^f(a)=I(w_J^0a,f).\]
        Since the set $N_{M_J}^-w_J^0A_{J}N_{M_J}$ is closed in $M_J$, the function $\omega_{J}$ is indeed in $\CS(A_{J})$. 
        
        We define a function
        \[f_2(g)=\sum f_0(g_1)I(w_J^0c,f)=\sum f_0(g_1)\omega_J^f(c)\]
        for $g\in M_J$, where the sum is over all pairs $(g_1,c),g_1\in M_J^\der,c\in A_{J}$ such that $g=g_1c$. For a given $g$, the sum is finite. If such decomposition does not exist, we define $f_2(g)=0$. 

        We now study the orbital integral $I(w_I^0a,f_2)$ for $I\subset J$ and $a\in A_I$, the integral
        \[I(w_I^0a,f_2)=\int f_2(\overline{u}_1^{-1}w_I^0au_2)\psi_0^{-1}(u_1^{-1}u_2)du_1du_2.\]
        We must consider all possible decompositions
        \[\overline{u}_1^{-1}w_I^0au_2=g_1c,\]
        with $g_1\in M_J^\der$ and $c\in A_{J}$. Since $c$ is in the center of $M_J$, we can write
        \[\overline{u}_1^{-1}w_I^0ac^{-1}u_2=g_1\]
        and thus
        \[g_1=\overline{u}_1^{-1}w_I^0bu_2,\]
        where $b\in A_{I}$ verifies $a=bc$. Since $g_1\in M_J^\der$, we have $b_1\in M_J^\der$, that is, $b\in A_{I}^{J}$. Hence,
        \[f_2(\overline{u}_1^{-1}wau_2)=\sum f_0(\overline{u}_1^{-1}wbu_2)I(w_J^0c,f),\]
        where the sum is over all $(b,c)$ with $b\in A_I^{J},c\in A_{J}$ and $a=bc$. Note that $(N_{M_J}\times N_{M_J})_a=(N_{M_J}\times N_{M_J})_b$ Then we find
        \[I(w_I^0a,f_2)=\sum I(w_I^0b,f_0)I(w_J^0c,f)=K_{I}^{J}*\omega_{J}^f(a)\]
        for $a\in A_I$. In particular, we have $I(w_J^0a,f_2)=I(w_J^0a,f)$ for $a \in A_J$. 
        
        Let $\Omega$ be the complement of the set $N_{M_J}^-w_J^0AN_{M_J}$ in $M_J$. By Lemma \ref{decomposition of functions} (1), there exists $f_2'\in \CS(\Omega)$ such that
        \[I(g,f-f_2)=I(g,f_2'),\]

        Now consider $I'\subset J$ with $|J\backslash I'|=1$ and suppose $J\backslash I'=\{\alpha_j\}$. Suppose $x\in N_{M_J}^-w_J^0AN_{M_J}\cap(\Omega_{I'}\cap M_J)$, then there exists some $w\in W_{I'}$ such that $x\in\ov{N}wAN$. This implies $x\in\ov{N}wAN\cap M_J$. Next we prove that
        \[\ov{N}wAN\cap M_J\subset(\ov{N}\cap M_J)wA(N\cap M_J)=N_{M_J}^-wAN_{M_J}.\]
        Suppose $x=\ov{n}wan\in\ov{N}wAN\cap M_J$, then we have $x\in N_{M_J}^-w'AN_{M_J}$ for some $w'\in W_J$ by the Bruhat decomposition of $M_J$. Note that $N_{M_J}^-w'AN_{M_J}\subset\overline{N}w'AN$ and we must have $w=w'\in W$. This implies $x\in N_{M_J}^-wAN_{M_J}$. Now $x$ belongs simultaneously to two Bruhat cells of $M_J$:
        \[x\in N_{M_J}^-w_J^0AN_{M_J}\cap N_{M_J}^-wAN_{M_J},\]
        this forces $w=w_J^0$. But $w_J^0\notin W_{I'}$. So no such $x$ exists.
        
        From above we see that $N_{M_J}^-w_J^0AN_{M_J}\cap(\Omega_{I'}\cap M_J)=\emptyset$. This implies the open set
        \[\Omega_0=\bigcup_{I'\subsetneq J,|J\backslash I'|=1}\Omega_{I'}\cap M_J\]
        is contained in $\Omega$. Thus, we may apply Lemma \ref{decomposition of functions} to the pair $(\Omega_0,\Omega)$. It follows that $I(g,f_2')=I(g,f_2'')$ for some $f_2''\in\CS(\Omega_0)$. Thus, we may take $f_2'\in \CS(\Omega_0)$. Using a partition of unity, we may write further
        \[f_2'=\sum_{I'\subsetneq J,|J\backslash I'|=1}f_{I'}.\]
        with $f_{I'}\in\CS(\Omega_{I'}\cap M_J)$. So finally we have
        \[f=\sum_{I':|J\backslash I'|=1}f_{I'}+f_2+f_3,\]
        where $f_3$ satisfies $I(w_I^0a,f_3)=0$ for all $I\subset J,a\in A_I$.
    \end{proof}
\end{prop}

Now, we give the proof for the existence of Shalika germs. The argument is similar to the case of $\GL_n$, as treated in \cite[Proposition 3.1]{Jac16}. 

Consider the Kloosterman integral with respect to the decomposition in Proposition \ref{prop-decomp}. Let $f\in\CS(G)$. For $I\subset\Delta$ and $a\in A_{I}$, we have
\[I(w_{I}^0a,f)=\sum_{I':I'\subsetneq\Delta:|\Delta\backslash I'|=1}I(w_{I}^0a,f_{I'})+(K_{I}^{\Delta}*\omega_{\Delta})(a).\]
Note that in the first sum the integral $I(w_{I}^0a,f_{I'})=0$ unless $w_I^0A_I$ intersects $\Omega_{I'}$ which happens only if $I\subset I'$. Thus above formula can be written in the form
\[I(w_{I}^0a,f)=\sum_{I':I\subset I'\subsetneq\Delta:|\Delta\backslash I'|=1}I(w_{I}^0a,f_{I'})+(K_{I}^{\Delta}*\omega_{\Delta})(a).\]
By Proposition \ref{transform orbital integrals}, there exists $h_{I'}=h_{f_{I'}}\in\CS(M_{I'})$ such that
\[I(w_{I}^0a,f_{I'})=I(w_{I}^0a,h_{I'}),\]
where the latter orbital integral is for the group $M_{I'}$. 

Moreover, we can apply  Proposition \ref{prop-decomp} for Schwartz functions on $M_{I'}$. We have a
function $\omega_{I'}\in\CS(A_{I'})$, and, for each $I\subset I'$, a function $K_{I}^{I'}$. For each $I''$ with $|I'\backslash I''|=1$, there is a function $_{I''}h_{I'}\in \CS(\Omega_{I''}\cap M_{I'})$ such that, for $I\subset I'$,
\[I(w_{I}^0a,h_{I'})=\sum_{I\subset I''\subsetneq I':|I'\backslash I''|=1}I(w_I^0a, {_{I''}}h_{I'})+(K_{I}^{I'}*\omega_{I'})(a).\]

By Lemma \ref{compact supported functions on M_I}, we can choose a function $_{I''}f_{I'}$ supported on the set $\Omega_{I'}$ such that, for any $I$ with $I\subset I''$,
\[I(w_I^0a, {_{I''}}h_{I'})=I(w_I^0a, {_{I''}}f_{I'}).\]

We can furthermore that $_{I''}f_{I'}$ is supported on the set $\Omega_{I''}$. To prove this fact, we need some $N^2$-invariant continuous functions in Lemma \ref{properties of Delta_i} analogue the functions $\Delta_i$ for the case $G = \GL_n$ in \cite{Jac16}. If $\Delta_i(w_{I''}^0)\neq 0$, there are constants $C_{\Delta_i}$ and $D_{\Delta_i}$ such that $C_{\Delta_i}\leq |\Delta_i(g)|\leq D_{\Delta_i}$ for $g$ in the support of $_{I''}h_{I'}$.  This implies that
\[I(w_I^0a, {_{I''}}h_{I'})\neq 0\implies C_{\Delta_i}\leq|\Delta_i(w_I^0a)|\leq D_{\Delta_i},\]
and thus
\[I(w_I^0a, {_{I''}}f_{I'})\neq 0\implies C_{\Delta_i}\leq|\Delta_i(w_I^0a)|\leq D_{\Delta_i}.\]

Denote by $\phi$ the characteristic function of the set $\{t\in F^\times:C_{\Delta_i}\leq|t|\leq D_{\Delta_i}\}$.
Multiplying $_{I''}f_{I'}$ by $\phi\circ\Delta_i$, we may assume $_{I''}f_{I'}$ is supported on the set defined by $C_{\Delta_i}\leq|\Delta_i(g)|\leq D_{\Delta_i}$. Repeating this construction for each $\Delta_i$ with $\Delta_i(w_{I''}^0)\neq 0$, we may assume that $_{I''}f_{I'}$ is supported on the set $\Omega_{I''}$.

We have 
\begin{align*}
    I(w_{I}^0a,f)&=\sum_{I':I\subset I'\subsetneq\Delta:|\Delta\backslash I'|=1}\sum_{I\subset I''\subsetneq I':|I'\backslash I''|=1}I(w_I^0a,{_{I''}}f_{I'})+\sum_{I':I\subset I'\subsetneq\Delta:|\Delta\backslash I'|=1}(K_{I}^{I'}*\omega_{I'})(a)+(K_{I}^{\Delta}*\omega_{\Delta})(a)\\
    &=\sum_{I\subset I''\subsetneq I':|I'\backslash I''|=1}I\left(w_I^0a,\sum_{I':I\subset I'\subsetneq\Delta:|\Delta\backslash I'|=1}{_{I''}f_{I'}}\right)+\sum_{I':I\subset I'\subsetneq\Delta:|\Delta\backslash I'|=1}(K_{I}^{I'}*\omega_{I'})(a)+(K_{I}^{\Delta}*\omega_{\Delta})(a)
\end{align*}
We set
\[f_{I''}=\sum_{I':I\subset I'\subsetneq\Delta:|\Delta\backslash I'|=1}{_{I''}f_{I'}}\in\CS(\Omega_{I''}).\]
Now we have the relation
\[I(w_{I}^0a,f)=\sum_{I\subset I''\subsetneq\Delta:|\Delta\backslash I''|=2}I(w_I^0a,f_{I''})+\sum_{I':I\subset I'\subsetneq\Delta:|\Delta\backslash I'|=1}(K_{I}^{I'}*\omega_{I'})(a)+(K_{I}^{\Delta}*\omega_{\Delta})(a).\]
Repeating this construction, we obtain the existence of germ expansions.

We shall omit the proof of the uniqueness of the system of Shalika germs as one can follow the  argument for  the $\GL_n$ case (See \cite[page 930]{JY96}). The proof of Theorem \ref{shalika} is complete.

\subsection{Shalika germs as Kloosterman integrals}\label{sec-prop-sha}

In the following, we shall express the Shalika germs as some special Kloosterman integrals. See \cite[Section 2]{JY99} for the $\GL_n$ case.

We consider the cone of dominant characters
\[X^*(A)^+=\left\{\lambda\in X^*(A)\Big| \left<\lambda,\alpha^\vee\right>\geq 0,\forall\alpha\in\Delta\right\}.\]

Let $\lambda_i\in X^*(A)^+,1 \leq i \leq m$ such that $\left<\lambda_i,\alpha_j^\vee\right>=0$ if and only if $i\neq j$.  For $I\subset J\subset\Delta$, consider
\[A_I^J(r,\lambda):=\left\{a\in A_I^J \Big| |\lambda_{i}(a)|\leq r, \forall \alpha_i\in J\backslash I\right\}.\]

\begin{remark}
    For the case $G = \GL_n$, consider the functions
    $\Delta_i$ in Remark \ref{remark-GL_n}.  In \cite{Jac16}, Jacquet considers
    \[
    \begin{aligned}
    A_I^J(r) &=\left\{a\in A_I^J\Big| \Delta_i(w_I^0a)\leq r \text{ for all $i$ with } \Delta_i(w_I^0)\neq 0,\Delta_i(w_J^0)=0\right\} \\
    &=\left\{a\in A_I^J\Big| \lambda_i(a)\leq r\cdot\Delta_i(w_I^0)^{-1} \text{ for all $i$ with } \alpha_i\in J\backslash I\right\}.
    \end{aligned}
    \]
     With the choice of $w_I^0$ as the Tits representative of 
     $w_I$, we have $\Delta_i(w_I^0)=1$ for $\Delta_i(w_I^0)\neq 0$. Hence, this definition agrees with ours.
\end{remark}

\begin{lem}
    Let $\lambda=(\lambda_1,\cdots,\lambda_m)$ and $\lambda'=(\lambda_1',\cdots,\lambda_m')$ be two such
    dominant characters. For fixed $r>0$, there exists $r_1,r_2>0$ such that
    \[A_I^J(r_1,\lambda')\subset A_I^J(r,\lambda)\subset A_I^J(r_2,\lambda').\]
    \begin{proof}
        
        Denote by $\omega_i\in\Phi_\BQ,1\leq i \leq m$ the fundamental weights of $G$, that is,  $\left<\omega_i,\alpha_j^\vee\right>=\delta_{ij},1\leq i,j\leq m$. Then every $\lambda_i$ can be written as
\[\lambda_i=k_i\omega_i+\chi_i, \quad k_i\in\BQ_{>0}, 
\quad \chi_i\in X^*(Z)_\BQ.\]

        We choose $N,N'>0$ such that
        \[N\lambda_i=Nk_i\omega_i+N\chi_i,N'\lambda_i'=N'k_i'\omega_i+N'\chi'_i,1\leq i\leq m,\]
        where $Nk_i\omega_i,N'k_i'\omega_i\in X^*(A)$ and $N\chi_i,N'\chi_i'\in X^*(Z)$.
        
        If $a\in A_I^J(r_1,\lambda')$, then $|\lambda_i'(a)|=|N'\lambda_i'(a)|^{1/N'}=|(N'k_i'\omega_i)(a)|^{1/N'}\leq r_1$ for $\alpha_i\in J\backslash I$. We have
        \[|\lambda_i(a)|=|(N\lambda_i)(a)|^{\frac{1}{N}}=|(Nk_i\omega_i)(a)|^{\frac{1}{N}}=|(N'k_i'\omega_i)(a)|^{\frac{k_i}{N'k_i'}}\leq r_1^{\frac{k_i}{k_i'}}.\]
        If we take $r_1$ such that $r_1^{k_i/k_i'}\leq r$, then we have $A_I^J(r_1,\lambda')\subset A_I^J(r,\lambda)$.

        For $A_I^J(r,\lambda)\subset A_I^J(r_2,\lambda')$, the proof is similar.
    \end{proof}
\end{lem}

For simplicity, we use $A_I^J(r)$ to denote $A_I^J(r,\lambda)$ if $\lambda$ is fixed.

\begin{prop}\label{germ function as Kloosterman sum}
    There exists a system of Shalika germs $\{K_I^J\}_{I \subset J}$ such that
    \[K_I^J(a)=I(w_I^0a,f_J),\]
    where $f_J$ is any Schwartz function on $M_J$ such that $I(w_J^0a,f_J)=\delta_e(a)$ for any $a \in A_J^J$. In particular,
\begin{enumerate}
    \item the function $K_I^J$ is supported on $A_I^J(r)$ for some $r>0$.
    \item for any $J' \subset \Delta$,  the family of functions
    $\{K_I^J\}_{I \subset J \subset J'}$ gives a system of Shalika germs
    for the group $M_{J'}$.
\end{enumerate}
\end{prop}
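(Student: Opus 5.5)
The plan is to reread the existence proof of Theorem \ref{shalika} and note that the germs it produces are exactly of the stated form; the uniqueness part of Theorem \ref{shalika} then shows that the choice of $f_J$ does not matter. In Proposition \ref{prop-decomp}, for each $J$ we chose $f_0 \in \CS(M_J)$ with $I(w_J^0a,f_0)=\delta_e(a)$ on $A_J^J$ and set $K_I^J(a)=I(w_I^0a,f_0)$. The recursion in the proof of Theorem \ref{shalika} only uses these $K_I^{J}$, applied to the Levi subgroups $M_{I'}$, and shows that they form a system of Shalika germs. So, for any fixed choice $\{f_J\}_J$ with the normalization above, $K_I^J(a)=I(w_I^0a,f_J)$ is a system of Shalika germs.

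Next I would check that this does not depend on $f_J$. Let $f_J'$ be another such function and set $g=f_J-f_J'\in\CS(M_J)$. Then $I(w_J^0a,g)=0$ for $a\in A_J^J$. Since $A_J^J$ is finite, I can shrink the support of $f_J$ near $w_J^0A_J^JN_{M_J}$. Better, I apply the germ expansion on $M_J$ to $g$ (using the system just built for $M_J$, with $J$ as the top set). The top term is $K_I^J*\omega_J$ with $\omega_J(c)=I(w_J^0c,g)$.

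At this point the normalization only constrains $f_J$ on $A_J^J$, not on all of $A_J$. I would therefore first replace $f_J$ by $f_J\cdot(\phi\circ\chi)$, where $\chi$ runs over the characters in $X^*(M_J)$ and $\phi$ cuts to the finite set $\chi(A_J^J)$. Equivalently, I can assume $f_J$ is supported in $M_J^\der\cdot(A_J\cap K)$ for a small compact open $K$ near the identity, so that the value on $A_J^J$ is all that matters. With that reduction, the difference of the two candidate germ families is $K_I^{I_1}*t_{I_1}^J$-type terms coming from lower strata. Uniqueness (Theorem \ref{shalika}) then says both are systems; so both candidate families are systems of germs and the claim holds. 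This step, pinning down the exact independence statement, is the main obstacle.

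For (1), I use the functions $\Delta_i$ from Lemma \ref{properties of Delta_i}: the $N^2$-invariant matrix coefficients attached to $\lambda_i$. These are bounded on the compact support of $f_J$. Hence $I(w_I^0a,f_J)\neq0$ forces $|\Delta_i(w_I^0a)|$ to be bounded. For $\alpha_i\in J\setminus I$ this quantity equals $|\lambda_i(a)|$ up to a constant, so $K_I^J$ is supported in $A_I^J(r)$. For (2), the construction for $M_{J'}$ uses the same $f_J$ with $J\subset J'$, and the recursion of Theorem \ref{shalika} inside $M_{J'}$ involves only the $K_I^J$ with $J\subset J'$. Therefore it yields a system for $M_{J'}$.
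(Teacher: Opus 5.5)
Your first paragraph, together with your treatment of (1) via the $N^2$-invariant functions $\Delta_i$ and of (2) by running the recursion only over $J\subset J'$, is exactly the paper's proof. The function $f_0$ in Proposition \ref{prop-decomp} is an arbitrary Schwartz function with $I(w_J^0a,f_0)=\delta_e(a)$ on $A_J^J$, so every such choice of $f_J$ directly yields a system of Shalika germs. Consequently your second paragraph is unnecessary and can be dropped: there is nothing to compare between two choices of $f_J$, and the cut-off by $\phi\circ\chi$ you introduce there, as first written, is not even locally constant.
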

    
\begin{remark}
    In fact, the above Proposition can be strengthened as following.
    \begin{itemize}
        \item Using the uniqueness of Shalika germs, we can prove that
        (1) and (2) in the above Proposition holds for any system
        of Shalika germs.
        \item  
        For each $J \subset \Delta$, let $f_J \in \CS(M_J)$ such that
        $I(w_J^0a,f_J)=\delta_e(a)$ for any $a \in A_J^J$. 
        Then, for any $r>0$, we can construct a system of Shalika germs $\{K_I^J\}_{I \subset J}$ such that
    \[K_I^J(a) = I(w_I^0a,f_{J}), \quad I\subset J\subset\Delta,a \in A_I^J(r')\]
    for some $r'>0$ and $K_I^J$ is supported on $A_I^J(r)$. See
    \cite[Proposition 2.6]{JY99} for $G = \GL_n$.
    \end{itemize}
\end{remark}

To prove this Proposition, we introduce some $N^2$-invariant
functions analogue the functions $\Delta_i$ for the case $G = \GL_n$ (See Remark \ref{remark-GL_n}).  

For this, we choose a basis $\{\lambda_i^\circ\}_{m+1 \leq i \leq n}$ of $X^*(Z)_\BQ \subset X^*(A)_\BQ$.  In particular, 
\[\{\omega_1,\cdots,\omega_m,\lambda_{m+1}^\circ,\cdots,\lambda_n^\circ\}\] 
gives a basis of $X^*(A)_\BQ=X^*(Z)_\BQ\bigoplus\BQ\Phi$. We also choose $d_i\in\BZ_{>0},1\leq i\leq n$ such that $\lambda_i:=d_i\lambda_i^\circ,1\leq i\leq n$ are dominant weights in $X^*(A)$.

By the highest weight theory, for each $1 \leq i \leq n$, there exists a unique (up to isomorphism) irreducible (algebraic) representation $\pi_i$ of $G$ such that its highest weight is $\lambda_i$ (\cite[Theorem 22.2]{Mil17}). Consider the matrix coefficient
\[\Delta_i(g)=\left<\pi_i(g)\varepsilon_i,\eta_i^*\right>, \quad g \in G.\]
Here, $\varepsilon_i$ is a nonzero highest weight vector in $\pi_i$ and $\eta_i^*$ is a nonzero lowest weight vector in the dual representation of $\pi_i$. 

We record basic properties for $\Delta_i$. The following lemma
may stand on its own.

\begin{lem}\label{properties of Delta_i}
The functions $\Delta_i$, $1 \leq i \leq n$ satisfy the following basic properties.
    \begin{enumerate}
        \item If $g=u_1\dot{w}au_2$ with $u_1 \in \ov{N}$, $\dot{w}$ is representative element of $w \in W$, $a \in A$ and $u_2 \in N$, then $\Delta_i(g)=\lambda_i(a)\Delta_i(\dot{w})$. This means $\Delta_i(\cdot)$ are all $N\times N$-invariant functions on $G$.
        \item For any representative element $\dot{w}$ of $w\in W$, 
        $\Delta_i(\dot{w})\neq 0$ if and only if $w\cdot \lambda_i =\lambda_i$.
        \item For $i>m$, we always have $\Delta_i(g)\neq 0$
        for any $g \in G$.
        \item Let $I$ be a subset of $\Delta$. 
        \begin{enumerate}
        \item Let $w\in W$. We have
        $w\cdot \lambda_i=\lambda_i$ for all  $i \leq m$ with $\alpha_i\notin I$ if and only if $w\in W_I$.
        \item Let $i \leq m$. We have $w_I \cdot \lambda_i=\lambda_i$ if and only if $\alpha_i\notin I$.
        \end{enumerate}
    \end{enumerate}
\end{lem}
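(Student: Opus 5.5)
The plan is to reduce everything to the Bruhat decomposition together with the weight decomposition of $\pi_i$. For (1), take $u_1\in\ov{N}$, $u_2\in N$. Since $\varepsilon_i$ is a highest weight vector for $B=AN$, we have $\pi_i(u_2)\varepsilon_i=\varepsilon_i$ and $\pi_i(a)\varepsilon_i=\lambda_i(a)\varepsilon_i$. Since $\eta_i^*$ is a lowest weight vector of $\pi_i^*$, it is fixed by $\ov{N}$ acting through the contragredient. Therefore
\[\langle \pi_i(u_1\dot wau_2)\varepsilon_i,\eta_i^*\rangle=\lambda_i(a)\langle\pi_i(\dot w)\varepsilon_i,\pi_i^*(u_1^{-1})\eta_i^*\rangle=\lambda_i(a)\Delta_i(\dot w).\]
The $N\times N$-invariance for the action $g\cdot(n_1,n_2)=\ov{n_1}^{-1}gn_2$ is then immediate.

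For (2), note that $\pi_i(\dot w)\varepsilon_i$ is a nonzero vector of weight $w\cdot\lambda_i$; one must be careful with the convention $w\cdot x=x\circ\Ad(w)$, but up to replacing $w$ by $w^{-1}$ it is the extremal weight of the orbit. Also $\eta_i^*$ pairs nontrivially only with the $\lambda_i$-weight space, because it is dual to the highest weight line: lowest weight in the dual means weight $-\lambda_i$. The extremal weight spaces are one-dimensional. Hence $\Delta_i(\dot w)\neq0$ if and only if $w\cdot\lambda_i=\lambda_i$ (or $w^{-1}\cdot\lambda_i=\lambda_i$, which is the same condition since the stabilizer is a group).

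For (3), when $i>m$ the weight $\lambda_i$ lies in $X^*(Z)$ and pairs to zero with all coroots. So $\pi_i$ is one-dimensional, given by a character $\chi$ of $G$ extending $\lambda_i$, and $\Delta_i=c\chi$ with $c\ne0$, which never vanishes.

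For (4), use the standard fact (e.g.\ Humphreys/Bourbaki) that the stabilizer in $W$ of a dominant weight $\lambda$ is the parabolic subgroup generated by the simple reflections $s_\alpha$ with $\langle\lambda,\alpha^\vee\rangle=0$. Here $\langle\lambda_i,\alpha_j^\vee\rangle=d_i\delta_{ij}$ for $i\le m$.

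For (4)(a), the common stabilizer of $\{\lambda_i:\alpha_i\notin I\}$ is handled as follows. First note it equals the stabilizer of the dominant weight $\mu=\sum_{\alpha_i\notin I}\lambda_i$, since by dominance $w\mu=\mu$ forces $w\lambda_i=\lambda_i$ for each summand; this uses $\mu-w\mu$ being a nonnegative combination of simple roots, a sum of such terms vanishing only if each does. Next, $\langle\mu,\alpha_j^\vee\rangle=0$ exactly when $\alpha_j\in I$. So the stabilizer is $W_I$.

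For (4)(b), $w_I\in W_I$ fixes $\lambda_i$ when $\alpha_i\notin I$, since each $s_\alpha$ with $\alpha\in I$ fixes $\lambda_i$. Conversely, if $\alpha_i\in I$, then $w_I$ lies in the stabilizer of $\lambda_i$, which is $W_{\Delta\setminus\{\alpha_i\}}$. But $w_I$ sends $\alpha_i$ to a negative root, whereas elements of $W_{\Delta\setminus\{\alpha_i\}}$ preserve the positivity of roots whose $\alpha_i$-coefficient is positive. That is a contradiction.

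The main obstacle is keeping the left/right action conventions consistent in (2), so that the condition comes out as $w\cdot\lambda_i=\lambda_i$. Since stabilizers are subgroups, the convention issue only affects bookkeeping.
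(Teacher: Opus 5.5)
Your proof is correct. Parts (1) and (2) follow the paper's argument. In (2) you are in fact more careful than the paper. The paper writes $\pi_i(\dot w)\varepsilon_i\in V_{w\cdot\lambda_i}$, although with the convention $w\cdot x=x\circ\Ad(w)$ the weight is really $w^{-1}\cdot\lambda_i$; this is harmless, as you say. The paper also leaves implicit the converse direction, which you supply via $\dim V_{\lambda_i}=1$ and $\langle\varepsilon_i,\eta_i^*\rangle\neq0$.

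The remaining parts use different sub-arguments.
\begin{itemize}
\item For (3), the paper never identifies $\pi_i$. It notes $s_\alpha(\lambda_i)=\lambda_i$ for all $\alpha\in\Delta$, so $\lambda_i$ is $W$-fixed and $\Delta_i(\dot w)\neq0$ for every $w$ by (2). Then (1) together with the Bruhat decomposition $G=\bigsqcup_w\ov{N}wAN$ gives $\Delta_i(g)=\lambda_i(a)\Delta_i(\dot w)\neq0$. Your route, showing that $\pi_i$ is a character $\chi$ of $G$, is more conceptual. However, it needs the extension of $\lambda_i$ to a character of $G$ (Lemma~\ref{lem-image-res} with $J=\Delta$) and the uniqueness of the irreducible module with given highest weight. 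The paper's route only reuses (1)--(2).
\item For (4a), the paper applies the stabilizer fact \cite[Proposition 20.11]{Bum13} to each $\lambda_i$ separately, obtaining $W_{\lambda_i}=W_{\Delta\setminus\{\alpha_i\}}$, and then intersects these parabolic subgroups. You pass to the single dominant weight $\mu=\sum_{\alpha_i\notin I}\lambda_i$. This replaces the intersection step, which the paper does not justify explicitly, with the positivity argument $\lambda_i-w\cdot\lambda_i\in\BZ_{\geq0}\Delta$.
\item For (4b), the paper uses the one-line pairing $\langle w_I\cdot\lambda_i,\alpha_i^\vee\rangle=\langle\lambda_i,w_I\cdot\alpha_i^\vee\rangle\leq0<\langle\lambda_i,\alpha_i^\vee\rangle$, which does not need the stabilizer description at all. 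You instead combine $W_{\lambda_i}=W_{\Delta\setminus\{\alpha_i\}}$ with the fact that this subgroup preserves the positive $\alpha_i$-coefficient of roots, whereas $w_I$ sends $\alpha_i$ to a negative root. Both arguments are valid.
\end{itemize}
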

\begin{proof}
    (1) Let $1 \leq i \leq n$. Since $\varepsilon_i$ is the highest weight vector and $\eta_i^*$ is the lowest weight vector in the dual representation, we have
    \[\pi_i(n)\varepsilon_i=\varepsilon_i,\quad \pi_i(\overline{n})\eta_i^*=\eta_i^*\]
    for $n\in N,\overline{n}\in\overline{N}$.
    Thus,
    \[\Delta_i(g)=\Delta_i(wa)=\left<\pi_i(wa)\varepsilon_i,\eta_i^*\right>=\lambda_i(a)\left<\pi_i(w)\varepsilon_i,\eta_i^*\right> = \lambda_i(a)\Delta_i(w).\]

    (2) For any representative element $\dot{w}$ of $w\in W$, $\pi_i(\dot{w})\varepsilon_i\in V_{w\cdot\lambda_i}$. Here, $V_{w\cdot\lambda_i}$ is the $(w\cdot \lambda_i)$-eigenspace. 
    For a vector $x \in \pi_i$ which is $A$-eigen, $\langle x,\eta_i^* \rangle \not=0$ implies that $x \in V_{\lambda_i}$.
    In particular, $\left<\pi_i(\dot{w})\varepsilon_i,\eta_i^*\right>\neq 0$ if and only if $w \cdot \lambda_i=\lambda_i$. 

    (3) Let $i>m$. By (2), we shall prove that for any $w \in W$, 
    $w\cdot \lambda_i = \lambda_i$. For any $\alpha\in\Delta$, we have
    \[s_\alpha(\lambda_i)=\lambda_i-\left<\lambda_i,\alpha^\vee\right>\alpha=\lambda_i\]
     since $X^*(Z)_\BQ=(\BQ\Phi^\vee)^\bot$. As $W$ is generated by these
     $s_\alpha$,  $w\cdot\lambda_i=\lambda_i$ for any $w \in W$.

    (4a)  For each dominant weight $\lambda \in X^*(A)$, denote
    by
    \[W_\lambda = \{w \in W | w \cdot \lambda = \lambda\}.\]
    For any $1 \leq i,j \leq m$,  we have $s_{\alpha_j}
    \in W_{\lambda_i}$ if and only if $j\neq i$. By
    \cite[Proposition 20.11]{Bum13},  for any $w \in W_{\lambda_i}$, if $w = s_{\alpha_{j_1}} \cdots s_{\alpha_{j_\ell}}$ is a reduced representation of $w$,
    then each $s_{\alpha_{j_k}} \in W_{\lambda_i}$, $1 \leq k \leq \ell$. Therefore, 
    \[W_{\lambda_i}=W_{\Delta\backslash\{\alpha_i\}}.\]
    In particular,
    \[W_I=\bigcap_{\alpha_i\notin I}W_{\Delta\backslash\{\alpha_i\}}=\bigcap_{\alpha_i\notin I}W_{\lambda_i}
    = \{w\in W:w\cdot\lambda_i=\lambda_i\text{ for all }\alpha_i\notin I\}.\]

    (4b) Suppose $\alpha_i\notin I$. For any $\alpha\in I$, we have
    \[s_\alpha(\lambda_i)=\lambda_i-\left<\lambda_i,\alpha^\vee\right>\alpha=\lambda_i.\]
    This implies $w_I\cdot\lambda_i=\lambda_i$. 
    
    On the other hand, if $\alpha_i\in I$, we have
    \[\left<w_I\cdot\lambda_i, \alpha_i^\vee\right>=\left<\lambda_i,w_I\cdot\alpha_i^\vee\right>\leq 0,\]
    since $w_I\cdot\Phi_I^{\vee,+}=\Phi_I^{\vee,-}$. This implies $w_I\cdot \lambda_i\neq\lambda_i$.
\end{proof}
\begin{proof}[The proof of \ref{germ function as Kloosterman sum}]
    (1)  For any $I\subset J\subset\Delta$, in the above proof of the existence of
    Shalika germ expansion (see the beginning in the proof of Proposition \ref{prop-decomp}), we choose 
    \[K_I^J(a)=I(w_I^0a,f_J)\]
    for any function $f_J\in\CS(M_J)$ such that $I(w_J^0a,f_J)=\delta_e(a)$ for any $a\in A_J^J$.
        
    For a fixed $J\subset\Delta$, the function $K_I^J(a)$ is the Kloosterman integral for $f_J\in\CS(M_J)$. For any $g \in M_J$ in the support of function $f_J\in\CS(M_J)$ and  any $i$, $\Delta_i(g)$ is contained in a compact support of $F$. Thus, for each $i$, there exists $C_i$ such that $K_I^J(a) \not=0$ implies that 
    \[|\Delta_i(w_I^0a)| = |\Delta_i(w_I^0)|\cdot |\lambda_i(a)| \leq C_i.\]
    By (2) and (4b) in 
    Lemma \ref{properties of Delta_i}, $\Delta_i(w_I^0) \not=0$
    if and only if $\alpha_i \not\in I$. Therefore, $K_I^J(a)$ is supported on $A_I^J(r)$ for some
    $r > 0$.

    (2) By definition of $\{K_I^J\}_{I\subset J\subset J'}$, the functions $\{K_I^J\}_{I\subset J\subset J'}$ is a system of Shalika germs for $M_J$.
\end{proof}

In the following, we shall give a way to construct the above $f_J$.

\begin{lem}
    Denote by $Z^\der$ the center of $G^\der$. Then as $F$-points
    of algebraic groups, $Z^\der = Z \cap G^\der$.
\end{lem}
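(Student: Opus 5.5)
One inclusion is formal: the center $Z^\der$ of $G^\der$ commutes with $G^\der$, and we want to show it commutes with all of $G$. Conversely, $Z\cap G^\der$ is contained in $G^\der$ and commutes with everything in $G^\der$, so $Z\cap G^\der\subset Z^\der$ holds on $F$-points with no work. The content is therefore the inclusion $Z^\der\subset Z$, i.e.\ that an element $z\in G^\der(F)$ central in $G^\der$ also commutes with all of $G(F)$.

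For this I will use the decomposition $G = Z(G)\cdot G^\der$ recalled in the proof of Proposition \ref{prop-transverse}, but only as algebraic groups. On $F$-points the multiplication map $Z(F)\times G^\der(F)\to G(F)$ need not be surjective, because the isogeny has a finite kernel and Galois cohomology obstructs surjectivity. So I will not argue elementwise on $G(F)$ directly.

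Instead I plan to argue scheme-theoretically, or on $\bar F$-points. Let $\bar F$ be an algebraic closure. The centralizer $C_G(z)$ of $z$ is a closed subgroup scheme of $G$. It contains $G^\der$ because $z$ is central there, and it contains $Z(G)$ for trivial reasons. Since $Z(G)\times G^\der\to G$ is surjective as a morphism of algebraic groups, it is surjective on $\bar F$-points. Hence $C_G(z)(\bar F)=G(\bar F)$, and as $G$ is reduced (smooth, characteristic $0$), $C_G(z)=G$. Thus $z$ lies in the scheme-theoretic center $Z$, which gives $z\in Z(F)$.

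One subtlety needs care: "center of $G^\der$" could be read as the $F$-points of the algebraic center $Z(G^\der)$, or as the abstract center of the group $G^\der(F)$. These agree because $G^\der(F)$ is Zariski dense in $G^\der$, since $G^\der$ is connected reductive over the infinite perfect field $F$ and hence unirational (Borel–Springer/Rosenlicht density). So an element of $G^\der(F)$ commuting with $G^\der(F)$ commutes with $G^\der$, and the argument above applies. The same density statement, or simply $Z=Z(G)$ taken scheme-theoretically in characteristic $0$, settles the identification on the $G$ side.

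I expect this density and interpretation step, rather than any computation, to be the main point to state carefully. Given these facts, both inclusions follow, and $Z^\der = Z\cap G^\der$ on $F$-points.
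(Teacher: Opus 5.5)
Your proposal is correct and is essentially the paper's argument, which is the one-line chain $Z(G^\der(F)) = Z(G^\der)(F) = (Z\cap G^\der)(F) = Z(F)\cap G^\der(F)$. The first equality there is your Zariski-density identification of the abstract center with the $F$-points of the algebraic center, the last is formal, and the middle one is the algebraic-group identity $Z(G^\der)=Z\cap G^\der$, which the paper merely asserts; your centralizer argument, using surjectivity of $Z\times G^\der\to G$ on $\bar F$-points rather than on $F$-points, proves its nontrivial inclusion, so you follow the same route with the omitted step filled in.
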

\begin{proof}
    As algebraic groups, $Z^\der = Z \cap G^\der$. Consider their
    $F$-points,
    \[Z(G^\der(F)) = Z(G^\der)(F) = (Z \cap G^\der)(F) = Z(F) \cap 
    G^\der(F).\]
\end{proof}

\begin{defn}\label{defn-adm}
    An open compact subgroup $K\subset G$ with
    $\psi_0|_{N \cap K} = 1$ is called admissible (with
    aspect to $\psi_0$)
    if for any $z \in Z^\der$ and $n \in N$
    with $zn \in K$, we have $z = 1$ and $n \in K$.
\end{defn}   

We give several examples for admissible subgroups.

\begin{example}
    Suppose the center of $G$ is trivial. Then any
    open compact subgroup $K$ of $G$ with $\psi_0|_{N \cap K} = 1$ is admissible.
\end{example}

\begin{example}\label{GL_n example}
    Suppose $G=\GL_n$ and we assume $\psi$  is trivial on $N(\CO)$. Consider the following subgroup of maximal compact subgroup $\GL_n(\CO)$:
    \[K=\left\{g=\begin{pmatrix}
    g' & v\\
    z & t  
\end{pmatrix}\in\GL_n(\CO)\Big|g' \in M_{n-1,n-1}(\CO), v \in M_{n-1,1}(\CO), z \in M_{1,n-1}(\fp^\ell),  t\in1+\fp^\ell\right\},\]
    Here, $\ell$ is a sufficiently large positive integer such that $(1+\fp^\ell) \cap \mu_n = 1$ where
    $\mu_n = Z \cap G^\der$ is the group of $n$th roots of unity. Then $K$ is admissible.
\end{example}

\begin{example}\label{special filtration}
   For each $d \geq 0$, consider the following open compact subgroups of $A$ and $N_{\alpha,d}$, $\alpha \in \Phi$
    \[A_d:=\{a\in A:\chi(a)\in (1+\fp^d) \cap \CO^\times \text{ for all }\chi\in X^*(A)\}, \quad N_{\alpha,d}=x_\alpha(\fp^d).\]
    Let $K_d$ be the subgroup of $G$ generated by $A_d$ and all $N_{\alpha,d}$ for $\alpha\in\Phi$. Then $\{K_d\}_{d\geq 1}$ is a filtration of compact open subgroup of $G$ with $\bigcap_{d\geq 1}K_d=\{1\}$ and the Iwahori decomposition $K_d=\ov{N}_dA_dN_d$.

    As $Z^\der$ is finite, we can take $d>0$ sufficiently large such that
    $Z^\der \cap A_d = 1$. Therefore, by the Iwahori decomposition, for
    $d>0$ sufficiently large, $K_d$ is an admissible group of $G$.
\end{example}

\begin{prop}\label{prop-germ-adm}
    For every $J\subset\Delta$, let $K_J$ be an admissible
    open compact subgroup of $M_J$. Denote by 
    \[f_{J} = \frac{1}{\Vol(N_{M_J}\cap K_{J})}\cdot1_{w_J^0K_{J}} \in \CS(M_J).\]
    Then, there exists a system of Shalika germs $\{K_I^J\}_{I\subset J}$ such that
    \[K_I^J(a)=I(w_I^0a,f_{J}).\]
\end{prop}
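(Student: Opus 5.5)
By Proposition \ref{germ function as Kloosterman sum}, it suffices to check one property of $f_J$: for every $a\in A_J^J$,
\[I(w_J^0a,f_J)=\delta_e(a).\]
Once this holds, the system of Shalika germs constructed there satisfies $K_I^J(a)=I(w_I^0a,f_J)$ for all $I\subset J$. So the whole plan reduces to computing the Kloosterman integral of $f_J$ at the points $w_J^0a$ with $a\in A_J^J=A_J\cap M_J^\der$.

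First, I would simplify the integral. For $a\in A_J$ the stabilizer quotient identifies with $N_{M_J}$, as in the proof of Proposition \ref{prop-decomp}, and this gives
\[I(w_J^0a,f_J)=\int_{N_{M_J}}f_J(w_J^0au)\psi_0^{-1}(u)\,du=\frac{1}{\Vol(N_{M_J}\cap K_J)}\int_{N_{M_J}}1_{K_J}(au)\,\psi_0^{-1}(u)\,du.\]

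Next, I would identify where the integrand is nonzero. Since $A_J$ is the center of $M_J$ and $A_J^J\subset M_J^\der$, each $a\in A_J^J$ is central in $M_J^\der$. Hence $A_J^J\subset Z(M_J^\der)$, which is the group $Z^\der$ of Definition \ref{defn-adm} applied to the group $M_J$. Admissibility of $K_J$ then says that $au\in K_J$ with $u\in N_{M_J}$ forces $a=1$ and $u\in K_J$. Consequently, for $a\neq e$ the integrand vanishes identically and the integral is $0$. For $a=e$ the integral equals
\[\Vol(N_{M_J}\cap K_J)^{-1}\int_{N_{M_J}\cap K_J}\psi_0^{-1}(u)\,du=1,\]
because admissibility includes $\psi_0|_{N_{M_J}\cap K_J}=1$.

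The only point needing care is the inclusion $A_J^J\subset Z(M_J^\der)(F)$. This holds because elements of $A_J$ commute with all of $M_J$, and in particular with $M_J^\der$. One should also note that the identification of the stabilizer quotient with $N_{M_J}$ (the first coordinate absorbed by the second) has Haar measure normalizations matching those in the definition of $I$. Neither is a serious obstacle, and with the value $\delta_e$ established, Proposition \ref{germ function as Kloosterman sum} yields the claim.
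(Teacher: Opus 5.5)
Your proposal is correct and follows essentially the same route as the paper. Both arguments verify $I(w_J^0a,f_J)=\delta_e(a)$ on $A_J^J=A_J\cap M_J^\der\subset Z(M_J^\der)$: the diagonal stabilizer reduces the integral to $N_{M_J}$, admissibility kills $a\neq e$, and $\psi_0|_{N_{M_J}\cap K_J}=1$ gives the value $1$ at $a=e$; then Proposition \ref{germ function as Kloosterman sum} is invoked. The only difference is that the paper first reduces to $J=\Delta$ and works with $Z^\der$ directly, whereas you work in $M_J$ throughout.
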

\begin{proof}
    We can assume $J=\Delta$ and $M_J=G$. Consider the function $1_{w^0K_\Delta}\in\CS(G)$. Let $z \in Z^\der$. Then the stabilizer
    $N^2_z = \Delta N$ so that
    \[I(w^0 z, 1_{w^0 K_\Delta}) = \int_N 1_{w^0K_\Delta} (w^0 zn)
    \psi_0^{-1}(n)dn.\]
    The admissible condition on $K_\Delta$ implies that
    \[I(w^0z,1_{w^0 K_\Delta}) = \Vol(N \cap K_\Delta) \delta_e(a).\]
    Therefore, by Proposition \ref{germ function as Kloosterman sum}, for any $I\subset\Delta$ we can choose the germ
    $K_I^\Delta(a) =I(w_I^0a,f_{\Delta})$. 
\end{proof}

\section{Bessel distributions}

In this section, based on the germ expansion of Kloosterman 
integrals, we relate the regularity of Bessel distributions to
the nontrivial bound of Kloosterman sums (See Theorem \ref{main theorem}).

\subsection{The regularity}
Let $G$ be a split (connected) reductive group over a $p$-adic field $F$. As before, we fix a Borel subgroup $B = AN$ of $G$. Let $\Phi \subset X^*(A)$ be the set of roots of $G$ with respect to $A$ and use $\Delta$ to denote the set of simple roots. For each $\alpha\in\Phi$, denote by $N_\alpha$ the corresponding root subgroup. Denote by $w_0$ the
longest Weyl element in the Weyl group $W = N_G(A)/A$ of $G$. Denote by $\Omega = Bw_0B$ the open Bruhat cell in $G$. Let $\psi$ be a generic character on $N$. 

Let $\pi$ be an irreducible smooth admissible representation on $G$. Let $\pi^\vee$ be the  contragredient representation of $\pi$. 
Denote by $\pi^*$ and $(\pi^\vee)^*$ the linear dual of $\pi$ and $\pi^\vee$ respectively. The representation $\pi$ is called \emph{generic} if $\Hom_N(\pi,\psi)\neq 0$. For a generic representation $\pi$, one has $\dim\Hom_N(\pi,\psi)=1$. A Whittaker functional is an element $\ell\in\Hom_N(\pi,\psi)$. If $\pi$ is generic, then so is $\pi^\vee$. Let $\ell_1 \in\pi^*$ and $\ell_2 \in(\pi^\vee)^*$ be nonzero Whittaker functionals with respect to $\psi$ and $\psi^{-1}$, respectively. In particular, for any Schwartz function $f \in \CS(G)$, $\pi^\vee(f)\ell_2 \in (\pi^\vee)^\vee = \pi$. The Bessel distribution for $\pi$ is defined as
\[B_\pi(f) = \ell_1 \left( \pi^\vee(f) \ell_2 \right), \quad f \in \CS(G).\]

We believe that the Bessel distribution $B_\pi$ satisfies the following
property. 

\begin{defn}
	The Bessel distribution $B_\pi$ is called regular if there is a unique smooth function $j_\pi \in C^\infty(\Omega)$ which is
	locally integrable on $G$ such that
	\[B_\pi(f) = \int_G j_\pi(g)f(g)dg, \quad f \in \CS(G).\]
\end{defn}

By the uniqueness of Whittaker functionals, the regularity of the
Bessel distribution for $\pi$ is independent of the choice of 
$\ell_1$ and $\ell_2$.

\begin{lem}
   Let $\psi' = \psi(\Ad(a)\cdot)$ for some $a \in A$. Let $\pi$ be an
   irreducible smooth admissible representation on $G$ which is generic
   with respect to $\psi$. Then $\pi$ is also generic with respect to $\psi'$. The regularity of the Bessel distribution for $\pi$
   with respect to $\psi$ implies the regularity of the Bessel distribution
   for $\pi$ with respect to $\psi'$.
    \begin{proof}
        Let $\ell_1 \in\pi^*$ and $\ell_2 \in(\pi^\vee)^*$ be nonzero Whittaker functionals with respect to $\psi$ and $\psi^{-1}$. Consider
        \[\ell_1'(v)=\ell_1(\pi(a)v)\in\Hom_N(\pi,\psi'), \quad \ell_2'(v^\vee)=\ell_2(\pi^\vee(a^{-1})v^\vee)\in\Hom_N(\pi^\vee,\psi'^{-1})\]
        For $f\in\CS(G)$, we have
        \[B_\pi'(f)=\ell_1'(\pi^\vee(f)\ell_2')=\ell_1(\pi(a)\pi^\vee(f)\pi^\vee(a^{-1})\ell_2) = \ell_1
        (\pi^\vee(f_a)\ell_2), \quad f_a(g) = f(a^{-1}ga).\]
        Hence, the regularity of $B_\pi$ implies regularity of $B_\pi'$.
    \end{proof}
\end{lem}

When the regularity holds, generic representations
are determined by their Bessel functions.
\begin{prop}\label{prop-FLO}
    Let $\pi_1,\pi_2$ be two generic smooth irreducible representations of $G$. Assume the distributions $B_{\pi_1}$ and $B_{\pi_2}$ are regular, say represented by
    functions $j_{\pi_1}$ and $j_{\pi_2}$ respectively.  If there exists a non-zero constant $c$, such that for any $g\in \Omega$, we have
    \[j_{\pi_1}(g)=cj_{\pi_2}(g).\]
    Then $\pi_1\cong\pi_2$.
    \begin{proof}
        By \cite[Lemma 2.2(2)]{FLO12}, the Bessel distributions are linear independent for two inequivalent smooth irreducible generic representations.  Note that \cite[Lemma 2.2(2)]{FLO12} only proved in the case $G=\GL_n$. However, this proof can be generalized to arbitrary split reductive groups. This Proposition now follows directly from the regularity of Bessel distributions.
    \end{proof}
\end{prop}

\subsection{Local integrability}
\begin{thm}[Baruch, Theorem 2.3 in \cite{Ba01}]\label{restriction} There exists a smooth function $j_\pi^0 \in C^\infty(\Omega)$ such that
	\[B_\pi(f) = \int_G j_\pi^0(g)f(g)dg, \quad f \in \CS(\Omega).\] 
\end{thm}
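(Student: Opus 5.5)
We will follow Baruch's strategy: realize the distribution on the open cell as integration against an explicit function built from Whittaker functions, and then check smoothness and the identity $B_\pi(f)=\int j_\pi^0 f$ for $f\in\CS(\Omega)$.

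\emph{Reduction to a convergent integral.} Write $\Omega=Bw_0B=N w_0 A N$. By the uniqueness of Whittaker functionals, for $v\in\pi$ the function $W_v(g)=\ell_1(\pi(g)v)$ realizes $\pi$ in its Whittaker model. For $f\in\CS(G)$ the vector $\pi^\vee(f)\ell_2\in\pi$ satisfies
\[B_\pi(f)=\ell_1\bigl(\pi^\vee(f)\ell_2\bigr)=\int_G f(g)\,\ell_1\bigl(\pi^\vee(g)\ell_2\bigr)\,dg,\]
but only formally, since $g\mapsto \ell_1(\pi^\vee(g)\ell_2)$ is not defined. To make it meaningful, fix a small open compact subgroup $K'$ with $f$ bi-$K'$-invariant and replace $\ell_2$ by its smoothing $\ell_2^{K'}=\pi^\vee(e_{K'})\ell_2\in\pi^{K'}$; then $\pi^\vee(f)\ell_2=\pi^\vee(f)\ell_2^{K'}$. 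This yields
\[B_\pi(f)=\int_G f(g)\,W_{\ell_2^{K'}}(g)\,dg .\]
The integrand depends on $K'$, so this alone does not produce a $K'$-independent kernel.

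\emph{Construction of $j_\pi^0$ on $\Omega$.} For $f\in\CS(\Omega)$ we use coordinates $g=n_1w_0an_2$ with $dg=\delta(a)^{-1}dn_1\,da\,dn_2$ (up to normalization). Using the $(N,\psi)$ and $(N,\psi^{-1})$ equivariance of $\ell_1,\ell_2$, we unfold $B_\pi(f)$ as
\[\int_A\Bigl(\int_{N\times N} f(n_1w_0an_2)\psi(n_1)\psi(n_2)\,dn_1dn_2\Bigr)\,\mathcal J(a)\,\delta(a)^{-1}\,da,\]
where $\mathcal J(a)$ is defined as follows. Choose a compact open $N_0\subset N$ large enough. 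We show that the expression
\[\ell_1\Bigl(\pi(w_0a)\int_{N_0}\psi^{-1}(n)\pi(n)\,\cdot\,dn\Bigr)\]
applied to $\ell_2$ (suitably dualized) stabilizes as $N_0$ increases, uniformly for $a$ in compacta. Setting $j_\pi^0(n_1w_0an_2)=\psi(n_1)\psi(n_2)\mathcal J(a)$ then gives a function that is smooth because it is locally constant in $a$.

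\emph{Main obstacle: the stabilization step.} One needs that for $a$ in a compact set the integrals $\int_{N_0}W(w_0an)\psi^{-1}(n)dn$ stabilize as $N_0$ exhausts $N$; this is Baruch's key lemma. We would prove it by the Bernstein--Zelevinsky/Jacquet argument: for $n$ outside a compact set, some simple root coordinate forces $\psi$-averaging over a root subgroup $N_\alpha$ on which $W(w_0a\,\cdot)$ is invariant under a fixed compact open subgroup. The averaging of the nontrivial character $\psi$ over those shells then vanishes, by induction on a filtration of $N$ by root subgroups ordered by height. Once stabilization is established, Fubini on $\Omega\cong N\times A\times N$ justifies the unfolding for $f\in\CS(\Omega)$, and the identity $B_\pi(f)=\int_G j_\pi^0 f$ follows.
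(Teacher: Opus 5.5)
The paper gives no proof of this statement; it quotes it from \cite{Ba01}. Your outline follows the later Lapid--Mao/Chai route instead: define $j_\pi$ through the regularized integral of Theorem~\ref{thm-regularized}, then show that it represents $B_\pi$ on $\Omega$. That route could work, but as written there is a genuine gap exactly at the step that carries the content.

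\textbf{The unfolding step assumes the conclusion.} Equivariance of $\ell_1,\ell_2$ only shows that $B_\pi|_\Omega$ has the form $f\mapsto T(f^\psi)$ for some \emph{distribution} $T$ on $A$, where $f^\psi(a)$ is the $\psi$-twisted integral of $f$ over $N\times N$. Writing $T$ as $\mathcal J(a)\,da$ is the theorem itself.

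\textbf{Your definition of $\mathcal J$ does not make sense.} The functional $\ell_2$ lies in $(\pi^\vee)^*$ and is already $(N,\psi)$-equivariant for the dual action. Hence $\int_{N_0}\psi^{-1}(n)\,n\cdot\ell_2\,dn=\Vol(N_0)\,\ell_2$, so no smoothing occurs. Moreover $\pi(w_0a)\ell_2$ is not a vector of $\pi$, and $\ell_1$ cannot be applied to it.

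\textbf{Nothing links the stabilized integral to $B_\pi$.} The integral $\int_{N_0}W_v(w_0an)\psi^{-1}(n)\,dn$ involves a smooth vector $v$ and produces the Lapid--Mao function $j_\pi$. Nothing in your proposal connects it with $\ell_1(\pi^\vee(f)\ell_2)$, and ``Fubini on $N\times A\times N$'' cannot do so, because $\ell_2$ is never expressed through integrals over $N$. That link is precisely Chai's identity (Theorem~\ref{thm-chai}). The paper states it as a comparison with Baruch's $j^0_\pi$, so invoking it would be circular; you must prove it.

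One way to supply the link:
\begin{enumerate}
\item Realize $\ell_2(v^\vee)=\lim_i\int_{N_i}\langle\pi(n)v_0,v^\vee\rangle\psi^{-1}(n)\,dn$. This is Lapid--Mao's stable integral of matrix coefficients, which for generic $\pi$ and suitable $v_0$ is a nonzero element of $\Hom_N(\pi^\vee,\psi^{-1})$.
\item For left $K$-invariant $f\in\CS(\Omega)$, the vectors $x_i=\pi(f)\int_{N_i}\psi^{-1}(n)\pi(n)v_0\,dn$ lie in the finite-dimensional space $\pi^K$ and equal $\pi^\vee(f)\ell_2$ for $i$ large.
\item Hence $B_\pi(f)=\int_G f(g)\int_{N_i}W_{v_0}(gn)\psi^{-1}(n)\,dn\,dg$ for $i\gg0$.
\item To conclude $B_\pi(f)=W_{v_0}(e)\int_G j_\pi f$, you need the stabilization to be \emph{uniform} for $g\in\mathrm{supp}(f)$.
\end{enumerate}

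\textbf{Your stabilization argument does not work as described.} $W(w_0a\,\cdot)$ is right-invariant only under a small open compact $K_W$. After shrinking $K_W$, $\psi$ is trivial on $N\cap K_W$, so averaging over root-subgroup pieces inside $K_W$ produces no cancellation. Translating by elements of $N_\alpha$ on which $\psi$ is nontrivial does not preserve $W(w_0an\,\cdot)$. You also cannot use the left $(N,\psi)$-equivariance of $W$ directly, because $\Ad(w_0a)$ carries $N$ into $\bar N$.

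Cancellation on the shells $N_{i+1}\smallsetminus N_i$ requires converting small right translations (by $A\cap K_W$ or $\bar N\cap K_W$) into left $N$-translations with nontrivial $\psi$-value. That is the substance of Lapid--Mao's stability theorem (Theorem~\ref{thm-regularized}), together with the uniformity on compact subsets of $\Omega$ needed above. It is not ``Baruch's key lemma,'' and the shell-cancellation sketch does not supply it.
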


\begin{prop}\label{locally integrable implies regular}
	If $j_\pi^0$ is locally integrable on $G$, then $B_\pi$ is regular. 
\end{prop}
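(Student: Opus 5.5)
We take $j_\pi := j_\pi^0$, extended by zero off $\Omega$. Since $\Omega = Bw_0B$ is open and its complement $G\setminus\Omega$ is a finite union of lower-dimensional Bruhat cells, $G\setminus\Omega$ has Haar measure zero. Hence $j_\pi$ is a well-defined element of $L^1_{\mathrm{loc}}(G)$, and the candidate distribution
\[T(f) = \int_G j_\pi(g) f(g)\,dg, \quad f \in \CS(G),\]
makes sense. By Theorem \ref{restriction}, $T$ and $B_\pi$ agree on $\CS(\Omega)$. The task is to show that the difference $D = B_\pi - T$ vanishes identically, although a priori it is only known to be supported on the closed set $G\setminus\Omega$.

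The key is the equivariance of both distributions under $N\times N$. For $n_1,n_2\in N$ and $f\in\CS(G)$, set $f^{(n_1,n_2)}(g) = f(n_1^{-1} g n_2)$. The Whittaker properties of $\ell_1$ and $\ell_2$ give
\[B_\pi(f^{(n_1,n_2)}) = \psi(n_1)^{\pm1}\psi(n_2)^{\pm1}B_\pi(f),\]
with signs fixed by the conventions. The same identity holds for $T$, because $j_\pi^0$ is bi-$(N,\psi)$-equivariant on $\Omega$. This equivariance follows from the Theorem \ref{restriction} identity tested against translated functions in $\CS(\Omega)$, together with smoothness of $j_\pi^0$.

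So $D$ is an $(N\times N,\psi\otimes\psi^{-1})$-equivariant distribution supported on the union of the non-open Bruhat cells $BwB$, $w\neq w_0$. We would then run a stratification argument in the style of Bernstein–Zelevinsky: order the cells by dimension and peel them off one at a time. On a cell $BwB\cong N_w\times w\times A\times N$, an $(N\times N,\psi)$-equivariant distribution supported there, together with its transversal derivatives, is forced to vanish unless $\psi$ restricted to the stabilizer is trivial. By the argument of Lemma \ref{relevant elements}, that restriction is nontrivial unless $w=w_I$ is relevant.

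This leaves exactly the relevant cells, and these are the main obstacle: genuinely equivariant distributions do live on $\ov{N}w_I^0A_IN$. For $I\neq\Delta$ they would have to be excluded by an additional argument, most naturally the $Z$-finiteness of $B_\pi$ (the center of the Bernstein center acts by scalars through $\pi$), or by invoking Baruch's full argument \cite[Theorem 2.3 and its proof]{Ba01}. That argument shows that any Bessel-type distribution supported on smaller cells, if nonzero, must fail to be tempered in a way incompatible with the germ structure, but it needs care. If the analytic route is too delicate, we would instead use the standard fact that $B_\pi$ is determined by its values on $\Omega$ through the Whittaker model. Concretely, $B_\pi(f) = \int_N W_{\pi(f)v}(n)\ldots$ is an absolutely convergent expression once regularized via Theorem \ref{thm-regularized}. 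Local integrability of $j_\pi^0$ then allows dominated convergence: approximate $f$ by $f\cdot(1-\phi_k)$, where $\phi_k$ is the characteristic function of shrinking neighborhoods of $G\setminus\Omega$, so that $B_\pi(f\phi_k)\to0$ by the continuity of $\ell_1\circ\pi^\vee(\cdot)\ell_2$ in the appropriate sense.

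Finally, uniqueness of $j_\pi$ is immediate: two locally integrable functions that are smooth on the dense open $\Omega$ and define the same distribution agree almost everywhere, hence agree on $\Omega$ by continuity.
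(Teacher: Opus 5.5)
There is a genuine gap: the whole content of the proposition is that $D=B_\pi-T$ vanishes, and the proposal never proves it.

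Your preliminary steps are fine: the extension by zero, $D$ being supported on $G\setminus\Omega$, the $N\times N$-equivariance of $B_\pi$ and $T$, and uniqueness. You also correctly see that equivariance alone cannot finish, since it only removes the non-relevant cells, while equivariant distributions do live on $\ov{N}w_I^0A_IN$ for $I\neq\Delta$. After that, however, none of your three remedies is actually carried out.

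The dominated-convergence route is circular. Let $\phi_k$ be the characteristic function of a compact open neighbourhood of $(G\setminus\Omega)\cap\mathrm{supp}(f)$, with these neighbourhoods shrinking to that set. Then $f(1-\phi_k)\in\CS(\Omega)$, so $D(f\phi_k)=D(f)$ for every $k$, while $T(f\phi_k)\to 0$ by dominated convergence. Hence $B_\pi(f\phi_k)\to D(f)$, and asserting that this limit is $0$ is just a restatement of $D=0$. A distribution on $G$ has no continuity of the kind you invoke: a point mass at a point of $G\setminus\Omega$ vanishes on $\Omega$ without being zero.

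Appealing to Baruch does not help either, since \cite[Theorem 2.3]{Ba01} (Theorem \ref{restriction}) only describes $B_\pi$ on $\Omega$.

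What is missing is exactly what the paper supplies.
\begin{enumerate}
\item Both $B_\pi$ and $T$ are $\fz$-finite, which the paper takes from \cite[Theorems 1.1 and 1.3]{Chai19}.
\item A $\fz$-finite distribution with this $(N\times N)$-equivariance (with respect to the character built from $\psi$) that vanishes on $\Omega$ is zero. This is \cite[Theorem A]{AGS15} together with \cite[Corollaries B and C]{AGK15}.
\end{enumerate}

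The $\fz$-finiteness of $B_\pi$, which you do mention, is not enough; you need it for $T$ as well. That is not formal: the Bernstein center acts by convolution with essentially compact invariant distributions, which does not preserve supports. So knowing $T=B_\pi$ on $\Omega$ tells you nothing directly about $zT$ for $z\in\fz$.

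With these two inputs, $D$ is $\fz$-finite, equivariant and supported in $G\setminus\Omega$, hence zero. Your cell-by-cell stratification then becomes unnecessary, because the vanishing theorem handles all of $G\setminus\Omega$ at once, relevant cells included.

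A minor point: in the $p$-adic setting there are no ``transversal derivatives''. A distribution supported on a closed subset is simply a distribution on that subset.
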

\begin{proof}
	Assume $j_\pi^0$ is locally integrable. Consider the distribution
	\[B_\pi^1(f) = \int_G j_\pi^0(g)f(g)dg, \quad f \in \CS(G).\]
	Then by the above Theorem of Baruch, the support for the difference $B_\pi - B_\pi^1$ is contained in $G - \Omega$. Note that by Theorem 1.1 and Theorem 1.3 in \cite{Chai19}, both $B_\pi$ and $B_\pi^1$ are $\fz$-finite, where $\fz$ denotes the Bernstein center of $G$. Now by
	results in \cite[Theorem A]{AGS15} and \cite[Corollary B and C]{AGK15}, $B_\pi - B_\pi^1 = 0$. 
\end{proof}

For any $v \in \pi$, denote by $W_v(g) = \ell_1(\pi(g)v)$
the corresponding Whittaker function. The Whittaker model
$\CW(\pi,\psi)$ of $\pi$ is the subspace of $C^\infty(G)$ spanned by these $W_v$, $v \in \pi$.

\begin{thm}[Lapid-Mao, Theorem 2 in \cite{LM13}]\label{thm-regularized}
    Consider the following filtration of compact open subgroups of $N$
    \[\CN: N_1 \subset N_2 \subset \cdots, \quad N = \bigcup_i N_i.\]
    For any $g \in \Omega$ and 
    $W \in \CW(\pi,\psi)$, consider the limit
    \[\int_N^\reg W(gn)\psi^{-1}(n)dn = \lim_{i \ra +\infty} \int_{N_i} W(gn)\psi^{-1}(n)dn.\]
    Then, the limit exists and is independent of the choice of the filtration $\CN$. 
\end{thm}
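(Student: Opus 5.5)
The statement to prove is Theorem \ref{thm-regularized}: the limit of the truncated integrals over $N_i$ exists for any filtration $\CN$, and it does not depend on $\CN$. My approach is to show that the integrand, viewed as a function on $N$, has compact support after averaging against a small compact open subgroup. That is, the truncated integrals stabilize for any exhausting filtration, so the limit is attained at a finite stage and equals a single well-defined quantity.

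\textbf{Step 1: reduce to $g = w_0 a$.} Write $g = n_1 w_0 a n_2$ with $n_1, n_2 \in N$, which is possible because $g \in \Omega$. Left translation by $n_1$ only contributes the constant $\psi(n_1)$. The change of variables $n \mapsto n_2^{-1} n$ sends the filtration $\CN$ to the translated filtration $\{n_2 N_i\}$. For $i$ large, $n_2 \in N_i$, and each $N_i$ is a group, so $n_2 N_i = N_i$. Hence it suffices to treat $g = w_0 a$.

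\textbf{Step 2: replace the integrand by its average.} Fix a compact open subgroup $N_0 \subset N$ such that $W$ is right $N_0$-invariant up to the character $\psi$, and such that $\psi$ is trivial on $\Ad(n)N_0$ in the relevant sense. More concretely, I would use an open compact $N'$ with $W(g n n') = W(gn)\psi(n')$ for $n' \in N'$ (conjugated appropriately). Every $N_i$ with $i$ large contains $N'$ and is a union of cosets $nN'$. So $\int_{N_i} W(gn)\psi^{-1}(n)\,dn$ equals the sum over $n \in N_i/N'$ of the coset averages
\[ \phi(n) = \int_{N'} W(gnn')\psi^{-1}(nn')\,dn'. \]

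\textbf{Step 3 (main obstacle): $\phi$ has compact support on $N/N'$.} The point is that $\phi(n)$ vanishes unless a character condition holds. For $n$ far out in $N$, write $w_0 a n$ via the Bruhat/Iwasawa decomposition as $u\, t(n)\, k(n)$. The averaging over $N'$ then produces, for the torus part $t(n)$, a factor
\[ \int \psi\bigl(\Ad(t(n)) x\bigr)\,\psi^{-1}(x)\,dx , \]
which vanishes unless $t(n)$ lies in a bounded region. This is essentially the Lapid–Mao/Jacquet argument using the $N^2$-equivariance and the root-by-root structure: one proceeds by induction along an ordering of the positive roots, $N = \prod N_\alpha$, integrating one root coordinate at a time. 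For a coordinate $x_\alpha(t)$ with $|t|$ large, $w_0 a x_\alpha(t)$ can be rewritten using the $\SL_2$ identity $x_{-\beta}(s) = x_\beta(s^{-1})\,\alpha^\vee(\cdot)\,w\,x_\beta(s^{-1})$, so that $W$ evaluated there picks up an oscillating character in a small variable. Integrating over a small ball then gives $0$ once $|t|$ exceeds a bound depending on $W$ and $a$. The asymptotics of Whittaker functions (support of $W$ on $A$ bounded in the positive direction, by the Casselman–Shalika style gauge estimate) is what guarantees these bounds, and that is the step I expect to need the most care.

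\textbf{Conclusion.} Once $\phi$ is supported on a compact set $C \subset N/N'$, every filtration eventually has $N_i \supset C N'$. From that point on the integral equals $\sum_{n \in C} \phi(n)$, which is independent of $i$ and of $\CN$. This gives both the existence of the limit and its independence of the filtration.
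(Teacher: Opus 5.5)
The paper does not prove this statement; it quotes it from \cite{LM13}, so your argument has to stand on its own. Step~1 is fine, and so is the final bookkeeping (compact support of $\phi$ would imply stabilization for every filtration). Steps~2--3 do not work, because the central claim is false.

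The only symmetry of $W$ you actually have is right invariance: $W(xk)=W(x)$ for $k$ in some compact open $K$. A relation $W(gnn')=W(gn)\psi(n')$ is not available, since for $g=w_0a$ the conjugate $g\,nN'n^{-1}g^{-1}$ lies in $\ov{N}$, not in $N$. So for $N'\subset N\cap K$ your coset average is just $\phi(n)=W(gn)\psi^{-1}(n)\int_{N'}\psi^{-1}(n')\,dn'$. That is either identically zero or a constant multiple of the integrand. Step~3 therefore asserts that $n\mapsto W(w_0an)$ has compact support on $N$, which fails already for $\GL_2$. Write $x(t)=\matrixx{1}{t}{0}{1}$ and $\ov{x}(s)=\matrixx{1}{0}{s}{1}$; then $w_0x(t)=\matrixx{0}{-1}{1}{t}=x(-t^{-1})\diag(t^{-1},t)\,\ov{x}(t^{-1})$. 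Hence $W(w_0x(t))=\psi(-t^{-1})\,\omega_\pi(t)\,W(\diag(t^{-2},1))$ for $|t|$ large. For an irreducible principal series you can choose $W$ with $W(\diag(y,1))\neq 0$ for all small $|y|$, because the Kirillov model is strictly larger than $\CS(F^\times)$.

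The torus part of $w_0an$ escapes in the direction $|\alpha|\to 0$, where $W$ has nonzero asymptotics. The fact that $W$ is supported in a region bounded in the positive direction is therefore irrelevant. Nor does the factor $\int\psi(\Ad(t(n))x)\psi^{-1}(x)\,dx$ ever arise: moving $n'$ past $k(n)$ gives an element of $K$, not of $N$. The $\SL_2$ identity only produces characters such as $\psi(\pm s^{-1})$, which tend to $1$ rather than oscillating on a fixed small ball.

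What actually makes the limit exist is cancellation at scales that grow with $n$. In the example, the amplitude $W(w_0x(t))$ is invariant under $t\mapsto tu$ for $u\in 1+\fp^c$, while $\psi^{-1}(t)$ oscillates on $t\fp^c$. So each shell $|t|=q^k$ contributes $0$ once $k$ is large, even though the integrand does not vanish there. If instead you take a large fixed $N'$ on which $\psi$ is nontrivial, the coset averages do vanish far out for $\GL_2$. But the reason is that $t+N'\subset t(1+\fp^c)$ for $|t|$ large, not the mechanism you describe.

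For general $G$ this multiplicative cancellation has to be organized uniformly over all of $N$. You would also have to handle directions in $[N,N]$, for example $x_{\alpha_1+\alpha_2}(s)$ in $\GL_3$. There $\psi$ is trivial and $W(w_0n)$ again does not vanish identically, so cancellation can only come from integrating the simple-root coordinates over balls on which the amplitude is constant. That uniform analysis is the real content of the Lapid--Mao theorem. Your proposal replaces it with a heuristic that, as stated, is incorrect.
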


In particular, by the uniqueness of Whittaker functionals, for each $g \in \Omega$, there is a scalar $j_\pi(g)$ such that
\[\int_N^\reg W(gn)\psi^{-1}(n)dn = j_\pi(g)W(e), \quad W \in \CW(\pi,\psi).\]

In the following, we fix a family of isomorphisms 
$x_\alpha:F \stackrel{\sim}{\lra} N_\alpha$, $\alpha \in \Phi$ and
consider the generic character 
\[\psi = \psi_0\] 
given before Definition \ref{relevant with respect to psi_0}.
\begin{thm}[Chai, Theorem 1.1 in \cite{Chai19}]\label{thm-chai}
    We have $j_\pi^0(g) = j_\pi(g)$ for any $g \in \Omega$. 
\end{thm}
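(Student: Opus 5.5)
The plan is to show that the two smooth functions on $\Omega$ define the same distribution on $\CS(\Omega)$. Since both are smooth, equality of distributions will give pointwise equality. By Theorem \ref{restriction} it then suffices to prove
\[\ell_1\left(\pi^\vee(f)\ell_2\right)=\int_G j_\pi(g)f(g)\,dg,\quad f\in\CS(\Omega).\]
Using $\Omega=Nw_0AN$ and a partition of unity, I reduce to the case $f(n_1w_0an_2)=\phi_1(n_1)\phi(a)\phi_2(n_2)$ with $\phi_1,\phi_2\in\CS(N)$ and $\phi\in\CS(A)$. Both sides transform under left and right translation by $N$ through $\psi$, in the same way: the left side by the Whittaker properties of $\ell_1,\ell_2$, and the right side because $j_\pi(n_1gn_2)=\psi(n_1)\psi(n_2)j_\pi(g)$, which follows from Theorem \ref{thm-regularized}. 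Hence the $N$-integrations on both sides produce the same Fourier coefficients $\hat\phi_1(\psi),\hat\phi_2(\psi)$. It is therefore enough to handle the dependence on $w_0a$.

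\textbf{Step 1: approximate $\ell_2$ by vectors.} Fix $u^\vee\in\pi^\vee$ and the filtration $\CN$ of Theorem \ref{thm-regularized}. Set
\[u_i^\vee=\int_{N_i}\psi(n)\pi^\vee(n)u^\vee\,dn .\]
I would use the Lapid--Mao formula (the dual form of Theorem \ref{thm-regularized}): for $v\in\pi$,
\[\lim_i\langle v,u_i^\vee\rangle=\int_N^\reg\langle v,\pi^\vee(n)u^\vee\rangle\psi(n)\,dn=c\,\ell_1(v)\,\ell_2(u^\vee),\]
with $c\neq0$ a fixed constant (this also normalizes $\ell_1\otimes\ell_2$). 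Consequently $\pi^\vee(f)\ell_2$ is realized, after normalizing $u^\vee$, as the limit of the smooth vectors $\pi(f^\vee)$ applied to $u_i^\vee$, paired against $\ell_1$.

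\textbf{Step 2: compute with Whittaker functions.} Write $W=W_v\in\CW(\pi,\psi)$. Then
\[\ell_1\bigl(\pi^\vee(f)\,\cdot\bigr)\]
unfolds to $\int_G f(g)\,\ell_1(\pi(g)\,\cdot)\,dg$. Substituting the finite-stage vectors $u_i^\vee$ and using the $N\times N$-equivariance, the $w_0a$-part becomes
\[\int_A\phi(a)\int_{N_i}W(w_0an)\psi^{-1}(n)\,dn\,\delta(a)^{-1}da .\]
By Theorem \ref{thm-regularized}, the inner integral converges as $i\to\infty$ to $j_\pi(w_0a)W(e)$. This yields the desired identity once the limit in $i$ can be moved inside the $a$-integral.

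\textbf{Main obstacle.} The main difficulty is justifying this interchange of $\lim_i$ with $\int_A\phi(a)\,da$, together with the passage from weak convergence of $u_i^\vee$ to convergence after applying $\pi^\vee(f)$. Because $\phi$ has compact support in $A$ and $w_0a$ stays in a compact subset of the open cell, I expect the Lapid--Mao stabilization to be \emph{uniform}: there should be an index $i_0$, depending only on $\operatorname{supp}\phi$, such that $\int_{N_i}W(w_0an)\psi^{-1}(n)\,dn$ is independent of $i\ge i_0$ for all $a\in\operatorname{supp}\phi$. To get this, I would go into the proof of Theorem \ref{thm-regularized}, where stabilization comes from the vanishing of integrals of $\psi$ over annuli $N_{i+1}\setminus N_i$ once $W(w_0an)$ is invariant under a compact open subgroup of $N$ uniformly in $a$. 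Smoothness of $W$ and compactness of $\operatorname{supp}\phi$ should supply that uniform invariance. With uniform stabilization, the limit becomes a finite-stage identity and the interchange is trivial, completing the proof.
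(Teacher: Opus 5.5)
The paper does not prove this statement; it quotes it from \cite{Chai19}, so your argument has to stand on its own. The overall plan is sound: normalize $\ell_1\otimes\ell_2$ by the Lapid--Mao stable matrix-coefficient integral, approximate a Whittaker functional weakly by $N_i$-averaged vectors, unfold, and let $i\to\infty$. Three points in the setup need correcting.

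\textbf{Step 1 is on the wrong side.} As a functional on $\pi$, $u_i^\vee$ tends to $c\,\ell_2(u^\vee)\,\ell_1$, so it approximates $\ell_1$, not $\ell_2$. Also, ``$\pi(f^\vee)$ applied to $u_i^\vee$'' is not defined. Used as written, this leads to Whittaker functions of $\pi^\vee$ at $g^{-1}$ and to $j_{\pi^\vee}(g^{-1})$, which you have not related to $j_\pi(g)$. Step 2 actually needs $v_i=\int_{N_i}\psi^{-1}(n)\pi(n)v\,dn\in\pi$. This converges weakly in $(\pi^\vee)^*$ to $c\,\ell_1(v)\ell_2$ and gives $\ell_1(\pi^\vee(f)v_i)=\int_G f(g)\int_{N_i}W_v(gn)\psi^{-1}(n)\,dn\,dg$.

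\textbf{The weak-limit passage is easy.} For $f$ bi-$K'$-invariant, admissibility gives $\ell_1(\pi^\vee(f)\lambda)=\lambda(\pi^\vee(f^\vee)w^\vee)$ for a fixed $w^\vee\in(\pi^\vee)^{K'}$. So passing the weak limit through $\pi^\vee(f)$ is not an obstacle.

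\textbf{The normalization needs a separate citation.} Stability and non-vanishing ($c\neq0$) of $\int_N\langle\pi(n)v,v^\vee\rangle\psi^{-1}(n)\,dn$ are a separate Lapid--Mao result, not a dual form of Theorem~\ref{thm-regularized}: the integrand sits at $e\notin\Omega$. It must be cited, and it is what fixes the normalization of $\ell_1\otimes\ell_2$ implicit in $j_\pi^0=j_\pi$.

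\textbf{The genuine gap is the interchange of $\lim_i$ with $\int_G f(g)\,\cdot\,dg$.} This is the real analytic content of the statement, and your proposal asserts it rather than proves it. Theorem~\ref{thm-regularized} gives only pointwise convergence on $\Omega$. In fact $\int_G f\,\Phi_i$, with $\Phi_i(g)=\int_{N_i}W_v(gn)\psi^{-1}(n)\,dn$, is eventually constant in $i$. That still does not identify its value with $W_v(e)\int_G f\,j_\pi$, since pointwise and weak limits of locally constant functions can differ. You need stabilization, or at least a uniform bound, that is uniform for $g$ in compact subsets of $\Omega$.

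\textbf{Your suggested mechanism cannot supply this uniformity.} Smoothness of $W$ gives right invariance of $n\mapsto W(w_0an)$ under one fixed compact open $N'\subset N$. If $\psi|_{N'}\neq1$, this would force $\int_{N_i}W(w_0an)\psi^{-1}(n)\,dn=0$ for all large $i$. If $\psi|_{N'}=1$, it gives no cancellation on $N_{i+1}\setminus N_i$. Cancellation on the $i$-th shell needs invariance under subgroups that grow with $i$ and on which $\psi$ is nontrivial, and this comes from rewriting $w_0an$ for large $n$.

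Already for $\GL_2$ this is visible. Take $w_0=\left(\begin{smallmatrix}0&-1\\1&0\end{smallmatrix}\right)$, $n(x)$ and $\bar n(y)$ upper and lower unipotent, and $t=a_1/a_2$. Then
\[w_0\,a\,n(x)=n\bigl(-(tx)^{-1}\bigr)\,\diag\bigl(a_1(tx)^{-1},\,a_2tx\bigr)\,\bar n(x^{-1}).\]
For $|x|=q^j$ with $j$ large (uniformly for $a$ in a compact set), the factor $W(w_0an(x))$ is unchanged under $x\mapsto x(1+\fp^e)$. Here $e$ is such that $A\cap K'$ fixes $W$, so the factor is invariant under translation by $\fp^{e-j}$. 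Since $\psi$ is nontrivial on $\fp^{e-j}$, the shell integral vanishes for $j>e$. In higher rank, the analogous control of $W(w_0an)$ for $n\notin N_i$ is exactly the content of Lapid--Mao's proof.

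So you must either quote a version of \cite[Theorem 2]{LM13} that is uniform on compact subsets of $\Omega$, or redo that analysis while tracking the thresholds. Smoothness of $W$ and compactness of $\operatorname{supp}\phi$ alone do not suffice.
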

In particular, the Bessel distribution $B_\pi$ is regular if and only if $j_\pi$ is locally integrable.

The following result relates $j_\pi$ to Kloosterman integrals. As before, we fix a system of representatives $\{w_I^0\}_{I \subset \Delta}$
of $\{w_I\}_{I \subset \Delta}$ which is relevant with respect to $\psi_0$.

Let $\omega$  be a character on the center $Z$ of $G$. We consider an analogue of the Kloosterman integral. For $f \in \CS(G)$,
\[I_\omega(g,f) = \int_{N\times Z\times N}f(zg\cdot(n_1,n_2)))\omega^{-1}(z)\psi_0^{-1}(n_1^{-1}n_2)dn_1dzdn_2, \quad g \in \ov{N}B,\]
where
\[g \cdot (u_1,u_2) = \ov{u_1}^{-1} g u_2, \quad \ov{u_1} = (w^0)^{-1} u_1w^0 \in \ov{N}.\]
The above integral converges absolutely and defines a locally constant function on $\overline{N}B$.

\begin{thm}[Lapid-Mao, Theorem 4 in \cite{LM13}] \label{locally orbital integral 2}
    Let $\pi$ be an irreducible admissible generic representation on $G$ with central character $\omega$. For any compact subset $U\subset G$, 
    there exists a Schwartz function $f\in \CS(G)$ such that
    \[j_\pi(g)=I_\omega((w^0)^{-1}g,f), \quad g \in U \cap \Omega.\]
\end{thm}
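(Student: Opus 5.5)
The plan is to deduce the statement from Lapid--Mao's original result, which uses the action $g\mapsto n_1^{-1}gn_2$, by translating by $w^0$. The statement is attributed to \cite[Theorem 4]{LM13}, so the work is bookkeeping between the two conventions.

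First, recall the form of the Lapid--Mao result. For $\pi$ with central character $\omega$ and a compact $U\subset G$, there is $f'\in\CS(G)$ with
\[j_\pi(g)=\int_{N\times Z\times N} f'(z n_1^{-1} g n_2)\,\omega^{-1}(z)\,\psi_0^{-1}(n_1^{-1}n_2)\,dn_1\,dz\,dn_2\]
for $g\in U\cap\Omega$. Their proof writes $j_\pi(g)$ as a regularized integral of Whittaker functions, as in Theorem \ref{thm-regularized}. It then uses smoothness of $W$ to replace this by an absolutely convergent orbital integral over the open cell. The open cell $Bw_0B$ carries a free action of $N\times N$ modulo $Z$, which is why $Z$ appears.

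Next, I would rewrite this in our convention. Put $g=w^0h$, so $h=(w^0)^{-1}g$ lies in $(w^0)^{-1}\Omega=(w^0)^{-1}Bw^0B=\ov{N}B$. Then
\[n_1^{-1}w^0 h n_2 = w^0\,\ov{n_1}^{-1} h n_2 = w^0\,\bigl(h\cdot(n_1,n_2)\bigr).\]
Define $f(x)=f'(w^0x)$, which is again in $\CS(G)$. Since $z$ is central, the integrand becomes $f(z\,h\cdot(n_1,n_2))\,\omega^{-1}(z)\,\psi_0^{-1}(n_1^{-1}n_2)$. This gives exactly $j_\pi(g)=I_\omega((w^0)^{-1}g,f)$ for $g\in U\cap\Omega$.

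The main point to check is that the characters match. The Lapid--Mao integral is normalized with $\psi^{-1}(n_1^{-1}n_2)$, or with $\psi(n_1)\psi^{-1}(n_2)$, for the same generic $\psi=\psi_0$. One must make sure no twist such as $\psi_0\circ\Ad(w^0)$ is introduced. In the computation above, the pair $(n_1,n_2)$ is not changed by the translation, so the character is untouched.

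If \cite{LM13} instead states the result with a representative of $w_0$ different from $w^0$, replace $w^0$ by $w^0t$ with $t\in A$. Then conjugate $\psi_0$ by $t$ using the lemma on $\psi'=\psi(\Ad(a)\cdot)$, and absorb the change into $f$. The convergence and local constancy of $I_\omega$ on $\ov{N}B$ follow because the $N\times Z\times N$ action on $\ov{N}B$ is free with closed orbits.
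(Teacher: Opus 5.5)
Your proposal is correct and is essentially the paper's proof: quote \cite[Theorem 4]{LM13} in the convention $n_1^{-1}zgn_2$, use $n_1^{-1}w^0=w^0\ov{n_1}^{-1}$ together with the centrality of $z$, and replace $f$ by $f(w^0\,\cdot)$. Your closing hedge about other representatives of $w_0$ is unnecessary, because the Lapid--Mao integral involves no Weyl representative; $w^0$ enters only through this translation, which works for any choice.
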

\begin{proof}
    By \cite[Theorem 4]{LM13}, for any compact subset $U\subset G$, there exists a Schwartz function $f\in\CS(G)$ such that
    \[j_\pi(g)=J_f^\psi(g),\quad g\in U.\]
    Here
    \[J_f^\psi(g)=\int_{N\times Z\times N}f(n_1^{-1}zgn_2)\omega^{-1}(z)\psi_0^{-1}(n_1^{-1}n_2)dn_1dn_2,\quad g\in\Omega.\]
    If $g\in\Omega$, then
    \begin{align*}
        J_{f}^\psi(g)&=\int_{N\times Z\times N}f(n_1^{-1}zgn_2)\omega^{-1}(z)\psi_0^{-1}(n_1^{-1}n_2)dn_1dzdn_2\\
        &=\int_{N\times Z\times N}f(w^0\ov{n_1}^{-1}z(w^0)^{-1}gn_2)\omega^{-1}(z)\psi_0^{-1}(n_1^{-1}n_2)dn_1dzdn_2\\
        &=I_\omega((w^0)^{-1}g,f_{w^0}),
    \end{align*}
    where $f_{w^0}(g)=f(w^0g),g\in G$. 
\end{proof}
\begin{lem}
   For any $f \in \CS(G)$, $I_\omega(\cdot,f)$ is locally integrable if $I(\cdot,f)$ is locally integrable.
\end{lem}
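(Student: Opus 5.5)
The natural route is to write $I_\omega(\cdot,f)$ as a central average of translates of $I(\cdot,f)$, and then use that averaging preserves local integrability.

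On the open set $\ov{N}B$ every element is relevant with trivial stabilizer. Since $Z$ is central and $\ov{N}$, $N$ commute with $Z$, we have
\[
zg\cdot(n_1,n_2)=z\,(g\cdot(n_1,n_2)).
\]
Therefore, for $g\in\ov{N}B$,
\[
I_\omega(g,f)=\int_Z \omega^{-1}(z)\, I(zg,f)\,dz = I(g,\tilde f),\qquad \tilde f(g)=\int_Z f(zg)\omega^{-1}(z)\,dz,
\]
after interchanging integrals. The interchange is justified because the $N\times Z\times N$ integral converges absolutely, as noted above. The problem is that $\tilde f$ is not compactly supported modulo nothing; it is only compactly supported modulo $Z$. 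So I would avoid $\tilde f$ and instead argue with $f$ directly.

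Fix a compact set $U\subset G$. We must show $\int_{U\cap \ov{N}B}|I_\omega(g,f)|\,dg<\infty$. The bound
\[
|I_\omega(g,f)|\le \int_Z |I(zg,f)|\,dz
\]
is too crude unless the $z$-range is compact, so we first restrict the range of $z$. Write $\operatorname{supp} f\subset C$ with $C$ compact. If the integrand at $(n_1,z,n_2)$ is nonzero, then
\[
z\,\ov{n_1}^{-1}gn_2\in C .
\]
Apply the $N\times N$-invariant functions $\Delta_i$ with $i>m$ from Lemma \ref{properties of Delta_i}(1),(3). These are nonvanishing and are characters times constants on $Z$-translates: for $i>m$, $\Delta_i(zh)=\lambda_i(z)\Delta_i(h)$. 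This gives
\[
|\lambda_i(z)|\,|\Delta_i(g)|\in |\Delta_i(C)|,
\]
and $|\Delta_i(C)|$ is a compact subset of $\BR_{>0}$. As $g$ ranges over $U$, $|\Delta_i(g)|$ is also bounded above and below. So $|\lambda_i(z)|$ is confined to a compact set for all $i>m$. Since $\{\lambda_i\}_{i>m}$ span $X^*(Z)_\BQ$, this forces $z$ into a compact subset $Z_U\subset Z$ modulo the finite group $Z\cap G^\der$ (the part of $Z$ killed by all these characters is compact). Thus $Z_U$ is compact and independent of $g\in U$.

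Then, for $g\in U\cap\ov{N}B$,
\[
|I_\omega(g,f)|\le \int_{Z_U}|I(zg,f)|\,dz .
\]
Here we bound $|I(zg,f)|$ by the orbital integral of $|f|$, or simply keep $I(zg,f)$ itself, since $I(zg,f)$ is $I(\cdot,f)$ translated. Integrating over $g\in U$ and using Fubini together with left-invariance of Haar measure under central translation gives
\[
\int_U |I_\omega(g,f)|\,dg \le \int_{Z_U}\int_{zU}|I(h,f)|\,dh\,dz \le \Vol(Z_U)\int_{Z_UU}|I(h,f)|\,dh<\infty .
\]
The last integral is finite because $Z_UU$ is compact and $I(\cdot,f)$ is locally integrable.

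The main obstacle is the confinement of $z$ to a compact set uniformly in $g\in U$. The $\Delta_i$ with $i>m$ handle exactly this, and the essential point to check is that $\Delta_i(zh)=\lambda_i(z)\Delta_i(h)$ for $z\in Z$. This holds because $\pi_i(z)$ acts by the scalar $\lambda_i(z)$. One should also confirm that measurability and Fubini apply to $|I(zg,f)|$; this is fine since $I(\cdot,f)$ is locally constant on $\ov{N}B$, whose complement has measure zero.
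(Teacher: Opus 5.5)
Your proposal is correct and is essentially the paper's argument: write $I_\omega(g,f)=\int_Z \omega^{-1}(z)\,I(zg,f)\,dz$, use the nonvanishing $N\times N$-invariant functions $\Delta_i$, $i>m$, to confine $z$ to a compact subset of $Z$ uniformly for $g$ in a compact set, and conclude by Fubini together with the local integrability of $I(\cdot,f)$. One small slip: the central character $\omega$ need not be unitary, so your bound should keep the factor $|\omega^{-1}(z)|$, which is harmless because it is bounded on the compact set $Z_U$.
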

\begin{proof}
	Let $Q$ be a compact set in $G$. We must show that $\int_{Q}|I_\omega(g,f)|dg$ is bounded on $Q$ in $G$. We have
    \begin{align*}
        \int_{Q}|I_\omega(g,f)|dg&=\int_{Q}\Big|\int_ZI(gz,f)\omega(z)^{-1}dz\Big|dg\\
        &\leq\int_Q\int_Z|I(zg,f)\omega(z)^{-1}|dzdg\\
        &=\int_Z\left(\int_Q|I(zg,f)|dg\right)|\omega(z)^{-1}|dz
    \end{align*}
    Consider the functions $\Delta_i, m+1\leq i\leq n$ in Lemma
\ref{properties of Delta_i}. If $g\in Q,z\in Z$ and $gz$ is in the support of the orbital integral $I(g,f)$, we have $\Delta_i(gz)=\Delta_i(g)\Delta_i(z)$, $m+1 \leq i \leq n$, is in some fixed compact set in $F^\times$. As $g$ is in the compact set $Q$, we know that $z$ is in a  compact set  in $Z$. As $I(\cdot,f)$ is locally integrable, $I_\omega(\cdot,f)$ is locally integrable.

\end{proof}

In particular, the local integrability of Kloosterman integrals for $G$ implies  the regularity of Bessel distributions for
all irreducible admissible generic representations on $G$.

Denote by $\delta$ the modulus character of $B$. Following \cite{DR98}, we consider a maximal compact subgroup $K_0$ of $G$. Denote by $\CO$ the ring of integers in $F$. Consider the maximal open compact subgroup of $A$ and $N_\alpha$, $\alpha \in \Phi$
\[A_0=\{a\in A:\chi(a)\in\CO^\times\text{ for all }\chi\in X^*(A)\}, \quad N_{\alpha,0} = x_\alpha(\CO).\]
Let $K_0$ be the subgroup of $G$ generated by $A_0$ and all $N_{\alpha,0}$ for $\alpha\in\Phi$. Then $K_0$ is a maximal compact subgroup of $G$.

For each $a\in A$, there exists a unique $\lambda_a\in X_*(A)$ such that 
\[a=a_0\lambda_a(\varpi), \quad a_0 \in A\cap K_0.\] 
This decomposition gives an isomorphism
\[(A \cap K_0) \times X_*(A) \stackrel{\sim}{\lra} A, \quad
(a_0, \lambda) \mapsto a_0\lambda(\varpi).\]

\begin{prop}\label{prop-delta}
	Let $\phi$ be a $(N \times N)$-invariant function on $G$ such that there exist some $\varepsilon > 0$ and $C>0$ satisfying  
	\[ \Big|\phi(a)\delta^{\frac{1}{2}-\varepsilon}(a)\Big| \leq C, \quad a \in A.\] 
    Then $\phi$ is locally integrable on $G$.
\end{prop}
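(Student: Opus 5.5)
We will reduce local integrability of $\phi$ to an integral over $A$ against the Haar measure in Bruhat coordinates, and then to a lattice sum controlled by the counting of D\k{a}browski--Reeder. Since $\phi$ is $(N\times N)$-invariant, its values are determined on $\ov{N}\,\dot w A N$. The complement of the open cell $\ov{N}AN$ (equivalently $w^0$-translate of $Bw_0B$) has measure zero. So it suffices to show
\[\int_Q|\phi(g)|\,dg<\infty\]
for every compact $Q$, where the integrand lives on the big cell. There $dg=\delta^{\pm1}(a)\,d\ov{n}\,da\,dn$, with the sign fixed by the normalization of the coordinates $g=\ov{n}an$ (I expect $\delta(a)^{-1}$ up to the $w^0$ twist).

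Rather than computing the volume of $Q\cap \ov{N}aN$ directly, we will use the group $K_0$ and its Cartan/Bruhat structure. Cover $Q$ by finitely many translates of $K_0$; we may assume $Q=K_0$, up to replacing $\phi$ by a translate. The hypothesis is translation-sensitive, so it is cleaner to bound $\int_{gK_0}$ by bi-invariance. Decompose $K_0$ into double cosets $(N\cap K_0)\backslash(\ov{N}\,\dot w_0 a N\cap K_0)/(N\cap K_0)$ for $a\in A$, using $\phi(\ov{n}an)=\phi(a)$ (after absorbing $w^0$). Then
\[\int_{K_0}|\phi|\,dg=\sum_{\lambda\in X_*(A)}|\phi(\lambda(\varpi))|\cdot \Vol\bigl(K_0\cap \ov{N}\lambda(\varpi)A_0N\bigr).\]
The volume equals $\#X(\dot w_0\lambda(\varpi))\cdot c$, with $c=\Vol(N\cap K_0)^2\Vol(A_0)$, which is the size of the Kloosterman set for $K_0$.

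The key input is the counting formula of D\k{a}browski--Reeder \cite{DR98}. The set $X(\dot w_0 a)$ is nonempty only for $a$ in a translate of the anti-dominant cone. Its cardinality is at most $C'\,\delta(a)^{-1/2}\cdot P(\lambda_a)$, with $P$ polynomial in $\lambda_a$ (up to the sign convention). Combined with the hypothesis $|\phi(a)|\le C\delta(a)^{-1/2+\varepsilon}$, the sum is dominated by
\[\sum_{\lambda\ \text{in a cone}} P(\lambda)\,\delta(\lambda(\varpi))^{\varepsilon}.\]
This converges because $\delta(\lambda(\varpi))=q^{-\langle 2\rho,\lambda\rangle}$ decays exponentially in the cone on which $\langle2\rho,\lambda\rangle\to+\infty$.

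There is a subtlety: the central directions. The cone of $\lambda$ with nonempty $X$ is a cone only modulo $X_*(Z)$, along which $\delta$ is trivial. Intersecting with $K_0$ forces $\lambda$ to have bounded central component, via the functions $\Delta_i$, $i>m$, of Lemma \ref{properties of Delta_i}(3), which are units on $K_0$. Hence only finitely many central classes contribute.

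The main obstacle is making the volume identification exact. This means matching the Haar measure on $G$ with $d\ov{n}\,da\,dn$ times the correct modulus factor, and checking that $\#X(\dot w_0a)$ grows like $\delta^{-1/2}$ times at most polynomial factors, rather than $\delta^{-1}$. I would first verify both on $\SL_2$, where $\#X=|\CO^\times/(1+\fp^r)|\approx q^r=\delta(a)^{-1/2}$, and then cite \cite{DR98} for the general bound.
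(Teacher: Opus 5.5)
Your route is the paper's: restrict to the open cell $\ov{N}AN$, reduce to $K_0$ by translating, write $\int_{K_0}|\phi|$ as a sum over $\lambda\in X_*(A)$ weighted by Kloosterman-set sizes, then conclude with the D\k{a}browski--Reeder bound $\#X(\dot w_0a)\ll\delta^{-1/2}(a)P(r)$ and a geometric series. However, the one quantitative step everything hinges on, the modulus factor in the Haar measure, is wrong as you wrote it.

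In the coordinates $g=\ov{n}an$ one has $dg=d\ov{n}\,d_rb$, with right Haar measure $d_r(an)=\delta(a)\,da\,dn$ on $B$. Hence
\[\Vol\bigl(K_0\cap\ov{N}\lambda(\varpi)A_0N\bigr)=\Vol(N\cap K_0)^2\,\Vol(A_0)\,\delta(\lambda(\varpi))\,\#X(\dot w_0\lambda(\varpi)).\]
It is not $c\cdot\#X(\dot w_0\lambda(\varpi))$, and the factor is $\delta^{+1}$, not the $\delta^{-1}$ you expect. This is decisive. On the support $\lambda=\sum_i r_i\alpha_i^\vee$ with $r_i\ge 0$, we have $\delta(\lambda(\varpi))=q^{-2\sum_i r_i}\to 0$. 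With your volume formula the terms are $\delta^{-1/2+\varepsilon}\cdot\delta^{-1/2}P=\delta^{-1+\varepsilon}P$. With a $\delta^{-1}$ factor they would be $\delta^{-2+\varepsilon}P$. Both series diverge.

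Your displayed majorant $\sum P(\lambda)\,\delta(\lambda(\varpi))^{\varepsilon}$ follows only from $\delta\cdot\delta^{-1/2+\varepsilon}\cdot\delta^{-1/2}=\delta^{\varepsilon}$, i.e.\ from the correct factor, which you never established. A quick check better than the $\SL_2$ count: $\phi\equiv 1$ satisfies the hypothesis with $\varepsilon=1/2$, and the volumes must sum to $\Vol(K_0)<\infty$. Since D\k{a}browski--Reeder also gives $\#X(\dot w_0\lambda(\varpi))\gg\delta^{-1/2}(\lambda(\varpi))$ on the support, this forces the factor $\delta(\lambda(\varpi))$.

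There are three smaller points. First, $\phi$ is not assumed $A_0$-invariant, so your identity $\int_{K_0}|\phi|=\sum_\lambda|\phi(\lambda(\varpi))|\Vol(\cdots)$ must become an inequality. Insert the $A_0$-invariant bound $C\delta^{-1/2+\varepsilon}$ first, as the paper does via $\int_A|\phi(a)|\,\CO(a)\,\delta(a)\,da$. Second, make the translation step precise. Cover a compact set by $K_0g_0$ with $g_0=a_0n_0\in B$ (Iwasawa). Then $\phi(\cdot\,g_0)$ is again $(N\times N)$-invariant and satisfies the hypothesis with $C$ replaced by $C\delta^{-1/2+\varepsilon}(a_0)$. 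Third, the central-direction discussion via the $\Delta_i$ is unnecessary. The support condition $\lambda_a\in\BZ_{\geq 0}\Phi^{\vee,+}$ already lies in the coroot lattice, so its central component vanishes. That is exactly how the paper reduces to a sum over $\BZ_{\geq 0}^m$.
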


The proof of this Proposition is based on the following result.

\begin{thm}[D\k{a}browski-Reeder, Theorem 0.3 in \cite{DR98}]\label{DR1998}
    Let $a \in A$. Consider the integral
    \[\CO(a) = \int_{N^2} 1_{K_0}( a \cdot(n_1,n_2))dn_1dn_2.\]
    Here $1_{K_0}$ is the characteristic function of the maximal open compact subgroup $K_0$ of $G$. 
    
    If $\lambda_a \notin\BZ_{\geq 0}\Phi^{\vee,+}$, then  $\CO(a) = 0$. If $\lambda_a \in \BZ_{\geq 0}\Phi^{\vee,+}$, then up to a constant,
    \[\CO(a) \stackrel{.}{=} \delta^{-\frac{1}{2}}(a)\sum_{\ov{r}}\left(1-\frac{1}{q}\right)^{\kappa(\ov{r})}.\]
    Here, $\overline{r}= (r_\gamma)_{\gamma \in \Phi^{\vee,+}}$ runs over all possible decompositions for $\lambda_a$ with respect to $\Phi^{\vee,+}$, that is,
    all the decompositions $\lambda_a = \sum_{\gamma \in \Phi^{\vee,+}} r_\gamma \gamma$. For each $\overline{r} = (r_\gamma)_\gamma$, $\kappa(\overline{r})$ is the number of 
    $\gamma \in \Phi^{\vee,+}$ with $r_\gamma > 0$.
\end{thm}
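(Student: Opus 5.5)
The plan is to turn $\CO(a)$ into a Haar-measure computation on $K_0$ and then evaluate it rank-one piece by rank-one piece. Since we may choose the representatives so that $w^0\in K_0$, the substitution $\ov{n_1}=(w^0)^{-1}n_1w^0$ identifies $\CO(a)$ with
\[\int_{\ov{N}\times N}1_{K_0}(\ov{u}\,a\,n)\,d\ov{u}\,dn .\]
On the big cell $\ov{N}AN$ the Haar measure decomposes as $dg=\delta(b)^{-1}d\ov{u}\,db\,dn$ for $g=\ov{u}bn$; the exact normalization of the $\delta$-power will be fixed by a rank-one check. The left factor $\ov N$ is conjugated past $b$ only in the Jacobian, and everything is right $A_0$-invariant because $A_0\subset K_0$.

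Integrating the above over the coset $aA_0$ therefore gives
\[\CO(a)\stackrel{.}{=}\delta(a)\cdot\Vol\bigl(K_0\cap \ov{N}aA_0N\bigr),\]
up to the normalizing constant $\Vol(A_0)^{-1}$. So the problem becomes measuring the set of $k\in K_0$ in the open cell whose torus component has valuation vector $\lambda_a$. This is a Kloosterman set for $K_0$, and its volume is a $q$-power times a finite count.

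\textbf{Support.} I will use the functions $\Delta_i$ of Lemma~\ref{properties of Delta_i}, taken with $\pi_i$ realized over $\CO$, so that $\Delta_i(K_0)\subset\CO$. By Lemma~\ref{properties of Delta_i}(1), $\Delta_i(k)=\lambda_i(b)$ for $k=\ov u b n$ (using $\Delta_i(e)=1$).
\begin{itemize}
\item For $i\le m$ this gives $\langle\lambda_i,\lambda_a\rangle\ge 0$.
\item For $i>m$ the same argument applied to $k$ and $k^{-1}$ (or to the dual representations) forces $\langle\lambda_i,\lambda_a\rangle=0$.
\end{itemize}
This only says that $\lambda_a$ lies in the cone $\BQ_{\geq0}\Phi^{\vee,+}$. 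To reach the integral condition $\lambda_a\in\BZ_{\ge0}\Phi^{\vee,+}$, I would not stop here. Instead the support statement should come out of the stratification below: every nonempty stratum produces an explicit expression of $\lambda_a$ as a nonnegative integer combination of positive coroots.

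\textbf{Stratification.} Fix a reduced word $w_0=s_{\beta_1}\cdots s_{\beta_L}$. Write a generic element of $\ov{N}$ (or $N$) as an ordered product of root-group elements along the positive roots $\gamma_j=s_{\beta_L}\cdots s_{\beta_{j+1}}(\beta_j)$, and peel off one factor at a time in each rank-one subgroup $G_{\beta_j}$, as in the $\SL_2$ computation. The $\SL_2$ identity
\[x_{-\gamma}(t)=x_\gamma(t^{-1})\,\gamma^\vee(t^{-1})\,\tilde s_\gamma\,x_\gamma(t^{-1})\]
shows that a coordinate $t$ with $v(t)=-r_\gamma<0$ contributes the torus factor $\gamma^\vee(\varpi^{r_\gamma})$ times a unit. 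Its set of admissible values has measure $(1-q^{-1})q^{r_\gamma}$ when $r_\gamma>0$, and $1$ when the coordinate is integral ($r_\gamma=0$).

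Inductively, membership in $K_0$ then splits into strata indexed by $\ov r=(r_\gamma)_{\gamma\in\Phi^{\vee,+}}$ satisfying $\lambda_a=\sum_\gamma r_\gamma\gamma$. The contribution of a stratum is
\[(1-q^{-1})^{\kappa(\ov r)}\,q^{\sum_\gamma r_\gamma\langle\rho,\gamma\rangle}=(1-q^{-1})^{\kappa(\ov r)}\,\delta(a)^{-1/2}.\]
Combining with the Jacobian factor yields the stated $\delta^{-1/2}(a)$ times the sum over decompositions. It also shows that $\CO(a)=0$ when no such decomposition exists, which gives the support statement.

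\textbf{Main obstacle.} The hard step is the inductive peeling. After moving one rank-one factor across, the remaining $N$-part is conjugated by a torus element and by $\tilde s_\gamma$. I must check that the integrality conditions for the later coordinates remain independent of the earlier ones, and that the strata are disjoint and exhaust $K_0\cap\ov NaA_0N$. This is the Bruhat–Tits / Chevalley-commutator bookkeeping of D\k{a}browski–Reeder. I would first verify it for $\SL_2$ and $\SL_3$ against the Kloosterman-set counts, and then argue by induction on $\ell(w)$ using the parabolic reduction of Lemma~\ref{decomposition of omega_w}.
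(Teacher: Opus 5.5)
Your outline (stratify along a reduced word for $w_0$, read off $\lambda_a=\sum_\gamma r_\gamma\gamma$ from the rank-one pieces, count each stratum) is the argument of \cite{DR98}. The paper does not reproduce that argument: it observes $\CO(a)\stackrel{.}{=}\#X(w^0a)$ (cf.\ the proof of Lemma~\ref{lem-int-sum}) and quotes \cite{DR98}. As a self-contained proof, your proposal has a normalization error and, more importantly, a genuine gap.

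\textbf{The normalization error.} For $g=\ov u b n$ one has $dg=\delta(b)\,d\ov u\,db\,dn$, not $\delta(b)^{-1}d\ov u\,db\,dn$; this is the formula used in the proof of Proposition~\ref{prop-delta}. Hence $\Vol(K_0\cap\ov NaA_0N)\stackrel{.}{=}\delta(a)\,\CO(a)$, i.e.\ $\CO(a)\stackrel{.}{=}\delta(a)^{-1}\Vol(K_0\cap\ov NaA_0N)$. Already in $\SL_2$, with $a=\diag(t,t^{-1})$ and $v(t)=r>0$, one computes $\CO(a)=q^r(1-q^{-1})$, while $\{k\in\SL_2(\CO): v(k_{11})=r\}$ has volume of order $q^{-r}$. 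Moreover, the quantities you attach to strata (measures $(1-q^{-1})q^{r_\gamma}$ of sets of $\ov N$-coordinates) are already contributions to $\CO(a)$ itself. Multiplying a stratum total of size $\delta(a)^{-1/2}$ by a further Jacobian $\delta(a)^{\pm1}$ cannot return $\delta(a)^{-1/2}$. The detour through volumes is unnecessary anyway. The integrand is left $(\ov N\cap K_0)$- and right $(N\cap K_0)$-invariant, and $(\ov u,n)\mapsto\ov uan$ is injective, so $\CO(a)=\Vol(N\cap K_0)^2\cdot\#X(w^0a)$ with no Jacobian at all.

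\textbf{The gap: the stratum count is asserted, not derived.} The ingredients you list do not produce it. Coordinates of valuation $-r_\gamma$ contributing $(1-q^{-1})q^{r_\gamma}$ and integral coordinates contributing $1$ give $(1-q^{-1})^{\kappa(\ov r)}q^{\sum_\gamma r_\gamma}$. You need $q^{\langle\rho,\lambda_a\rangle}=q^{\sum_\gamma r_\gamma\langle\rho,\gamma\rangle}$. Nothing in your sketch supplies the factor $q^{\sum_\gamma r_\gamma(\langle\rho,\gamma\rangle-1)}$, which is nontrivial as soon as a non-simple coroot occurs.

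For example, take $\SL_3$ and $a=\diag(\varpi,1,\varpi^{-1})$. By your own formula, the stratum with $r_{\alpha_1^\vee+\alpha_2^\vee}=1$ must contribute $q^2(1-q^{-1})$, but your recipe gives $q(1-q^{-1})$. In \cite{DR98} (in the form of \cite{BM24} recalled in the $\GL_n$ section, with representatives $c_{ij}\in\CO/\fp^{\sum_{k\ge j}m_{ik}}$), the missing powers of $q$ come from the right $N\cap K_0$-action. Modulo that action, each coordinate is determined only modulo $\fp^{M}$, where $M$ is a partial sum of the stratum indices. In the example, the \emph{integral} coordinate $a_{11}$ runs over $\CO/\fp^{m_{11}+m_{12}}$, i.e.\ $q$ classes, not one.

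Establishing this description of the $N\cap K_0$-action on each stratum is exactly what you defer as ``bookkeeping'', and so are the disjointness and exhaustion of the strata. That is the whole content of the theorem. It also includes the integral support statement, since your $\Delta_i$ argument only gives $\lambda_a\in\BQ_{\ge0}\Phi^{\vee,+}$. So in its present form the proposal amounts to citing \cite{DR98}, as the paper does, and its self-contained parts contain the errors above.
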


 \begin{remark}\label{results in DR}
    We have $\CO(a)\stackrel{.}{=}\#X(w^0a)$ (See the proof of Lemma \ref{lem-int-sum}), where
    \[X(w^0a)=N\cap K_0\backslash Nw^0aN\cap K_0/N\cap K_0\]
    is the Kloosterman set corresponding to $w^0a$ considered in \cite[Theorem 0.3]{DR98}.
\end{remark}

\begin{proof}[Proof of Proposition \ref{prop-delta}]
    It is sufficient to prove that
    \[\int_G|\phi(g)|1_{K_0g_0}(g)dg<+\infty\]
    for any $g_0 \in B$. The above integral equals
    \[\int_G |\phi(gg_0)|1_{K_0}(g)dg.\]
    As $\phi$ is $(N \times N)$-invariant. We can take $g_0 \in A$. Write $R(g_0)\phi = \phi(\cdot g_0)$.
    Then $R(g_0)\phi$ is also $(N \times N)$-invariant.

    We may assume the Haar measures on $G$ with those on $A$ and $N$ satisfying the following
    integration formula
    \[\int_G f(g)dg = \int_N \int_A \int_N f(\ov{n_1} a n_2) \delta(a)dn_1 da dn_2, \quad f \in C_c(G).\]
    
    By the $(N \times N)$-invariance property of $R(g_0)\phi$, 
    we have
    \[\int_G|R(g_0)\phi(g)|1_{K_0}(g)dg =\int_A|R(g_0)\phi(a)|\CO(a)\delta(a)da \leq C\delta^{1/2+\varepsilon}(g_0)\int_A\delta^{\frac{1}{2}+\varepsilon}(a) \CO(a)da.\]

    Note that $\delta$ and $\CO$ are invariant under the multiplication by $A\cap K_0$. We have
    \[\int_A\delta^{\frac{1}{2}+\varepsilon}(a) \CO(a)da
     = \Vol(A\cap K_0) \sum_{\lambda \in X_*(A)}\delta^{\frac{1}{2}+\varepsilon}(\lambda(\varpi)) \CO(\lambda(\varpi)).\]

     By the result for the support of $\CO$ in Theorem \ref{DR1998},
     \[\sum_{\lambda \in X_*(A)}\delta^{\frac{1}{2}+\varepsilon}(\lambda(\varpi)) \CO(\lambda(\varpi)) = 
     \sum_{\lambda \in \BZ_{\geq 0}\Phi^{\vee,+}}\delta^{\frac{1}{2}+\varepsilon}(\lambda(\varpi)) \CO(\lambda(\varpi)).\]

     Denote by 
     $\Delta^\vee = \{\alpha_i^\vee\}_{1 \leq i \leq m}$ 
     the set of simple coroots. Each $\lambda \in \BZ_{\geq 0} \Phi^{\vee,+}$ can be written uniquely as
     $\lambda = \sum_{i=1}^m r_i \alpha_i^\vee$
     and this gives an isomorphism
     \[\BZ_{\geq 0} \Phi^{\vee,+} \stackrel{\sim}{\lra} \BZ_{\geq 0}^m, \quad \lambda \mapsto (r_i)_{1 \leq i \leq m}.\]

     By the following Lemma \ref{lem-delta-formula}
     \[\delta(\lambda(\varpi)) = q^{-2\sum_i r_i}, \quad 
     \lambda = \sum_{i=1}^m r_i \alpha_i^\vee \in 
     \BZ_{\geq 0} \Phi^{\vee,+}.\]
     On the other hand, 
     by the following Lemma \ref{estamite on Kostant partition function}, there exists a polynomial 
     $P(x_1,\ldots,x_m) \in \BZ_{\geq 0}[x_1,\ldots,x_m]$ such that
     \[\CO(\lambda(\varpi)) \leq \delta^{-1/2}(\lambda(\varpi))P(r_1,\ldots,r_m), \quad 
     \lambda = \sum_{i=1}^m r_i \alpha_i^\vee \in 
     \BZ_{\geq 0} \Phi^{\vee,+}.\]

      Therefore,
    \[\sum_{\lambda \in \BZ_{\geq 0}\Phi^{\vee,+}}\delta^{\frac{1}{2}+\varepsilon}(\lambda(\varpi)) \CO(\lambda(\varpi))
    \leq \sum_{(r_i)_i \in \BZ_{\geq 0}^m} 
    q^{-2\varepsilon\sum_i r_i} P(r_1,\ldots,r_m).\]

   Consider the following function on $\BR^m$
   \[f(x_1,\cdots,x_m):=q^{-2\varepsilon\sum_{1\leq i\leq m}x_i}\cdot P(x_1,\cdots,x_m).\]
   It is a Schwartz function on $\BR_{\geq 0}^m$.
   The local integrability of $\phi$ follows from 
   \[\sum_{(r_i)_i \in \BZ_{\geq 0}^m} f(r_1,\ldots,r_m) < \infty.\]
\end{proof}

\begin{lem}\label{lem-delta-formula}
    For each coroot  $\lambda = \sum_{i=1}^m r_i \alpha_i^\vee$, 
    we have
     \[\delta(\lambda(\varpi)) = q^{-2\sum_i r_i}.\]
\end{lem}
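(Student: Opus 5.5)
The plan is to compute $\delta$ directly as a product over positive roots and then reduce everything to the standard pairing of $2\rho$ with simple coroots. Since $\delta$ is the modulus character of $B = AN$, for $a \in A$ we have
\[\delta(a) = \bigl|\det \Ad(a)|_{\mathrm{Lie}(N)}\bigr| = \prod_{\alpha\in\Phi^+}|\alpha(a)|,\]
because $\mathrm{Lie}(N)$ decomposes as the sum of the root lines for $\alpha \in \Phi^+$, each of dimension one since $G$ is split. Set $2\rho = \sum_{\alpha\in\Phi^+}\alpha \in X^*(A)$. Then $\delta(a) = |(2\rho)(a)|$.

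For $\lambda \in X_*(A)$, the definition of the pairing gives $(2\rho)(\lambda(\varpi)) = \varpi^{\langle 2\rho,\lambda\rangle}$. Hence
\[\delta(\lambda(\varpi)) = q^{-\langle 2\rho,\lambda\rangle}.\]
By bilinearity of $\langle\cdot,\cdot\rangle$, it therefore suffices to show $\langle 2\rho,\alpha_i^\vee\rangle = 2$ for every simple coroot $\alpha_i^\vee$. This is the only substantive step; once it holds, $\langle 2\rho,\sum_i r_i\alpha_i^\vee\rangle = 2\sum_i r_i$, which is the claim. The formula then holds for every $\lambda \in \BZ_{\geq 0}\Phi^{\vee,+}$ (indeed for all of $\BZ\Delta^\vee$), which is how the lemma is used in the proof of Proposition \ref{prop-delta}.

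To show $\langle 2\rho,\alpha_i^\vee\rangle = 2$, I will use the standard fact that the simple reflection $s_{\alpha_i}$ permutes $\Phi^+\setminus\{\alpha_i\}$ and sends $\alpha_i$ to $-\alpha_i$. Applying this to the sum defining $2\rho$ gives
\[s_{\alpha_i}(2\rho) = 2\rho - 2\alpha_i.\]
On the other hand, the defining formula for the coroot gives
\[s_{\alpha_i}(2\rho) = 2\rho - \langle 2\rho,\alpha_i^\vee\rangle\alpha_i.\]
Comparing the two expressions, and using $\alpha_i \neq 0$, yields $\langle 2\rho,\alpha_i^\vee\rangle = 2$.

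For the permutation fact itself, take $\beta \in \Phi^+$ with $\beta \neq \alpha_i$. Write $\beta$ in simple roots; since $\beta$ is not a multiple of $\alpha_i$ (the root system of a split reductive group is reduced), some simple root $\alpha_j$ with $j\neq i$ has positive coefficient in $\beta$. The reflection $s_{\alpha_i}(\beta) = \beta - \langle\beta,\alpha_i^\vee\rangle\alpha_i$ changes only the $\alpha_i$-coefficient, so that positive coefficient survives and $s_{\alpha_i}(\beta)$ is again a positive root. Since $s_{\alpha_i}$ is an involution, it restricts to a bijection of $\Phi^+\setminus\{\alpha_i\}$.
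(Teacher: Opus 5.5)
Your proposal is correct and follows essentially the same route as the paper: write $\delta(\lambda(\varpi))=q^{-\langle 2\rho,\lambda\rangle}$, then get $\langle 2\rho,\alpha_i^\vee\rangle=2$ by comparing $s_{\alpha_i}(2\rho)=2\rho-2\alpha_i$ with the coroot formula $s_{\alpha_i}(x)=x-\langle x,\alpha_i^\vee\rangle\alpha_i$. The only difference is that you also justify the product formula for $\delta$ and the fact that $s_{\alpha_i}$ permutes $\Phi^+\setminus\{\alpha_i\}$, which the paper takes as standard.
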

\begin{proof}
   By definition, we have
   \[\delta(\lambda(\varpi)) = q^{-\langle 2\rho,\lambda \rangle}
   = q^{-\sum_i \langle 2\rho,\alpha_i^\vee \rangle r_i}\]
   with $2\rho = \sum_{\beta \in \Phi^{+}} \beta$. Let $\alpha$
   be a simple root. Then the simple reflection $s_\alpha$ satisfies
   \[s_\alpha(\Phi^+) = (\Phi^+ \setminus \{\alpha\}) \cup \{-\alpha\}.\]
   Therefore, 
   \[s_\alpha(2\rho) = \sum_{\beta \in \Phi^+} s_\alpha(\beta) = 
   \sum_{\beta \in \Phi^+ \setminus \{\alpha\}} s_\alpha(\beta) - \alpha = \sum_{\beta \in \Phi^+ \setminus \{\alpha\}} \beta - \alpha = 2\rho-2\alpha.\]
   As $s_\alpha(2\rho) = 2\rho  - \langle 2\rho,\alpha^\vee \rangle \alpha$, we have $\langle 2\rho,\alpha^\vee \rangle = 2$ and
   the Lemma holds.
\end{proof}

Consider the Kostant partition function
\[R(\lambda)=\#\left\{\ov{r}=(r_\gamma)_{\gamma\in\Phi^{\vee,+}}:\lambda=\sum_{\gamma\in\Phi^{\vee,+}}r_\gamma\gamma\right\}, \quad \lambda \in \BZ_{\geq 0}\Phi^{\vee,+}.\]
\begin{lem}\label{estamite on Kostant partition function}
    The Kostant partition function $R$ is bounded by a polynomial 
    $P(x_1,\ldots,x_m) \in \BZ_{\geq 0}[x_1,\ldots,x_m]$, that is,
    \[R(\lambda)\leq P(r_1,\cdots,r_m), \quad \lambda = \sum_{i=1}^m r_i \alpha_i^\vee \in 
     \BZ_{\geq 0} \Phi^{\vee,+}.\]
     In particular, by Theorem \ref{DR1998},  
     \[\CO(\lambda(\varpi)) \leq \delta^{-1/2}(\lambda(\varpi))P(r_1,\ldots,r_m), \quad 
     \lambda = \sum_{i=1}^m r_i \alpha_i^\vee \in 
     \BZ_{\geq 0} \Phi^{\vee,+}.\]
    \begin{proof}
        In fact, we can take the following polynomial
        \[P(x_1,\cdots,x_m)=\left(1+\sum_{1\leq i\leq m}x_i\right)^{|\Phi^{\vee,+}|}.\] 
        To see this, write each $\gamma \in \Phi^{\vee,+}$ as $\gamma=\sum_{1\leq i\leq m}c_{\gamma,i}\alpha_i^\vee$ with $c_{\gamma,i}\in\BZ_{\geq 0}$. Therefore, for each 
        $\lambda = \sum_i r_i\alpha^\vee_i \in \BZ_{\geq 0} \Phi^{\vee,+}$, a partition
        $\lambda = \sum_\gamma r_\gamma \gamma$ satisfies the following equations
        \[\sum_{\gamma\in\Phi^{\vee,+}}c_{\gamma,i}r_\gamma=r_i, \quad 1\leq i\leq m.\]
        Then we have
        \[\sum_{\gamma\in\Phi^{\vee,+}}\left(\sum_{1\leq i\leq m}c_{\gamma,i}\right)r_\gamma=\sum_{1\leq i\leq m}r_i.\]
        Since at least one $c_{\gamma,i}\geq 1$ for every $\gamma$, we have $\sum_{1\leq i\leq m}c_{\gamma,i}\geq 1$ and hence
        \[0 \leq r_\gamma\leq\left(\sum_{1\leq i\leq m}c_{\gamma,i}\right)r_\gamma \leq \sum_{1\leq i\leq m}r_i.\]
        This implies
        \[R(\lambda)\leq\left(1+\sum_{1\leq i\leq m}r_i\right)^{|\Phi^{\vee,+}|}.\]
        Therefore, the Kostant partition function $R$ is bounded 
        by a polynomial $P$.

        Now, for any $\lambda = \sum_{i=1}^m r_i \alpha_i^\vee \in 
     \BZ_{\geq 0} \Phi^{\vee,+}$, by Theorem \ref{DR1998}, we have
     \[\delta^{1/2}(\lambda(\varpi))\CO(\lambda(\varpi)) = \sum_{\ov{r}}\left(1-\frac{1}{q}\right)^{\kappa(\ov{r})} \leq
     \sum_{\ov{r}} 1 =
     R(\lambda) \leq P(r_1,\ldots,r_m).\]
     We are done.
     
    \end{proof}
\end{lem}
\begin{cor}\label{bound of orbital integral implies regularity}
	Assume that for any $f\in \CS(G)$, there exist some $\varepsilon > 0$ and $C>0$, such that  
	\[ \Big|I(a,f)\delta^{\frac{1}{2}-\varepsilon}(a)\Big| \leq C, \quad a \in A.\] 
    Then for any irreducible admissible generic representation $\pi$ of $G$, the Bessel distribution $B_\pi$ is regular.
\end{cor}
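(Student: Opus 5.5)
The plan is to chain together the reductions already established in this section. Fix an irreducible admissible generic representation $\pi$ with central character $\omega$. By Proposition \ref{locally integrable implies regular} combined with Theorem \ref{thm-chai}, it is enough to prove that $j_\pi = j_\pi^0$ is locally integrable on $G$. Local integrability is a local property, so it suffices to show that $j_\pi$ is integrable on every compact set $U \subset G$; equivalently, we may take $U$ to be a fixed compact neighbourhood of an arbitrary point of $G$.

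For such a $U$, Theorem \ref{locally orbital integral 2} provides $f \in \CS(G)$ with
\[j_\pi(g) = I_\omega((w^0)^{-1}g, f), \quad g \in U \cap \Omega.\]
Left translation by $(w^0)^{-1}$ preserves Haar measure and sends compact sets to compact sets. Also, $\Omega$ has full measure in $G$ because its complement is a finite union of lower-dimensional Bruhat cells. It therefore suffices to show that $I_\omega(\cdot,f)$ is locally integrable. By the lemma preceding Proposition \ref{prop-delta}, this in turn follows once $I(\cdot,f)$ is locally integrable. Here $I(\cdot,f)$ is defined on $\ov{N}B$, which again has full measure.

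Next, apply Proposition \ref{prop-delta} with $\phi = I(\cdot,f)$. This function is $(N\times N)$-invariant for the twisted action $g\cdot(u_1,u_2)$, and the hypothesis gives exactly $|\phi(a)\delta^{1/2-\varepsilon}(a)|\le C$ on $A$.

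The main obstacle is an invariance mismatch. Proposition \ref{prop-delta} is phrased for $(N\times N)$-invariant functions, and its proof uses $\phi(\ov{n_1} a n_2) = \phi(a)$, i.e.\ left $\ov{N}$- and right $N$-invariance. However, $I(\cdot,f)$ is invariant only up to the character: substituting in the integral gives
\[I(\ov{u_1}^{-1} g u_2, f) = \psi_0(u_1^{-1}u_2)\, I(g,f).\]
So $|I(\cdot,f)|$ is genuinely invariant, and that is all we need. In the proof of Proposition \ref{prop-delta}, only $|\phi|$ enters the integral, so I would apply that argument to $|I(\cdot,f)|$ directly. The integration formula $\int_G = \int_{\ov N}\int_A\int_N \delta$ reduces everything to the displayed bound on $A$, together with the Dąbrowski–Reeder count (Theorem \ref{DR1998}) and Lemmas \ref{lem-delta-formula} and \ref{estamite on Kostant partition function}.

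I also need to check that $|I(\cdot,f)|$ is measurable and finite on $\ov{N}B$, which holds since it is smooth there, and that the translation by $g_0 \in A$ used in that proof preserves the bound. Replacing $\phi$ by $R(g_0)\phi$ only multiplies the bound by $\delta^{1/2+\varepsilon}(g_0)$ up to the constant $C$, exactly as in the proof given. Combining all of this, $j_\pi$ is locally integrable, hence $B_\pi$ is regular.
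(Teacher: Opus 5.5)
Your proposal follows the paper's proof step for step. The chain is: Proposition \ref{prop-delta} gives local integrability of $I(\cdot,f)$; the lemma preceding it transfers this to $I_\omega(\cdot,f)$; Theorem \ref{locally orbital integral 2} transfers it to $j_\pi$; and Theorem \ref{thm-chai} with Proposition \ref{locally integrable implies regular} gives regularity. Your remark that $I(\cdot,f)$ is only invariant up to the character $\psi_0(u_1^{-1}u_2)$, so that Proposition \ref{prop-delta} should really be applied to $|I(\cdot,f)|$, is a correct tightening of a point the paper glosses over.

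The only slip is inherited from the paper's proof of Proposition \ref{prop-delta}. Translating by $g_0\in A$ multiplies the bound by $\delta^{-1/2+\varepsilon}(g_0)$, not $\delta^{1/2+\varepsilon}(g_0)$. This is harmless, since it is a constant for fixed $g_0$.
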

\begin{proof}
   Apply Proposition \ref{prop-delta}, the orbital integral $I(g,f)$ is a locally integrable function. Hence by Theorem \ref{locally orbital integral 2}, the function $j_\pi(g)$ is locally integrable function for any $\pi$. This implies $B_\pi$ is regular by Proposition \ref{locally integrable implies regular}.
\end{proof}
\begin{prop}\label{K implies I(a,f)}
    Assume $\{K_I^J\}_{I\subset J\subset\Delta}$ such that for any $J\subset\Delta$,
    \begin{enumerate}
        \item The functions $K_\emptyset^J$ is supported on the set $A_\emptyset^J(r_J)$ for some $r_J>0$.
        \item For $r>0$ small enough, there exists $\varepsilon_J,C_J$ such that
        \[\left|K_\emptyset^J(a)\delta_{B_{M_J}}^{\frac{1}{2}-\varepsilon_J}(a)\right|\leq C_J,\]
        for any $a\in A_\emptyset^J(r)$.
    \end{enumerate}
    Then for any $f\in\CS(G)$, there exists $\varepsilon>0,C>0$ such that
    \[\left|I(a,f)\cdot\delta^{\frac{1}{2}-\varepsilon}(a)\right|\leq C, \quad a \in A.\]
    \begin{proof}
        By the conditions, we have the germ expansion
        \[I(a,f)=\sum_{J\subset\Delta}(K_\emptyset^J*\omega_J)(a),\quad a\in A,\]
        where the function $K_\emptyset^J$ is supported on the set $A_\emptyset^J(r_J)$ for some $r_J>0$. For each $J \subset \Delta$, note that for any $r' > r > 0$, the set 
        \[\left\{a \in A_\emptyset^J \Big| r \leq |\lambda_i(a)| \leq r', \forall
        \alpha_i \in J\right\}\]
        is compact. Therefore, there exists some $\varepsilon_J>0$ such that $\Big|K_\emptyset^J(\cdot)\delta_{B_{M_J}}^{\frac{1}{2}-\varepsilon_J}(\cdot)\Big|$ is  bounded on the set $A_\emptyset^J$. Then we take $\varepsilon=\min_{J\subset\Delta}\varepsilon_J$ and we have
        \[\Big|I(a,f)\delta^{\frac{1}{2}-\varepsilon}(a)\Big|\leq\sum_{J\subset\Delta}\Big|(K_\emptyset^J*\omega_J)(a)\delta^{\frac{1}{2}-\varepsilon_J}(a)\Big|.\]
        For each $J\subset\Delta$,
        \[\Big|(K_\emptyset^J*\omega_J)(a)\delta^{\frac{1}{2}-\varepsilon_J}(a)\Big|\leq\sum_{a=bc}|K_\emptyset^J(b)\delta_{B_{M_J}}^{\frac{1}{2}-\varepsilon_J}(b)|\cdot\Big|\omega_J(c)\delta^{\frac{1}{2}-\varepsilon_J}(c)\Big|,\]
        where $b\in A_\emptyset^J,c\in A_J$. Here we use the fact that for $J\subset\Delta$, the function $\delta|_{A_\emptyset^J}=\delta_{B_{M_J}}$ on $A_\emptyset^J=A\cap M_J^\der$. Since $\omega_J\in\CS(A_J)$, the function $|\omega_J(c)\delta^{\frac{1}{2}-\varepsilon_J}(c)\Big|$ is bounded on $A_J$. Combine with
        \[\Big|K_\emptyset^J(a)\delta_{B_{M_J}}^{\frac{1}{2}-\varepsilon_J}(a)\Big| \leq C_J,\]
        there exists constant $C$ such that
        \[\Big|I(a,f)\delta^{\frac{1}{2}-\varepsilon}(a)\Big|\leq C,\quad a\in A.\]
    \end{proof}
\end{prop}

\subsection{Kloosterman sums}
\begin{defn}\label{defn-sys-Kl}
    Let $G$ be  a split reductive group. We fix a Borel subgroup $B=AN$. Consider the following triple
    $\left(\psi,\dot{w_0},K\right)$:
    \begin{itemize}
        \item $\psi$ - a generic character on $N$;
        \item $\dot{w_0}$ - a representative for the longest
        Weyl element $w_0 \in W$;
        \item $K$ is a compact open subgroup of $G$ such that $\psi|_{N\cap K}=1$.
    \end{itemize}
    The Kloosterman sum on $G$ with respect to
        the triple $\left(\psi,\dot{w_0},K\right)$ is the function
        \[\mathrm{Kl}(a) =\Kl(a;\psi,\dot{w_0},K) = \sum_{x \in X_K(\dot{w_0}a)} \psi(u(x))\psi(u'(x)), \quad a \in A.\]
        Here, we consider the finite set
        \[X_K(\dot{w_0}a) = (N\cap K)\bs \left(N\dot{w_0}a N \cap K \right)/(N\cap K)\]
        with the following two maps
        \[u: X_K(\dot{w_0}a) \ra (N\cap K)\bs N, \quad u': X_K(\dot{w_0}a) \ra N/(N\cap K).\]
        Here, if $[n_1\dot{w_0}an_2] \in X_K(\dot{w_0}a)$ with $n_1, n_2 \in N$ and $a \in A$, then 
        \[u([n_1\dot{w_0}an_2]) = [n_1], \quad u'([n_1\dot{w_0}an_2]) = [n_2].\]
        The Kloosterman sum $\mathrm{Kl}$ is said to have a nontrivial bound if there exists $\varepsilon>0$ and $C>0$ such that
        \[\left|\mathrm{Kl}(a)\cdot\delta^{1/2-\varepsilon}(a)\right| \leq C, \quad a \in A^\der\]
        where $A^\der = A_\emptyset^\Delta = A\cap G^\der$ is the maximal torus of $G^\der$.
\end{defn}

\begin{example}\label{DR example}
    Suppose $G$ is a general split reductive group, $\dot{w_0} = w^0$ the longest Weyl element in $W$ defining the Kloosterman integrals and $\psi=\psi_0$. Let $K_0$ be the maximal open compact subgroup in Remark \ref{results in DR}. 
    For each $a \in A$, $X(\dot{w_0}a)$ is the Kloosterman set with respect to $K_0$. In this example, we consider the Kloosterman sum $\mathrm{Kl}$ with respect to the triple $(\psi,\dot{w_0},K_0)$.

    By  Theorem \ref{DR1998} (\cite[Theorem 0.3]{DR98}), we have
    \[\#X(\dot{w_0a})=\begin{cases}
        0,&\text{ if }\lambda_a \notin \BZ_{\geq 0}\Phi^{\vee,+}\\
        \delta^{-\frac{1}{2}}(a)\sum_{\ov{r}}\left(1-\frac{1}{q}\right)^{\kappa(\ov{r})}&\text{ if }\lambda_a \in \BZ_{\geq 0}\Phi^{\vee,+}
    \end{cases}.\]
    In the above, the terms $\overline{r},\kappa(\overline{r})$ are given in Theorem \ref{DR1998}.
    

    Moreover, if $\lambda_a \in \BZ_{\geq 0}\Phi^{\vee,+}$, then $\lambda_a$ can be written uniquely as $\lambda_a= \sum_{i=1}^m r_{a,i}\alpha_i^\vee$ and by Lemma \ref{estamite on Kostant partition function}
    \[\sum_{\ov{r}}\left(1-\frac{1}{q}\right)^{\kappa(\ov{r})}< R(\lambda_a)\leq \left(1+\sum_{1\leq i\leq m}r_{a,i}\right)^{|\Phi^{\vee,+}|}.\]
    Hence,  
    \[\left|\mathrm{Kl}(a)\cdot\delta^{1/2}(a)\right| \leq \left(1+\sum_{1\leq i\leq m}r_{a,i}\right)^{|\Phi^{\vee,+}|}, \quad a \in A, \quad
    \lambda_a \in \BZ_{\geq 0} \Phi^{\vee,+}\]
    may be regarded as a trivial bound for the corresponding Kloosterman sum.
\end{example}

We list some basic facts for Kloosterman sums. Some proofs
will be omitted.

\begin{lem}\label{lem-int-sum}
    Consider $\dot{w_0} =w^0$ with $w^0$ the
    longest Weyl element in $W$ defining the Kloosterman 
    integrals, $\psi = \psi_0^{-1}$ and an open compact subgroup $K$. Then up to a constant, we have
    \[I(a,1_{w^0K}) \stackrel{.}{=} \mathrm{Kl}((w^0)^{-2}a; \psi,w^0,K), \quad
    a \in A.\]
\end{lem}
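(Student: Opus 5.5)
The plan is to unfold the definition of $I(a,1_{w^0K})$ and rewrite it as a finite sum over the Kloosterman set. Since $a\in A\subset \ov{N}B$, the stabilizer $(N\times N)_a$ is trivial, so
\[I(a,1_{w^0K})=\int_{N\times N}1_{w^0K}\big(\ov{u_1}^{-1}au_2\big)\psi_0^{-1}(u_1^{-1}u_2)\,du_1du_2 .\]
Using $\ov{u_1}=(w^0)^{-1}u_1w^0$, the condition $\ov{u_1}^{-1}au_2\in w^0K$ becomes
\[(w^0)^{-1}u_1^{-1}w^0au_2\in w^0K \iff u_1^{-1}(w^0)^2\cdot (w^0)^{-2}w^0a\,u_2 \in (w^0)^2K .\]
Since $(w^0)^2\in Z$, this is equivalent to $u_1^{-1}\,w^0\,((w^0)^{-2}a)\,u_2\in K$. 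So the integrand is $1_K(u_1^{-1}w^0a'u_2)\,\psi_0(u_1)\psi_0^{-1}(u_2)$ with $a'=(w^0)^{-2}a$. I will double check that the central element can be moved freely: $w^0a=(w^0)^2\cdot (w^0)^{-1}a$ and $(w^0)^{-1}a=w^0(w^0)^{-2}a$.

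Next, the integrand is invariant under $u_1\mapsto k_1u_1$ and $u_2\mapsto u_2k_2$ for $k_1,k_2\in N\cap K$. The indicator is invariant because $K$ is a group. The character is invariant because $\psi_0|_{N\cap K}=1$, which is implicit in the Kloosterman sum setup with $\psi=\psi_0^{-1}$. Hence the integral equals $\Vol(N\cap K)^2$ times a sum over pairs $([u_1],[u_2])\in (N\cap K)\bs N\times N/(N\cap K)$ with $u_1^{-1}w^0a'u_2\in K$.

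By uniqueness of the Bruhat decomposition $Nw^0AN$ (the map $N\times A\times N\to Nw^0AN$ is bijective), these pairs are in bijection with the elements $x=[u_1^{-1}w^0a'u_2]\in X_K(w^0a')$. The only point needing care is that the double coset quotient corresponds exactly to the pair of one-sided quotients: $k_1u_1^{-1}w^0a'u_2k_2=v_1w^0a'v_2$ forces $v_1=k_1u_1^{-1}$ and $v_2=u_2k_2$. Under this bijection $u(x)=[u_1^{-1}]$ and $u'(x)=[u_2]$. The summand is therefore
\[\psi_0(u_1)\psi_0^{-1}(u_2)=\psi_0^{-1}(u_1^{-1})\psi_0^{-1}(u_2)=\psi(u(x))\psi(u'(x)),\qquad \psi=\psi_0^{-1}.\]
This gives $I(a,1_{w^0K})=\Vol(N\cap K)^2\,\mathrm{Kl}((w^0)^{-2}a;\psi_0^{-1},w^0,K)$. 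Finiteness of $X_K$ follows from compactness of $K$ together with the closedness of the Bruhat cell $Nw^0a'N$, so no convergence issues arise.

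The main obstacle is bookkeeping: tracking the central element $(w^0)^2$, and checking that $\ov{u_1}^{-1}$ turns into left multiplication by $u_1^{-1}$ with the correct inverse. That inverse determines whether the character appears as $\psi_0^{-1}$ (as stated) or $\psi_0$. I will also record that the constant is $\Vol(N\cap K)^2$, which in particular is independent of $a$.
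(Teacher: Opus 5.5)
Your proposal is correct and follows the same route as the paper: you unfold the integral using the trivial stabilizer of $a$, use $(w^0)^2\in Z$ to turn $\ov{u_1}^{-1}au_2\in w^0K$ into $u_1^{-1}w^0\big((w^0)^{-2}a\big)u_2\in K$, and collapse the integral to a sum over $X_K\big(w^0(w^0)^{-2}a\big)$ with the constant $\Vol(N\cap K)^2$. One small slip is that the indicator is invariant under $u_1\mapsto u_1k_1$ (left translation of $u_1^{-1}$ by $N\cap K$), not under $u_1\mapsto k_1u_1$; the right-multiplication version is what your later identification $u(x)=[u_1^{-1}]\in(N\cap K)\bs N$ actually uses.
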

\begin{proof}
    We have
    \begin{align*}
        I(a,1_{w^0K})&=\int_{N\times N}1_{w^0K}(\ov{n_1}^{-1}an_2)\psi_0^{-1}(n_1^{-1}n_2)dn_1dn_2\\
        &=\int_{N\times N}1_{w^0K}(\ov{n_1}an_2)\psi_0^{-1}(n_1n_2)dn_1dn_2
    \end{align*}
    Note that $\ov{n_1}an_2\in w^0K$ if and only if
    $n_1w^0((w^0)^{-2}a)n_2 \in K$. This implies that up to a constant
    depending on $K$,
    \[I(a,1_{w^0K}) \stackrel{.}{=} \sum_{x\in X_K(w^0(w^0)^{-2}a)}\psi_0^{-1}(u(x))\psi_0^{-1}(u'(x))=\mathrm{Kl}((w^0)^{-2}a).\]
\end{proof}

Let $G_1,G_2$ be two split reductive groups. We fix a Borel subgroup $B_i = A_i N_i$ for $G_i$ with $i=1,2$. Suppose we are given triples $(\psi_1,\dot{w_{1,0}},K_1)$ and $(\psi_2,\dot{w_{2,0}},K_2)$ for $G_1$ and $G_2$ respectively.

Consider $G = G_1\times G_2$. The group $B=B_1\times B_2=AN$ is a Borel subgroup of $G$ with $A=A_1\times A_2$ and  $N=N_1\times N_2$.  Let $\psi = \psi_1 \boxtimes \psi_2$
be a generic character of $N$. The triple $(\psi,(\dot{w_{1,0}},\dot{w_{2,0}}),K_1\times K_2)$ defines a triple for $G$.

\begin{lem}\label{Kloosterman sum of product group}
    We have
    \[\Kl(a;\psi,(\dot{w_{1,0}},\dot{w_{2,0}}),K_1\times K_2)=\Kl(a_1;\psi_1,\dot{w_{1,0}},K_1)\cdot\Kl(a_2;\psi_2,(\dot{w_{2,0}},K_2)), \quad a=(a_1,a_2)\in A.\]
\end{lem}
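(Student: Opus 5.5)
The plan is to show that the Kloosterman set of the product is the product of the Kloosterman sets, compatibly with the maps $u$ and $u'$, and that the summand factors. Write $\dot{w_0} = (\dot{w_{1,0}},\dot{w_{2,0}})$, $K = K_1\times K_2$, and $a=(a_1,a_2)$. Every object in Definition \ref{defn-sys-Kl} is defined componentwise:
\begin{align*}
N\cap K &= (N_1\cap K_1)\times(N_2\cap K_2),\\
N\dot{w_0}aN &= (N_1\dot{w_{1,0}}a_1N_1)\times(N_2\dot{w_{2,0}}a_2N_2),\\
N\dot{w_0}aN\cap K &= (N_1\dot{w_{1,0}}a_1N_1\cap K_1)\times(N_2\dot{w_{2,0}}a_2N_2\cap K_2).
\end{align*}
The double coset space of a product by a product group is the product of the double coset spaces. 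This gives a natural bijection
\[X_K(\dot{w_0}a)\stackrel{\sim}{\lra} X_{K_1}(\dot{w_{1,0}}a_1)\times X_{K_2}(\dot{w_{2,0}}a_2).\]

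Next I check that this bijection intertwines the maps. If $x=[n_1\dot{w_0}an_2]$ with $n_j=(n_j^{(1)},n_j^{(2)})$, then $x$ corresponds to the pair
\[\bigl([n_1^{(1)}\dot{w_{1,0}}a_1n_2^{(1)}],\,[n_1^{(2)}\dot{w_{2,0}}a_2n_2^{(2)}]\bigr).\]
Hence $u(x)=(u_1(x_1),u_2(x_2))$ and $u'(x)=(u_1'(x_1),u_2'(x_2))$ under $(N\cap K)\bs N=\prod_i (N_i\cap K_i)\bs N_i$. The values $\psi(u(x))$ and $\psi(u'(x))$ are well defined because $\psi|_{N\cap K}=1$, which follows from $\psi_i|_{N_i\cap K_i}=1$. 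Since $\psi=\psi_1\boxtimes\psi_2$,
\[\psi(u(x))\psi(u'(x))=\psi_1(u_1(x_1))\psi_1(u_1'(x_1))\cdot\psi_2(u_2(x_2))\psi_2(u_2'(x_2)).\]

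Summing over the product set, the sum factors as a product of the two sums, which is the claim. There is no real obstacle here. The only point requiring care is that the Bruhat decomposition $n_1\dot{w_0}an_2$ of an element of the big cell is unique, so that $u$ and $u'$ are well defined and compatible with the projections. This holds componentwise, because $N\dot{w_0}AN$ is the product of the corresponding cells of $G_1$ and $G_2$.
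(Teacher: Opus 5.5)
Your proof is correct: $N\cap K$, the cell $N\dot{w_0}aN$, the double coset space, the maps $u,u'$ and the character $\psi=\psi_1\boxtimes\psi_2$ all split componentwise, so the sum over $X_K(\dot{w_0}a)\cong X_{K_1}(\dot{w_{1,0}}a_1)\times X_{K_2}(\dot{w_{2,0}}a_2)$ factors. The paper lists this lemma among basic facts whose proofs are omitted, and your componentwise verification is the routine check it leaves to the reader.
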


The following lemma gives the dependence of $\psi$ and $\dot{w_0}$ for the Kloosterman sums.
\begin{lem}\label{properties for Kloosterman sums}\
    \begin{enumerate}
        \item Let $a' \in A$. Let $\psi$ be a generic character on $N$ with $\psi' = \psi(\Ad(a')\cdot)$ and $\dot{w_0} =w^0$ the
    longest Weyl element in $W$ defining the Kloosterman 
    integrals. Consider the triples $(\psi',\dot{w_0},K)$ and $(\psi,\dot{w_0},(a')^{-1}Ka')$. Then we have the relation
        \[\mathrm{Kl}(a;\psi',\dot{w_0},K)=\mathrm{Kl}\left(\ov{a'}^{-1}aa';\psi,\dot{w_0},(a')^{-1}Ka'\right), \quad a \in A.\]
        \item Let $a' \in A$ and $\dot{w_0}'=\dot{w_0}a'$ be a representative of $w_0$. Consider the  triples $(\psi,\dot{w_0},K)$ and $(\psi,\dot{w_0}',K)$. Then we have the relation
        \[\mathrm{Kl}(a;\psi,\dot{w_0},K)=\mathrm{Kl}(a'a;\psi,\dot{w_0}',K), \quad a \in A.\]
    \end{enumerate}
\end{lem}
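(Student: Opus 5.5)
Both identities will follow from a change of variables inside the Kloosterman set: I will construct an explicit bijection between the two Kloosterman sets and check that the phase $\psi(u(x))\psi(u'(x))$ is carried to the corresponding phase.

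\textbf{Part (1).} Here $\psi'=\psi\circ\Ad(a')$. I will use the map $g\mapsto (a')^{-1}g a'$. It sends $K$ onto $(a')^{-1}Ka'$. It also sends $N\cap K$ onto $N\cap (a')^{-1}Ka'$, because $A$ normalizes $N$. The condition $\psi'|_{N\cap K}=1$ then becomes $\psi|_{N\cap (a')^{-1}Ka'}=1$, so the second triple is admissible in the sense of Definition \ref{defn-sys-Kl}.

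Take $g=n_1\dot{w_0}an_2\in K$. Then
\[(a')^{-1}g a'=\bigl((a')^{-1}n_1a'\bigr)\,\dot{w_0}\,\bigl(\dot{w_0}^{-1}(a')^{-1}\dot{w_0}\bigr)\,a\,a'\,\bigl((a')^{-1}n_2a'\bigr).\]
With $\ov{a'}:=\dot{w_0}^{-1}a'\dot{w_0}\in A$, the torus part is $\ov{a'}^{-1}aa'$; since $A$ is commutative, the order of the factors does not matter. This gives a bijection
\[X_K(\dot{w_0}a)\to X_{(a')^{-1}Ka'}\bigl(\dot{w_0}\,\ov{a'}^{-1}aa'\bigr).\]
It is well defined on double cosets because conjugation by $a'$ respects the double coset structure. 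Under this bijection, $u$ is replaced by $\Ad((a')^{-1})u$ and $u'$ by $\Ad((a')^{-1})u'$. Hence
\[\psi\bigl(\Ad((a')^{-1})n_i\bigr)=\psi'\bigl(\Ad((a')^{-1})\Ad(a')\ldots\bigr).\]
The direction of $\Ad$ needs care here: I must check it matches $\psi'=\psi(\Ad(a')\cdot)$. If the natural conjugation runs the other way, I will conjugate by $a'^{-1}$ instead, adjusting signs. The statement's shape $\ov{a'}^{-1}aa'$ pins down which choice is right. Summing over the set gives the identity.

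\textbf{Part (2).} This is simpler. We have $\dot{w_0}a=(\dot{w_0}a')(a'^{-1}a)$, so $N\dot{w_0}aN=N\dot{w_0}'(a'^{-1}a)N$. Thus $X_K(\dot{w_0}a)=X_K(\dot{w_0}'\,a'^{-1}a)$ as sets, and the maps $u$ and $u'$ are literally the same. This proves $\Kl(a;\psi,\dot{w_0},K)=\Kl(a'^{-1}a;\psi,\dot{w_0}',K)$. It matches the stated form after renaming $a'\leftrightarrow a'^{-1}$, or equivalently after replacing $a$ by $a'a$; the bookkeeping of which representative absorbs $a'$ has to be made consistent with the statement's convention.

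\textbf{Main obstacle.} There is nothing deep here. The only delicate point is the sign and convention bookkeeping: the direction of $\Ad$, and the fact that conjugation by $a'$ moves the torus element through $\dot{w_0}$ as $\ov{a'}$. I would verify this on $\GL_2$ before writing it up.
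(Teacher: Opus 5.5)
You follow the paper's route: in (1) a bijection of Kloosterman sets induced by conjugation by $a'$ (the paper writes out only (1)), and in (2) the regrouping $\dot{w_0}a=(\dot{w_0}a')\bigl((a')^{-1}a\bigr)$. The gap is the step you defer as ``bookkeeping'': it does not go through for the statement as written.

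Take the convention $\Ad(a')u=a'u(a')^{-1}$. This is the convention under which the displayed equivalence in the paper's own proof of (1) holds. Your map $g\mapsto (a')^{-1}ga'$ is the one forced by the group $(a')^{-1}Ka'$ and the torus element $\ov{a'}^{-1}aa'$. It sends $[n_1\dot{w_0}an_2]$ to the class with components $u=\Ad(a')^{-1}n_1$ and $u'=\Ad(a')^{-1}n_2$. So the right-hand summand $\psi(u)\psi(u')$ equals $\psi(\Ad(a')^{-1}n_1)\psi(\Ad(a')^{-1}n_2)$. The left-hand summand is $\psi'(n_1)\psi'(n_2)=\psi(\Ad(a')n_1)\psi(\Ad(a')n_2)$, so the two differ by $\Ad(a')^{2}$.

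For the same reason, $\psi'|_{N\cap K}=1$ does not give $\psi|_{N\cap (a')^{-1}Ka'}=1$. Take $\GL_2$, $K=\GL_2(\CO)$, $\psi_F$ of conductor $\CO$, $a'=\diag(\varpi,1)$ and $n(x)=\bigl(\begin{smallmatrix}1&x\\0&1\end{smallmatrix}\bigr)$. Then $\psi'(n(x))=\psi_F(\varpi x)$ is trivial on $N\cap K$. But $N\cap(a')^{-1}Ka'=\{n(x):x\in\fp^{-1}\}$, on which $\psi$ is nontrivial, so the right-hand side is not even defined.

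Conjugating by $a'$ instead makes the phases match, but it produces $\Kl(\ov{a'}a(a')^{-1};\psi,\dot{w_0},a'K(a')^{-1})$. So the statement's shape does not ``pin down'' a consistent choice. What your argument actually proves is (1) with $\psi'=\psi(\Ad(a')^{-1}\cdot)$, or equivalently (1) with $a'$ replaced by $(a')^{-1}$ on the right-hand side.

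The literal identity fails even when both triples are valid. Work over $\BQ_7$ with $\psi_F$ of conductor $\BZ_7$, $K=\GL_2(\BZ_7)$, $\dot{w_0}=\bigl(\begin{smallmatrix}0&-1\\1&0\end{smallmatrix}\bigr)$, $a'=\diag(3,1)$ and $a=\diag(7,7^{-1})$. The two sides are the classical sums $S(3,3;7)=S(1,2;7)\approx-2.36$ and $S(5,5;7)=S(1,4;7)\approx-2.69$.

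In (2), your identity $\Kl(a;\psi,\dot{w_0},K)=\Kl((a')^{-1}a;\psi,\dot{w_0}a',K)$ is correct, but it is not equivalent to the stated one. ``Renaming $a'\leftrightarrow (a')^{-1}$'' also changes $\dot{w_0}'$. Substituting $a\mapsto a'a$ gives $\Kl(a'a;\psi,\dot{w_0},K)=\Kl(a;\psi,\dot{w_0}',K)$, which is again not the statement.

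The stated right-hand side is a sum over $X_K(\dot{w_0}(a')^2a)$, and the stated identity is false in general. Take $\GL_2$, $K=\GL_2(\CO)$, the $\dot{w_0}$ above, $a=1$ and $a'=\diag(\varpi,1)$. The left side is $1$, since $X_K(\dot{w_0})$ is a single point. But $X_K(\dot{w_0}\diag(\varpi^2,1))$ is empty, because every element of $N\dot{w_0}\diag(\varpi^2,1)N$ has determinant $\varpi^2$.

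The paper's own proof of (1) uses your conjugation and has the same slip. Its concluding ``Hence'' tacitly replaces $\psi(\Ad(a')\Ad(a')u)$ by $\psi(u)$. The paper gives no proof of (2).

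So the lemma itself has to be corrected: (1) with $\psi'=\psi(\Ad(a')^{-1}\cdot)$, and (2) in the form you derived. Your change-of-variables arguments then prove it verbatim. As written, however, your proposal claims agreement with a statement that does not hold.
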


\begin{proof}
     We give a proof of (1). We have
     \[\mathrm{Kl}(a;\psi',\dot{w_0},K)=\sum_{x\in X_K(\dot{w_0}a)}\psi'(u(x))\psi'(u'(x))=\sum_{x\in X_K(\dot{w_0}a)}\psi(\Ad(a')u(x))\psi(\Ad(a')u'(x)).\]
     The condition $x=u(x)\dot{w_0}\ov{a'}^{-1}aa'u'(x)\in a'^{-1}Ka'\cap N\dot{w_0}\ov{a'}^{-1}aa'N$ is equivalent to
     \[\Ad(a')u(x)\dot{w_0}a\Ad(a')u'(x)\in K\cap N\dot{w_0}aN.\]
     Hence,
     \[\sum_{x\in X_K(\dot{w_0}a)}\psi(\Ad(a')u(x))\psi(\Ad(a')u'(x))=\sum_{x\in X_{(a')^{-1}Ka}(\dot{w_0}\ov{a'}^{-1}aa')}\psi(u(x))\psi(u'(x))\]
     and this is equal to $\mathrm{Kl}(\ov{a'}^{-1}aa';\psi,\dot{w_0},(a')^{-1}Ka')$.

\end{proof}

We consider Kloosterman sums on a split group $G$ with
those on its derived group $G^\der$. 
Let $\psi$ be a generic character of $N$.  Let
$\dot{w_0} \in G^\der$ be a representative of the longest Weyl element $w_0 \in W$ (for example, we may choose the
Tits representative of $w_0$).

\begin{lem}\label{kloosterman sum for derived subgroup}
    We have the following.
    \begin{enumerate}
        \item For an admissible group $K$ of $G$ (with respect to $\psi$), the subgroup $K\cap G^\der$ is an admissible group of $G^\der$.  Conversely, given an admissible group $K$ of $G^\der$, there exists an admissible group $\wt{K}$ of $G$ such that $\wt{K}\cap G^\der= K$.
    \item Let $K$ be an admissible group of $G$. Consider the Kloosterman sums $\Kl_G(\cdot;\psi,\dot{w_0},K)$ and $\Kl_{G^\der}(\cdot;\psi,\dot{w_0},K\cap G^\der)$ on $G$ and $G^\der$. We have
    \[\Kl_G(a;\psi,\psi',\dot{w_0},K)=\Kl_{G^\der}(a;\psi,\psi',\dot{w_0},K\cap G^\der),\quad a\in A^\der.\]
    \end{enumerate}
    \begin{proof}
        We only prove the existence of $\wt{K}$ in (1). Since $Z^\der = Z\cap G^\der$ is finite, we can choose a small enough open compact subgroup $L\subset Z$ such that
        \[L \cap G^\der = L\cap(Z\cap G^\der)=\{1\}.\]
        Consider $\wt{K} = KL$. Then $\wt{K}$ is an 
        open compact subgroup of $G$. We claim $\wt{K}$
        is admissible. In fact, let $z \in Z^\der$ and 
        $n \in N$ with $zn \in \wt{K}$. Then, there exists
        $\ell \in L$ such that $\ell z n \in K$. Then
        $\ell \in L \cap G^\der = \{1\}$ and
        $zn \in K$. As $K$ is admissible, $z = 1$ and $n \in K$. Therefore, $\wt{K}$ is admissible.

    \end{proof}
\end{lem}

Now, we consider the relationship of Kloosterman sums under
isogeny maps. 

Let $G'$ and $G$ be two split groups over $F$ with an isogeny
\[\pi: G' \lra G.\]
We fix a Borel subgroup $B' = A'N'$ of $G'$. Denote by
$A = \pi(A')$ and $N = \pi(N')$. Then $A$ is a maximal
torus of $G$, $\pi|_{N'}$ gives an isomorphism of $N'$ and $N$
and $B = AN$ is a Borel subgroup of $G$. 

Let $\psi'$ be a generic character of $N'$, then $\psi'$ 
induces a generic character $\psi$ on $N$. Let $\dot{w_0}'$
be a representative of the longest Weyl element in the
Weyl group $N_{G'}(A')/A'$ of $G'$. Then $\dot{w_0} = \pi(\dot{w_0}')$ gives a representative of the longest 
Weyl element of $G$.

\begin{lem}\label{Kloosterman sum under isogeny map}
    Assume $G'$ is semisimple (so that $G$ is also semisimple).
    We have the following
    \begin{enumerate}
    \item Let $K'$ be an admissible group of $G'$ (with respect 
    to $G'$). Then $K = \pi(K')$ is an admissible group
    of $G$ and $\pi|_{K'}$ gives an isomorphism from $K'$ to $K$. 
    \item  Let $a\in A$ such that the Kloosterman set $X_{K}(\pi(w_0')a)\neq\emptyset$. Then there exist $a'\in A'$ such that $\pi(a')=a$. In this case, we have
    \[\Kl_{G'}(a';\psi',\dot{w_0}',K')=\Kl_{G}(a;\psi,\dot{w_0},K).\]
    \end{enumerate}
    \begin{proof}
        
(1) The isogeny map $\pi$ is an \etale morphism so that
it induces a locally analytic isomorphism between the
$F$-points of $G'$ and $G$. In particular, $K$ is an open compact subgroup
of $G$. As $\ker\pi \subset Z'$, $K' \cap \ker\pi \subset K' \cap Z' = 1$ as we require $G'$ is semisimple and $K'$ is admissible. Therefore, $\pi|_{K'}$ gives an isomorphism of
$K'$ and $K$. 

We now prove that $K$ is admissible. Let $z \in Z$, $n \in N$
with $zn \in K$. We want to show that $z = 1$ and $n \in K$.
As $\pi|_{N'}$ is an isomorphism (of varieties and also of
$F$-points of varieties), there exists $n' \in N'$ such that
$\pi(n') = n$. Then $z \in \pi(G') \cap Z$. Note that,
as $F$-points, $\pi(G') \cap Z = \pi(Z')$ so that there
exists $z' \in Z'$ such that $z = \pi(z')$. As
$zn \in K$, there exists $u \in \ker\pi$ 
such that $uz'n' \in K'$. As $K'$ is admissible, $uz' = 1$
and $n' \in K'$. In particular, 
$z = \pi(z') = \pi(uz') = 1$ and $n = \pi(n') \in K$.

(2) Assume $a \in A$ with nonempty $X_K(\dot{w_0}a)$. In 
particular, $N\pi(\dot{w_0})aN \cap K \not= \emptyset$. 
Therefore, as $F$-points, $a \in \pi(G')\cap A = \pi(A')$
so that there exists $a' \in A'$ with $a = \pi(a')$. In this
case, the
bijection $\pi|_{K'}$ induces a bijection between the
two Kloosterman sets $X_{K'}(\dot{w_0}'a')$ and 
$X_K(\dot{w_0}a)$ and we have
 \[\Kl_{G'}(a';\psi',\dot{w_0}',K')=\Kl_{G}(a;\psi,\dot{w_0},K).\]
    \end{proof}
\end{lem}

\subsection{Kloosterman sums and the regularity}

The following is one of our main result. The nontrivial bound for
Kloosterman sums on each Levi subgroup of $G$ associated to admissible open compact subgroups (See Definition \ref{defn-adm}) implies the regularity of Bessel distributions
on $G$.

\begin{thm}\label{main theorem}
    Let $G$ be a split reductive group. We fix a Borel subgroup $B=AN$. Moreover, we fix 
    \begin{itemize}
        \item a family of isomorphisms $x_\alpha: F \stackrel{\sim}{\ra} N_\alpha$, $\alpha \in \Phi$ and consider the generic character $\psi_0$;
        \item a system of representatives $\{\dot{w_I}\}_{I \subset \Delta}$ of $\{w_I\}_{I \subset \Delta}$;
        \item for each standard Levi subgroup $M_I,I\subset\Delta$, an admissible compact open subgroup $K_I$ of $M_I$ (with
        respect to $\psi_0|_{N_{M_I}}^{-1}$).
    \end{itemize}

    Assume that for each $I \subset \Delta$, the 
    Kloosterman sum $\Kl_I$ on $M_I$ with respect to the triple 
    \[(\psi_0|_{N_{M_I}}^{-1},\dot{w_I},K_{I})\]
    has a nontrivial bound. 

    Then, for any irreducible smooth admissible representation $\pi$ on $G$ which is generic with respect to $\psi_0$, the Bessel distribution $B_\pi$ is regular.
\end{thm}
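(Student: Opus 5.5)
The plan is to reduce everything to Corollary \ref{bound of orbital integral implies regularity}. It suffices to show that for every $f\in\CS(G)$ there are $\varepsilon>0$ and $C>0$ with $|I(a,f)\delta^{1/2-\varepsilon}(a)|\le C$ for $a\in A$. By Proposition \ref{K implies I(a,f)}, this in turn follows once we produce a system of Shalika germs $\{K_I^J\}$ satisfying two conditions for every $J\subset\Delta$:
\begin{enumerate}
\item $K_\emptyset^J$ is supported on some $A_\emptyset^J(r_J)$;
\item $|K_\emptyset^J(a)\delta_{B_{M_J}}^{1/2-\varepsilon_J}(a)|\le C_J$ on $A_\emptyset^J(r)$ for small $r$.
\end{enumerate}

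To build the germs, I use the given admissible subgroups $K_J\subset M_J$. Proposition \ref{prop-germ-adm} gives a system of germs
\[
K_I^J(a)=\Vol(N_{M_J}\cap K_J)^{-1}\, I\bigl(w_I^0a,\,1_{w_J^0K_J}\bigr),
\]
where the Kloosterman integral is taken on $M_J$. Proposition \ref{germ function as Kloosterman sum}(1) then gives condition (1), since this choice is of the form treated there. For condition (2), apply Lemma \ref{lem-int-sum} on the group $M_J$, with $w_J^0$ in place of $w^0$ and $I=\emptyset$. This gives, up to a constant,
\[
K_\emptyset^J(a)\doteq \Kl_{M_J}\bigl((w_J^0)^{-2}a;\ \psi_0|_{N_{M_J}}^{-1},\ w_J^0,\ K_J\bigr),\qquad a\in A_\emptyset^J=A\cap M_J^\der .
\]
Note that $(w_J^0)^2\in A_J$, and the same lemma applies to $M_J$ because the relevance conditions of Lemma \ref{relevant elements for Levi} hold for $M_J$.

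Next I must pass from the representative $w_J^0$ to the prescribed $\dot{w_J}$. Write $\dot{w_J}=w_J^0a'$ with $a'\in A$. Lemma \ref{properties for Kloosterman sums}(2) shows that changing the representative only translates the argument of $\Kl$ by $a'$. The shift $(w_J^0)^{-2}\in A_J$ is central in $M_J$; if necessary it can be handled using Lemma \ref{kloosterman sum for derived subgroup}, so that one works on $A^\der_{M_J}$. So the hypothesis that $\Kl_J$ has a nontrivial bound on $A\cap M_J^\der$ transfers to a bound $|K_\emptyset^J(a)|\le C\delta_{B_{M_J}}^{-1/2+\varepsilon}(a\,c)$ for a fixed translate $c$. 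Since $\delta_{B_{M_J}}$ is a character, the translate changes only the constant. When the argument $(w_J^0)^{-2}a$ or $a'a$ leaves $A^\der$, I will use that the Kloosterman set is empty unless the argument lies in $M_J^\der$; together with the admissibility condition (no central elements of $Z(M_J^\der)$ in $K_J N$), this reduces to $a\in A^\der_{M_J}$. Note also that $\delta|_{A_\emptyset^J}=\delta_{B_{M_J}}$, as used in Proposition \ref{K implies I(a,f)}.

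I expect the main obstacle to be this bookkeeping between representatives and the central translate $(w_J^0)^{-2}$. The nontrivial bound is only hypothesized on $A^\der$, and for a fixed $J$ the germ argument $a\in A_\emptyset^J$ must be matched with a Kloosterman sum argument lying in $A\cap M_J^\der$. I would first verify that both $(w_J^0)^{-2}$ and $a'$ can be chosen in $M_J^\der$: Tits representatives lie in the derived group, and I would take $\dot{w_J}$ there as well, adjusting by Lemma \ref{properties for Kloosterman sums} otherwise. Granting this, the shifted argument stays in $A\cap M_J^\der$ and the hypothesis applies directly. Proposition \ref{K implies I(a,f)} and Corollary \ref{bound of orbital integral implies regularity} then complete the proof.
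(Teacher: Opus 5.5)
Your proposal follows the paper's proof essentially step for step: reduce via Corollary \ref{bound of orbital integral implies regularity} and Proposition \ref{K implies I(a,f)} to bounding $K_\emptyset^J$ on $A_\emptyset^J$, take the germs from Proposition \ref{prop-germ-adm}, identify $K_\emptyset^J$ with $\Kl_J((w_J^0)^{-2}\,\cdot\,)$ by Lemma \ref{lem-int-sum}, and pass to the prescribed representatives by Lemma \ref{properties for Kloosterman sums}(2), which the paper also does in a single line.

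One caution about your bookkeeping. Your claim that the Kloosterman set is empty unless the argument lies in $M_J^\der$ is false: admissibility only excludes $Z(M_J^\der)$, and $K_J$ can contain elements of $A_J$ outside $M_J^\der$. So the transfer genuinely needs $(w_J^0)^{-1}\dot{w_J}\in M_J^\der$. For instance, for $\GL_2$ and $\dot w=w^0\diag(1,\varpi)$, every $X_K(\dot w a)$ with $a\in A^\der$ is empty by a determinant count, so the hypothesis holds vacuously. With Tits representatives (or $w_\ell$) everything is fine, since then $(w_J^0)^2\in A_J\cap M_J^\der$ is a finite group and the shift changes neither $A^\der$-membership nor $\delta$.
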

\begin{proof}
    By Lemma \ref{properties for Kloosterman sums} (2), we can assume 
    Kloosterman sum $\Kl_I$ on $M_I$ with respect to the triple 
    \[(\psi_0|_{N_{M_I}}^{-1},w_I^0,K_{I})\]
    has a nontrivial bound, where $\{w_I^0\}_{I\subset\Delta}$ is an system of representatives of $\{w_I\}_{I \subset \Delta}$ relevant to $\psi_0$ (for example, the Tits representatives in the proof of
    Lemma \ref{relevant weyl elements}).
    
    By Corollary \ref{bound of orbital integral implies regularity}, we need to prove that for any $f \in \CS(G)$, there exist some $\varepsilon > 0$ and $C>0$, such that  
	\[ \Big|I(a,f)\delta^{\frac{1}{2}-\varepsilon}(a)\Big| \leq C, \quad a \in A.\]
    
    Apply Theorem \ref{shalika}, we have the following germ expansion of Kloosterman integrals  
    \[I(a,f) =\sum_{J\subset\Delta}(K_\emptyset^{J}*\omega_J)(a),\quad a\in A,\]
    where $\omega_J\in\CS(A_J)$ for every $J\subset\Delta$ and $\{K_I^J\}_{I\subset J}$ is a system of Shalika germs.
    
    By Proposition \ref{prop-germ-adm}, for each $J$, we can choose  the germ function $K_\emptyset^J$ such that
    \[K_\emptyset^{J}(a) =I(a,1_{w_J^0K_{J}}) \stackrel{.}{=} \Kl_J((w_J^0)^{-2}a),\quad a \in A_\emptyset^J.\]
    In particular, $K_\emptyset^J$ is  supported on $A_\emptyset^J(r_J)$ for some $r_J>0$. 
    
    By the above germ expansion and Proposition \ref{K implies I(a,f)}, the local integrability
    of $I(\cdot,f)$ holds provided that for every $J\subset\Delta$, for any $r>0$ small enough, there exist some constants $\varepsilon > 0$ and $C_J>0$, such that
    \[|K_\emptyset^J(a)\delta_{B_{M_J}}^{\frac{1}{2}-\varepsilon}(a)|\leq C_J,\quad a\in A_\emptyset^J(r).\]
    
    By our assumption on the Kloosterman sum $\Kl_J$, such 
    a nontrivial bound holds for $K_\emptyset^J$. The proof is done.
\end{proof}

\section{Kloosterman sums for groups of type $A$}

\subsection{Kloosterman sums for $\GL_2$}

Here, we consider Kloosterman sums for $\GL_2$ over a
$p$-adic field $F$. This result is well known to experts and is included here for the convenience of the reader.

We keep notations as in the beginning of Section \ref{notations}. We fix an additive nontrivial character $\psi_F$ on $F$. We assume that $\psi_F$ is unramified.

Let $d$, $r$ be two integers with $0 \leq d < r$. For any $\nu,\nu'\in\CO-\{0\}$, consider the restricted Kloosterman sum
\[\Kl(\nu,\nu';d,r)=\sum_{x\in  (1+\fp^{d})\cap \CO^\times /1+\fp^{r}}\psi_F\left(\frac{\nu x+\nu'x^{-1}}{\varpi^{r}}\right).\]

Firstly, we consider the case $d$ is sufficiently large. In particular, we assume that the map 
\[1 + \fp^d \lra 1+ 2\fp^d, \quad z \mapsto z^2\]
is an analytic bijection. The inverse bijection will be denoted by a square root.


\begin{prop}\label{GL2 kloosterman sum}
    For $d > 0$ sufficiently large, we have the following Weil bound
    \[|\Kl(\nu,\nu';d,r)| \leq q^{\frac{k}{2}}q^{\frac{\min\{v(\nu),v(\nu')\}+r}{2}}.\]
    Here, we fix $k \geq 0$ such that $q^{-k} \leq |2|^2$.
\end{prop}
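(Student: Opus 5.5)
We evaluate the restricted Kloosterman sum $\Kl(\nu,\nu';d,r)$ by the $p$-adic stationary phase method, in the standard form where one splits $x = y(1+\varpi^{s}t)$. Write $\mu=\min\{v(\nu),v(\nu')\}$.

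The first step handles large $\mu$. If $\mu\geq r$, the phase is trivial and we bound trivially:
\[
|\Kl| \le \#\bigl((1+\fp^d)/(1+\fp^r)\bigr) \le q^{r-d} \le q^{(\mu+r)/2}.
\]
If $0<\mu<r$, we factor out $\varpi^{\mu}$. The summand then depends only on $x$ modulo $1+\fp^{r-\mu}$, so
\[
\Kl(\nu,\nu';d,r)=q^{\mu}\,\Kl(\nu/\varpi^\mu,\nu'/\varpi^\mu;d,r-\mu)
\]
when $d<r-\mu$. When $d\geq r-\mu$ the sum is trivially at most $q^{r-d}\le q^{\mu}\le q^{(\mu+r)/2}$. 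Applying the $\mu=0$ bound $q^{k/2}q^{(r-\mu)/2}$ to the reduced sum gives $q^{\mu}q^{k/2}q^{(r-\mu)/2}=q^{k/2}q^{(\mu+r)/2}$. So it suffices to treat $\mu=0$, that is, one of $\nu,\nu'$ a unit.

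The second step is the key reduction in the case $\mu=0$. Set $s=\lceil r/2\rceil\geq d$; if $s<d$, the sum has at most $q^{r-d}\le q^{r/2}$ terms and we are done. Write $x=y(1+\varpi^s t)$, with $y$ running over $(1+\fp^d)/(1+\fp^s)$ and $t\in \CO/\fp^{r-s}$. Since $2s\ge r$,
\[
\nu x+\nu' x^{-1}\equiv \nu y+\nu' y^{-1}+\varpi^s t(\nu y-\nu' y^{-1}) \pmod{\fp^r}.
\]
The inner sum over $t$ is a complete character sum. It vanishes unless $\nu y^2\equiv \nu' \pmod{\fp^{r-s}}$, and otherwise equals $q^{r-s}$. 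Hence
\[
|\Kl|\le q^{r-s}\cdot \#\{y\in(1+\fp^d)/(1+\fp^s): \nu y^2\equiv\nu' \bmod \fp^{r-s}\}.
\]

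The third step counts these solutions. Note that $\nu y^2\equiv\nu'$ with $y$ a unit forces both $\nu$ and $\nu'$ to be units, so $y^2\equiv \nu'/\nu$. Here the largeness of $d$ enters. The squaring map is a bijection $1+\fp^d\to 1+2\fp^d$, and more precisely $y_1^2\equiv y_2^2 \pmod{\fp^{j}}$ implies $y_1\equiv y_2 \pmod{\fp^{j}/2}$ for $y_i\in 1+\fp^d$. (For $y_1=y_2u$ with $u\in1+\fp^d$, $d$ large, we have $v(u^2-1)=v(2)+v(u-1)$.) So the solutions $y$ modulo $1+\fp^s$ lie in a single class modulo $1+\fp^{r-s-v(2)}$. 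Their number is at most $q^{s-(r-s)+v(2)}\le q^{1+v(2)}$. Therefore
\[
|\Kl|\le q^{r-s}q^{2s-r+v(2)}=q^{s+v(2)}\le q^{r/2}\,q^{1/2+v(2)}.
\]

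The remaining task is bookkeeping of the constant: we must check that $q^{1/2+v(2)}$ (or its refinement in the even and odd $r$ cases) is at most $q^{k/2}$ with $q^{-k}\le|2|^2$, that is, $k\ge 2v(2)$. For $r$ even the estimate gives $q^{r/2+v(2)}\le q^{k/2}q^{r/2}$ directly. For $r$ odd the extra $q^{1/2}$ must be removed. For this we would do the refined stationary phase with $s=(r-1)/2$, which produces a quadratic Gauss sum over $\CO/\fp$ of absolute value $q^{1/2}$ and cancels the loss. Alternatively, we may absorb it by allowing $k$ large (if $p$ is odd, $v(2)=0$ and the Gauss sum is needed). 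This odd-$r$ Gauss sum step is the main technical obstacle. Everything else is the routine stationary phase argument above.
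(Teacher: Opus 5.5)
\textbf{Gap in the odd-$r$ case.} Your reduction to $\mu=0$ is correct, and so is the first-order stationary phase with $x=y(1+\varpi^s t)$, $s=\lceil r/2\rceil$. Together they give $|\Kl(\nu,\nu';d,r)|\le q^{\lceil r/2\rceil+v(2)}$. For even $r$ this is the claimed bound for every admissible $k$, since $q^{-k}\le|2|^2$ means exactly $k\ge 2v(2)$.

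The gap is odd $r$, and your proposed remedy fails in residue characteristic $2$. For odd $p$ the refinement with $s=(r-1)/2$ does work, though you still need to write it out:
\begin{itemize}
\item split $t=t_0+\varpi t_1$;
\item the $t_1$-sum forces $\nu y^2\equiv\nu'\pmod{\fp^s}$, which has at most one solution class;
\item the $t_0$-sum is then a nondegenerate Gauss sum of modulus $q^{1/2}$.
\end{itemize}
When $\CO/\fp$ has characteristic $2$, however, $t_0\mapsto ct_0^2+bt_0$ is additive. So $\sum_{t_0}\psi_F(\varpi^{-1}(ct_0^2+bt_0))$ is $0$ or $q$, and no cancellation is gained. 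You remain at $q^{(r+1)/2+v(2)}$, which exceeds $q^{k/2}q^{r/2}$ for the admissible choice $k=2v(2)$.

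For example, over $\BQ_2$ with unit $\nu,\nu'$ and $r$ odd and large, you get $2^{(r+3)/2}$. The statement requires $2^{(r+2)/2}$, and the true size (when nonzero) is $2^{(r+1)/2}$.

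The loss comes from bounding the stationary set term by term. For large $r$ the admissible $y$ form a full coset of $1+\fp^{r-s-v(2)}$, that is, $q^{2s-r+v(2)}$ classes modulo $1+\fp^s$, and their contributions partly cancel. Enlarging $k$ only proves the proposition for $k\ge 2v(2)+1$. That weaker form would suffice for every later use in the paper, where $k$ only enters implied constants. But it is not what is stated, and the paper's argument gives the bound for every $k$ with $q^{-k}\le|2|^2$.

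\textbf{What is missing.} You need an exact second-order evaluation at the stationary point that is valid over dyadic fields, i.e.\ the Weil index. The paper takes this from Jacquet--Ye. Write $\Kl(\nu,\nu';d,r)=q^rT^d(\nu\varpi^{-r},\nu'\varpi^{-r})$.
\begin{itemize}
\item In the range $|\nu\varpi^{-r}|>q^{2d+k}$, one has $|T^d(a,b)|=|2a|^{-1/2}$, via the Weil constant $\gamma(a/u,\psi)$ with $|\gamma|=1$. Hence $|\Kl|=q^{r/2}|2|^{-1/2}|\nu|^{-1/2}$ exactly, which is within the allowance because $|2|^{-1/2}=q^{v(2)/2}\le q^{k/2}$.
\item In the complementary range, part (1) of their lemma shows that the trivial bound $q^{r-d}$ is already at most $q^{(r+k)/2}|\nu'|^{-1/2}$.
\end{itemize}
Your elementary route avoids the Weil-constant machinery and is complete for odd $p$ once the Gauss-sum step is written out. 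For $p=2$, though, you would have to evaluate dyadic quadratic Gauss sums exactly, which amounts to reproducing that computation.
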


This Weil bound can be obtained using the existence of the Weil constant (See \cite[Section 3]{JY99}). 

For any $a \in F^\times$, there is a unique constant $\gamma(a,\psi)$, called the Weil constant associated to $a$ and $\psi$, satisfying  
    \[\int_F \Phi(x)\psi\left(\frac{ax^2}{2}\right)dx=|a|^{-\frac{1}{2}}\gamma(a,\psi)\int_F \wh{\Phi}(x)\psi\left(-\frac{x^2}{2a}\right)dx, 
\quad \Phi \in \CS(F).\]
Here, $\wh{\Phi}(x) = \int_F \Phi(y)\psi(-xy)dy$ is the Fourier transform of $\Phi$.   Moreover, we have $|\gamma\left(a,\psi\right)|=1$.

\begin{prop}[Jacquet-Ye, Lemma 3.2, Lemma 3.3, Lemma 3.4 in \cite{JY99}]\label{stationary phase}
    For $d > 0$ sufficiently large, consider
    \[T^d(a,b)=\int_{1+\mathfrak{p}^d}\psi\left(ax+\frac{b}{x}\right)dx,\]
    here the Haar measure $dx$ on $F$ is normalized with the volume of $\CO$ equal to $1$.
    Assume $(a,b)\in\operatorname{supp}T^d$. Then the following properties hold
    \begin{enumerate}
        \item For any $k \geq 0$, we have $|b|\leq q^{2d+k}$ if and only if $|a|\leq q^{2d+k}$. Under this condition, we have $|a-b|\leq q^{d+k}$.
        \item Let $k\geq0$ such that $q^{-k}\leq|2|^2$. If $|a|,|b|>q^{2d+k}$, 
		then $a=bu^2$ with $u\in1+\mathfrak{p}^d$. Furthermore, the integral
        \[T^d(a,b)=|2a|^{-1/2}\psi\left(\frac{2a}{u}\right)\gamma\left(\frac{a}{u},\psi\right),\]
         In particular,
        \[|T^d(a,b)|=|2a|^{-1/2}.\]
    \end{enumerate}
\end{prop}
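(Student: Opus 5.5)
We prove the three assertions by $p$-adic stationary phase on the substitution $x\in 1+\fp^d$, using that for $d$ large the square-root map $1+2\fp^d\to 1+\fp^d$ is an analytic bijection.

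\emph{Support.} Substituting $x\mapsto xy$ with $y\in 1+\fp^{d+j}$ and averaging over $y$ shows that $T^d(a,b)\neq 0$ forces the linear part of the phase to be small. Write $y=1+t$ with $t\in\fp^{d+j}$. Then
\[ax y+\frac{b}{xy}=ax+\frac{b}{x}+t\left(ax-\frac{b}{x}\right)+\frac{b}{x}\left(t^2-t^3+\cdots\right).\]
Choose $j$ so that $|b|\,q^{-2(d+j)}\le 1$; then the higher-order terms lie in $\CO$ and are killed by $\psi$, which is unramified. Integrating over $t$ kills the integral unless $|ax-b/x|\le q^{d+j}$ for some $x$ in the support. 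If $|a|\le q^{2d+k}$, take $j=d+k$ (the symmetric argument with $a$ and $b$ swapped also applies). This gives $|ax-b/x|\le q^{2d+k}$. Since $x\in 1+\fp^d$, we get $|b|\le q^{2d+k}$, and also $|a-b|\le |ax-b/x|+|a|\,|x-1|+|b|\,|x^{-1}-1|$. I would redo this with the finer choice $j=\lceil(v$-data$)/2\rceil$ to sharpen the bound to $|a-b|\le q^{d+k}$.

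\emph{Large case.} Suppose $|a|,|b|>q^{2d+k}$. The same averaging, now with $j$ chosen so that $|a|q^{-2(d+j)}\le 1<|a|q^{-2(d+j)+2}$, shows that the contributing $x$ satisfy $|ax^2-b|\le |a|q^{-(d+j)}$, which is much smaller than $|a|\,|2|^2q^{-d}$. Hence $b/a\in 1+4\fp^{d}\cdot(\text{small})\subset 1+2\fp^d$ once $k$ absorbs $|2|$, so $b/a=u^{-2}$ and $a=bu^2$ with $u\in 1+\fp^d$.

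\emph{Evaluation.} Substitute $x=u^{-1}(1+s)$, which amounts to centring at the critical point $x_0=\sqrt{b/a}=u^{-1}$. Then
\[ax+\frac{b}{x}=\frac{a}{u}\left((1+s)+(1+s)^{-1}\right)=\frac{a}{u}\left(2+s^2-s^3+\cdots\right).\]
Restricting $s$ to $\fp^{d+j}$ (the complement contributes $0$ by the support step), the cubic and higher terms are integral, so
\[T^d(a,b)=\psi(2a/u)\int_{\fp^{d+j}}\psi\!\left(\frac{a}{u}s^2\right)ds.\]
This Gaussian over the ball is evaluated by the Weil-constant identity with $\Phi=1_{\fp^{d+j}}$, whose Fourier transform is a multiple of $1_{\fp^{-d-j}}$. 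Using $\psi(-x^2/(2\cdot(2a/u)))\equiv 1$ on that ball (by the choice of $j$ and $k$), we obtain $|2a/u|^{-1/2}\gamma(\cdot,\psi)$ up to the stated normalization. Since $|u|=1$, this gives $|T^d|=|2a|^{-1/2}$.

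The main obstacle is the bookkeeping of the exponents $j,k$, which must make both the Taylor remainder vanish and the Fourier side trivial simultaneously. A second point needing care is that a nonzero contribution requires the critical point to lie inside $1+\fp^d$, which follows from the support step.
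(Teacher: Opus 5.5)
The paper gives no proof of this proposition; it is quoted from \cite{JY99}. Your argument therefore has to stand on its own. The strategy is the right one, but the evaluation step fails as written.

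Put $N=-v(a)$, so your $j$ satisfies $d+j=\lceil N/2\rceil$. The averaging does not confine $s$ to the averaging subgroup $\fp^{d+j}$. It confines $x$ to $\{|ax-b/x|\le q^{d+j}\}$. With $x=u^{-1}(1+s)$ one has $ax-b/x=(a/u)\,s(2+s)(1+s)^{-1}$, so $|ax-b/x|=|2a|\,|s|$. The surviving set is therefore $s\in\fp^{m}$ with $m=\lfloor N/2\rfloor-v(2)$, which is strictly larger than $\fp^{d+j}$ unless $N$ is even and $p\neq 2$.

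On $\fp^{d+j}$ itself even the quadratic term $(a/u)s^2$ is integral, so your restricted integral is just $q^{-\lceil N/2\rceil}$. Correspondingly, the claim $\psi\bigl(-x^2/(4a/u)\bigr)\equiv 1$ on $\fp^{-d-j}$ is false, since $q^{2(d+j)}/|4a|=q^{2\lceil N/2\rceil-N+2v(2)}$ can exceed $1$. Concretely, for $p$ odd and $N$ odd, the annulus $\fp^{m}\setminus\fp^{m+1}$ carries a quadratic Gauss sum. The true value has modulus $q^{-N/2}$, whereas your restriction gives $q^{-(N+1)/2}$. So $|T^d(a,b)|=|2a|^{-1/2}$ is not established; the ``bookkeeping'' you flag as the main obstacle is exactly where the argument breaks.

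The repair is to restrict to $\fp^m$ instead.
Since $N\ge 2d+k+1\ge 2d+2v(2)+1$, we have $m\ge d$. The terms $(a/u)s^n$ with $n\ge 3$ are integral on $\fp^m$ once $3m\ge N$, which holds for $d$ large. Now take $\Phi=1_{\fp^m}$, so $\widehat{\Phi}=q^{-m}1_{\fp^{-m}}$, and $c=2a/u$. Then $|x^2/(2c)|\le q^{2m+2v(2)-N}\le 1$ on $\fp^{-m}$, so the Fourier side is exactly $1$. This gives $T^d(a,b)=\psi(2a/u)\,|2a|^{-1/2}\gamma(2a/u,\psi)$ in the paper's normalization of $\gamma$; only the modulus is used later.

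The same care tidies the other steps.
In (1), take $j=\lceil k/2\rceil$ from the start, instead of $j=d+k$ plus an unspecified sharpening. Then $|b|\,|t|^2\le 1$ on $\fp^{d+j}$, and contributing $x$ satisfy $|ax-b/x|\le q^{d+j}$, hence $|a|\le q^{2d+k}$. Since also $|a(x-1)|,|b(x^{-1}-1)|\le q^{d+k}$, you get $|a-b|\le q^{d+k}$.
In the large case, the contributing $x$ give $b/a\in x^2(1+\fp^{\lfloor N/2\rfloor})$, not $1+(\text{small})$. It is $x^2\in 1+2\fp^d$ together with $\lfloor N/2\rfloor\ge d+v(2)$ that places $b/a$ in $1+2\fp^d$.
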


\begin{proof}[The proof of Proposition \ref{GL2 kloosterman sum}]
    By the symmetry of $x$ and $x^{-1}$, it is 
    enough to show that 
    \[|\Kl(\nu,\nu';d,r)| \leq q^{\frac{k}{2}}q^{\frac{v(\nu')+r}{2}}.\]
    
    We have
        \[\Kl(\nu,\nu';d,r) = q^rT^d\left(\frac{\nu}{\varpi^{r}},\frac{\nu'}{\varpi^{r}}\right).\]
     
        We fix $k>0$ such that $q^{-k}\leq|2|^2$. Then for $|\frac{\nu}{\varpi^{r}}|>q^{2d+k}$, by Lemma \ref{stationary phase}(2), we have $|\nu|=|\nu'|$ and
        \[|\Kl(\nu,\nu';d,r)|=q^r\left|T^d\left(\frac{\nu}{\varpi^{r}},\frac{\nu'}{\varpi^{r}}\right)\right|=q^r\left|\frac{2\nu'}{\varpi^{r}}\right|^{-\frac{1}{2}}=q^{\frac{r}{2}}|2|^{-\frac{1}{2}}|\nu'|^{-\frac{1}{2}}\leq q^{\frac{r+k}{2}}|\nu'|^{-\frac{1}{2}}.\]
        On the other hand, for $|\frac{\nu}{\varpi^{r}}|\leq q^{2d+k}$, by Lemma \ref{stationary phase}(1), we have
        $|\nu'|^{-1/2}\geq q^{-\frac{d-r+k}{2}}$ so that
        \[|\Kl(\nu,\nu';d,r)|=q^{r}\left|T^d\left(\frac{\nu}{\varpi^{r}},\frac{\nu'}{\varpi^{r}}\right)\right|
            \leq q^{r-d} \leq q^{\frac{r+k}{2}}\cdot|\nu'|^{-\frac{1}{2}}.\]
     
\end{proof}

We also consider the case $d=0$
\[\mathrm{Kl}(\nu,\nu';r)= \mathrm{Kl}(\nu,\nu';0,r) = \sum_{x\in\CO^\times/1+\fp^{r}}\psi_F\left(\frac{\nu x+\nu'x^{-1}}{\varpi^r}\right).\]
\begin{cor}\label{classical kloosterman sum}
    There is a constant $C$ only depending on $F$ such that for any $r > 0$ sufficiently large
    \[|\mathrm{Kl}(\nu,\nu';r)| \leq C q^{\frac{\min\{v(\nu),v(\nu')\}+r}{2}}\]
    \begin{proof}
        Choose a constant $d>0$ sufficiently large as above.
        For any $r > d$, by Proposition \ref{GL2 kloosterman sum}, we have 
        \begin{align*}
            |\mathrm{Kl}(\nu,\nu';r)|&=\Big|\sum_{x\in\CO^\times/1+\fp^{r}}\psi_F\left(\frac{\nu x+\nu'x^{-1}}{\varpi^r}\right)\Big|\\
            &\leq\sum_{y\in\CO^\times/1+\fp^d}\Big|\sum_{x\in 1+\fp^d/1+\fp^{r}}\psi_F\left(\frac{\nu xy+\nu'(xy)^{-1}}{\varpi^{r}}\right)\Big|\\
            &\leq (q-1)q^{d+k/2-1} q^{\frac{\min\{v(\nu),v(\nu')\}+r}{2}}.
        \end{align*}
    \end{proof}
\end{cor}
\subsection{The stratification of D\k{a}browski-Reeder}

Here, we consider the stratification of D\k{a}browski-Reeder
for Kloosterman sets on $G = \GL_n$, especially the refinement
in the paper of Blomer-Man \cite{BM24}. 
Let $B=AN$ be the standard upper triangular Borel subgroup of $G$ with $A$ the diaganal torus and $N$ the upper triangular unipotent matrices.

We shall follow the convention of \cite{BM24}. Note that this is slightly different from \cite{DR98} (and Theorem
\ref{DR1998}). For example, for an open compact subgroup $K$ of $G$, the Kloosterman set here looks like
\[N \cap K \bs NawN \cap K / N\cap K\]
with the Weyl element $w$ in the right side of $a \in A$.

Firstly, we consider the stratification for $K = K_0 = G(\CO)$.
Denote by $N_0 = K_0 \cap N$.

Let $\Delta=\{\alpha_{i}:1\leq i\leq n-1\}$ be the set of simple roots
and let $\Phi^+=\{\alpha_{ij}:=\sum_{k=i}^j\alpha_k:1\leq i \leq j\leq n-1\}$
be the set of positive roots.  For each positive root $\beta$ there is an embedding
\[\phi_\beta\colon\SL_2\to G.\]

For a positive root $\beta$, consider the following map
\[b_\beta: F \lra G\]
given by
\[b_\beta(a)=\begin{cases}
    \phi_\beta\begin{pmatrix}
        0 & -1\\
        1 & a
    \end{pmatrix},&\text{ if }a\in\CO;\\
    \phi_\beta\begin{pmatrix}
        c^{-1} & \\
        \varpi^m & c
    \end{pmatrix},&\text{ if }a=\varpi^{-m}c,c\in\CO^\times,m>0.
\end{cases}\]

Consider the following reduced representation of the longest
Weyl element as products of simple reflections
\[w_0=(s_{\alpha_1}\cdots s_{\alpha_{n-1}})(s_{\alpha_1}\cdots s_{\alpha_{n-2}})\cdots(s_{\alpha_1}s_{\alpha_2})s_{\alpha_1}.\]
Write
\[\underline{\beta}=(\alpha_1,\cdots,\alpha_{n-1},\alpha_1,\cdots,\alpha_{n-2},\cdots,\alpha_1,\alpha_2,\alpha_1)\in(\Phi^+)^{\frac{n(n-1)}{2}}.\]
For $\underline{a}=(a_{11},\cdots,a_{1,n-1},a_{22},\cdots,a_{n-1,n-1})\in F^{\frac{n(n-1)}{2}}$, we define
\[b_{\underline{\beta}}(\underline{a}):=b_{\alpha_1}(a_{11})\cdots b_{\alpha_{n-1}}(a_{1,n-1})\cdots b_{\alpha_1}(a_{n-1,n-1}).\]
We have the following injective map (\cite[Proposition 3.1]{DR98})
\[b_{\underline{\beta}}: F^{\frac{n(n-1)}{2}} \lra N_0\backslash\left(Nw_0AN\cap K_0\right).\] 

For $\underline{m}=(m_{11},\cdots,m_{1,n-1},m_{22},\cdots,m_{n-1,n-1})\in\BZ_{\geq 0}^{\frac{n(n-1)}{2}}$, consider
\[Y_{\underline{\beta}}(\underline{m})=\left\{\underline{a}=(a_{11},\cdots,a_{1,n-1},a_{22},\cdots,a_{n-1,n-1})\in F^{\frac{n(n-1)}{2}}\Big|\mu(a_{ij})=m_{ij},1\leq i\leq j\leq n-1\right\}\]
where for $a \in F$, we denote
\[\mu(a)=\begin{cases}
    0, \quad &\text{if } a\in\CO;\\
    -v(a),\quad &\text{otherwise }.
\end{cases}\]

Cocharacters on $A$ can be viewed as elements of $A$ by the following injection
\[X_*(A) \lra A, \quad \lambda \mapsto \lambda(\varpi).\]

Let $\lambda \in X_*(A)$. Consider
\[Y_\lambda = \bigsqcup_{\underline{m}\in M_\lambda}Y_{\underline{\beta}}({\underline{m}})\]
with the index set
\[M_\lambda=\left\{\underline{m}\in\BZ_{\geq 0}^{\frac{n(n-1)}{2}}\Big| \lambda=-\sum_{1\leq i\leq j\leq n-1}m_{ij}\alpha_{ij}^\vee\right\}.\]

\begin{prop}[Proposition 3.2 and 3.4 in \cite{DR98}]
For each $\underline{m}$, the set $Y_{\underline{\beta}}({\underline{m}})$ can be endowed with an action of $N_0$ such that
\[b_{\underline{\beta}}(\underline{a} * u)=b_{\underline{\beta}}(\underline{a})u.\]
Let $\lambda\in X_*(A)$.  Then $b_{\underline{\beta}}$ descends to a bijection between $Y_\lambda/N_0$ and the Kloosterman set $X_{K_0}(\lambda w_\ell)$. Here, the Kloosterman set 
    \[X_{K_0}(\lambda w_\ell)=N_0\backslash N\lambda w_\ell N\cap K_0/N_0\]
    with $w_\ell$ the following representative of the longest Weyl element $w_0$ 
\[w_\ell=\begin{pmatrix}
    & & & & & (-1)^{n-1}\\
    & & & & \iddots &\\
    & & & -1 & & \\
    & & 1 & & & \\
    & -1 & & & & \\
    1 & & & & &
\end{pmatrix}.\]
\end{prop}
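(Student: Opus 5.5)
This is the Dąbrowski–Reeder stratification as refined by Blomer–Man. I would prove it by an explicit column-reduction (Bruhat decomposition in $\SL_2$ blocks) together with a bookkeeping of torus parts. The plan has three steps.

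\textbf{Step 1: the image lies in the open cell, and the torus part is read off from $\underline{m}$.} First I verify that each $b_\beta(a)$ lies in $K_0$. For $a\in\CO$ the matrix $\begin{pmatrix}0&-1\\1&a\end{pmatrix}$ is integral with determinant $1$. For $a=\varpi^{-m}c$ the matrix $\begin{pmatrix}c^{-1}&0\\ \varpi^m&c\end{pmatrix}$ is integral with determinant $1$. Hence $b_{\underline\beta}(\underline a)\in K_0$.

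Next I locate its Bruhat cell. In $\SL_2$ one has the identity
\[
\begin{pmatrix}c^{-1}&0\\ \varpi^m&c\end{pmatrix}
=\begin{pmatrix}1&\varpi^{-m}c^{-1}\\0&1\end{pmatrix}
\begin{pmatrix}\varpi^{-m}&0\\0&\varpi^{m}\end{pmatrix}
\begin{pmatrix}0&-1\\1&0\end{pmatrix}
\begin{pmatrix}1&\varpi^{-m}c\\0&1\end{pmatrix}.
\]
So each $b_\beta(a)$ equals $n\,\alpha^\vee(\varpi^{-\mu(a)})\,s_\beta\,n'$ with $n,n'\in N_\beta$; the case $a\in\CO$ is the case $\mu(a)=0$. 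Since $\underline\beta$ is a reduced word for $w_0$, I can move the torus factors and the unipotent factors to the left and right through the product. Each time a factor passes a simple reflection in a reduced word, it is conjugated into a positive root group, so it stays in $N$. The torus exponents are transported by the partial Weyl words, which turns each simple coroot into the coroot $\alpha_{ij}^\vee$ attached to the corresponding inversion. The product therefore lies in $N\lambda w_\ell N$ with $\lambda=-\sum m_{ij}\alpha_{ij}^\vee$. The sign conventions of $w_\ell$ are fixed by checking that $\prod s_{\alpha}$ with the representatives $\begin{pmatrix}0&-1\\1&0\end{pmatrix}$ gives exactly $w_\ell$.

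\textbf{Step 2: the $N_0$-action.} Given $u\in N_0$, I push $u$ leftward through $b_{\underline\beta}(\underline a)$ one factor at a time. At the last factor $b_{\alpha}(a)$, write $u=u_\alpha u^\alpha$ with $u_\alpha\in N_{\alpha}(\CO)$, where $u^\alpha$ lies in the $\CO$-points of the unipotent radical of the minimal parabolic attached to $\alpha$. The factor $b_\alpha(a)u_\alpha$ is again of the form $b_\alpha(a')$ times an element of the torus/unipotent part, with $a'=a+t$ and $\mu(a')=\mu(a)$. This uses $t\in\CO$, and in the case $m>0$ it is an $\SL_2$ computation. The leftover element, together with $u^\alpha$, is then moved past $b_\alpha$ and remains integral. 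Iterating defines $\underline a*u$ with $b_{\underline\beta}(\underline a*u)=b_{\underline\beta}(\underline a)u'$, and the construction preserves $\underline m$. Associativity follows from the uniqueness statement in Step 3.

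\textbf{Step 3: bijectivity.} Injectivity is the quoted \cite[Proposition 3.1]{DR98}. For surjectivity, take $k\in N\lambda w_\ell N\cap K_0$ and induct on the length of the word. Multiply $k$ on the left by an element of $N_0$ so that its first column is put in Hermite normal form. This column is governed by the first simple-root factor $b_{\alpha_1}(a_{11})$, in the $(1,2)$ block. Choose $a_{11}$ accordingly: if the relevant entry is a unit, take $a_{11}\in\CO$; otherwise $\varpi^{m}$ is the gcd of the pivot entry. Peeling off $b_{\alpha_1}(a_{11})$ leaves an element of $K_0$ in a smaller cell. Iterating through the word $\underline\beta$ produces $\underline a$, unique modulo the $N_0$-action. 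Comparing torus parts via Step 1 shows that $\underline m\in M_\lambda$, so that $b_{\underline\beta}$ induces a bijection $Y_\lambda/N_0\cong X_{K_0}(\lambda w_\ell)$.

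\textbf{Main obstacle.} I expect the hardest step to be Step 2 and the surjectivity in Step 3: showing that pushing $u$, or performing the reduction, never leaves $K_0$, and that the choice of $a_{ij}$ is forced up to the $N_0$-action. I would first handle $n=2,3$ by explicit matrices, and then argue by induction along the factorization $w_0=(s_{\alpha_1}\cdots s_{\alpha_{n-1}})\,w_0^{(n-1)}$. This reduces the general case to the $\GL_{n-1}$ statement, embedded via the lower-right block.
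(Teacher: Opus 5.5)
The paper does not prove this statement; it quotes it from \cite{DR98}, so your argument has to stand on its own.

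\textbf{Steps 1 and 2 are sound.} Your $\SL_2$ identity is correct. The reduced-word argument places $b_{\underline\beta}(\underline a)$ in $N\lambda(\varpi)w_\ell N$ with $\lambda=-\sum m_{ij}\alpha_{ij}^\vee$. This is really $BsB\cdot BwB=BswB$: the $N_\alpha$-component must be pushed to the right through $\dot w'$, not through $\dot s_\alpha$, where it would become a negative root element. For $t\in\CO$ one has $b_\alpha(a)x_\alpha(t)\in x_\alpha(\CO)\,b_\alpha(a+t)$, with the leftover factor on the \emph{left}. Together with the fact that $\phi_\alpha(\SL_2(\CO))$ normalizes $N^\alpha\cap K_0$, this gives an $N_0$-action preserving $\underline m$. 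Injectivity on orbits then follows from the quoted injectivity of $b_{\underline\beta}$ plus this equivariance.

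\textbf{The gap is surjectivity in Step 3.} Your mechanism fails because the first column of $k$ does not see $a_{11}$. Write $k=u\,\lambda(\varpi)w_\ell\,u'$. Then $ke_1=u\lambda(\varpi)w_\ell e_1$ is a power of $\varpi$ times $ue_n$. So the first column records the coordinates of $u$ along the roots $e_i-e_n$, not along $\alpha_1$.

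Concretely, take $n=3$ and $a_{12},a_{22}\in\CO$. Then $b_{\alpha_1}(a_{22})e_1=e_2$, $b_{\alpha_2}(a_{12})e_2=e_3$ and $b_{\alpha_1}(a_{11})e_3=e_3$. So the first column of $b_{\underline\beta}(\underline a)$ is $e_3$ for every $a_{11}\in F$, although distinct $a_{11}$ give distinct cosets $N_0b_{\underline\beta}(\underline a)$.

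The invariant that fixes the first letter is the $\alpha_1$-coordinate $u_{12}$ of the left unipotent factor. On the big cell, $u_{12}=-(k^{-1})_{n,2}/(k^{-1})_{n,1}$.

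What you need is a one-step peeling lemma valid for \emph{every} reduced $w=s_\alpha w'$, not only for $w_0$. Suppose $k\in K_0\cap N\dot wAN$ and its left unipotent factor has $\alpha$-coordinate $x$; this is well defined since $w^{-1}\alpha<0$.
\begin{itemize}
\item If $x\in\CO$, then $b_\alpha(0)^{-1}x_\alpha(-x)k\in K_0\cap N\dot w'AN$.
\item If $x=\varpi^{-m}c^{-1}$ with $m>0$, your Step 1 identity read backwards gives $b_\alpha(\varpi^{-m}c)^{-1}k\in K_0\cap N\dot w'AN$. Here the $N^\alpha$-part is conjugated through $b_\alpha(\cdot)$ and stays in $N$, and the $N_\alpha$-part passes to the right because $w'^{-1}\alpha>0$.
\end{itemize}
Iterate this along $\underline\beta$, using your Step 2 commutation to move the accumulated $N_0$-factors to the far left. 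You end with $k\in N_0\,b_{\underline\beta}(\underline a)\,t_0N_0$ for some $t_0\in A\cap K_0$. You must then invoke uniqueness of the decomposition $a=a_0\lambda_a(\varpi)$ to conclude $t_0=1$ and $\underline m\in M_\lambda$; "comparing torus parts" should say this explicitly.

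Your fallback induction also sits in the wrong block. The tail $(s_{\alpha_1}\cdots s_{\alpha_{n-2}})\cdots s_{\alpha_1}$ involves only $\alpha_1,\dots,\alpha_{n-2}$, so it lives in the upper-left $\GL_{n-1}$, not the lower-right one. Moreover, the first $n-1$ peeling steps pass through cells such as $N s_{\alpha_1}w_0AN$, which are not attached to longest elements of Levi subgroups. So the general one-step lemma above cannot be avoided by reducing to $\GL_{n-1}$.
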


\begin{lem}[Lemma 3 in \cite{BM24}]\label{Bruhat decomposition}
    Let $\underline{a}=(a_{11},\cdots,a_{n-1,n-1})\in F^{\frac{n(n-1)}{2}}$.   Write  $b_{\underline{\beta}}(\underline{a}) =u a w_\ell u' \in N_0 \bs N_1 Aw_0 N_2 \cap K_0$ with $u,u'\in N$, $a \in A$. Denote by $a_{ij}=\varpi^{-m_{ij}}c_{ij}$ with $m_{ij} \geq 0$ and $c_{ij} \in \CO$ such that $(c_{ij},\varpi^{m_{ij}}) = 1$. On the first off-diagonal, we have
    \begin{align*}
        &u_{i,i+1}=\sum_{1\leq\delta \leq i}\frac{c_{\delta(i-1)}\prod_{t=1}^{\delta-1}\varpi^{m_{t(i-1)}}}{c_{\delta i}\prod_{k=1}^{\delta}\varpi^{m_{ki}}},\quad 1\leq i\leq n-1;\\
        &u_{i,i+1}'=\sum_{n-i\leq\partial\leq n-1}\frac{c_{(n-i)\partial}\prod_{k=\partial+1}^{n-1}\varpi^{m_{(n+1-i)k}}}{c_{(n+1-i)\partial}\prod_{t=\partial}^{n-1}\varpi^{m_{(n-i)t}}},\quad 1\leq i\leq n-1.
    \end{align*}
    Here, we set $c_{ij}=1$ and $m_{ij}=0$ if the condition $1\leq i\leq j\leq n-1$ is not satisfied. As a convention, when $m_{ij}=0$ for $1\leq i \leq j \leq n-1$, we define $c_{ij}^{-1}=0$ as a formal symbol.
\end{lem}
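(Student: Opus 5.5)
We compute $b_{\underline{\beta}}(\underline{a})$ one factor at a time and record how each new factor changes the Bruhat data $(u,a,u')$. Each $b_{\alpha}(x)$ is an $\SL_2$-element placed in the $\{\alpha,\alpha+1\}$ block, and it has an explicit Bruhat form. If $x\in\CO$, then $b_\alpha(x)=\phi_\alpha\matrixx{0}{-1}{1}{x}$; this is already a Weyl representative times a unipotent, with $x$ entering only as a right unipotent parameter. If $x=\varpi^{-m}c$ with $m>0$, we use
\[\phi_\alpha\matrixx{c^{-1}}{0}{\varpi^m}{c}=\phi_\alpha\left(\matrixx{1}{c^{-1}\varpi^{-m}}{0}{1}\matrixx{\varpi^{-m}}{0}{0}{\varpi^{m}}\matrixx{0}{-1}{1}{0}\matrixx{1}{c\varpi^{-m}}{0}{1}\right).\]
This is the form $u\,a\,s_\alpha\,u'$, with left parameter $1/(c\varpi^m)$ and right parameter $c/\varpi^m$. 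Under the conventions $c^{-1}=0$ when $m=0$, and $c_{ij}=1$, $m_{ij}=0$ outside the index range, both cases are covered by the same formulas.

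The next step is an induction on the number of factors, over the reduced word $\underline{\beta}$. The product of the first $k$ factors has the form $u_k\,a_k\,w_k\,u_k'$, where $w_k$ is the partial reduced word. To append the next factor $u\,t\,s_\alpha\,u''$, we must move $u_k'$ past $u\,t\,s_\alpha$. Since the word is reduced, $w_ks_\alpha$ has length $\ell(w_k)+1$. Hence the part of $u_k'$ that $s_\alpha$ does not carry into $N$ stays on the right, and only its $\alpha$-coordinate interacts with the new factor. Because we only need the first off-diagonal entries, the whole computation takes place in $N/[N,N]\cong\prod_{\alpha\in\Delta}N_\alpha$. There conjugation by the torus rescales coordinates by root values, conjugation by Weyl elements permutes simple-root coordinates, and higher commutators drop out.

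To get $u_{i,i+1}$, we track which factors feed the simple coordinate $\alpha_i$ of the left unipotent. A factor $b_{\alpha_i}(a_{\delta i})$ in the $\delta$-th block contributes $1/(c_{\delta i}\varpi^{m_{\delta i}})$ to this coordinate, rescaled by $\alpha_i$ of the accumulated torus element. That torus element comes from the earlier factors $\varpi^{-m_{ki}}$ with $k<\delta$, and from the factors in column $i-1$ together with the coefficient $c_{\delta(i-1)}$ produced by the neighbouring $b_{\alpha_{i-1}}$. Carrying out this bookkeeping gives the ratio
\[\frac{c_{\delta(i-1)}\prod_{t<\delta}\varpi^{m_{t(i-1)}}}{c_{\delta i}\prod_{k\le\delta}\varpi^{m_{ki}}},\]
and summing over $\delta$ gives the stated formula. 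The $u'$ formula follows symmetrically by reading the word from the right: the reduced word is reversed and the index $i\mapsto n-i$ is relabelled, since the right-hand $\alpha$-coordinate at the end corresponds to $w_0$-conjugated positions. Alternatively, one applies the anti-involution $g\mapsto w_\ell\,{}^tg\,w_\ell^{-1}$ to the left-side result.

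The main obstacle is the torus bookkeeping. We need the precise exponent with which the accumulated $a_k$ acts on each simple coordinate as it passes the partial Weyl words, together with the signs coming from the $w_\ell$ normalisation. Concretely, we must check that $\alpha_i\bigl(w_k(\prod\varpi^{-m}\text{-cocharacters})\bigr)$ collapses to the product of $\varpi^{m}$ from columns $i$ and $i-1$ only. We would verify this first for $n=3$, where it is an explicit matrix multiplication, and then organise the general case block by block $(s_{\alpha_1}\cdots s_{\alpha_{n-j}})$, inducting on $n$ by embedding $\GL_{n-1}$ in the lower-right corner. Since this is \cite[Lemma 3]{BM24}, we could also simply cite it after checking that the conventions match.
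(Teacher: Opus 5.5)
The paper gives no proof of this lemma; it quotes \cite[Lemma 3]{BM24}. Your closing option of citing it after matching conventions is therefore exactly the paper's route. The derivation you sketch as an independent proof, however, has a genuine gap.

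\textbf{The reduction to $N/[N,N]$ fails.} When you split the accumulated right unipotent as $n=n_\alpha n^\alpha$ and move $n^\alpha$ to the right of $s_\alpha$, the map $n^\alpha\mapsto s_\alpha^{-1}n^\alpha s_\alpha$ does not descend to abelianizations. For $\alpha=\alpha_i$ it carries $N_{\alpha_{i-1}+\alpha_i}\subset[N,N]$ onto the simple root group $N_{\alpha_{i-1}}$, and $N_{\alpha_i+\alpha_{i+1}}$ onto $N_{\alpha_{i+1}}$. Moreover, the reordering $n=n_\alpha n^\alpha$ itself creates such commutator components.

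Nor do the prefixes permute simple coordinates. With the paper's indexing, $a_{\delta i}$ sits in the factor $b_{\alpha_{i-\delta+1}}$, not $b_{\alpha_i}$. The prefix sends its left parameter into $N_{\alpha_\delta+\cdots+\alpha_i}$, which is simple only for $\delta=i$.

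Already for $n=3$ with all $m_{ij}>0$ this breaks your bookkeeping. Put $A=c_{11}\varpi^{-m_{11}-m_{12}}$ and $B=\varpi^{m_{12}}c_{12}^{-1}$; these are the rescaled right parameter of the first factor and left parameter of the second. At the second step you must rewrite
\[x_{\alpha_1}(A)\,x_{\alpha_2}(B)=x_{\alpha_2}(B)\,x_{\alpha_1}(A)\,x_{\alpha_1+\alpha_2}(AB).\]
Moving $x_{\alpha_1+\alpha_2}(AB)$ to the right of $s_{\alpha_2}$ turns it into $x_{\alpha_1}(AB)$. At the third step this $\alpha_1$-coordinate is pushed through $s_{\alpha_1}s_{\alpha_2}$ into $N_{\alpha_2}$. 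It gives exactly the first summand of
\[u_{23}=\frac{c_{11}}{c_{12}\varpi^{m_{12}}}+\frac{\varpi^{m_{11}}}{c_{22}\varpi^{m_{12}+m_{22}}}.\]
You can check this directly: $g$ is lower triangular and $u_{23}=g_{21}/g_{31}$. Modulo $[N,N]$ the term $AB$ is zero, so your scheme returns only the second summand.

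In general, every summand of $u_{i,i+1}$ with $\delta<i$ arises from a coordinate that lay in $[N,N]$ at an earlier stage; these are exactly the summands with a genuine numerator $c_{\delta(i-1)}$. No torus bookkeeping can produce them, since the torus parts $\diag(\varpi^{-m},\varpi^{m})$ contain no units.

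\textbf{Further points.}

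The $u'$ formula does not follow by symmetry. The anti-involution $g\mapsto w_\ell{}^tg\,w_\ell^{-1}$ reverses and relabels the word. It turns $\underline\beta$ into the different reduced word $(s_{\alpha_{n-1}})(s_{\alpha_{n-2}}s_{\alpha_{n-1}})\cdots(s_{\alpha_1}\cdots s_{\alpha_{n-1}})$. This is why the two displayed formulas have different index patterns: $u_{i,i+1}$ runs down the columns $i-1,i$, while $u'_{i,i+1}$ runs along the rows $n-i,n+1-i$. The right side therefore needs its own computation.

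Also, the tail of $\underline\beta$ after its first block is the long word of the upper-left $\GL_{n-1}$, not the lower-right one.

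\textbf{A workable repair.} Either keep the full right unipotent throughout the induction, or bypass Bruhat normalisation and use minors, which are one-sided $N$-invariant:
\[u_{i,i+1}=\frac{\det g[\{i,i+2,\dots,n\},\{1,\dots,n-i\}]}{\det g[\{i+1,\dots,n\},\{1,\dots,n-i\}]}.\]
Likewise $u'_{i,i+1}$ is the ratio of the minors of $g$ on the bottom $i$ rows with columns $\{1,\dots,i-1,i+1\}$ and $\{1,\dots,i\}$. These minors can be tracked factor by factor.
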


\begin{lem}[Lemma 4 in \cite{BM24}]
    For $\underline{m}\in\BZ_{\geq 0}^{\frac{n(n-1)}{2}}$, a complete system of coset representatives for $Y_{\underline{\beta}}(\underline{m})/N_0$ is given by
    \[\CC(\underline{m}) = \left\{(\varpi^{-m_{ij}}c_{ij})_{1\leq i\leq j\leq n-1}\Big| c_{ij} \in \CO/ \fp^{\sum_{k=j}^{n-1}m_{ik}},\quad (c_{ij},\varpi^{m_{ij}})=1\right\}.\]
\end{lem}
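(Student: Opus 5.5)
We have to prove that
\[\CC(\underline{m}) = \left\{(\varpi^{-m_{ij}}c_{ij})_{1\leq i\leq j\leq n-1}\Big| c_{ij} \in \CO/ \fp^{\sum_{k=j}^{n-1}m_{ik}},\ (c_{ij},\varpi^{m_{ij}})=1\right\}\]
is a complete system of coset representatives for $Y_{\underline{\beta}}(\underline{m})/N_0$. The plan is to work with the action of $N_0$ on $Y_{\underline{\beta}}(\underline{m})$ coming from Proposition 3.2/3.4 of \cite{DR98}. Since $b_{\underline{\beta}}$ is injective and $N_0$-equivariant, we have $b_{\underline{\beta}}(\underline{a}*u)=b_{\underline{\beta}}(\underline{a})u$. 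So two tuples lie in the same orbit exactly when their images differ by right multiplication by $N_0$. Both directions will be analysed one row block at a time, that is, one factor $(s_{\alpha_1}\cdots s_{\alpha_{n-i}})$ of the reduced word at a time. The first step is the rank-one computation. For $u=x_{\alpha}(t)$ with $t\in\CO$ we use
\[b_\alpha(a)\,\phi_\alpha\begin{pmatrix}1&t\\&1\end{pmatrix}=\phi_\alpha\begin{pmatrix}1&t'\\&1\end{pmatrix}\, b_\alpha(a')\cdot(\text{torus/correction}).\]
When $a\in\CO$ this gives $a'=a+t$. When $a=\varpi^{-m}c$ with $m>0$ it gives $c'\equiv c+\varpi^{m}c^{2}t$ modulo higher terms, and moves the unipotent part to the left. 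Here we use that $\begin{pmatrix}c^{-1}&\\ \varpi^m&c\end{pmatrix}\begin{pmatrix}1&t\\&1\end{pmatrix}$ is again of the same shape up to a left upper unipotent factor. So a single $b_\alpha$ only sees $c$ modulo $\fp^{m}$, and it passes a residual unipotent element of $N_0$ along to the next factor.

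The second step tracks how this residual element propagates through the word. The factor $b_{\alpha_j}(a_{ij})$ in block $i$ is followed, within that block, by $b_{\alpha_{j+1}}(a_{i,j+1}),\dots,b_{\alpha_{n-i}}(a_{i,n-i})$. The $\phi_{\alpha_k}\begin{pmatrix}c^{-1}&\\ \varpi^{m}&c\end{pmatrix}$ factors conjugate root subgroups $N_{\alpha_{j}+\dots}$ with scaling by $\varpi^{m_{ik}}$. By induction on the position, we will show that acting by $N_0$ on the right changes $c_{ij}$ exactly by $\varpi^{\sum_{k\geq j}m_{ik}}\CO$ and leaves the earlier coordinates fixed. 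The freedom in the later coordinates is absorbed by the next factors. The later blocks $i'>i$ are treated the same way, after commuting the remaining $N_0$-element past block $i$ using the shape of $b$ in Lemma \ref{Bruhat decomposition}. This shows that every orbit contains an element of $\CC(\underline{m})$: reduce $c_{11}$ first, then $c_{12}$, and so on, absorbing the right translate at each stage.

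The third step is injectivity, namely that distinct elements of $\CC(\underline{m})$ lie in distinct orbits. For this we will use the $N_0$-invariants $u\bmod N_0$ on the left, that is, the left coordinate $u\in N_0\bs N$ is well defined. Lemma \ref{Bruhat decomposition} expresses its first off-diagonal entries, and more generally its entries, through the $c_{ij}$ and $\varpi^{m_{ij}}$. Working modulo $\CO$, the leading term $c_{\delta(i-1)}\cdots/(c_{\delta i}\prod\varpi^{m_{ki}})$ recovers $c_{\delta i}^{-1}$, and hence $c_{\delta i}$, modulo exactly the required power of $\fp$, by triangular induction. The higher off-diagonal entries of $u$ are needed for the remaining $c_{ij}$.

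A cleaner alternative for injectivity is counting. Summing $|\CC(\underline{m})|=\prod_{i\leq j}\phi(\fp^{m_{ij}})q^{\sum_{k>j}m_{ik}}$ over $\underline{m}\in M_\lambda$ should reproduce the D\k{a}browski--Reeder cardinality of $X_{K_0}(\lambda w_\ell)$ in Theorem \ref{DR1998}. Since the first two steps give surjectivity, equality of cardinalities yields bijectivity. I would try this counting route first, and the main obstacle is matching the sum with $\delta^{-1/2}\sum_{\ov r}(1-q^{-1})^{\kappa(\ov r)}$ stratum by stratum. The bookkeeping in the second step, namely the exact modulus $\fp^{\sum_{k\ge j}m_{ik}}$, is the delicate computation; I expect it to come from the successive conjugations scaling by each $\varpi^{m_{ik}}$, $k\geq j$.
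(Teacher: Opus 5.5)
The paper gives no argument for this lemma (it quotes \cite{BM24}), so I judge your proposal on its own. Step~2 has a genuine gap: it is the entire content of the lemma, it is asserted rather than proved, and the mechanism you describe is wrong. In the right action the rank-one identity is exactly $b_\alpha(a)x_\alpha(t)=x_\alpha(t'')\,b_\alpha(a+t)$ with $t''=c^{-1}t/(c+\varpi^m t)$, i.e.\ $c'=c+\varpi^m t$ (no $c^2$). As you note yourself, the residual $x_\alpha(t'')$ lands on the \emph{left}, so it must be pushed through the \emph{earlier} factors. Hence right translation by $N_0$ changes earlier coordinates, not later ones. Already for $n=3$ with all $m_{ij}>0$, using $b_{\alpha_2}(a_{12})\,x_{\alpha_1}(s)=x_{\alpha_1}(sc_{12})\,b_{\alpha_2}(a_{12})$, one finds
\[\underline a * x_{\alpha_1}(\tau)=\bigl(a_{11}+\tau''c_{12},\ a_{12},\ a_{22}+\tau\bigr),\qquad \tau''=\frac{c_{22}^{-1}\tau}{c_{22}+\varpi^{m_{22}}\tau}.\]
For $\tau\in\CO^\times$ this shifts $c_{11}$ by an element of valuation exactly $m_{11}$, which is not in $\fp^{m_{11}+m_{12}}$. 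So it is false that $N_0$ moves $c_{ij}$ only by $\varpi^{\sum_{k\ge j}m_{ik}}\CO$ while fixing the earlier coordinates. Your order of reduction ($c_{11}$ first) also breaks down: normalizing $a_{22}$ afterwards undoes the normalization of $c_{11}$ modulo $\fp^{m_{11}+m_{12}}$.

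What has to be proved instead runs from right to left. Normalize $a_{n-1,n-1}$ modulo $\CO$ first, then show that the subgroup of $N_0$ fixing all coordinates to the right of position $(i,j)$ translates $a_{ij}$ by exactly $\varpi^{\sum_{k>j}m_{ik}}\CO$, affecting only coordinates further left. The inclusion $\supseteq$ gives surjectivity and $\subseteq$ gives injectivity. For $n=3$ the stabilizer of $(a_{12},a_{22})$ consists of the elements $x_{\alpha_2}(-t\varpi^{m_{22}}c_{22}^{-1})\,x_{\alpha_1+\alpha_2}(t)$, $t\in\CO$, and it sends $a_{11}$ to $a_{11}-tc_{22}^{-1}\varpi^{m_{12}}$. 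The extra $\fp^{m_{12}}$ in the modulus of $c_{11}$ comes from pushing $E_{13}$ leftward through $b_{\alpha_2}(a_{12})$, whose lower-left entry is $\varpi^{m_{12}}$. For general $n$ you must carry out this bookkeeping of root elements travelling leftward within and across blocks, and that is the missing core. Lemma \ref{Bruhat decomposition} only records the first off-diagonal of $u$, so it neither supplies this nor suffices for your $u$-based injectivity sketch.

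Your counting alternative is sound and easier than you fear. With $\kappa(\underline m)$ the number of nonzero $m_{ij}$, $|\CC(\underline m)|=q^{\sum_{i\le j}(j-i+1)m_{ij}}(1-q^{-1})^{\kappa(\underline m)}$ is literally the D\k{a}browski--Reeder summand for $\ov r=\underline m$ in the exact count of Example \ref{DR example}. Surjectivity for every $\underline m$ together with equality of totals then forces bijectivity stratum by stratum. But this presupposes precisely the surjectivity that Step~2, as written, does not deliver.
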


In the following, we shall focus on Kloosterman set associated to the admissible subgroup in Example \ref{GL_n example}. For 
a positive integer $\ell$ sufficiently large such that
$(1+\fp^\ell) \cap \mu_n = 1$, consider
\[K=\left\{g=\begin{pmatrix}
    g' & v\\
    z & t  
\end{pmatrix}\in K_0 \Big|g' \in M_{n-1,n-1}(\CO), v \in M_{n-1,1}(\CO), z \in M_{1,n-1}(\fp^\ell),  t\in1+\fp^\ell\right\}.\]
Note that $K \cap N = N_0$. 

For $\lambda \in X_*(A)$, consider the Kloosterman set
\[X_K(\lambda w_\ell) = N_0 \bs N\lambda w_\ell N \cap K / N_0 \subset
X_{K_0}(\lambda w_\ell).\]

\begin{lem}\label{complete system of coset representatives}
    Let $\lambda\in X_*(A)$. Then $b_{\underline{\beta}}$
    induces a bijection
    \[\bigsqcup_{\underline{m} \in M_\lambda^K} \CC^K(\underline{m}) \stackrel{\sim}{\lra} X_K(\lambda w_\ell).\]
    Here, we denote by
    \[M_\lambda^{K}=\left\{\underline{m}\in M_\lambda \Big| m_{1,n-1}\geq\ell \right\}\]
    and for each $\underline{m} \in M_{\lambda}^K$,
    \[\CC^K(\underline{m}) = \left\{(\varpi^{-m_{ij}}c_{ij})_{1\leq i\leq j\leq n-1} \in \CC(\underline{m}) \Big| c_{1,n-1}\in1+\fp^\ell \right\}.\]
    \begin{proof}
       Recall for any simple root $\beta\in\Delta$,
        \[b_\beta(a)=\begin{cases}
        \phi_\beta\begin{pmatrix}
        0 & -1\\
        1 & a
        \end{pmatrix},&\text{ if }a\in\CO;\\
        \phi_\beta\begin{pmatrix}
        c^{-1} & \\
        \varpi^m & c
        \end{pmatrix},&\text{ if }a=\varpi^{-m}c,c\in\CO^\times,m>0,
        \end{cases}\]
        and the embedding $\phi_{\alpha_i}$ maps $\begin{pmatrix}
            x & y\\
            z & w
        \end{pmatrix}$ to the matrix
        \[\begin{pmatrix}
            I_{i-1} & & \\
            & x & y &\\
            & z & w &\\
            & & & I_{n-1-i}\\
        \end{pmatrix}.\]
       Therefore, for each $1 \leq i \leq n-2$, 
       $b_{\alpha_i}(a) \in K$ for any $a \in F$ while
       $b_{\alpha_{n-1}}(a) \in K$ if and only if $a = \varpi^{-m}c$
       with $m \geq \ell$ and $c \in 1+\fp^\ell$.
       
       The reduced expression for $w_0$ is chosen such that there is exactly one factor $s_{\alpha_{n-1}}$ in this expression.
       The map $b_{\underline{\beta}}$ is a 
       product of $b_{\alpha_i}$ with respect to the reduced expression for $w_0$. Therefore, the factors
       of $b_{\underline{\beta}}(\underline{a})$ are always in $K$
       except $b_{\alpha_{n-1}}(a_{1,n-1})$. In particular,
       $b_{\underline{\beta}}(\underline{a}) \in K$ if and only if
       $c_{1,n-1} \in 1+\fp^\ell$ and $m_{1,n-1} \geq \ell$.
    \end{proof}
\end{lem}

\subsection{Kloosterman sums for  $\GL_n$}

We keep notations in the last section. 

Let $\psi_F$ be a nontrivial additive character on $F$ with
$\psi_F|_\CO = 1$. We consider two generic characters on $N$ 
\[\psi(n)=\psi_F\left(\sum_{j=1}^{n-1}\psi_jn_{j,j+1}\right),\psi'(n)=\psi_F\left(\sum_{j=1}^{n-1}\psi_j'n_{j,j+1}\right)\quad n=(n_{i,j})_{1\leq i,j\leq n},\]
where $\psi_j,\psi_j'\in\CO,1\leq j\leq n-1$ are nonzero. In particular,
$\psi|_{N_0} = \psi'_{N_0} = 1$.

Following Blomer-Man \cite{BM24}, consider the following Kloosterman sum on $G$ with respect to the quadruple $\left(\psi,\psi',w_\ell,K\right)$ 
\[\Kl'(a) = \Kl'(a;\psi,\psi',w_\ell,K) = \sum_{x \in X_K(aw_\ell)} \psi(u(x))\psi'(u'(x)), \quad a \in A.\]
Here, $K$ is the admissible open compact subgroup of $G$ given
in Example \ref{GL_n example} associated to
a sufficiently large  $\ell$ with $(1+\fp^\ell)\cap\mu_n = 1$
and the Kloosterman set $X_K(aw_\ell)$ is the one
in Lemma \ref{complete system of coset representatives}.

The following is the main result in this section.
\begin{thm}\label{main theorem GL_n}
    There exists a constant $\delta_n>0$ such that for any $a=\diag(\varpi^{-r_1},\varpi^{r_1-r_2},\cdots,\varpi^{r_{n-1}})$ with $r_i\geq 0$, $1 \leq i \leq n-1$, we have
    \[\Kl'(a)\ll(\max_{1\leq j\leq n-1}|\psi_j|^{-\frac{1}{2}})\cdot q^{(1-\delta_n)(r_1+\cdots+r_{n-1})}.\]
    Here, the notion $A \ll B$ means that $A \leq CB$ for
    a constant $C$ depending only on $F$.
\end{thm}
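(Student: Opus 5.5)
We follow the Blomer--Man argument \cite{BM24} for $K_0$ and modify only the $\SL_2$-block attached to $b_{\alpha_{n-1}}$. By Lemma \ref{complete system of coset representatives},
\[\Kl'(a)=\sum_{\underline{m}\in M_\lambda^K}\ \sum_{\underline{c}\in \CC^K(\underline{m})}\psi(u(b_{\underline{\beta}}(\underline{c})))\,\psi'(u'(b_{\underline{\beta}}(\underline{c}))),\]
where $\lambda$ corresponds to $a$, so $a=\lambda(\varpi)$. Lemma \ref{estamite on Kostant partition function} bounds the number of $\underline{m}\in M_\lambda$ polynomially in $r_1+\cdots+r_{n-1}$. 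It therefore suffices to bound each stratum sum $S(\underline{m})$ by $\ll \max_j|\psi_j|^{-1/2}q^{(1-\delta)\sum r_i}$ for some uniform $\delta>0$. A polynomial factor can then be absorbed by slightly shrinking $\delta$ to $\delta_n$.

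For a fixed stratum we use Lemma \ref{Bruhat decomposition}. The phase $\psi(u)\psi'(u')$ is an explicit sum of the first-off-diagonal entries $u_{i,i+1}$ and $u'_{i,i+1}$, and each of these is a sum of fractions in the $c_{ij}$ with prime-power denominators. The trivial bound is $\#\CC^K(\underline{m})\le \prod q^{\sum_{k\ge j}m_{ik}}$, and this matches $q^{\sum r_i}=\delta^{-1/2}(a)$ up to the counting identity. For the saving we pick one variable $c_{ij}$ which occurs in the phase through both $c_{ij}$ and $c_{ij}^{-1}$. We fix all other variables and sum over this one. This inner sum is a $\GL_2$ Kloosterman sum $\Kl(\nu,\nu';r)$, or a restricted sum $\Kl(\nu,\nu';\ell,r)$ when the variable is $c_{1,n-1}\in 1+\fp^\ell$. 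Corollary \ref{classical kloosterman sum} and Proposition \ref{GL2 kloosterman sum} then give square-root cancellation in that variable, up to $q^{\min(v(\nu),v(\nu'))/2}$. The gcd factor $\nu,\nu'$ involves the $\psi_j$, and this produces the factor $\max_j|\psi_j|^{-1/2}$.

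The constraint from $K$ acts only on $a_{1,n-1}$: $m_{1,n-1}\ge\ell$ and $c_{1,n-1}\in 1+\fp^\ell$. All other coordinates range exactly as in \cite{BM24} for $K_0$. So the case analysis of Blomer--Man carries over verbatim whenever the chosen saving variable is not $c_{1,n-1}$. When it is $c_{1,n-1}$ (or when the other factors are already fixed), we replace the full Kloosterman sum by the restricted sum over $1+\fp^\ell$ and apply Proposition \ref{GL2 kloosterman sum} with $d=\ell$. Since $\ell$ is fixed, this costs at most a constant factor.

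Finally we combine the strata. In each stratum the saving is $q^{-m/2}$, where $m$ is the exponent carried by the summed variable. The point of the \cite{BM24} argument is that one can always choose the variable so that $m\gg \sum r_i$, possibly after using a stratum-dependent choice together with the duality $u\leftrightarrow u'$. Strata in which every available exponent is small have few elements and satisfy the bound trivially.

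The main obstacle is this last step for the special stratum coordinate $m_{1,n-1}$. We must check that the decay of the Blomer--Man choice of variable is not destroyed by forcing $c_{1,n-1}$ into a coset of $1+\fp^\ell$. Here the condition $m_{1,n-1}\ge\ell$ is essential, since it keeps the restricted sum in the stationary-phase range of Proposition \ref{stationary phase}. This is the content of the lifting argument (Lemma \ref{lifting modulus}), and we would verify it by comparing the restricted sum with the full one on $\CO^\times$, coset by coset.
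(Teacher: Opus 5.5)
\textbf{There is a genuine gap: you never control the coefficients of the inner Kloosterman sum.} Your skeleton matches the paper: stratify by $\underline{m}$, bound the number of strata polynomially, and save in one variable via the $\GL_2$ Weil bound, using the restricted sum of Proposition \ref{GL2 kloosterman sum} for $c_{1,n-1}$. But the step that actually produces the saving is missing.

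Fix all variables except $c_{ij}$ and apply the Weil bound (after the modulus-lifting trick of Lemma \ref{lifting modulus}). What you gain is only the factor $W(A_{ij},B_{ij})=\min\{1,|A_{ij}|^{-1/2},|B_{ij}|^{-1/2}\}$, where $A_{ij}=\psi_{j+1}c_{i,j+1}^{-1}\varpi^{a_1(i,j)}+\psi'_{n-i}c_{i+1,j}^{-1}\varpi^{a_2(i,j)}$ and $B_{ij}=\psi_jc_{i,j-1}\varpi^{b_1(i,j)}+\psi'_{n+1-i}c_{i-1,j}\varpi^{b_2(i,j)}$. These coefficients depend on the neighbouring summation variables and on the other exponents, not just on the $\psi_j$:
\begin{itemize}
\item the two terms of $B_{ij}$ (or $A_{ij}$) can cancel;
\item even $b_1(i,j)=-m_{ij}+\sum_{k<i}(m_{k,j-1}-m_{kj})$ is only bounded by $-m_{ij}+(i-1)\mu_{ij}$.
\end{itemize}
So for many choices of the remaining $c_{uv}$ one can have $W=1$ and no saving at all. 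Your claim that ``the gcd factor involves the $\psi_j$'' is exactly where the argument breaks. Choosing a variable with $m_{ij}\gg\sum r_i$ (say the largest) does not repair it, because its coefficients are governed by the earlier exponents $\mu_{ij}$, which may be just as large.

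\textbf{The paper supplies two ingredients your proposal lacks.}
\begin{itemize}
\item It averages $W(A_{ij},B_{ij})$ over the auxiliary variable $c_{i,j-1}$, on which $B_{ij}$ is affine-linear, via Lemma \ref{linear average} (the analogue of \cite[Lemma 7]{BM24}). This gives Lemma \ref{refined bound}: $|\Kl'(\underline{m})|\ll Q(\psi)\,q^{\rho-m_{ij}/2+(n-1)\mu_{ij}}$. The diagonal case $i=j$ is handled by forcing $v(B_{ii})<0$ from the size of $\psi_i$.
\item A pigeonhole argument along the ordering $(1,n-1)<(1,n-2)<\cdots<(n-1,n-1)$ produces an index with $m_{ij}-2(n-1)\mu_{ij}\ge\varepsilon_nM_*$. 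Since $\rho\le(n-1)NM_*$, this becomes a power saving.
\end{itemize}

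\textbf{Smaller points.}
\begin{itemize}
\item The special coordinate $c_{1,n-1}$ is not the main obstacle. It is handled in one line by Proposition \ref{GL2 kloosterman sum} with $d=\ell$, and the averaging variable $c_{i,j-1}$ is never $c_{1,n-1}$.
\item A bound for the full sum over $\CO^\times$ gives nothing for a single coset of $1+\fp^\ell$. The implication runs the other way (Corollary \ref{classical kloosterman sum}); the restricted bound comes from stationary phase (Proposition \ref{stationary phase}).
\item Strata never have ``few elements'': each has roughly $q^{\rho}$ points up to constants. So the trivial bound is never enough for a stratum once $\rho$ is large.
\end{itemize}
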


\begin{prop}
    Let $a=\diag(\varpi^{-r_1},\varpi^{r_1-r_2},\cdots,\varpi^{r_{n-1}})$ with $r_i\geq 0$. We have
    \[\mathrm{Kl}'(a)=\sum_{\underline{m}\in\CM^K(r)}\mathrm{Kl}'(\underline{m}).\]
    Here,
    \[\CM^K(r)=\left\{\underline{m}=(m_{ij})_{1\leq i\leq j\leq n-1}\in\BZ_{\geq 0}^{\frac{n(n-1)}{2}}\Big| \sum_{i\leq k\leq j}m_{ij}=r_k,1\leq k\leq n-1, m_{1,n-1} \geq \ell \right\}.\]
    For each $\underline{m} \in \CM^K(r)$, the partial
    Kloosterman sum $\Kl'(\underline{m})$ is
    \begin{align*}
    \sum_{\underline{c}\in\CC^K(\underline{m})}&\psi_F\Bigg(\sum_{j=1}^{n-1}\sum_{i=1}^j\psi_jc_{i(j-1)}c_{ij}^{-1}\varpi^{\sum_{k=1}^{i-1}m_{k(j-1)}-\sum_{k=1}^im_{kj}}\\
    &+\sum_{i=1}^{n-1}\sum_{j=1}^i\psi_i'c_{(n-i)(n-j)}c_{(n+1-i)(n-j)}^{-1}\varpi^{\sum_{k=1}^{j-1}m_{(n+1-i)(n-k)}-\sum_{k=1}^jm_{(n-i)(n-k)}}\Bigg).
\end{align*}
Here we employ the convention that $c_{ij}=1$ and $m_{ij}=0$ for $i>j$. Moreover, for
$1\leq i\leq j\leq n-1$, we set $c_{ij}^{-1}=0$ when $m_{ij}=0$. 
    \begin{proof}
    Denote by $\lambda=-\sum_{j=1}^{n-1}r_j\alpha_j^\vee$. Then
    $\lambda$ is the cocharacter associated to $a$. We have
    \[M_\lambda =\left\{\underline{m}=(m_{ij})_{1\leq i\leq j\leq n-1}\in\BZ_{\geq 0}^{\frac{n(n-1)}{2}}\Big| \sum_{i\leq k\leq j}m_{ij}=r_k,1\leq k\leq n-1 \right\} \]
    and $M_\lambda^K = \CM^K(r)$. By Lemma \ref{complete system of coset representatives},
    \[X_K(aw_\ell) = \bigsqcup_{\underline{m} \in \CM^K(r)}
    \CC^K(\underline{m})\]
    and
    \[\Kl'(a) = \sum_{\underline{m} \in \CM^K(r)} 
    \Kl'(\underline{m}), \quad 
    \Kl'(\underline{m}) = \sum_{\underline{a} \in \CC^K(\underline{m})}
    \psi(u(b_{\underline{\beta}}(\underline{a})))\psi'(u'(b_{\underline{\beta}}(\underline{a}))).\]
    We now obtain the Proposition by Lemma \ref{Bruhat decomposition}.
    \end{proof}
\end{prop}

Note that, for each $\underline{m} \in \CM^K(r)$,  $\Kl'(\underline{m})$ is independent on the choice of a system of representatives for $\CC^K(\underline{m})$. See also remarks following
\cite[Equation (1.6)]{BM24}.

We now give the proof of Theorem \ref{main theorem GL_n}. The argument is slightly different from that in \cite[Section 5.1 and 5.2]{BM24}.

Fix $\underline{m}\in \CM^K(r)$.

For each index $(i,j)$ with $1 \leq i \leq j \leq n-1$, 
denote by 
\[
M_{ij} =\sum_{k=j}^{n-1} m_{ik}, \quad  C_{ij}=\varpi^{M_{ij}}
\]
and
\[A_{ij}=\psi_{j+1}c_{i,j+1}^{-1}\varpi^{a_1(i,j)}+\psi_{n-i}'c_{i+1,j}^{-1}\varpi^{a_2(i,j)}, \quad
    B_{ij}=\psi_jc_{i,j-1}\varpi^{b_1(i,j)}+\psi_{n+1-i}'c_{i-1,j}\varpi^{b_2(i,j)},\]
with
\begin{align*}
    a_1(i,j)=\sum_{k=1}^{i-1}m_{kj}-\sum_{k=1}^im_{k,j+1},\quad a_2(i,j)=\sum_{k=j+1}^{n-1}m_{i+1,k}-\sum_{k=j}^{n-1}m_{ik},\\
    b_1(i,j)=\sum_{k=1}^{i-1}m_{k,j-1}-\sum_{k=1}^im_{kj},\quad b_2(i,j)=\sum_{k=j+1}^{n-1}m_{ik}-\sum_{k=j}^{n-1}m_{i-1,k}.
\end{align*}
Here, we use the following conventions: if $m_{ij} = 0$ for $1 \leq i \leq j \leq n-1$, then $c_{ij}^{-1} = 0$;
If $j < i$, then $m_{ij} = 0$, $c_{ij} = c_{ij}^{-1} = 1$ with $C_{ij} = 1$; if none of the above cases apply
with $i < 0$ or $j > n-1$, then $c_{ij} = c_{ij}^{-1} = m_{ij} = 0$ and $C_{ij} = 1$.

\begin{lem}\label{lifting modulus}
    Fix an index $(i,j)$ with $1 \leq i \leq j \leq n-1$. 
    Assume $m_{ij} \not=0$. 
    Then for any system of representatives $c_{uv}$ for $\CO/\fp^{M_{uv}}$ with $(u,v) \not= (i,j)$, we have
    \[|\Kl'(\underline{m})|\ll q^{M_{ij}}\sum_{c_{uv},(u,v)\neq(i,j)} W(A_{ij},B_{ij}).\]
    Here, for $A,B\in F$, we denote by $W(A,B)=\min \{1,|A|^{-1/2},|B|^{-1/2} \}$.
\end{lem}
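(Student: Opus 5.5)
Fix all coordinates $c_{uv}$ with $(u,v)\neq(i,j)$, and look at $\Kl'(\underline{m})$ as an exponential sum in the single variable $c_{ij}$. The goal is to show that this inner sum has size at most $\ll q^{M_{ij}}W(A_{ij},B_{ij})$. Summing the absolute values over the remaining variables then gives the lemma.

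\textbf{Step 1: isolate $c_{ij}$.} From the phase in the Proposition above, I collect every term containing $c_{ij}$ or $c_{ij}^{-1}$. The variable $c_{ij}$ appears in numerators through the $\psi_j$-term at index $(i,j+1)$, and through the $\psi'$-term in which the row index is shifted by one. The variable $c_{ij}^{-1}$ appears in the $\psi_j$-term at $(i,j)$ and in the corresponding $\psi'$-term. Matching exponents with $a_1,a_2,b_1,b_2$, the phase should take the form
\[\psi_F\bigl(A_{ij}c_{ij}+B_{ij}c_{ij}^{-1}+(\text{terms independent of }c_{ij})\bigr).\]
This is the one step I would write out carefully against the index conventions. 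Boundary indices are handled by the conventions $c^{-1}=0$ and $C=1$, under which the corresponding terms vanish.

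\textbf{Step 2: rewrite the inner sum as a Kloosterman-type sum.} The variable $c_{ij}$ runs over $\CO/\fp^{M_{ij}}$ with $(c_{ij},\varpi^{m_{ij}})=1$. Since $m_{ij}\neq 0$, this means $c_{ij}\in\CO^\times/(1+\fp^{M_{ij}})$. There is one extra constraint when $(i,j)=(1,n-1)$: by Lemma \ref{complete system of coset representatives}, $c_{1,n-1}$ then runs over $(1+\fp^\ell)/(1+\fp^{M_{1,n-1}})$, where $M_{1,n-1}=m_{1,n-1}\ge\ell$. Because $\Kl'(\underline{m})$ does not depend on the choice of representatives, the summand is well defined modulo $1+\fp^{M_{ij}}$. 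The inner sum is therefore $q^{M_{ij}}$ times the integral
\[\int_{S}\psi_F(A x+Bx^{-1})\,dx,\]
where $S$ is $\CO^\times$ or $1+\fp^\ell$. Write $\CO^\times$ as a finite union of cosets $y(1+\fp^{d})$, with $d$ the fixed large constant of Proposition \ref{stationary phase}; in the case $S=1+\fp^\ell$, take $\ell\ge d$, so the same decomposition applies. On each coset, substituting $x\mapsto yx$ produces $T^d(Ay,By^{-1})$.

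\textbf{Step 3: bound each piece.} The trivial bound gives $|T^d|\le 1$. When $T^d(Ay,By^{-1})\neq0$, Proposition \ref{stationary phase} applies in two regimes. If $|A|,|B|>q^{2d+k}$, part (2) gives $|T^d|=|2A|^{-1/2}$, and also $|A|=|B|$; this is $\ll\min(|A|^{-1/2},|B|^{-1/2})$. Otherwise, part (1) gives $|A|,|B|\le q^{2d+k}$, so $\min(|A|^{-1/2},|B|^{-1/2})\gg 1$ and the trivial bound suffices. The number of cosets is bounded in terms of $F$ only, so the inner sum is $\ll q^{M_{ij}}W(A_{ij},B_{ij})$. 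Summing over the other $c_{uv}$ finishes the proof.

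The main obstacle is Step 1. One must check that the collected coefficients of $c_{ij}$ and $c_{ij}^{-1}$ are exactly $A_{ij}$ and $B_{ij}$, and in particular that $c_{ij}$ enters no other term of the phase.
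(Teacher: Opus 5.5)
Your Step 2 has a genuine gap. That $\Kl'(\underline m)$ is independent of the choice of representatives is a statement about the \emph{whole} sum over $\CC^K(\underline m)$. It does not imply that, with the other $c_{uv}$ fixed, the summand $\psi_F(A_{ij}c_{ij}+B_{ij}c_{ij}^{-1}+\cdots)$ is a function of $c_{ij}$ modulo $\fp^{M_{ij}}$. In general it is not, because $A_{ij}$ and $B_{ij}$ can have valuation well below $-M_{ij}$. For instance, take $n=3$ and $(i,j)=(2,2)$. The definitions give $M_{22}=m_{22}$ but
\[B_{22}=\psi_2\varpi^{m_{11}-m_{12}-m_{22}}+\psi_2'c_{12}\varpi^{-m_{12}}.\]
With unit $\psi$'s, $m_{12}>m_{22}$ and $m_{11}\neq m_{22}$, this gives $v(B_{22})\le -m_{12}<-M_{22}$. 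Replacing $c_{22}$ by $c_{22}+\varpi^{m_{22}}$ then changes $c_{22}^{-1}B_{22}$ by a non-integral amount. So the phase modulo $\CO$ is not determined by $c_{22}$ modulo $\fp^{M_{22}}$. Consequently the single-variable sum over a fixed representative set $\CR$ for $c_{ij}$ depends on $\CR$. It is not equal to $q^{M_{ij}}\int_S\psi_F(A_{ij}x+B_{ij}x^{-1})\,dx$, and your plan of bounding each inner sum for fixed outer coordinates cannot be carried out as stated.

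The missing idea is the averaging step the lemma is named after, which is exactly what the paper does.
\begin{itemize}
\item Use invariance of the \emph{total} sum to replace $\CR$ by each translate $\CR_h=\{x+h\varpi^{M_{ij}}:x\in\CR\}$, $h\in\CO/\fp^\alpha$. Each translate is again a legitimate representative system.
\item Average over $h$, and only then interchange the order of summation.
\item For fixed $c_{uv}$ with $(u,v)\neq(i,j)$, the variable now runs over a full system for $(\CO/\fp^{M_{ij}+\alpha})^\times$. When $(i,j)=(1,n-1)$ it runs over $(1+\fp^\ell)/(1+\fp^{M_{ij}+\alpha})$ instead.
\item Choose $\alpha$ with $v(A_{ij})+\alpha\geq 0$ and $v(B_{ij})+\alpha\geq0$ for the finitely many choices of the other coordinates. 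Then the summand is genuinely well defined modulo $\fp^{M_{ij}+\alpha}$.
\end{itemize}
The inner sum becomes $q^{-\alpha}$ times a complete Kloosterman sum of modulus $\fp^{M_{ij}+\alpha}$. This equals $q^{M_{ij}}\int_S\psi_F(A_{ij}x+B_{ij}x^{-1})\,dx$, the quantity you wrote down, now correctly justified.

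From that point your Step 3 is fine. It re-derives Corollary \ref{classical kloosterman sum} and Proposition \ref{GL2 kloosterman sum} from Proposition \ref{stationary phase}. One small correction: for $S=1+\fp^\ell$ with $\ell\geq d$, do not decompose into cosets of $1+\fp^d$. Apply Proposition \ref{stationary phase} directly with $\ell$ in place of $d$.
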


\begin{proof}
    For any index $(u,v) \not=(i,j)$, we fix a system of representatives $c_{uv}$ for $\CO/\fp^{M_{uv}}$ (and
    other conditions as in the definition of $\CC^K(\underline{m})$). 
    
    Firstly, we consider the case $(i,j)\neq(1,n-1)$. Fix a complete representatives system $\CR$ of $(\CO/\fp^{M_{ij}})^\times = (\CO/C_{ij}\CO)^\times$. We can write $\Kl'(\underline{m})$ as
    \[\sum_{x \in \CR}\sum_{c_{uv},(u,v)\neq(i,j)}\psi_F(xA_{ij}+x^{-1}B_{ij}+G_{ij})\]
    where $G_{ij}$ denotes the contributions from $c_{uv}$
    with $(u,v) \not\in \{(i,j+1),(i+1,j),(i,j-1),(i-1,j)\}$.
    Note that, $\Kl'(\underline{m})$ is independent on the
    choice of $\CR$.

    We choose a positive integer $\alpha$ which is large
    enough such that for any $A_{ij}, B_{ij}$ in the above
    \[v(A_{ij}) + \alpha, v(B_{ij}) + \alpha \geq 0.\]
     For any $h\in\CO$, denote by  
    \[
    \CR_h=\left\{x+hC_{ij}\Big| x\in \CR  \right\}.
    \]
    Then each $\CR_h$ is still a system of  representatives
    for $(\CO/C_{ij}\CO)^\times$  and the set $\bigsqcup_{h\in\CO/\fp^\alpha}\CR_h$ is exactly a system of  representatives for $(\CO/C_{ij}\varpi^\alpha\CO)^\times$. As $\Kl'(\underline{m})$ is independent on the
    choice of $\CR$ 
    \[\begin{aligned}
        \Kl'(\underline{m}) &=
        q^{-\alpha} \sum_{x \in \bigsqcup_{h\in\CO/\fp^\alpha}\CR_h} \sum_{c_{uv},(u,v)\neq(i,j)}\psi_F(xA_{ij}+x^{-1}B_{ij}+G_{ij}) \\
        &= q^{-\alpha} \sum_{c_{uv},(u,v)\neq(i,j)}\psi_F(G_{ij}) \sum_{x\in(\CO/C_{ij}\varpi^\alpha\CO)^\times}\psi_F(xA_{ij}+x^{-1}B_{ij})
    \end{aligned}\]
    Here, we use the fact that $C_{ij}\varpi^\alpha A_{ij},C_{ij}\varpi^\alpha B_{ij}\in \CO$.

     The inner sum is the Kloosterman sum $\Kl(C_{ij}\varpi^\alpha A_{ij}, C_{ij}\varpi^\alpha B_{ij};M_{ij}+\alpha)$ in Corollary \ref{classical kloosterman sum}. 
     By Corollary \ref{classical kloosterman sum} (
     if $A_{ij}, B_{ij} \in \CO$, we consider the trivial bound) 
    \[
    |\Kl(C_{ij}\varpi^\alpha A_{ij}, C_{ij}\varpi^\alpha B_{ij};M_{ij}+\alpha)|\ll q^{M_{ij}+\alpha}W(A_{ij},B_{ij}).
    \]
    Hence
    \[|\Kl'(\underline{m})|\ll q^{M_{ij}}\sum_{c_{uv},(u,v)\neq(i,j)} W(A_{ij},B_{ij}).\]

    Consider the case $(i,j)=(1,n-1)$. By our choice of the open compact subgroup $K$, $c_{1,n-1} \in 1+\fp^\ell$ and $M_{1,n-1} = m_{1,n-1} \geq \ell$. 
    
    Similarly as the case $(i,j) \not= (1,n-1)$, we have
    \[\Kl'(\underline{m})= q^{-\alpha} \sum_{c_{uv},(u,v)\neq(i,j)}\psi_F(G_{ij}) \sum_{x\in 1+\varpi^\ell\CO/1+C_{ij}\varpi^\alpha\CO}\psi_F(xA_{ij}+x^{-1}B_{ij}).
    \]
    The inner sum is the Kloosterman sum $\Kl(C_{ij}\varpi^\alpha A_{ij}, C_{ij}\varpi^\alpha B_{ij};\ell,M_{ij}+\alpha)$ in Proposition \ref{GL2 kloosterman sum}.
    By Proposition \ref{GL2 kloosterman sum}
    \[
    |\Kl(C_{ij}\varpi^\alpha A_{ij}, C_{ij}\varpi^\alpha B_{ij};\ell,M_{ij}+\alpha)|\ll q^{M_{ij}+\alpha}W(A_{ij},B_{ij}).
    \]
    Hence, in this case, we also have
    \[|\Kl'(\underline{m})|\ll q^{M_{ij}}\sum_{c_{uv},(u,v)\neq(i,j)} W(A_{ij},B_{ij}).\]
\end{proof}

The following lemma plays a role similar to that of \cite[Lemma 7]{BM24}.

\begin{lem}\label{linear average}
    Let $r> 0$ be a positive integer and  $a\in F^\times$.
    Then for any complete representatives system $\CR$ of $(\CO/\fp^r)^\times$ and $D \in F$ we have
    \[
    \sum_{u\in\CR} \min\{1,|au+D|^{-\frac{1}{2}}\}\leq 4(q^{r+\frac{1}{2}\min \{0,v(a)\}}+1).\]
\end{lem}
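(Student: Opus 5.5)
The approach is to sort the representatives $u\in\CR$ by the valuation of $u+D/a$ and count how many representatives fall in each layer. Since the elements of $\CR$ are pairwise incongruent modulo $\fp^r$, a ball in $\CO$ of radius $q^{-j}$ with $j\le r$ contains at most $q^{r-j}$ of them. Any ball of radius at most $q^{-r}$ contains at most one of them. I take $\CR\subset\CO$ throughout. Put $s=v(a)$.

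\emph{Case $s\ge 0$.} Every summand is at most $1$ and $\#\CR\le q^r$. The sum is therefore at most $q^r\le 4(q^{r}+1)$, which is the claim since $\min\{0,v(a)\}=0$.

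\emph{Case $s<0$.} Write $x=-D/a$, so that $|au+D|=q^{-s}|u-x|$. For $u\in\CR$ set $j(u)=v(u-x)$. The summand is then $\min\{1,q^{(s+j(u))/2}\}$. I split according to $j(u)$.

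First suppose $v(x)<0$. Since $u\in\CO$, every $u$ has $j(u)=v(x)\le -1$. Each summand is then at most $q^{(s-1)/2}$, so the whole sum is at most $q^{r}q^{(s-1)/2}\le q^{r+s/2}$.

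Otherwise $x\in\CO$ and all $j(u)\ge 0$. The layer $\{u: j(u)=j\}$ lies in the ball $x+\fp^j$.
\begin{itemize}
\item For $0\le j<-s$ the layer has at most $q^{r-j}$ elements, or at most one element if $j>r$, and each summand equals $q^{(s+j)/2}$. Summing the layers with $j\le r$ gives
\[
\sum_{0\le j<-s} q^{r-j}q^{(s+j)/2}=q^{r+s/2}\sum_{j\ge 0}q^{-j/2}\le \frac{q^{r+s/2}}{1-2^{-1/2}}<3.5\,q^{r+s/2}.
\]
\item For $j\ge -s$ each summand is at most $1$. All such $u$ lie in the ball $x+\fp^{-s}$. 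That ball contains at most $q^{r+s}\le q^{r+s/2}$ elements of $\CR$ if $-s\le r$, and at most one if $-s>r$.
\item The representatives with $j(u)>r$ all lie in the single class $x+\fp^{r+1}\subset x+\fp^r$. Hence at most one of them occurs in total, contributing at most $1$. This term, together with the case $-s>r$, is the source of the ``$+1$''.
\end{itemize}

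Adding the pieces gives a bound of the form $(3.5+\tfrac12)q^{r+s/2}+1\le 4(q^{r+s/2}+1)$. The constants only need a little care so that the $j\ge -s$ term and the $j>r$ term are not both counted as a full $q^{r+s/2}$.

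I expect the main obstacle to be this bookkeeping at the two boundaries, $j=-s$ (where the $\min$ switches between $1$ and $|au+D|^{-1/2}$) and $j=r$ (where the counting switches from $q^{r-j}$ to at most one). The point to check is that the overlapping regimes are absorbed into the geometric series and the single additive $1$, and are not double counted. Everything else reduces to the elementary fact that distinct residues modulo $\fp^r$ are spread at most $q^{r-j}$ per ball of radius $q^{-j}$.
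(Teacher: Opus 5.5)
Your argument is correct and is essentially the paper's proof. The paper makes the same two reductions ($v(a)\ge 0$ is trivial, and so is $v(a)<0$ with $D/a\notin\CO$). It then groups the $u$ by $v(a_0u+D_0)=v(u-x)$, bounds each group using $\#\{u\in\CR: u\in x+\fp^h\}\le\max\{q^{r-h},1\}$, and sums a geometric series in $q^{-1/2}$; its use of cumulative counts $M_h$ instead of your exact layers is cosmetic.

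The one thing to tidy is the final constant. The $j\ge -s$ contribution $q^{r+s}=q^{s/2}\,q^{r+s/2}$ need not be $\le\tfrac12 q^{r+s/2}$: for $q=2$, $s=-1$ it equals $2^{-1/2}q^{r+s/2}$, so the sum $(3.5+\tfrac12)$ is not justified as written. However, $q^{r+s}$ is exactly the $j=-s$ term of your geometric series. Counting it there, all pieces together are at most $\frac{q^{1/2}}{q^{1/2}-1}\,q^{r+s/2}+2\le 4\bigl(q^{r+s/2}+1\bigr)$.
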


\begin{proof}

    If $v(a)\ge0$, then each term is at most $1$, and the conclusion follows trivially.

    Assume $-n = v(a)<0$.  Write
    \[au+D=\varpi^{-n}(a_0u+D_0),\quad a_0\in\CO^\times.\]
    If $D_0\notin \CO$, then
    \[
    \min\{1,|au+D|^{-\frac{1}{2}}\}\le q^{\frac{1}{2}v(a)}
    \]
    holds for all $u$, and the conclusion follows.
    
    Now suppose $D_0\in\CO$.  For each $u \in \CR$, denote by
    \[m(u) = v(a_0u+D_0), \quad w(u) = m(u)-n \geq -n, \quad
    S = \sum_{u \in \CR} \min\{1,q^{w(u)/2}\}.\]
    The following inequality is key to the whole proof
    \[\min\{1,q^{w(u)/2}\} \leq \sum_{h=0}^n q^{-h/2} \delta(w(u) \geq -h).\]
    Here,  the notion $\delta(w(u) \geq -h)=1$ if $w(u) \geq -h$ and $=0$ otherwise.
    In fact, in the right hand side, if $w(u) \geq 0$, the term $h = 0$ contributes $1$  while if $0 > w(u) \geq -n$, the term $h = -w(u) \leq n$ contributes $q^{w(u)/2}$.

    Therefore, 
    \[S \leq \sum_{h=0}^n q^{-h/2} M_{n-h} = q^{-n/2}\sum_{j=0}^n q^{j/2}M_j.\]
    Here, for each $0 \leq h \leq n$, 
    \[M_h = \#\{u \in \CR: m(u) \geq h\} = \#\{u \in \CR: u \equiv a_0^{-1} D_0 \pmod{\fp^h}\}.\]
    As there is at most one solution for the equation $u \equiv a_0^{-1} D_0 \pmod{\fp^h}$ with $u \in \CR$,
    \[M_h \leq \max\{q^{r-h},1\} \leq q^{r-h} + 1.\]

    In particular,
    \[\sum_{j=0}^n q^{j/2}M_j \leq q^r\sum_{j=0}^n q^{-j/2} + 
    \sum_{j=0}^n q^{j/2} < \frac{q^{1/2}}{q^{1/2} - 1} (q^r + q^{n/2})\]
    and
    \[S \leq \frac{q^{1/2}}{q^{1/2} - 1} (q^{r-n/2} + 1).\]
    
    In summary, for all the cases, we have 
    \[\sum_{u\in\CR} \min\{1,|au+D|^{-\frac{1}{2}}\}\leq \frac{q^{1/2}}{q^{1/2} - 1}(q^{r+\frac{1}{2}\min \{0,v(a)\}}+1) \leq 4(q^{r+\frac{1}{2}\min \{0,v(a)\}}+1).\]

\end{proof}

\begin{lem}\label{refined bound}
    For any index $(i,j)$ with $1 \leq i \leq j \leq n-1$, we have
    \[
    |\Kl'(\underline{m})|\ll Q(\psi) q^{\rho-m_{ij}/2+(n-1)\mu_{ij}}.
    \]
    Here, $Q(\psi)=\max_{1\leq j\leq n-1}|\psi_j|^{-1/2}$, $\rho=\sum_{k=1}^{n-1}r_k$ and $\mu_{ij}=\max_{(i',j')<(i,j)}m_{i'j'}$  with the following order for the indices
    \[(1,n-1)<(1,n-2)<\cdots<(1,1)<(2,n-1)<\cdots<(2,2)<\cdots<(n-1,n-1).\]
\end{lem}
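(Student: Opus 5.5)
We start from Lemma \ref{lifting modulus} applied at the index $(i,j)$ and then bound the resulting sum of $W(A_{ij},B_{ij})$ over the remaining variables. If $m_{ij}=0$, the bound follows from the trivial estimate. In that case $|\Kl'(\underline{m})|\le \#\CC^K(\underline{m})\le q^{\sum M_{uv}}$. Since $\sum_{(u,v)}M_{uv}=\sum_{u\le v}\sum_{k\ge v} m_{uk}$, this is $\rho$ up to the normalisation used in \cite{BM24}: the count $\#\CC(\underline{m})$ is at most $q^{\rho}$ by the Dąbrowski–Reeder count, Theorem \ref{DR1998}. So assume $m_{ij}\ge 1$. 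Lemma \ref{lifting modulus} gives
\[|\Kl'(\underline{m})|\ll q^{M_{ij}}\sum_{c_{uv},(u,v)\ne(i,j)}W(A_{ij},B_{ij}),\]
and it remains to save $q^{m_{ij}/2}$, up to a loss of $q^{(n-1)\mu_{ij}}$ and the factor $Q(\psi)$, against the trivial count $q^{\rho-M_{ij}}$ of the outer sum.

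The saving comes from one neighbouring variable. The quantities $A_{ij}$ and $B_{ij}$ involve only $c_{i,j\pm1}$, $c_{i\pm1,j}$ and their inverses. Following \cite[Section 5]{BM24}, I will single out a neighbour $(u_0,v_0)$ that occurs linearly in $B_{ij}$ with coefficient $a=\psi_j\varpi^{b_1(i,j)}$, taking $c_{u_0v_0}=c_{i,j-1}$ when $j>i$ and $c_{i-1,j}$ otherwise. I will write $B_{ij}=a\,c_{u_0v_0}+D$, where $D$ is independent of $c_{u_0v_0}$, and use $W(A,B)\le\min\{1,|B|^{-1/2}\}$. Summing over $c_{u_0v_0}$ with Lemma \ref{linear average} (with $r=M_{u_0v_0}$) gives
\[\sum_{c_{u_0v_0}}\min\{1,|a c_{u_0v_0}+D|^{-1/2}\}\le 4\bigl(q^{M_{u_0v_0}+\frac12\min\{0,v(a)\}}+1\bigr).\]
Here $v(a)=v(\psi_j)+b_1(i,j)$ and $b_1(i,j)=\sum_{k<i}m_{k,j-1}-\sum_{k\le i}m_{kj}\le -m_{ij}+\sum_{k<i}m_{k,j-1}$. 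Every index $(k,j-1)$ with $k<i$ precedes $(i,j)$ in the given order, so $\sum_{k<i}m_{k,j-1}\le (n-1)\mu_{ij}$. This yields the saving $q^{-m_{ij}/2+(n-1)\mu_{ij}/2}|\psi_j|^{-1/2}$, which accounts for both $Q(\psi)$ and the $\mu_{ij}$ term.

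Multiplying by the trivial count for the remaining variables gives
\[q^{M_{ij}}\cdot q^{\rho-M_{ij}-M_{u_0v_0}}\cdot q^{M_{u_0v_0}}q^{-m_{ij}/2+(n-1)\mu_{ij}}Q(\psi),\]
which is the claim. The additive $+1$ in Lemma \ref{linear average} is absorbed because $M_{u_0v_0}\ge m_{ij}/2$ in the relevant range; otherwise one uses the trivial bound. If the neighbour $(u_0,v_0)$ has $m_{u_0v_0}=0$, the inverse convention $c^{-1}=0$ means that $c_{u_0v_0}$ still enters $B_{ij}$ linearly, so the argument is unchanged.

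The main obstacle is the exponent bookkeeping. One must check that the chosen neighbour's coefficient valuation involves only $m_{ij}$ and $m$'s at indices earlier than $(i,j)$, and treat the boundary cases where a neighbour index falls outside the triangle: $i=j$, or $(i,j)=(1,n-1)$ with the restricted variable $c_{1,n-1}\in 1+\fp^\ell$. In the boundary cases I would try switching to $A_{ij}$ and its linear variable $c_{i+1,j}^{-1}$, reparametrised by $c\mapsto c^{-1}$ on the units, and then redo the valuation count with $a_2(i,j)$ in place of $b_1(i,j)$.
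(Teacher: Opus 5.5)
Your argument for the off-diagonal case $j>i$ is the paper's argument. You average over $c_{i,j-1}$ using Lemma \ref{linear average} with $a=\psi_j\varpi^{b_1(i,j)}$, and the $+1$ is absorbed unconditionally because $M_{i,j-1}=m_{i,j-1}+m_{ij}+\cdots\ge m_{ij}$.

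The gap is the diagonal case $i=j$, which your proposal does not handle. By convention $c_{i,i-1}=1$, so $B_{ii}=\psi_i\varpi^{b_1(i,i)}+\psi'_{n+1-i}c_{i-1,i}\varpi^{b_2(i,i)}$. The variable $c_{i-1,i}$ you designate therefore enters with coefficient $\psi'_{n+1-i}\varpi^{b_2(i,i)}$, not $\psi_i\varpi^{b_1(i,i)}$. Since $b_2(i,i)=\sum_{k>i}m_{ik}-\sum_{k\ge i}m_{i-1,k}$ contains no $-m_{ii}$, averaging over $c_{i-1,i}$ with Lemma \ref{linear average} saves nothing. For $i=1$ that term is absent altogether.

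Your fallback fails too. In $A_{ii}$ the factor $c_{i+1,i}^{-1}$ equals $1$ by convention, so there is nothing to average over. Moreover $a_2(i,i)=M_{i+1,i+1}-M_{ii}$ involves the indices $(i+1,k)$, which come \emph{after} $(i,i)$ in the order and cannot be controlled by $\mu_{ii}$. The only free variable of $A_{ii}$ is $c_{i,i+1}^{-1}$, whose coefficient $\psi_{i+1}\varpi^{a_1(i,i)}$ contains no $m_{ii}$. So on the diagonal no variable has a coefficient carrying $\varpi^{-m_{ii}}$, and an averaging argument cannot work.

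What is missing is that on the diagonal no averaging is needed. The factor $\varpi^{-m_{ii}}$ sits in the \emph{constant} term of $B_{ii}$, and the paper uses a dichotomy plus ultrametric dominance. One has $b_1(i,i)\le -m_{ii}+(i-1)\mu_{ii}$ and $b_2(i,i)\ge -(n-1)\mu_{ii}$.
\begin{itemize}
\item If $Q(\psi)q^{-m_{ii}/2+(n-1)\mu_{ii}}\ge 1$, the trivial bound $q^{\rho}$ already suffices.
\item Otherwise $v(\psi_i)<m_{ii}-2(n-1)\mu_{ii}$. Then $v(\psi_i\varpi^{b_1(i,i)})<-(n-1)\mu_{ii}\le v(\psi'_{n+1-i}c_{i-1,i}\varpi^{b_2(i,i)})$ for \emph{every} value of $c_{i-1,i}$. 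Hence $v(B_{ii})=v(\psi_i)+b_1(i,i)$ exactly, and pointwise $W(A_{ii},B_{ii})\le |B_{ii}|^{-1/2}\le |\psi_i|^{-1/2}q^{-m_{ii}/2+(i-1)\mu_{ii}/2}$. Inserting this into Lemma \ref{lifting modulus} and counting the remaining $q^{\rho-M_{ii}}$ tuples trivially gives the claim.
\end{itemize}

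Your worry about $(i,j)=(1,n-1)$ is unfounded. The restriction $c_{1,n-1}\in 1+\fp^\ell$ is already absorbed in Lemma \ref{lifting modulus}, and $(i,j-1)$ is never $(1,n-1)$.
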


\begin{proof}
     Note that
    \[\rho=\sum_{k=1}^{n-1}\sum_{i\le k \le j}m_{ij}=\sum_{1\le i\le j\le n-1}(j-i+1)m_{ij} = \sum_{1 \leq i \leq j \leq n-1} M_{ij}.\]
    We have the trivial bound
   \[
    |\Kl'(\underline{m})|\le q^\rho.
    \]
  
    If $m_{ij}=0$, then the trivial bound is enough.  We will assume $m_{ij}>0$ in the following. By Lemma
    \ref{lifting modulus}, 
     \[|\Kl'(\underline{m})|\ll q^{M_{ij}}\sum_{c_{uv},(u,v)\neq(i,j)} W(A_{ij},B_{ij}).\]
     
     Suppose $i<j$. By Lemma \ref{linear average} with $r = M_{i.j-1}$, $a = \psi_j\varpi^{b_1(i,j)}$ and $D = \psi_{n+2-i}'c_{i-1,j}\varpi^{b_2(i,j)}$, we have
     \[\sum_{c_{i,j-1}} W(A_{ij},B_{ij})
     \leq \sum_{c_{i,j-1}} \min\{1,|B_{ij}|^{-1/2}\}
     \ll q^{M_{i,j-1} + \frac{1}{2}\min\{0,v(\psi_j) + b_1(i,j)\}}+1.\]
     We have 
    \[
    b_1(i,j) = -m_{ij}+\sum_{k=1}^{i-1}m_{k,j-1} - \sum_{k=1}^{i-1}m_{kj}  
    \le  -m_{ij}+(i-1)\mu_{ij}.\]
    In particular,
    $\min\{0,v_p(\psi_j)+b_1(i,j)\}\le v_p(\psi_j)-m_{ij}+(i-1)\mu_{ij}$ and 
    \[\sum_{c_{i,j-1}} W(A_{ij},B_{ij}) \ll
    Q(\psi) q^{M_{i,j-1} -m_{ij}/2 +(n-1)\mu_{ij}}+1.\]
    Since $m_{ij}\leq M_{i,j-1}$, we have $q^{-M_{i,j-1}}\leq q^{-m_{ij}}\leq q^{-m_{ij}/2+(n-1)\mu_{ij}}$ and hence
    \begin{align*}
        |\Kl'(\underline{m})|&\ll Q(\psi) q^{\rho-m_{ij}/2+(n-1)\mu_{ij}}+q^\rho\cdot q^{-M_{i,j-1}}\leq Q(\psi) q^{\rho-m_{ij}/2+(n-1)\mu_{ij}}+Q(\psi) q^{\rho-m_{ij}/2+(n-1)\mu_{ij}}\\
        &\leq 2Q(\psi)q^{\rho-m_{ij}/2+(n-1)\mu_{ij}}
    \end{align*}

    Suppose $i=j$.  Then $c_{i,i-1} = 1$ and $M_{i,i-1} = 0$ by our convention with 
    \[B_{ii}=\psi_i\varpi^{b_1(i,i)}+\psi_{n+1-i}'c_{i-1,i}\varpi^{b_2(i,i)}.\]    
    
    If
    \[Q(\psi)q^{-\frac{1}{2}m_{ii}+(n-1)\mu_{ii}}\geq 1,\]
    then the trivial bound $|\Kl'(\underline{m})|\le q^\rho$ proves the result. Now suppose $Q(\psi) < q^{1/2m_{ii}-(n-1)\mu_{ii}}$. This
    implies that
    \[v(\psi_i) <  m_{ii}-2(n-1)\mu_{ii}.\]
     
    Note that 
    \[b_1(i,i)\le-m_{ii}+(i-1)\mu_{ii}, \quad
    b_2(i,i) \geq -(n-1)\mu_{ii}.\]
    We have
    \[v(\psi_i\varpi^{b_1(i,i)})<-(n-1)\mu_{ii}, \quad 
    v(\psi_{n+1}'c_{i-1,i}\varpi^{b_{2}(i,i)})\geq b_2(i,i)\geq -(n-1)\mu_{ii}\]
    so that
    \[v(B_{ii})=v(\psi_i)+b_1(i,i)<0.\]
    This implies
    \[W(A_{ii},B_{ii})\leq\min\{1,|B_{ii}|^{-\frac{1}{2}}\}=|B_{ii}|^{-\frac{1}{2}}=q^{\frac{1}{2}(v(\psi_i)+b_1(i,i))}\]
    and hence again by Lemma \ref{lifting modulus}, 
    \begin{align*}
        |\Kl'(\underline{m})|\ll Q(\psi) q^{\rho-m_{ij}/2+(n-1)\mu_{ij}}.
    \end{align*}
\end{proof}

\begin{lem}
    Denote by $N = \frac{n(n-1)}{2}$, $M_* = \max_{(i,j)} m_{ij}$ and 
    \[
    \varepsilon_n=\frac{2n-3}{2((2n-2)^N-1)}.
    \]
    Then there exists an index $(i,j)$ with $1 \leq i \leq j \leq n-1$  such that
    \[
    m_{ij} - 2(n-1)\mu_{ij} \ge\varepsilon_n M_*.
    \]
\end{lem}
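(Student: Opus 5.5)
We argue by contradiction, using a pigeonhole along the fixed total order
\[(1,n-1)<(1,n-2)<\cdots<(1,1)<(2,n-1)<\cdots<(n-1,n-1)\]
on the $N=\frac{n(n-1)}{2}$ indices. Enumerate them as $\iota_1<\iota_2<\cdots<\iota_N$ and write $x_k = m_{\iota_k}$. Then $\mu_{\iota_k}=\max_{l<k}x_l$, with the convention $\mu_{\iota_1}=0$ (empty maximum), and $M_*=\max_k x_k$. If $M_*=0$ the statement is trivial, because any index gives $0\ge 0$. So assume $M_*>0$. Put $c=2(n-1)=2n-2$ and suppose, for contradiction, that
\[x_k - c\,\max_{l<k}x_l < \varepsilon_n M_* \quad \text{for every } 1\le k\le N.\]

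Next we bound the prefix maxima $P_k=\max_{l\le k}x_l$ by induction, with $P_0=0$. The hypothesis gives $x_k<cP_{k-1}+\varepsilon_n M_*$. Since $c\ge 1$, also $P_{k-1}\le cP_{k-1}+\varepsilon_nM_*$, so
\[P_k \le cP_{k-1}+\varepsilon_n M_*,\]
with strict inequality at any step where the maximum is attained by $x_k$. Unwinding the recursion gives
\[P_k\le \varepsilon_n M_*(1+c+\cdots+c^{k-1}) = \varepsilon_n M_*\frac{c^k-1}{c-1}.\]
Take $k_0$ to be the first index with $x_{k_0}=M_*$. At that step the inequality is strict, and $P_{k_0}=M_*$. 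Hence
\[M_* = P_{k_0} < \varepsilon_n M_*\frac{c^{k_0}-1}{c-1}\le \varepsilon_n M_*\frac{c^N-1}{c-1}.\]

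Now substitute $\varepsilon_n=\frac{2n-3}{2((2n-2)^N-1)}$ and $c-1=2n-3$:
\[\varepsilon_n\frac{c^N-1}{c-1}=\frac{2n-3}{2(c^N-1)}\cdot\frac{c^N-1}{2n-3}=\tfrac12.\]
This yields $M_*<\tfrac12 M_*$, which is impossible when $M_*>0$. So some index $(i,j)$ satisfies $m_{ij}-2(n-1)\mu_{ij}\ge\varepsilon_nM_*$.

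The only delicate point is getting the geometric series to match the exact constant $\varepsilon_n$. It works with room to spare (a factor of $2$), so any slack lost from the strict versus non-strict inequalities, or from $\mu$ being taken over strictly earlier indices, is harmless. One case needs a separate check: $n=2$, where $c=2$ and $c-1=1=2n-3$. The computation goes through unchanged there.
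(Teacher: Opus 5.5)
Your proof is correct and is essentially the paper's argument. Both assume the inequality fails at every index, bound the running maxima by $R_t\le 2(n-1)R_{t-1}+\varepsilon_n M_*$, and sum the geometric series to reach $M_*\le\frac12 M_*$. Your separate handling of $M_*=0$ and your tracking of the strict inequality are harmless refinements of steps the paper leaves implicit.
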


\begin{proof}
    According to the given order of indices, we write $m_{ij}$ as $m_1,...,m_N$. Assume for all $1\leq i\leq N$, $m_i-2(n-1)\mu_i<\varepsilon_n M_*$. Let $R_t:=\max_{1\le s\le t} m_s, R_0:=0$. Since $\mu_t\le R_{t-1}$, we have
    \[
    R_t\le 2(n-1)R_{t-1}+\varepsilon_n M_*
    \]
    for all $1\leq t\leq N$. Inductively, we will have
    \begin{eqnarray*}
        R_N &\le &\varepsilon_nM_*\sum_{k=0}^{N-1} (2(n-1))^k  \\
        &=&\varepsilon_nM_* \frac{(2n-2)^N-1}{2n-3}=\frac{1}{2}M_*.
    \end{eqnarray*}
    This is a contradiction since $R_N=M_*$.
 
\end{proof}

\begin{proof}[The proof of Theorem \ref{main theorem GL_n}]
Choose an index $(i,j)$ satisfying the above lemma. 
Then Lemma \ref{refined bound} gives
\[
    |\Kl'(\underline{m})| \ll Q(\psi)q^{\rho-\frac{1}{2}(m_{ij}-2(n-1)\mu_{ij})}  
    \le  Q(\psi)q^{\rho -\varepsilon_n M_*/2}.\]
Note that 
\[\rho  = \sum_{1 \leq i \leq j \leq n-1} (j-i+1)m_{ij} \le(n-1)NM_*.\] 
We have
\[|\Kl'(\underline{m})| \ll Q(\psi)q^{(1-\frac{\varepsilon_n}{2(n-1)N})\rho}.\]

Finally, since
    \[\Kl'(a)=\sum_{\underline{m}\in \CM^K(r)}\Kl'(\underline{m}), \quad 
    \# \CM^K(r)\le (\rho +1)^N,\]
we have
\[|\Kl'(a)| \ll Q(\psi) (\rho +1)^{N+1} q^{(1-\frac{\varepsilon_n}{2(n-1)N})\rho}\le Q(\psi)q^{(1-\frac{\varepsilon_n}{4(n-1)N})\rho}\]
and take $\delta_n=\frac{\varepsilon_n}{4(n-1)N}$. We are done.
\end{proof}

We denote the following Kloosterman sum on $G$ with respect to the quadruple $\left(\psi,\psi',w_\ell,K\right)$ 
\[\Kl(a;\psi,\psi',w_\ell,K) = \sum_{x \in X_K(w_\ell a)} \psi(u(x))\psi'(u'(x)), \quad a \in A.\]
Note that the only difference between $\Kl(a;\psi,\psi',w_\ell,K)$
and $\Kl'(a;\psi,\psi',w_\ell,K)$ is their associated
Kloosterman sets $X_K(w_\ell a)$ and $X_K(a w_\ell)$ respectively.
Moreover, if $\psi=\psi'$, then $\Kl(a;\psi,\psi',w_\ell,K)$ equal to $\Kl(a;\psi,w_\ell,K)$ in Definition \ref{defn-sys-Kl}.

\begin{cor}\label{corollary on Kloosterman sum}
    There exists a constant $\delta_n>0$ such that
    \[\mathrm{Kl}(a;\psi,\psi',w_\ell,K)\ll (\max_{1\leq j\leq n-1}|\psi_j|^{-\frac{1}{2}})\cdot q^{(1-\delta_n)(r_1+\cdots+r_{n-1})}\]
    for $a=\diag(\varpi^{r_1},\varpi^{r_2-r_1},\cdots,\varpi^{r_{n-1}})$ with $r_i\geq 0$, $1\leq i\leq n-1$.
    \begin{proof}
        Note that $a^{w_\ell}=w_\ell aw_\ell^{-1}=\diag(\varpi^{-r_{n-1}},\varpi^{r_{n-1}-r_{n-2}},\cdots,\varpi^{r_1})$. By Theorem \ref{main theorem GL_n}, there exists a $\delta=\delta_n>0$ such that
        \[\mathrm{Kl}'(a^{w_\ell};\psi,\psi',w_\ell,K)\ll (\max_{1\leq j\leq n-1}|\psi_j|^{-\frac{1}{2}})q^{(1-\delta_n)(r_1+\cdots+r_{n-1})}.\]
        Hence,
        \[\mathrm{Kl}(a;\psi,\psi',w_\ell,K)=\mathrm{Kl}'(a^{w_\ell};\psi,\psi',w_\ell,K)\ll (\max_{1\leq j\leq n-1}|\psi_j|^{-\frac{1}{2}})\cdot q^{(1-\delta_n)(r_1+\cdots+r_{n-1})}.\]
    \end{proof}
\end{cor}
\begin{cor}\label{congruence subgroup gamma}
    The Kloosterman sum $\Kl(\cdot;\psi_0^{-1},w_\ell,K)$ has
    a nontrivial bound: there exist $\varepsilon>0$ and $C>0$ such that
    \[\left|\mathrm{Kl}(a;\psi_0^{-1},w_\ell,K)\cdot\delta^{1/2-\varepsilon}(a)\right| \leq C, \quad a \in A^\der\]
    with $A^\der = A\cap G^\der$ the maximal torus of $\SL_n$.
    \begin{proof}
        Recall that for each $a\in A$, there exists a unique $\lambda_a\in X_*(A)$ such that 
        \[a=a_0\lambda_a, \quad a_0 \in A\cap K_0.\]

        Note that $X_{K}(w_\ell a)\subset X_{K_0}(w_\ell a)$. By Theorem \ref{DR1998},  if $\lambda_a\notin\BZ_{\geq0}\Phi^{\vee,+}$, then
        $X_K(w_\ell a) = \emptyset$. We may assume $\lambda_a\in \BZ_{\geq0}\Phi^{\vee,+}$ and write $a\in A^\der$ as
        \[a=a_0\lambda_a, \quad a_0=\diag(v_1,\cdots,v_n),\quad \lambda_a=\diag(\varpi^{r_1},\varpi^{r_2-r_1},\cdots,\varpi^{-r_{n-1}})\]
        where for each $i$, $r_i$ is a positive integer and $v_i \in \CO^\times$ with $v_1\cdots v_n=1$. Then
        \[\delta(a) = q^{-2(r_1 + \cdots + r_{n-1})}.\]

        Assume $n_1w_\ell a n_2 \in N_1w_\ell a N_2 \cap K$.
        Write $a_0=b_0c_0$ with
        \[b_0=\diag(v_1,\cdots,v_{n-1},1),\quad c_0=\diag(1,\cdots,1,v_n).\]
        Then
        \[n_1w_\ell a n_2 = c_0^{w_\ell}\left(\Ad((c_0^{w_\ell})^{-1})
        n_1\right)w_\ell\lambda_a\left(\Ad(b_0)n_2\right)b_0
        \in K.\]
        As $b_0 \in K$ and $c_0^{w_\ell}=\diag(v_n,1,\cdots,1)\in K$, we have
        \[\left(\Ad((c_0^{w_\ell})^{-1})
        n_1\right)w_\ell\lambda_a\left(\Ad(b_0)n_2\right) \in K.\]
        
        This implies
        \begin{align*}
            \mathrm{Kl}(a;\psi_0^{-1},w_\ell,K)&=\sum_{x\in X_K(w_\ell a)}\psi_0^{-1}(u(x))\psi_0^{-1}(u'(x))\\
            &=\sum_{x\in X_K(w_\ell\lambda_a)}\psi_0^{-1}(\Ad((c_0^{w_\ell})) u(x))\psi_0^{-1}(\Ad(b_0^{-1})u'(x))\\
            &=\Kl(\lambda_a;\psi_1,\psi_2,w_\ell,K)
        \end{align*}
        where $\psi_1=\psi_0^{-1}(\Ad(c_0^{w_\ell})\cdot),\psi_2=\psi_0^{-1}(\Ad(b_0^{-1})\cdot)$ are generic characters of $N$ trivial on $N(\CO)$. Moreover, $|\psi_{1,j}| = 
        |\psi_{2,j}| = 1$ for any $1 \leq j \leq n-1$.
        
        By Corollary \ref{corollary on Kloosterman sum}, there exists a $\delta_n>0$ such that
        \[\Kl(\lambda_a;\psi_1,\psi_2,w_\ell,K)\ll 
        q^{(1-\delta_n)(r_1+\cdots+r_{n-1})}.\]
        Therefore, the Kloosterman sum $\Kl(\cdot;\psi_0^{-1},w_\ell,K)$ has a nontrivial 
        bound.
    \end{proof}
\end{cor}
\begin{cor}
    For any irreducible smooth admissible representation $\pi$ on $G=\GL_n$ which is generic with respect to $\psi_0$, the Bessel distribution $B_\pi$ is regular.
    \begin{proof}
        By Theorem \ref{main theorem}, it is enough to
        prove that for any subset $I \subset \Delta$, the 
        Kloosterman sum $\Kl_I$ on $M_I$ with respect to
        the triple $(\psi_0|_{N_{M_I}}^{-1},w_\ell,K_I)$
        has a nontrivial bound. Here, $K_I$ is an admissible
        compact open subgroup of $M_I$. The Levi 
        subgroup $M_I$ of $G = \GL_n$ is the  product of $\GL_{m_i}$ for some $m_i$ with $n = \sum_i m_i$. The Kloosterman sum
        on $M_I$ is the product of Kloosterman sums on 
        $\GL_{m_i}$ (by considering $K_I$ as a product of
        admissible subgroups on each $\GL_{m_i}$). It
        is enough to prove the existence of nontrivial 
        bound for $I = \Delta$. The nontrivial bound for
        $\Kl_\Delta$ with $K_\Delta = K$ is given in 
        Corollary \ref{congruence subgroup gamma}. Therefore,
        we obtain the regularity.
    \end{proof}
\end{cor}
\subsection{Groups of type $A$}
In this section, we prove the regularity of Bessel distributions for type $A$ split reductive groups.

Let $G$ be a split reductive group over $F$. As before, we fix a Borel subgroup $B =AN$ of $G$ and $\psi = \psi_0$ a generic character on $N$. 

Denote by $\{G_i\}_{1 \leq i \leq s}$ the set of minimal nontrivial normal smooth connected subgroups of $G^\der$. By \cite[Theorem in Section 27.5]{H80}, the groups $G_i$ are simple and the multiplication homomorphism
\[G_1 \times \cdots \times G_s \lra G^\der\]
is an isogeny map. These groups $G_i$ are called simple
factors of $G$. The root system $\Phi$ of $G$ is decomposed
as $\Phi = \sqcup_i \Phi_i$ where $\Phi_i$ is the root system of $G_i$, $1 \leq i \leq s$. 

The group $G$ is called of type $A$ if every irreducible component $\Phi_i$ of $\Phi$ is an irreducible root system
of type $A$. If $G$ is of type $A$, then all the (standard) Levi subgroups of $G$ are still of type $A$. In fact, the Levi
subgroups of $G$ are in the form of $M_I$ for a subset
$I$ of $\Delta$. The root system of $M_I$ is $\Phi_I = \BZ I
\cap \Phi$, which is a subsystem of $\Phi$. The irreducible
components of $\Phi_I$ are all of type $A$.

\begin{thm}\label{thm-type-A}
    Suppose $G$ is a type $A$ split reductive group. For any irreducible admissible generic representation $\pi$ of $G$, the Bessel distribution $B_\pi$ is regular.
    \begin{proof}
        By Theorem \ref{main theorem}, it is enough to
        prove that for any subset $I \subset \Delta$, the 
        Kloosterman sum $\Kl_{M_I}$ on $M_I$ with respect to
        the triple $(\psi_0|_{N_{M_I}}^{-1},\dot{w_I},K_I)$
        has a nontrivial bound. Here, $K_I$ is some admissible group of $M_I$ and $\{\dot{w_I}\}_{I\subset\Delta}$ is a system of representatives of $\{w_I\}_{I\subset\Delta}$.

        For every $I$, we have the following  isogeny map
        \[\wt{M_{I,1}}\times\cdots\times\wt{M_{I,s_I}}\to M_I^\der\]
        where $\wt{M_{I,i}},1\leq i\leq s_I$ is the universal cover of the simple factor $M_{I,i}$ of $M_I^\der$. 
        
        For each $i$, the group $\wt{M_{I,i}}$ is simple, simply connected and of type $A$. Therefore, 
        $\wt{M_{I,i}} = \SL_n$ for some $n > 0$. 
        Consider the following admissible group of $\GL_n$ in Example \ref{GL_n example}
        \[K_n=\left\{g=\begin{pmatrix}
        g' & v\\
        z & t  
        \end{pmatrix}\in \GL_n(\CO) \Big|g' \in M_{n-1,n-1}(\CO), v \in M_{n-1,1}(\CO), z \in M_{1,n-1}(\fp^\ell),  t\in1+\fp^\ell\right\}\]
         with $\ell$ a sufficiently large  positive
        integer. By Corollary \ref{congruence subgroup gamma} and Lemma \ref{kloosterman sum for derived subgroup}, there exists $\varepsilon>0$ and $C>0$ such that for any $a$ in the maximal torus of $\SL_n$
        \[\left|\mathrm{Kl}_{\SL_n}\left(a;\psi_0^{-1},w_\ell,K_n\cap\SL_n\right)\cdot\delta^{1/2-\varepsilon}(a)\right| \leq C.\]
        
        In particular, for every $i,1\leq i\leq s$, we can find an admissible group $K_{I,i}$ of $\wt{M_{I,i}}$ such that the Kloosterman sum $\Kl_{\wt{M_{I,i}}}(\cdot;\psi_0^{-1}|_{N_i},\dot{w_{0,i}},K_i)$ has a nontrivial bound.
        
        By Lemma \ref{Kloosterman sum under isogeny map} (1), 
        the product of $K_{I,i}$ gives an admissible group $K_I$ of $M_I^\der$. By Lemma \ref{Kloosterman sum of product group} and 
        Lemma \ref{Kloosterman sum under isogeny map}, for a representatives $\dot{w_I}$ of the longest Weyl element in $W_I$, the Kloosterman sums $\Kl_{M_I^\der}(\cdot;\psi_0^{-1}|_{N_I},\dot{w_I},K_I)$ has a nontrivial bound. Here, we use the fact that the modular characters are compatible under isogeny maps. 

        By  Lemma \ref{kloosterman sum for derived subgroup}, there exists an admissible group $K_{M_I}$ of $M_I$ and a representative $\dot{w_I}$ of the longest Weyl element in $W_I$ such that the Kloosterman sums $\Kl_{M_I}(\cdot;\psi_0^{-1}|_{N_I},\dot{w_I},K_{M_I})$ has a nontrivial bound. Therefore, we obtain the regularity.
    \end{proof}
\end{thm}

\section{Stevens' approach}

In this section, applying the main result (Theorem \ref{main theorem}), we obtain the regularity of Bessel distributions  for $\Sp_4$ following the approach of Stevens \cite{Ste87}.

\subsection{The general argument}
We now consider Kloosterman sums on split reductive group $G$ over $F$. As before, we fix a Borel subgroup $B$ of $G$. We have the Levi decomposition $B = AN$ with $A$ the maximal torus and $N$ its unipotent radical. Let $\Phi \subset X^*(A)$ be the set of roots of $G$ with respect to $A$ and use $\Delta=\{\alpha_1,\cdots,\alpha_m\}$ to denote the set of simple roots. Denote by $\delta = \delta_B$ the modulus character of $B$.

We fix 
\begin{itemize}
\item a generic character $\psi_0$ on $N$ as the one given before
Definition \ref{relevant with respect to psi_0}.
\item a representative $\dot{w_0}$ for the longest Weyl element $w_0$.
\item a filtration $\{K_d\}_d$ of open compact subgroups in $G$ as in the Example \ref{special filtration}. In particular, 
$\bigcap_{d\geq 1}K_d=\{1\}$ and the Iwahori decomposition $K_d=\ov{N}_dA_dN_d$. In particular, for $d > 0$ sufficiently large,
$K_d$ is admissible.
\end{itemize}

Consider the family of Kloosterman sums on $G$ 
\[\Kl_d(a) =\Kl(a;\psi,\dot{w_0},K_d) = \sum_{x \in X_d(\dot{w_0}a)} \psi_0(u(x))\psi_0(u'(x)), \quad a \in A,\]
corresponding to the triple $(\psi_0,\dot{w_0},K_d)$ (See Definition \ref{defn-sys-Kl}).  Here, we write
$X_d(\dot{w_0}a)$ for the Kloosterman set $X_{K_d}(\dot{w_0}a)$.

We choose $\ell > d$ such that for any $x \in X_d(\dot{w_0}a)$, we have $u(x) \in N_d\bs N_{-\ell}$ and $u'(x)  \in N_{-\ell}/N_d$.  
For any $x = [n_1\dot{w_0}an_2] \in X_d(\dot{w_0}a)$, denote by $\kappa_i(x) = (n_1)_{\alpha_i} \in  \fp^{-\ell}/ \fp^d$ and 
$\kappa_i'(x) = (n_2)_{\alpha_i} \in \fp^{-\ell}/\fp^d$. 

Consider the action of $A_d$ on $X_d(\dot{w_0}a)$
\[t \cdot [n_1\dot{w_0}an_2] = [ tn_1\dot{w_0}an_2 (\mathrm{Ad}(\dot{w_0})t)^{-1} ].\]
In particular, 
\[\kappa_i(t \cdot [x]) = \alpha_i(t) \kappa_i([x]), \quad \kappa_i'(t \cdot [x]) =(w_0\cdot\alpha_i)(t) \kappa_i'([x])=\alpha_{\sigma(i)}(t)^{-1} \kappa_i'([x]),\quad 1\leq i\leq m.\]
Here, $\sigma$ is the permutation on the $\{1,\cdots,m\}$ induced by the action of $w_0$ on $\Delta$.
\begin{lem}\label{simple root action is surjective}
    Denote by $C$ the cokernel of the following  map
    \[f\colon X_*(A)\to\BZ^m,\quad f(\lambda)=(\left<\alpha_i,\lambda\right>)_i.\]
    Assume that $p\nmid|C|$. Then for any $d>\frac{v(p)}{p-1}$, the map
    \[\phi_d\colon A_d\to(1+\fp^d)^m,\quad a\mapsto(\alpha_1(a),\cdots,\alpha_m(a))\]
    is surjective.
\end{lem}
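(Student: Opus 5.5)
The plan is to use the $p$-adic logarithm and exponential to linearize the problem. Since $A$ is split, identify $A\cong (F^\times)^n$ via a basis of $X_*(A)$, that is, via $X_*(A)\otimes F^\times\cong A$. By the definition in Example \ref{special filtration}, $A_d$ corresponds to $X_*(A)\otimes(1+\fp^d)$, and each root $\alpha_i$ acts by $\lambda\otimes t\mapsto t^{\langle\alpha_i,\lambda\rangle}$. Thus $\phi_d$ is the map
\[f\otimes\id\colon X_*(A)\otimes_\BZ(1+\fp^d)\lra \BZ^m\otimes_\BZ(1+\fp^d)=(1+\fp^d)^m .\]
So the statement becomes: tensoring $f$ with the abelian group $U_d=1+\fp^d$ gives a surjection.

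For $d>\frac{v(p)}{p-1}$ (with $v(p)$ the valuation of $p$ in $F$), the logarithm gives an isomorphism of topological groups $\log\colon 1+\fp^d\stackrel{\sim}{\lra}\fp^d$ with inverse $\exp$. This is exactly the standard convergence range, and it is where the hypothesis on $d$ enters. Hence $U_d\cong\fp^d$ as abstract groups. As a $\BZ$-module, $\fp^d$ is a free $\BZ_p$-module, and in particular it is uniquely $\ell$-divisible for every prime $\ell\neq p$.

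Right exactness of $\otimes$ applied to $X_*(A)\xrightarrow{f}\BZ^m\to C\to 0$ gives the exact sequence
\[X_*(A)\otimes U_d\lra U_d^m\lra C\otimes U_d\lra 0 .\]
So it suffices to show $C\otimes_\BZ U_d=0$. Since $C$ is finitely generated and $f$ has finite cokernel, $C$ is a finite group. It is finite because the simple roots are linearly independent, so $f\otimes\BQ$ is surjective; in any case $|C|$ being finite is implicit in the hypothesis $p\nmid|C|$. Then $C\cong\bigoplus\BZ/e_k$ with each $e_k$ prime to $p$, and $\BZ/e\otimes U_d\cong U_d/U_d^{e}$. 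This quotient vanishes because multiplication by $e$ on $\fp^d$ is bijective, as $e\in\BZ_p^\times$.

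The only point needing care is the first identification, namely checking that $A_d$ is precisely $X_*(A)\otimes(1+\fp^d)$. From the definition, $a\in A_d$ iff $\chi(a)\in 1+\fp^d$ for all $\chi\in X^*(A)$. Choosing dual bases $\{\chi_k\}$ of $X^*(A)$ and $\{\lambda_k\}$ of $X_*(A)$, we get $a=\prod_k\lambda_k(\chi_k(a))$ with each $\chi_k(a)\in 1+\fp^d$, and conversely every such product lies in $A_d$. This is straightforward; I expect no real obstacle beyond citing the range of the $p$-adic logarithm.
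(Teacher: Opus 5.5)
Your proposal is correct and follows essentially the paper's proof: both identify $A_d$ with $X_*(A)\otimes(1+\fp^d)$, use $\log$ and $\exp$ to pass to $\fp^d$, and use right exactness of $\otimes$ to see that the cokernel of $\phi_d$ is $C\otimes\fp^d=0$ because $p\nmid|C|$. In your version the logarithm only serves to show that $1+\fp^d$ is uniquely $e$-divisible for $e$ prime to $p$; this in fact holds for every $d\geq 1$, since $1+\fp^d$ is a pro-$p$ group, so the hypothesis on $d$ is not really needed for your argument.
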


\begin{remark}
    For the above cokernel $C$, if $G = \GL_n$, $|C| = 1$ and
    if $G = \Sp_{2n}$, then $|C| = 2.$
\end{remark}

\begin{proof}[Proof of Lemma \ref{simple root action is surjective}]
        For $d>\frac{v(p)}{p-1}$, the exponential map gives an isomorphism
        \[\exp\colon(\fp^d,+)\to(1+\fp^d,\times),\]
        and the logarithm map  $\log\colon1+\fp^d\to\fp^d$ is its inverse.
        
        We take a basis $\{\lambda_j\}_{j=1}^n$ of $X_*(A)$. Consider the following isomorphism
        \[X_*(A)\otimes\fp^d\stackrel{\sim}{\lra} A_d,\quad\sum_{j}\lambda_j\otimes t_j\mapsto\prod_j\lambda_j(\exp t_j).\]
        The composition of this map with the logarithm map is 
        \[f \otimes \id: X_*(A)\otimes\fp^d\lra (\fp^d)^m,\quad \sum_{j}\lambda_j\otimes t_j\mapsto \sum_j\left<\alpha_i,\lambda_j\right>t_j.\]
        Therefore, it suffices to show that $f\otimes \mathrm{id}$ is surjective. 
        
        Note that we have the following exact sequence
        \[X_*(A)\otimes\fp^d\xrightarrow{f\otimes\id}(\fp^d)^m\to C\otimes\fp^d\to 0.\]
        As we assume that $p \nmid |C|$, we have $C\otimes\fp^d=0$. 
        Therefore, $f \otimes \id$ is surjective and $\phi_d$ is
        surjective.

    \end{proof}

\begin{prop}\label{stevens}
    For $d > 0$ sufficiently large and $p \nmid |C|$ (See Lemma \ref{simple root action is surjective}), we have
    \[\Kl_d(a) = \frac{1}{q^{m(\ell-d)}} \sum_{x \in X_d(\dot{w_0}a)}  \prod_{i=1}^{m} \Kl(\varpi^\ell \kappa_i(x),\varpi^\ell
    \kappa_{\sigma(i)}'(x);d,\ell),\quad a\in A.\]
  \end{prop}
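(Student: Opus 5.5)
We average the sum $\Kl_d(a)$ over the action of the torus $A_d$ on the Kloosterman set $X_d(\dot{w_0}a)$ introduced just before Lemma \ref{simple root action is surjective}. That is Stevens' device. Since $t\cdot x$ runs over $X_d(\dot{w_0}a)$ bijectively as $x$ does, for each fixed $t\in A_d$ we have
\[\Kl_d(a)=\sum_{x\in X_d(\dot{w_0}a)}\psi_0(u(t\cdot x))\psi_0(u'(t\cdot x)).\]
We will average this identity over $t\in A_d$ with respect to normalized Haar measure, or over the finite quotient $A_d/A_{\ell'}$ for $\ell'$ large; the action factors through such a quotient because $u(x),u'(x)$ only matter modulo $N_d$.

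First, we make sure the phases only see simple-root coordinates. The character $\psi_0$ only depends on the simple-root coordinates of $u(x)$ and $u'(x)$, so
\[\psi_0(u(x))\psi_0(u'(x))=\prod_{i}\psi_F(\kappa_i(x))\psi_F(\kappa_i'(x)).\]
This is well defined because $\psi_F$ is trivial on $\fp^d\subset\CO$. By the transformation formulas $\kappa_i(t\cdot x)=\alpha_i(t)\kappa_i(x)$ and $\kappa_i'(t\cdot x)=\alpha_{\sigma(i)}(t)^{-1}\kappa_i'(x)$, we may regroup by $j=\sigma(i)$. Each $t$-phase then becomes
\[\prod_{i=1}^m\psi_F\bigl(\alpha_i(t)\kappa_i(x)+\alpha_i(t)^{-1}\kappa'_{\sigma(i)}(x)\bigr),\]
after reindexing, using that $\sigma$ is an involutive permutation (or at least a bijection) of $\{1,\dots,m\}$.

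Next we average over $t$. By Lemma \ref{simple root action is surjective}, which uses $d$ large and $p\nmid|C|$, the map $\phi_d\colon t\mapsto(\alpha_i(t))_i$ is a surjective homomorphism $A_d\to(1+\fp^d)^m$. Pushing Haar measure forward along $\phi_d$ therefore gives Haar measure on $(1+\fp^d)^m$. The average over $A_d$ thus becomes a product of $m$ independent averages over $y_i\in 1+\fp^d$ of $\psi_F(y_i\kappa_i+y_i^{-1}\kappa'_{\sigma(i)})$. Since $\kappa_i,\kappa'_j\in\fp^{-\ell}/\fp^d$, these integrands are invariant under $1+\fp^{\ell}$: indeed $(y(1+\fp^\ell)-y)\kappa\in\fp^{d}\subset\CO$, and similarly for $y^{-1}$. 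Each average is then the finite sum over $(1+\fp^d)/(1+\fp^\ell)$ divided by its cardinality $q^{\ell-d}$. Writing $\kappa_i=\varpi^\ell\kappa_i/\varpi^\ell$, each such sum is exactly $\Kl(\varpi^\ell\kappa_i(x),\varpi^\ell\kappa'_{\sigma(i)}(x);d,\ell)$, where the arguments lie in $\CO$. Multiplying the $m$ factors gives the prefactor $q^{-m(\ell-d)}$ and the claimed formula.

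The main obstacle is the bookkeeping around $\sigma$ and well-definedness. We must check the $A_d$-action is well defined on double cosets, which uses that $A_d$ normalizes $N_d$ and that $\Ad(\dot{w_0})A_d=A_d$. We must also match the index pairing $\kappa_i$ with $\kappa'_{\sigma(i)}$ correctly, using $w_0\cdot\alpha_i=-\alpha_{\sigma(i)}$. Finally, the fact that the integrand depends on $t$ only through $\phi_d(t)$ is what makes the surjectivity in Lemma \ref{simple root action is surjective} essential. Should some $\kappa$ vanish, the formula still holds, with the convention that $\Kl$ with a zero argument is the corresponding Ramanujan-type sum.
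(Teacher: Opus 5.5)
Your proposal is correct and is essentially the paper's argument. The paper splits $X_d(\dot{w_0}a)$ into $A_d$-orbits and weights each orbit by $|N(x)|=\Vol(A_d)/\Vol(\Stab(x))$, which is equivalent to your direct averaging over $A_d$; it then uses the surjectivity of Lemma \ref{simple root action is surjective} in the same way to factor the average into the $m$ sums $\Kl(\cdot,\cdot;d,\ell)$ with prefactor $q^{-m(\ell-d)}$. One small slip: for $y\in 1+\fp^d$, $z\in\fp^\ell$ and $\kappa\in\fp^{-\ell}$ you only get $yz\kappa\in\CO$, not $\fp^d$, but that is all you need since $\psi_F|_{\CO}=1$.
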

\begin{proof} 
   We have
   \[\Kl_d(a) = \sum_{x \in A_d \bs X_d(\dot{w_0}a)} \sum_{y \in A_d \cdot x} \prod_i \psi_F(\kappa_i(y))\prod_i \psi_F(\kappa_i'(y)).\]
   For each $x$, 
   \[\begin{aligned}
   \sum_{y \in A_d \cdot x} \prod_i \psi_F(\kappa_i(y))\prod_i \psi_F(\kappa_i'(y)) 
   &= \frac{1}{\Vol(\Stab(x))} \int_{A_d} \prod_i \psi_F(\kappa_i(t \cdot x))\prod_i \psi_F(\kappa_i'(t \cdot x)) dt \\
   &= \frac{1}{\Vol(\Stab(x))} \int_{A_d} \prod_i \psi_F\left(\alpha_i(t)\kappa_i(x) + \alpha_i(t)^{-1}\kappa_{\sigma(i)}'(x)\right) dt. \\
   \end{aligned}\]
   
   By our assumption on $d$ and $p$, the morphism
   \[A_d \lra \left( 1+\fp^d/1+\fp^\ell \right)^{m}, \quad t \mapsto (\alpha_i(t))_{1 \leq i \leq m}\]
   is surjective by Lemma \ref{simple root action is surjective}. Denote by $\Ker$ its kernel. The above integral over $A_d$ equals to
   \[\frac{\Vol(\Ker)}{\Vol(\Stab(x))} 
   \sum_{\lambda \in \left( 1+\fp^d/1+\fp^\ell \right)^{m}} \prod_i \psi_F\left(\lambda_i\kappa_i(x) + \lambda_i^{-1}\kappa_{\sigma(i)}'(x)\right).\]
   Denote by $N(x)$ the cardinality for the orbit $A_d \cdot x$. Note that
   \[\frac{\Vol(\Ker)}{\Vol(\Stab(x))} = \frac{1}{q^{m(\ell-d)}} \cdot \frac{\Vol(A_d)}{\Vol(\Stab(x))} = 
   \frac{|N(x)|}{q^{m(\ell-d)}}.\]
   We obtain that
  \[\Kl_d(a) = \frac{1}{q^{m(\ell-d)}} \sum_{x \in A_d \bs X_d(\dot{w_0}a)} |N(x)| \prod_{i=1}^{m} \Kl(\varpi^\ell \kappa_i(x),\varpi^\ell
    \kappa_{\sigma(i)}'(x);d,\ell).\]
   In the above, note that the Kloosterman sum $\Kl((\varpi^\ell \kappa_i(x),\varpi^\ell\kappa_{\sigma(i)}'(x);d,\ell)$ depends only on the $A_d$-orbit of
   $x$. We obtain the Proposition.
\end{proof}

Denote by
\[\nu: \fp^{-\ell}/\fp^d \lra [-\ell,d], \quad [x] \mapsto 
\begin{cases}
	v(x), \quad &[x] \not= 0; \\
	d, \quad &[x] = 0.
\end{cases}\]
Then for each $x \in X_d(\dot{w_0}a)$, by Proposition \ref{GL2 kloosterman sum}, for each $1 \leq i \leq m$, we have
\[|\Kl(\varpi^\ell \kappa_i(x),\varpi^\ell
\kappa_{\sigma(i)}'(x);d,\ell)| \leq q^{\ell+\frac{\nu\left(\kappa_{\sigma(i)}'(x)\right) + k}{2}}.\]
By Proposition \ref{stevens}, we obtain
\[|\Kl_d(a)| \leq q^{m(d+k/2)} \sum_{x \in X_d(\dot{w_0}a)}  \prod_{i=1}^{m} q^{\frac{\nu(\kappa_i'(x))}{2}}.\]

Consider the map
\[\kappa' = (\kappa_i')_{1 \leq i \leq m}: X_d(\dot{w_0}a) \lra (\fp^{-\ell}/\fp^d)^{m}\]
and its composition with $\nu$
\[X_d(\dot{w_0}a) \stackrel{\kappa'}{\lra} (\fp^{-\ell}/\fp^d)^{m} \stackrel{\nu}{\lra} [-\ell,d]^{m}.\]
For each $y \in [-\ell,d]^{m}$, denote by $X_d(\dot{w_0}a)_y$ the fibre
of $y$ under $\nu\circ \kappa'$. 

We have
\[
\begin{aligned}
|\Kl_d(a)| &\leq q^{m(d+k/2)} \sum_{x \in X_d(\dot{w_0}a)}  \prod_{i=1}^{m} q^{\frac{\nu(\kappa_i'(x))}{2}} \\
&= q^{m(d+k/2)} \sum_{y \in [-\ell,d]^{m}}  |X_d(\dot{w_0}a)_y| q^{\frac{\sum_{i=1}^{m} y_i}{2}}.
\end{aligned}
\]

We obtain the following result.

\begin{cor}\label{corollary of Stevens' method}
 For $d > 0$ sufficiently large and $p \nmid |C|$ (See Lemma \ref{simple root action is surjective}), we have
\[|\Kl_d(a)| \leq q^{m(d+k/2)} \sum_{y \in [-\ell,d]^{m}}  |X_d(\dot{w_0}a)_y| q^{\frac{\sum_{i=1}^{m} y_i}{2}} .\]
\end{cor}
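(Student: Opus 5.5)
The bound follows by combining Proposition \ref{stevens} with the Weil bound of Proposition \ref{GL2 kloosterman sum}, term by term, and then regrouping according to the valuations of the $\kappa'$-coordinates.

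\textbf{Step 1: bound each factor.} Start from the identity in Proposition \ref{stevens}:
\[\Kl_d(a) = q^{-m(\ell-d)} \sum_{x \in X_d(\dot{w_0}a)} \prod_{i=1}^m \Kl(\varpi^\ell\kappa_i(x),\varpi^\ell\kappa'_{\sigma(i)}(x);d,\ell).\]
For each $x$ and $i$, apply Proposition \ref{GL2 kloosterman sum} with $r=\ell$ and $\nu'=\varpi^\ell\kappa'_{\sigma(i)}(x)$. Using only the minimum in that bound, we get
\[\bigl|\Kl(\varpi^\ell\kappa_i(x),\varpi^\ell\kappa'_{\sigma(i)}(x);d,\ell)\bigr| \le q^{k/2}\,q^{\frac{v(\varpi^\ell\kappa'_{\sigma(i)})+\ell}{2}} = q^{\ell+\frac{\nu(\kappa'_{\sigma(i)}(x))+k}{2}}.\]

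\textbf{Step 2: handle the degenerate and well-definedness issues.} The main care is here. Proposition \ref{GL2 kloosterman sum} was stated for $\nu,\nu'\in\CO-\{0\}$, and $\kappa'$ is only defined modulo $\fp^d$.

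If $[\kappa'_{\sigma(i)}(x)]=0$, we set $\nu=d$. The sum then depends only on the class modulo $\fp^{d+\ell}$ after scaling, since $x\in 1+\fp^d$. So we may pick the representative $\varpi^{d}$, or simply use the trivial bound $q^{\ell-d}\le q^{\ell+(d+k)/2}$.

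If the class is nonzero, any representative has valuation $\nu<d$, and the Kloosterman sum is independent of the choice of representative modulo $\fp^d$. Also $v(\varpi^\ell\kappa')\ge 0$, so the sums are genuinely of the stated form. This justifies the bound in all cases.

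\textbf{Step 3: multiply and regroup.} Taking the product over $i$ and using that $\sigma$ is a permutation, we obtain
\[q^{m\ell+mk/2}\prod_{i}q^{\nu(\kappa'_i(x))/2}.\]
Multiplying by the prefactor $q^{-m(\ell-d)}$ gives
\[|\Kl_d(a)|\le q^{m(d+k/2)}\sum_{x\in X_d(\dot{w_0}a)}\prod_i q^{\nu(\kappa'_i(x))/2}.\]

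Finally, partition $X_d(\dot{w_0}a)$ into the fibres $X_d(\dot{w_0}a)_y$ of $\nu\circ\kappa'$ over $y\in[-\ell,d]^m$. On each fibre the summand equals $q^{\sum_i y_i/2}$, which yields exactly the claimed inequality. No genuine obstacle is expected beyond the bookkeeping in Step 2.
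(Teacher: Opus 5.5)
Your proposal is correct and follows essentially the paper's own argument. You apply the Weil bound of Proposition \ref{GL2 kloosterman sum} factor by factor in the identity of Proposition \ref{stevens} to get $q^{\ell+\frac{\nu(\kappa'_{\sigma(i)}(x))+k}{2}}$, reindex the product using that $\sigma$ is a permutation, and then regroup the sum over the fibres of $\nu\circ\kappa'$. Your Step 2, which checks well-definedness modulo $\fp^d$ and handles the zero class via the trivial bound $q^{\ell-d}$, is extra care that the paper leaves implicit.
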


\begin{remark}
    The assumption that $p \nmid |C|$ can be dropped. See 
    Proposition \ref{refined stevens in Sp_2n} for the case of $\Sp_{2n}$. 
\end{remark}

By this corollary, to obtain a nontrivial bound for the system of Kloosterman sums $\{\mathrm{Kl}_d\}_d$, it is enough to prove the existence of constants $\varepsilon>0$ and $C>0$ such that
\[|X_d(\dot{w_0}a)_y| q^{\frac{\sum_{i=1}^{m} y_i}{2}}\leq C|\delta^{-1/2+\varepsilon}(a)|\]
for any $a\in A$ with $\lambda_a \in \BZ_{\geq 0} \Phi^{\vee,+}$ and any $y\in[-\ell,d]^{m}$.

In the following, we shall count the fibre $X_d(\dot{w_0}a)_y$
for $G = \Sp_4$ via the Pl\"ucker coordinates for $N \bs G$ (See Lemma \ref{subdeterminant for symplectic group}). 

\subsection{Groups of type $C_2$}

In this section, we prove the regularity of Bessel distributions for type $C_2$ split reductive groups.

Let $G$ be a split reductive group over $F$ of type $C_2$, that is, the root system of $G$ is the irreducible root system of type $C_2$. As before, we fix a Borel subgroup $B =AN$ of $G$ and $\psi = \psi_0$ a generic character on $N$.

\begin{thm}\label{thm-type-C_2}
    For any irreducible admissible generic representation $\pi$ of $G$, the Bessel distribution $B_\pi$ is regular.
\end{thm}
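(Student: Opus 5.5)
By Theorem \ref{main theorem}, it suffices to exhibit, for every $I\subset\Delta$, an admissible open compact subgroup $K_I$ of $M_I$ such that the Kloosterman sum on $M_I$ attached to $(\psi_0|_{N_{M_I}}^{-1},\dot{w_I},K_I)$ has a nontrivial bound. For $G$ of type $C_2$ the proper standard Levi subgroups are $M_\emptyset=A$ and the two Levis with one simple root. The case $M_\emptyset$ is trivial, since the Kloosterman set is a point and $A^\der$ is trivial. For $|I|=1$ the root system of $M_I$ is of type $A_1$, so $M_I$ is of type $A$. Its nontrivial bound is exactly the input used in the proof of Theorem \ref{thm-type-A}: Corollary \ref{congruence subgroup gamma} for $n=2$, transported via Lemma \ref{kloosterman sum for derived subgroup}, Lemma \ref{Kloosterman sum under isogeny map} and Lemma \ref{Kloosterman sum of product group}. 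Thus everything reduces to $I=\Delta$.

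For $I=\Delta$, the first step is to reduce to $\Sp_4$. The universal cover of $G^\der$ is $\Sp_4$, with isogeny $\Sp_4\to G^\der$. By Lemma \ref{Kloosterman sum under isogeny map}, an admissible subgroup of $\Sp_4$ with a nontrivial bound pushes forward to one for $G^\der$. Lemma \ref{kloosterman sum for derived subgroup} then lifts this to an admissible subgroup of $G$ with the same Kloosterman sum on $A^\der$; modular characters are compatible under the isogeny. So I will prove a nontrivial bound for $\Kl_d$ on $\Sp_4$ with $K_d$ as in Example \ref{special filtration}, for $d$ large (so $K_d$ is admissible). This is the forthcoming Theorem \ref{Sp_4 nontrivial bound}. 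One subtlety is that $|C|=2$ for $\Sp_4$, so Corollary \ref{corollary of Stevens' method} requires $p\neq 2$. For $p=2$ I would rely on the refined version (Proposition \ref{refined stevens in Sp_2n}), which handles the index-two cokernel by splitting $A_d$-orbits according to the image of $\phi_d$.

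The main obstacle is the counting estimate demanded after Corollary \ref{corollary of Stevens' method}. It asks for $\varepsilon>0$ with $|X_d(\dot{w_0}a)_y|\,q^{(y_1+y_2)/2}\ll \delta^{-1/2+\varepsilon}(a)$, uniformly in $y\in[-\ell,d]^2$ and in $a$ with $\lambda_a=r_1\alpha_1^\vee+r_2\alpha_2^\vee$; polynomial losses in $r_1,r_2$ are harmless. To attack it I would parametrize $N\bs G$ by Pl\"ucker coordinates, namely the bottom rows and bottom $2\times 2$ minors of $g\in\Sp_4$ (Lemma \ref{subdeterminant for symplectic group}). An element $n_1\dot{w_0}an_2\in K_d$ is then determined modulo $N_d$ by these coordinates, which are integral up to congruences mod $\fp^d$ and whose valuations are pinned by $r_1,r_2$. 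The simple-root entries $\kappa'_i$ of $n_2$ are ratios of these coordinates, so fixing $y$ fixes certain valuations. I would count coordinate tuples with prescribed valuations subject to the symplectic (Pl\"ucker) relation, in the manner of Stevens for $\GL_3$ and Man for $\Sp_4$.

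The expected outcome is $|X_d(\dot{w_0}a)_y|\ll (1+r_1+r_2)^c\,q^{r_1+r_2-\min(\cdot)}$. The saving must come from the cases where $y_i$ is very negative, where the factor $q^{y_i/2}$ is small. Where $y_i$ is near $d$, the fiber is correspondingly small because the coordinates are forced to have large common divisibility. Combining the two regimes gives a saving of the form $q^{-\eta\max(r_1,r_2)}$, hence the bound with some $\varepsilon>0$. I would first carry out the count in the generic valuation regime and then check separately the degenerate boundary cases where one coordinate vanishes mod $\fp^d$. I expect those boundary cases to be the delicate part.
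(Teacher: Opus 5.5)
Your reduction is correct and coincides with the paper's. Theorem \ref{main theorem} reduces everything to the Levi subgroups. $M_\emptyset$ is trivial, and the two rank-one Levis are of type $A_1$, so they are covered by the argument of Theorem \ref{thm-type-A}. For $I=\Delta$ you pass to $\Sp_4$ via Lemmas \ref{Kloosterman sum under isogeny map} and \ref{kloosterman sum for derived subgroup}, then use Stevens' method with $K_d$, the Weil bound, and Proposition \ref{refined stevens in Sp_2n} for $p=2$. (The mechanism there is simply that the image of $\phi_d$ is $(1+\fp^d)\times(1+\fp^{d+v(2)})$, so the $\beta$-factor becomes a restricted Kloosterman sum of level $d+v(2)$.)

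If you could quote Proposition \ref{Sp_4 nontrivial bound}, you would be done exactly as the paper is. But that proposition is proved inside this proof, and your own treatment of it stops at a heuristic. The fibre estimate $q^{(y_1+y_2)/2}|X_d(w^0a)_y|\ll (1+r+s)^{c}q^{(1-\eta)(r+s)}$, uniform in $y\in[-\ell,d]^2$, is only announced. Your ``expected outcome'' contains an unspecified $\min(\cdot)$. The remark that fibres with $y_i$ near $d$ are small because of ``large common divisibility'' does not say which condition produces a saving. As written, the proposal proves the reduction but not the theorem.

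What is missing is the content of Lemma \ref{estimate on kloosterman set}. The map $u'$ is injective on $X_d(w^0a)$: if $n_1w^0an_2$ and $n_1'w^0an_2$ both lie in $K_d$, then $n_1'n_1^{-1}\in N_d$. So it suffices to count $n_2=n_{x,y}^{u,w}\in N/N_d$ with $v(x)=y_1$ and $v(y)=y_2$. This is simpler than counting Plücker vectors modulo $N_d$, as you propose.

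The Plücker coordinates of $w^0an_2$ must equal those of an element of $K_d$, hence be congruent mod $\fp^d$ to those of $I_4$. This gives
$v(x)+r\ge d$, $v(u)=-r$, $\varpi^r(w+xy)\in\fp^d$, $v(w)+s\ge d$, $v(y)+s\ge d$, $\varpi^s(xw-u)\in\fp^d$, and, crucially, $\varpi^s(uy-w^2-wxy)\in 1+\fp^d$.

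The saving comes from these congruences, not from the Plücker relations themselves. The unit condition on the minor $v_{34}$ and the integrality of $v_{23}$ force valuation coincidences, each of which removes a free parameter:
\begin{itemize}
\item $2v(w)=v(y)-r$ when $2v(w)<-s\le v(wxy)$;
\item $v(x)+v(w)=-r$ in the other regimes.
\end{itemize}

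The paper then splits into three cases:
\begin{itemize}
\item $2v(w)<-s\le v(wxy)$;
\item $\max(2v(w),v(wxy))<-s$, handled by the substitution $z=u-xw$, which turns the unit condition into $\varpi^s(yz-w^2)\in1+\fp^d$;
\item $2v(w)\ge -s$, subdivided by $r$ versus $s$ and by $r-v(y)$ versus $s$.
\end{itemize}
This yields $q^{(y_1+y_2)/2}|X_d(w^0a)_y|\le q^{5d}(\ell+d+1)q^{\frac{11}{12}(r+s)}$, and hence Proposition \ref{Sp_4 nontrivial bound}. You need this case analysis, or an equivalent explicit count, to close the argument.
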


    For the proof of Theorem \ref{thm-type-C_2}, the argument is similar to the proof of Theorem \ref{thm-type-A} for type $A$ groups. 
    
    Note that the root system of a Levi subgroup for a group of type $C_{n}$  can be decomposed into the following form
    \[A_{m_1}\sqcup A_{m_2}\sqcup\cdots\sqcup A_{m_s}\sqcup C_{r}, \quad m_1+\cdots+m_s+r = n.\]
    
    This reduces the problem to estimate the nontrivial
    bounds for Kloosterman sums on $\Sp_{2r}$, $r \leq n$
    and on $\SL_m$. As we have obtained the nontrivial bounds 
    for Kloosterman sums on $\SL_m$. It suffices to study
    Kloosterman sums on $\Sp_{2r}$ with $r \leq n$. 
    In particular, for $n=2$, we only need to consider
    Kloosterman sums for $\Sp_4$. 

    Now, we shall obtain the nontrivial bound for Kloosterman
    sums on $\Sp_4$. The main result is
    Proposition \ref{Sp_4 nontrivial bound} and this will
    finish the proof of Theorem \ref{thm-type-C_2}.
    
 Denote by 
\[G=\Sp_{4}=\{M\in\GL_{4}(F)\Big| M^TJM=J\},\quad J=\begin{pmatrix}
     & I_2\\
     -I_2 &
\end{pmatrix}.\]
The standard maximal torus and the standard unipotent subgroup of $G$ are given by
\begin{align*}
    &A=\left\{\begin{pmatrix}
    A_0 & \\
     & A_0^{-1}
    \end{pmatrix}\in G\Big| A_0\in\GL_2(F)\text{ diagonal}\right\}\\
    &N=\left\{\begin{pmatrix}
    N_0 & N_0S \\
     & (N_0^{-1})^t
    \end{pmatrix}\in G \Big| N_0\text{ is upper triangular, unipotent and }S\text{ is symmetric}\right\}
\end{align*}

A set of simple roots of $G$ with respect to the maximal torus $A$ is given by $\Delta=\{\alpha,\beta\}$, where
\[\alpha(\diag(y_1,y_2,y_1^{-1},y_2^{-1}))=y_1y_2^{-1},\quad \beta(\diag(y_1,y_2,y_1^{-1},y_2^{-1}))=y_2^2.\]
Then $\Phi^+=\{\alpha,\beta,\alpha+\beta,2\alpha+\beta\}$ is a set of positive roots. We denote by $s_\alpha$ and $s_\beta$ the simple reflections corresponding to $\alpha$ and $\beta$ respectively. We fix a family of isomorphisms $x_{\gamma},\gamma\in\Phi$: for $t\in F$,
\begin{align*}
    &x_{\alpha}(t)=I_4+t(E_{1,2}-E_{4,3}),\quad x_{-\alpha}(t)=I_4+t(E_{2,1}-E_{3,4})\\
    &x_\beta(t)=I_4+tE_{2,4},\quad x_{-\beta}(t)=I_4+tE_{4,2}\\
    &x_{\alpha+\beta}(t)=I_4+t(E_{1,4}+E_{2,3}),\quad x_{-(\alpha+\beta)}(t)=I_4+t(E_{4,1}+E_{3,2})\\
    &x_{2\alpha+\beta}(t)=I_4+tE_{1,3},\quad x_{-(2\alpha+\beta)}(t)=I_4+tE_{3,1}
\end{align*}
with $E_{i,j}$ the standard elementary matrix with a $1$ in the $(i,j)$–position and $0$ everywhere else. We can define the generic character $\psi_0$ with respect to $\{x_\gamma\}_{\gamma\in\Phi}$:
\[\psi_0\colon N\to\BC^\times,n=\prod_{\gamma\in\Phi^+}x_\gamma(n_\alpha)\mapsto\psi_F\left(\sum_{\gamma\in\Delta}n_\alpha\right).\]
The matrix form of $\psi_0$ is the following
\[\psi_0\begin{pmatrix}
     1 & x_1 & * & * \\
       & 1 & * & x_2\\
       & & 1 & \\
       & & -x_1 & 1
\end{pmatrix}=\psi_F(x_1+x_2).\]
The Tits representative $w^0$ of longest Weyl element $w_0=s_\alpha s_\beta s_\alpha s_\beta\in W$ is 
\[w^0=\begin{pmatrix}
      &  & -1 & \\
      & & & -1\\
      1 & & &\\
      & 1 &  &
\end{pmatrix}.\]
Note that $(w^0)^2=-I_4$.

Now we consider the filtration $\{K_d\}_{d\geq 1}$ in Example \ref{special filtration}. For $d>0$, the subgroup $A_d$ is
\[A_d=\{\diag(a_1,a_2,a_1^{-1},a_2^{-1}):a_1,a_2\in1+\fp^d\}\]
and $N_{\gamma}=x_\gamma(\fp^d)$ for $\gamma\in\Phi$. Then $K_d$ is the principal congruence subgroup of maximal compact open subgroup of $K_0=K=\Sp_4(\CO)$.

The triple $(\psi_0,w^0,\{K_d\}_d)$ gives a family of Kloosterman sums on $G$
\[\mathrm{Kl}_d(a)=\sum_{x\in X_d(w^0a)}\psi_0(u(x))\cdot\psi_0(u'(x)),\quad a\in A.\]

Recall that for each $a\in A$, there exists a unique $\lambda_{a}\in X_*(A)$ such that 
\[a=a_0\lambda_{a}(\varpi), \quad a_0 \in A\cap K.\]

Now we state the main result in this subsection, which concerns the non-trivial bound for local $\Sp_4$ Kloosterman sum. By the result of D\k{a}browski-Reeder \cite{DR98}, we only need to consider the element $a\in A$ such that $\lambda_{a}\in\BZ_{\geq 0}\Phi^{\vee,+}$, which is in the support of Kloosterman sum.

For a $a\in A$ such that $\lambda_{a} \in \BZ_{\geq 0}\Phi^{\vee,+}$, we can assume
\[a=\diag(\varpi^{r}v_1,\varpi^{s-r}v_2,\varpi^{-r}v_1^{-1},\varpi^{r-s}v_2^{-1}),\]
where $r,s\geq 0$ and $v_1,v_2\in\CO^\times$.
\begin{prop}\label{Sp_4 nontrivial bound}
    For all
    \[a=\diag(\varpi^{r}v_1,\varpi^{s-r}v_2,\varpi^{-r}v_1^{-1},\varpi^{r-s}v_2^{-1})\in A,\]
    where $r,s\geq d$, $v_1,v_2\in\CO^\times$, we have
    \[|\operatorname{Kl}_d(a)|\leq C(d,k,q)\cdot(\ell+d+1)^3\cdot q^{\frac{11}{12}(r+s)},\]
    where $k$ is a constant such that $q^{-k}\leq|2|^2$, $\ell=\max\{r,s\}$ and $C(d,k,q)$ is a constant depend only on $d,k,q$.
\end{prop}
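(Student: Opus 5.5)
We follow Stevens' approach as set up in Corollary \ref{corollary of Stevens' method}. For $\Sp_4$ the cokernel $C$ has order $2$, so the hypothesis $p\nmid|C|$ may fail when $p=2$. We therefore first use the refined version of Proposition \ref{stevens} alluded to in the remark after Corollary \ref{corollary of Stevens' method}. Concretely, we replace the surjectivity onto $(1+\fp^d)^2$ by surjectivity onto a finite-index subgroup, namely the image of $A_d$ under $(\alpha,\beta)$, which contains $(1+\fp^{d'})^2$ for some $d'$ depending only on $d,q$. The loss is a constant absorbed into $C(d,k,q)$. This yields
\[|\Kl_d(a)|\leq C(d,k,q)\sum_{y\in[-\ell,d]^2}|X_d(w^0a)_y|\,q^{(y_1+y_2)/2},\]
with $\ell=\max\{r,s\}$ (large enough that $u(x),u'(x)$ have entries in $\fp^{-\ell}$). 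Since there are $(\ell+d+1)^2$ values of $y$, it suffices to prove, uniformly in $y=(y_1,y_2)$,
\[|X_d(w^0a)_y|\,q^{(y_1+y_2)/2}\ll (\ell+d+1)\,q^{\frac{11}{12}(r+s)}.\]
The trivial bound $|X_d(w^0a)|\ll q^{r+s}$ (Theorem \ref{DR1998}, with $\delta^{-1/2}(a)=q^{r+s}$) handles the case $y_1+y_2\ge -\tfrac16(r+s)$. So the task is a counting bound that improves as the valuations $y_i$ of the simple-root coordinates $\kappa'_i(x)$ of $u'(x)$ become negative.

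To count, we parametrize $N\bs G$ by Plücker-type coordinates, namely the bottom rows of $g$ and the $2\times2$ minors of the bottom two rows (Lemma \ref{subdeterminant for symplectic group}). An element $x=[n_1w^0an_2]\in X_d(w^0a)$ is determined modulo $N_d$ on the right by $u'(x)=n_2$. The condition $w^0an_2\in N K_d$ says exactly that the vectors $(\text{bottom row},\ 2\times2\text{ minors})$ of $w^0an_2$ are, after scaling by $\varpi^{r}$ and $\varpi^{s}$ respectively, integral primitive vectors congruent to the corresponding data of the identity mod $\fp^d$. Writing $n_2=x_\alpha(t_1)x_\beta(t_2)x_{\alpha+\beta}(t_3)x_{2\alpha+\beta}(t_4)$, these coordinates are explicit polynomials in the $t_i$. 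The bottom row gives $(\ast)\varpi^{-r}$ times $(1,t_1,t_4',t_3')$, and the minors give a quadratic expression in the $t$'s times $\varpi^{r-s}$-type factors. One then counts $t\in(\fp^{-\ell}/\fp^d)^4$ satisfying: $v(t_1)=y_1$ and $v(t_2)=y_2$ (the simple coordinates $\kappa'$), the bottom-row entries lie in $\varpi^{r}\CO$, and the minors lie in $\varpi^{s}\CO$, with primitivity.

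The counting goes as follows. The bottom-row conditions force $t_3,t_4$ to have denominators at most $\varpi^{-r}$, and primitivity forces one of them to have exact denominator $r$. Given $t_1$ with $v(t_1)=y_1$, the number of admissible $(t_3,t_4)$ modulo $\fp^d$ is then at most $q^{r+\min(r,\cdot)}$ reduced by the constraint that $t_1$ appears linearly in a minor. Specifically, the minor condition of the shape $t_1t_3-t_4\equiv 0$ modulo appropriate powers pins down $t_4$ given $t_1,t_3$ up to $q^{\,r+y_1}$-type ambiguity. Similarly $t_2$ enters the second minor linearly, saving $q^{-y_2}$ roughly. Combining these, the target is an estimate
\[|X_d(w^0a)_y|\ll (\ell+d+1)\,q^{r+s+\theta(y_1+y_2)}\quad\text{or}\quad\ll q^{r+s-c\min(r,s)}\]
with $\theta\ge \tfrac12+\tfrac1{12}$ after combining with $q^{(y_1+y_2)/2}$; the constant $11/12$ comes from optimizing between this and the trivial bound. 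The linear factor $(\ell+d+1)$ arises from summing over the valuation of one free coordinate.

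The main obstacle is this counting step. We expect a case division according to the relative sizes of $y_1,y_2$ versus $r,s$ and $s-r$. In the degenerate regime where $t_1$ or $t_2$ is nearly integral ($y_i$ close to $d$), the Weil-bound saving is lost, and a genuine saving must instead come from the Plücker constraints. Here we would show that such points force a nontrivial congruence reducing the count by $q^{-(r+s)/12}$. If a uniform argument fails, we would treat $r\le s$ and $r>s$ separately, since the matrix coefficients are asymmetric between the long and short roots.
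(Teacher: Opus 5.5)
Your reduction follows the paper's route. You use Stevens' orbit decomposition with the $p=2$ adjustment; the paper runs the long-root sum over $1+\fp^{d+v(2)}$ in Proposition \ref{refined stevens in Sp_2n}, and your finite-index-subgroup version works equally well. You then sum over the $(\ell+d+1)^2$ values of $y$, so everything rests on the per-fibre estimate
\[
q^{\frac{y_1+y_2}{2}}\,|X_d(w^0a)_y|\ll_d(\ell+d+1)\,q^{\frac{11}{12}(r+s)},
\]
which is the paper's Lemma \ref{estimate on kloosterman set}. That estimate is the entire substance of the proposition, and \textbf{your proposal does not prove it}.

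Worse, the heuristic you give for it points the wrong way. The weight $q^{(y_1+y_2)/2}$ is small when $y_1+y_2$ is very negative. So the trivial count $|X_d(w^0a)|\ll_d q^{r+s}$ (up to a partition-function factor) disposes of the range $y_1+y_2\le-\frac16(r+s)$, not $y_1+y_2\ge-\frac16(r+s)$ as you write. The difficult range is $y_1,y_2$ near $d$. There the Weil bound saves nothing, and the fibre itself must have size at most roughly $q^{\frac{11}{12}(r+s)-\frac{y_1+y_2}{2}}$. In other words, you need a count that \emph{decreases} as $y$ grows.

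Neither of your proposed targets gives this. A bound $q^{r+s+\theta(y_1+y_2)}$ with $\theta>0$ yields no saving at $y_1,y_2\approx d$. A bound $q^{r+s-c\min(r,s)}$ yields none when one of $r,s$ stays near $d$ while the other grows, which the statement allows. Your last paragraph does identify the near-integral regime as the crux. However, it only expresses the hope that the Pl\"ucker constraints produce a saving of $q^{-(r+s)/12}$.

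What is missing is the explicit list of constraints and the case analysis that actually produce the saving. Write $n_2$ via $x=\kappa_1'$, $y=\kappa_2'$, $u$, $w$ as in the paper. The Pl\"ucker coordinates of $w^0an_2$ are $\varpi^r v_1(1,x,u,w+xy)$, together with $\varpi^s$ times quadratic expressions in these variables. The condition $w^0an_2\in NK_d$ then forces:
\begin{itemize}
\item $v(u)=-r$ and $v(x)\ge d-r$;
\item $\varpi^r(w+xy)\in\fp^d$;
\item $\varpi^s w,\ \varpi^s y\in\fp^d$ and $\varpi^s(xw-u)\in\fp^d$;
\item most importantly, $\varpi^s(uy-w^2-wxy)\in1+\fp^d$.
\end{itemize}
This last relation is what ties the otherwise free variable $w$ to $y_2$ and to $r,s$. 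For example, when $2v(w)<-s\le v(wxy)$ it forces $2v(w)=v(y)-r$.

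The paper then splits into cases:
\begin{itemize}
\item $2v(w)<-s$ versus $2v(w)\ge-s$;
\item $v(wxy)$ versus $-s$, using the substitution $z=u-xw$ and a further split on $v(xw)$ versus $-r$;
\item $r$ versus $s$, and the sign of $s+v(y)-r$.
\end{itemize}
In each case it bounds the number of admissible $(x,y,u,w)$ modulo $\fp^d$ and optimizes linear combinations of the valuations to reach the exponent $\frac{11}{12}$. Without an argument of this kind, your proposal establishes only the trivial bound $q^{r+s}$, up to polynomial and $q^{O(d)}$ factors.
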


\begin{prop}\label{refined stevens in Sp_2n}
    Suppose $G=\Sp_{2n}$ and $\alpha_i,1\leq i\leq n$ are simple roots of $\Sp_{2n}$:
    \begin{align*}
        &\alpha_i(a_1,\cdots,a_n,a_1^{-1},\cdots,a_n^{-1})=\frac{a_i}{a_{i+1}},1\leq i\leq n-1,\\
        &\alpha_n(a_1,\cdots,a_n,a_1^{-1},\cdots,a_n^{-1})=a_n^2.
    \end{align*}
    For $d > 0$ sufficiently large, we have
    \[|\Kl_d(a)| \leq q^{n(d+k/2)+v(2)} \sum_{y \in [-\ell,d]^{n}}  |X_d(\dot{w_0}a)_y| q^{\frac{\sum_{i=1}^{n} y_i}{2}}.\]
    \begin{proof}
        For $G = \Sp_{2n}$ and general $p$ (especially for $p=2$), Proposition \ref{stevens} becomes
        \[\Kl_d(a) = \frac{1}{q^{n(\ell-d)-v(2)}} \sum_{x \in X_d(w^0a)} \prod_{i=1}^{n-1} \Kl(\varpi^\ell \kappa_i(x),\varpi^\ell \kappa_{i}'(x);d,\ell)\cdot \Kl(\varpi^\ell \kappa_n(x),\varpi^\ell \kappa_n'(x);d+v(2),\ell).\]
        As in the proof of Corollary \ref{corollary of Stevens' method}, applying the Weil bound for  Kloosterman sums on $\GL_2$ (Proposition \ref{GL2 kloosterman sum}) yields
        \[|\Kl_d(a)| \leq q^{n(d+k/2)+v(2)} \sum_{y \in [-\ell,d]^{n}}  |X_d(\dot{w_0}a)_y| q^{\frac{\sum_{i=1}^{n} y_i}{2}}.\]
    \end{proof}
\end{prop}
Hence by the corollary \ref{corollary of Stevens' method} and Proposition \ref{refined stevens in Sp_2n}, to obtain a non-trivial bound for the Kloosterman sum it suffices to show that there exist constants $0<\delta<1$ and $C>0$ such that such that for every $a=\diag(\varpi^{r}v_1,\varpi^{s-r}v_2,\varpi^{-r}v_1^{-1},\varpi^{r-s}v_2^{-1})\in A$ and $y\in[-\ell,d]^{2}$,
\[|X_d(w^0a)_y| q^{\frac{y_1+y_2}{2}}\leq C\cdot q^{\delta(r+s)}.\]

Bounds for local $\Sp_4$ Kloosterman sums in the case $F=\BQ_p$, $d=0$ were given by Man \cite{Man22}. In \cite{Man22}, Man expresses the Kloosterman sums via the coset
representatives for the Kloosterman set $X(w^0a)$, corresponding to the case $d=0$. This description is given via Pl\"ucker coordinates, which provide an explicit parametrization of the Bruhat decomposition of $G=\Sp_4$. In the following, we present a different approach, based on Stevens' approach, to treat the case $d>0$ and general $p$-adic fields.

For $g=(g_{ij})_ {1\leq i,j\leq 4}\in G=\Sp_4(F)$, we define the Pl\"ucker coordinates of $g$:
\begin{align*}
    v_{i}&=g_{3i},\quad 1\leq i\leq 4;\\
    v_{ij}&=g_{3i}g_{4j}-g_{3j}g_{4i},\quad 1\leq i<j\leq 4.
\end{align*}
We also have the following relations:
\begin{align}
    v_iv_{jk}-v_jv_{ik}+v_kv_{ij}&=0,\quad 1\leq i<j<k\leq 4,\\
    v_{13}+v_{24}&=0
\end{align}
Define
\[V=\{v=(v_1,\cdots,v_4,v_{12},\cdots,v_{34})\in F^{10}:v\text{ satisfies (5.1) and (5.2)}\}.\]

Now we determine which element in $N/N(\CO)$ lies in $u'(X_d(w^0a))$. Given the large number of Pl\"ucker coordinates, we will not write down a complete set of Pl\"ucker relations characterizing the set $N\backslash\ K_d$. For our purposes, we only use the following result:
\begin{lem}[Man, Proposition 3.1 (1) in \cite{Man24}]\label{subdeterminant for symplectic group}
    The Pl\"ucker coordinates defined above gives a bijection
    \[N\backslash\Sp_4\xrightarrow{\sim} V\backslash\{0\}.\]
\end{lem}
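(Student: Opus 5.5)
We will prove the lemma by realizing $N\backslash \Sp_4$ as a space of pairs of bottom rows, in four steps: invariance, the image lies in $V\setminus\{0\}$, injectivity, and surjectivity. Throughout, for $g\in G$ we write $r_3=(g_{31},\dots,g_{34})$ and $r_4=(g_{41},\dots,g_{44})$ for its third and fourth rows. In this language $(v_i)=r_3$ and $(v_{ij})$ are the coordinates of the bivector $r_3\wedge r_4\in\wedge^2F^4$.

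\emph{Invariance.} Take $n=\begin{pmatrix} N_0 & N_0S\\ & (N_0^{-1})^t\end{pmatrix}\in N$. The bottom two rows of $ng$ are $(N_0^{-1})^t\begin{pmatrix} r_3\\ r_4\end{pmatrix}$. Since $N_0$ is upper triangular unipotent, $(N_0^{-1})^t$ is lower triangular unipotent. Hence $r_3$ is unchanged and $r_4\mapsto r_4+c\,r_3$ for some $c\in F$. Both $r_3$ and $r_3\wedge r_4$ are invariant under this operation, so the map $g\mapsto v$ factors through $N\backslash G$.

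\emph{The image lies in $V\setminus\{0\}$.} Relation (5.1) is exactly the coordinate form of $r_3\wedge(r_3\wedge r_4)=0$ in $\wedge^3F^4$, so it holds automatically. For (5.2), use that $g^tJg=J$ implies $gJ^{-1}g^t=J^{-1}$, i.e.\ $gJg^t=J$. Therefore the rows of $g$ satisfy $r_iJr_j^t=J_{ij}$. In particular $r_3Jr_4^t=J_{34}=0$, and expanding this gives $v_{13}+v_{24}=0$ (up to an overall sign convention, which must be checked against the chosen $J$). Finally, $g$ is invertible, so $r_3,r_4$ are linearly independent. This gives $r_3\neq0$ and $r_3\wedge r_4\neq 0$, hence $v\neq 0$.

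\emph{Injectivity.} Suppose $g,g'$ have the same coordinates. Then $r_3'=r_3$. Moreover $r_3\wedge r_4'=r_3\wedge r_4$ forces $r_4'=r_4+c\,r_3$ for some $c\in F$. Set $h=g'g^{-1}$. Its bottom rows are $(0,0,1,0)$ and $(0,0,c,1)$, i.e.\ $h=\begin{pmatrix} X& Y\\ 0& U\end{pmatrix}$ with $U$ lower triangular unipotent. The symplectic condition forces $h$ into the Siegel parabolic with $X=(U^{-1})^t$ and $X^{-1}Y$ symmetric. Thus $X$ is upper triangular unipotent, and $h$ has exactly the shape of the elements of $N$ described above. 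So $g'\in Ng$, as required.

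\emph{Surjectivity.} This is the step I expect to be the real obstacle. The issue is that the relations (5.1), (5.2) alone do not seem to exclude degenerate points of $V\setminus\{0\}$, for instance points with all $v_i=0$ but some $v_{ij}\neq0$, or with $v_{ij}=0$ but $r_3\neq0$. These cannot come from a group element, since we showed $r_3\ne 0$ and $r_3\wedge r_4\ne0$. So I would first pin down the precise meaning of $V$ as in \cite{Man24}: either a description that includes the nondegeneracy conditions, or the additional Pl\"ucker relations (such as decomposability of $(v_{ij})$) that rule these points out.

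On the nondegenerate locus, surjectivity then goes as follows. Let $r_3:=(v_i)\neq 0$ and $\omega:=(v_{ij})\neq0$. Since $r_3\wedge\omega=0$ by (5.1), we can write $\omega=r_3\wedge r_4$ for some $r_4$. Relation (5.2) says that the plane spanned by $r_3,r_4$ is isotropic for the form $x J y^t$. By Witt's theorem we can complete $r_3,r_4$ to a basis $r_1,\dots,r_4$ satisfying $r_iJr_j^t=J_{ij}$. The matrix $g$ with these rows satisfies $gJg^t=J$, so $g\in\Sp_4$, and its Pl\"ucker coordinates are $v$.
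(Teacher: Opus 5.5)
The paper does not prove this lemma; it only cites Man. Your argument is therefore a genuinely different, self-contained route.

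Your invariance step, your check that the image lies in $V\setminus\{0\}$, and your injectivity step are all correct as written. In (5.2) there is in fact no sign ambiguity: since $J_{34}=0$,
$r_3Jr_4^t=g_{31}g_{43}+g_{32}g_{44}-g_{33}g_{41}-g_{34}g_{42}=v_{13}+v_{24}$
on the nose.

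Your worry about surjectivity is justified, and it is a defect of the statement, not of your proof. With $V$ cut out only by (5.1) and (5.2), two kinds of points lie in $V\setminus\{0\}$ but are not Pl\"ucker coordinates of any $g\in\mathrm{Sp}_4$:
\begin{itemize}
\item $(v_1,\dots,v_4)=(1,0,0,0)$ with all $v_{ij}=0$;
\item all $v_i=0$, with $v_{12}=1$ and the other $v_{ij}=0$.
\end{itemize}
Neither can occur, because $r_3,r_4$ are linearly independent for invertible $g$.

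The correct target is $\{v\in V:\ (v_1,\dots,v_4)\neq 0,\ (v_{12},\dots,v_{34})\neq 0\}$. Your argument proves exactly the bijection onto this set.

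You do not need any further Pl\"ucker relations to repair the statement. Decomposability of $(v_{ij})$ would not help anyway, since $e_1\wedge e_2$ is decomposable. Once $(v_i)\neq 0$, relation (5.1) alone forces $\omega=r_3\wedge r_4$, exactly as you use in your surjectivity step.

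The Witt step can be made concrete. Choose a Lagrangian complement $L'$ of $\langle r_3,r_4\rangle$, and take $r_1,r_2\in L'$ dual to $r_3,r_4$ under the pairing $xJy^t$.

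The citation buys brevity, but the paper's paraphrase misstates the image. Your proof is self-contained and identifies the image exactly.

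The paper only ever uses the invariance part of the lemma. It needs the coordinates of $w^0an_2$ and of $g_0=n_1w^0an_2$ to agree, and then reads off conditions (1)--(7) as necessary conditions. So the misstated image has no effect on the $\mathrm{Sp}_4$ counting or the resulting bounds.
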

\begin{remark}
    By \cite[Proposition 3.1]{Man24}, this bijection restricts to a bijection between $N\backslash \Sp_4(\CO)$ and a subset $V'\subset V\setminus{0}$ consisting of integral elements satisfying certain coprimality conditions. Man’s work focuses on the case $d=0$ and $F=\BQ_p$, and he expresses the coset representatives for Kloosterman set in terms of these Pl\"ucker coordinates.
\end{remark}
For a fixed element $[g_0]:=[n_1\cdot w^0a\cdot n_2]\in X_d(w^0a)$, where $n_1\in N_d\backslash N$ and $n_2\in N/N_d$. From the definition of the set $X_d(w^0a)$, we can assume that the element
\[n_2=\begin{pmatrix}
    1 & x & u &w+xy\\
     & 1 & w &y\\
     & &1 &\\
     & & -x &  1\\
\end{pmatrix}\in N/N_d.\]
Here $v(x),v(y),v(u),v(w)$ all integers and they satisfy $\leq d$. For given $x,y,u,w$ , we also use $n_{x,y}^{u,w}$ to denote the corresponding element $n_2$ in $N/N_d$.

Note that $w^0a\cdot u_2=u_1^{-1}\cdot g_0$. By direct computation, we have
\[w^0a\cdot u_2=\begin{pmatrix}
    0 & 0 & -\varpi^{-r}v_1^{-1} & 0\\
    0 & 0 & \varpi^{r-s}v_2^{-1}x & -\varpi^{r-s}v_2^{-1}\\
    \varpi^{r}v_1 & \varpi^{r}v_1x & \varpi^{r}v_1u & \varpi^{r}v_1(w+xy)\\
    0 & \varpi^{s-r}v_2 & \varpi^{s-r}v_2w & \varpi^{s-r}v_2y
\end{pmatrix}\]

By Lemma \ref{subdeterminant for symplectic group}, the Pl\"ucker coordinates $v_i,1\leq i\leq 4$ and $ v_{ij},1\leq i<j\leq 4$ of $w^0a\cdot u_2$ coincide with those of $g_0$. Since $g_0\in K_d$, the following conditions (1)-(7) hold:
\begin{enumerate}\label{conditions from plucker coordinates}
    \item $\varpi^{r}v_1x\in\fp^d$, i.e., $v(x)+r\geq d$.
    \item $\varpi^{r}v_1u\in1+\fp^d$, Therefore, we have $v(u)=-r$.
    \item $\varpi^{r}v_1(w+xy)\in\fp^d$. Therefore, we have $-(v(x)+v(y))\leq\max(-v(w),r)$.
    \item $v_{34}=\varpi^s(uy-w^2-wxy)\in1+\fp^d$.
    \item $v_{13}=-v_{24}=\varpi^{s}w\in\fp^d$, i.e., $v(w)+s\geq d$.
    \item $v_{14}=\varpi^{s}y\in\fp^d$, i.e., $v(y)+s\geq d$.
    \item $v_{23}=(\varpi^{s})(xw-u)\in\fp^d$. Therefore, we have $-(v(x)+v(w))\leq\max(-v(u),s)$.
\end{enumerate}

Now for fixed $y\in[-\ell,d]^2$, we estimate $q^{\frac{1}{2}(y_1+y_2)}\cdot|X_d(w^0a)_y|$. Let $R\subset\CO$ be a system of representatives for $\CO/\fp$ such that $0\in R$. Note that every $n\in N/N_d$ can be written as the following form
\[n=\begin{pmatrix}
    1 & x & u &w+xy\\
     & 1 & w & y\\
     & &1 &\\
     & & -x &  1\\
\end{pmatrix},\]
where
\begin{align*}
    &x\in\{j_0\varpi^{v(x)}+j_1\varpi^{v(x)+1}+j_2\varpi^{v(x)+2}+\cdots+j_{d-1-v(x)}\varpi^{d-1}:j_i\in R\}\\
    &y\in\{j_0\varpi^{v(y)}+j_1\varpi^{v(y)+1}+j_2\varpi^{v(y)+2}+\cdots+j_{d-1-v(y)}\varpi^{d-1}:j_i\in R\}\\
    &u\in\{j_0\varpi^{v(u)}+j_1\varpi^{v(u)+1}+j_2\varpi^{v(u)+2}+\cdots+j_{d-1-v(u)}\varpi^{d-1}:j_i\in R\}\\
    &w\in\{j_0\varpi^{v(w)}+j_1\varpi^{v(w)+1}+j_2\varpi^{v(w)+2}+\cdots+j_{d-1-v(w)}\varpi^{d-1}:j_i\in R\}
\end{align*}
and $v(x),v(y),v(w),v(u)\leq d$. If $x=[n_1w^0an_2]\in X_d(w^0a)_y$, then $[n_2]\in u'(X_d(w^0a))$ and we can assume
\[n_2=\begin{pmatrix}
    1 & x & u &w+xy\\
     & 1 & w &y\\
     & &1 &\\
     & & -x &  1\\
\end{pmatrix}\]
has the above form, where $v(x)=y_1,v(y)=y_2$ and $v(x),v(y),v(u),v(w)$ all integers and they satisfy $\leq d$. Since $n_2\in u'(X_d(w^0a))$, the matrix $n_2$ satisfies the conditions (1)-(7).

\begin{lem}\label{estimate on kloosterman set}
    Let $\ell=\max(r,s)$. Then
    \[q^{\frac{1}{2}(y_1+y_2)}\cdot|X_d(w^0a)_y|\leq q^{5d}\cdot(\ell+d+1)\cdot q^{\frac{11}{12}(r+s)}.\]
    \begin{proof}
        Case 1: $2v(w)<-s$ and $v(wxy)\geq -s$. By property (4) we have $2v(w)=v(uy)=-r+v(y)$, i.e., we have $-r=2v(w)-v(y)$. This implies
        \[-\frac{v(x)+v(y)}{2}-v(w)+r=-\frac{1}{2}v(x)-\frac{2}{3}v(y)-\frac{2}{3}v(w)+\frac{7}{6}r.\]
        If $s-r\geq d$, then $s>r$ and
        \begin{align*}
            -\frac{1}{2}v(x)-\frac{2}{3}v(y)-\frac{2}{3}v(w)+\frac{7}{6}r
            =\frac{1}{6}v(x)-\frac{2}{3}v(xyw)+\frac{7}{6}r
            \leq\frac{1}{6}d+\frac{2}{3}s+\frac{7}{6}r
            \leq d+\frac{11}{12}(r+s)
        \end{align*}
        Hence
        \begin{align*}
            q^{\frac{v(x)+v(y)}{2}}\cdot|X_d(w^0a)_y|\leq q^{-\frac{v(x)+v(y)}{2}-v(w)+r+4d}=q^{5d+\frac{11}{12}(r+s)}.
        \end{align*}
        If $s-r<d$, then by property (7), we have $v(x)+v(w)=-r$, then
        \begin{align*}
            &-\frac{1}{2}v(x)-\frac{2}{3}v(y)-\frac{2}{3}v(w)+\frac{7}{6}r
            =\frac{1}{4}v(y)-\frac{11}{12}v(xyw)+\frac{11}{12}r+\frac{1}{6}v(x)
            \leq\frac{11}{12}(r+s)+d.
        \end{align*}
        Hence
        \begin{align*}
            q^{\frac{v(x)+v(y)}{2}}\cdot|X_d(w^0a)_y|\leq q^{5d+\frac{11}{12}(r+s)}.
        \end{align*}
        
        Case 2: $2v(w)<-s$ and $v(wxy)<-s$. Introduce the change of variable $z:=u-xw$. Under this substitution, property (2) becomes $v(z+xw)=-r$, property (4) becomes $\varpi^s(yz-w^2)\in1+\fp^d$ and property (7) becomes $v(z)\geq d-s$. Henceforth, we fix $(x,w)$. We have $\#(u)=\#(z)$ and $yz\in(w^2+\varpi^{-s})+\fp^{d-s}$, then $\#(z)\leq q^{d-v(z)}$. For fixed $z$, we have $y\in z(w^2+\varpi^{-s})+\fp^{d-s-v(z)}$ and hence $\#(y)\leq q^{s+v(y)}$. This implies $\#(y,z)\leq q^{s+d}$. Now we suppose $v(xw)\geq -r$, then $v(z)\geq -r$. We have
        \[\frac{v(x)+v(y)}{2}-v(x)-v(w)+s=\frac{1}{2}v(y)-\frac{1}{2}v(x)-v(w)+s.\]
        Use the condition $2v(w)=v(y)+v(z)$, we have
        \begin{align*}
            s+\frac{1}{2}v(y)-\frac{1}{2}v(x)-v(w)
            =s-\frac{1}{2}v(xw)-\frac{3}{8}v(z)+\frac{1}{4}v(w)+\frac{1}{8}v(y)
            \leq \frac{7}{8}(r+s)+d.
        \end{align*}
        Hence
        \begin{align*}
            q^{\frac{v(x)+v(y)}{2}}\cdot|X_d(w^0a)_y|\leq q^{\frac{v(x)+v(y)}{2}-v(x)-v(w)+s+3d}=q^{\frac{7}{8}(r+s)+4d}.
        \end{align*}
        If $v(xw)<-r$, then we have $v(z)=v(xw)$. Since $2v(w)=v(y)+v(z)$, we get $v(w)=v(x)+v(y)$. We fix $(x,z)$, then we have $\#(x,z)\leq q^{-v(x)-v(z)+2d}$. We have
        \begin{align*}
            w\in(-ux^{-1}+\varpi^{-r}x^{-1})+\fp^{d-r-v(x)},\quad y\in(w^2z^{-1}+\varpi^{-s}z^{-1})+\fp^{d-s-v(z)}
        \end{align*}
        and $\#(w,y)\leq q^{r+s+v(x)+v(z)}$. So we have $\#(x,z,w,y)\leq q^{r+s+2d}$. We estimate
        \begin{align*}
            \frac{v(x)+v(y)}{2}+r+s\leq\frac{1}{8}v(xw)+\frac{1}{4}v(w)+\frac{1}{8}v(y)+r+s\leq\frac{7}{8}(r+s)+d.
        \end{align*}
        Hence
        \begin{align*}
            q^{\frac{v(x)+v(y)}{2}}\cdot|X_d(w^0a)_y|\leq q^{\frac{v(x)+v(y)}{2}+r+s+2d}=q^{\frac{7}{8}(r+s)+3d}.
        \end{align*}
        Case 3: $2v(w)\geq -s$. For $r>s$. By property (7), we must have $v(x)+v(w)=-r$. If $-v(y)+r\leq s$. We get $r\leq s+v(y),-v(w)=r+v(x)\leq s/2$. Then we have
        \[\#(x,y,u,w)\leq q^{-(v(x)+v(y)+v(w))+r+4d}\leq q^{-(v(x)+v(y))+s+v(y)+r+v(x)+4d}\]
        and hence
        \begin{align*}
            q^{\frac{v(x)+v(y)}{2}}\cdot|X_d(w^0a)_y|\leq q^{r+s+\frac{v(x)+v(y)}{2}+4d}\leq q^{r+s+\frac{v(x)}{2}+d+4d},
        \end{align*}
        here we use the fact $v(y)\leq d$. Since $v(x)\leq s/2-r$, we have $r+s+v(x)/2\leq r/2+5s/4$ and
        \[q^{\frac{v(x)+v(y)}{2}}\cdot|X_d(w^0a)_y|\leq(s+1)q^{\frac{1}{2}r+\frac{5}{4}s+5d}.\]
        If $-v(y)+r>s$. By property (4), we have $\#(u)\leq q^{s+v(y)}$. Hence we still have
        \[\#(x,y,u,w)\leq q^{-(v(x)+v(y))+s+v(y)+r+v(x)+4d}.\]
        and
        \[q^{\frac{v(x)+v(y)}{2}}\cdot|X_d(w^0a)_y|\leq q^{-\frac{v(x)+v(y)}{2}+s+v(y)+r+v(x)+4d}\leq q^{\frac{1}{2}r+\frac{5}{4}s+5d}\leq q^{5d}\cdot(s+d+1)\cdot q^{\frac{7}{8}(r+s)}.\]

        For $r\leq s$. If $-v(y)+r>s$, then we have $v(y)-r=v(w)+v(x)+v(y)$, this implies $v(x)+v(w)=-r$. By property (4), we have $\#(u)\leq q^{s+v(y)}$. So we have
        \[\#(x,y,u,w)\leq q^{-v(x)+s-v(w)+4d}\]
        and hence
        \[q^{\frac{v(x)+v(y)}{2}}\cdot|X_d(w^0a)_y|\leq q^{-\frac{v(x)}{2}+s+\frac{v(y)}{2}-v(w)+4d}\leq q^{-\frac{v(x)}{2}+s+\frac{1}{2}r-\frac{1}{2}s-v(w)+4d}\leq q^{r+\frac{3}{4}s+4d}.\]
        If $-v(y)+r\leq s$, then $-(v(x)+v(y)+v(w))\leq s$, so we have $-(v(x)+v(y))/2\leq s/2+v(w)/2$. This gives
        \begin{align*}
            q^{\frac{y_1+y_2}{2}}\cdot|X_d(w^0a)_y|&\leq(s+d+1)q^{-\frac{v(x)+v(y)}{2}+r-v(w)+4d}
            \leq (s+d+1)q^{\frac{1}{2}s+\frac{1}{2}v(w)+r-v(w)+4d}\\
            &\leq(s+d+1)q^{r+\frac{3}{4}s+4d}\leq q^{5d}\cdot(s+d+1)\cdot\cdot q^{\frac{7}{8}(r+s)}
        \end{align*}
    \end{proof}
\end{lem}
Now we can prove the Proposition \ref{Sp_4 nontrivial bound}.
\begin{proof}[Proof of Proposition \ref{Sp_4 nontrivial bound}]
    Using Corollary \ref{corollary of Stevens' method} and Proposition \ref{refined stevens in Sp_2n}, we have
    \begin{align*}
        |\Kl_d(a)| &\leq q^{2(d+k/2)+v(2)}\sum_{y \in [-\ell,d]^{2}}  |X_d(\dot{w_0}a)_y| q^{\frac{y_1+y_2}{2}}.
    \end{align*}
    By Lemma \ref{estimate on kloosterman set}, for $y\in[-\ell,d]^2$, we have 
    \[q^{\frac{1}{2}(y_1+y_2)}\cdot|X_d(w^0a)_y|\leq q^{5d}\cdot(\ell+d+1)\cdot q^{\frac{11}{12}(r+s)}.\]
    Therefore, we have
    \begin{align*}
        |\Kl_d(a)|\leq q^{7d+k+v(2)}\cdot(\ell+d+1)^3\cdot q^{\frac{11}{12}(r+s)}
    \end{align*}
\end{proof}


\begin{thebibliography}{XX}
\bibitem[AV16]{AV16} J. Adams, D. Vogan, {\it Contragredient representations and characterizing the local Langlands correspondence.} Amer. J. Math., 138(3):657–682, 2016.
\bibitem[AGK15]{AGK15} A. Aizenbud, D. Gourevitch and A. Komarsky, {\it Vanishing of certain equivariant distributions on $p$-adic spherical spaces, and nonvanishing of spherical Bessel functions}. International Mathematics Research Notices, (2015), no.18, 8471-8483.
\bibitem[AGS15]{AGS15} A. Aizenbud, D. Gourevitch and E. Sayag, {\it $z$-finite distributions on $p$-adic groups}. Advances in Mathematics 285 (2015), 1376-1414.
\bibitem[Ba97]{Ba97} E. M. Baruch, {\it On Bessel distributions of $GL(2)$ over a $p$-adic field.} J. Number Theory 67 (1997), 190-202.
\bibitem[Ba01]{Ba01} E. M. Baruch, {\it On Bessel distributions for quasi-split groups.} Transactions American Math. Society, 353, no.7, (2001), 2601-2614.
\bibitem[Ba04]{Ba04} E. M. Baruch, {\it Bessel distribution for $GL(3)$ over the $p$-adic field.} Pac. J. Math. 217 (1) (2004), 11-27.
\bibitem[BB05]{BB05} A. Bj\"orner and F. Brenti. {\it Combinatorics of Coxeter Groups}, Graduate Texts in Mathematics, 231, Springer-Verlag, New York, 2005.
\bibitem[BM24]{BM24} V. Blomer and S. Man, {\it Bounds for Kloosterman sums on GL(n)}.  Math. Ann. 390 (2024), 1171-1200.
\bibitem[Bum13]{Bum13} D. Bump, {\it Lie Groups}, volume 225 of Graduate Texts in Mathematics. Springer, New York, second edition, 2013.
\bibitem[Chai19]{Chai19} J. Chai, {\it On Bessel functions over $p$-adic fields.} International Mathematics Research Notices, (2019), no.3, 673-699.
\bibitem[CST17]{CST17} J. W. Cogdell, F. Shahidi, and T-L. Tsai. {\it Local Langlands correspondence for $\GL_n$ and the exterior and symmetric square $\varepsilon$-factors}.
Duke Math. J., 166(11):2053–2132, 2017.
\bibitem[DR98]{DR98} R. D\k{a}browski, M. Reeder, {\it Kloosterman sets in reductive groups}. Journal of Number Theory 73 (1998), no.2, 228-255.
\bibitem[Fri87]{Fri87} S. Friedberg, {\em Poincar\'e Series for GL(n): Fourier Expansion, Kloosterman Sums, and Algebreo-
Geometric Estimates.}, Math. Z. 196 (1987), 165-188.
\bibitem[Del84]{Del84} P. Deligne. {\it Le “centre” de Bernstein.} In Representations of reductive groups over a local field, Travaux en Cours, pages 1–32. Hermann, Paris, 1984. Edited by P. Deligne.
\bibitem[FLO12]{FLO12} B. Feigon, E. Lapid and O. Offen, On representations distinguished by
unitary groups. Publ. Math. Inst. Hautes Etudes Sci. 115 (2012), 185-323.
\bibitem[GSW21]{GSW21} D. Goldfeld, E. Stade, and M. Woodbury, {\it An orthogonality relation for $\GL(4,\BR)$ (with an appendix by Bingrong Huang)}. arXiv: 1910.13586. Forum of Mathematics, Sigma (2021), 1-83.
\bibitem[Guo]{Guo} J. Guo. {\it Spherical characters on certain $p$-adic symmetric spaces.} preprint, available at \url{https://archive.mpim-bonn.mpg.de/id/eprint/525/}.
\bibitem[Ha]{Ha} J. Hakim. {\it Admissible distributions on $p$-adic symmetric spaces.} J. Reine Angew. Math. 455 (1994), 1-19.
\bibitem[HC70]{HC70} Harish-Chandra, {\it Harmonic analysis on reductive $p$-adic groups.} Lecture Notes in Mathematics, 162, Springer-Verlag, Berlin, 1970. Notes by G. van Dijk.
\bibitem[HC99]{HC99} Harish-Chandra, {\it Admissible invariant distributions on reductive $p$-adic groups.} University Lecture Series, vol. 16, American Mathematical Society, Providence, RI. 1999, Preface and notes by Stephen DaBacker and Paul J. Sally, Jr.
\bibitem[H80]{H80} J. E. Humphreys, {\it Linear Algebraic Groups}, Graduate Texts in Mathematics, 021, Springer-Verlag, 1980. 
\bibitem[Jac16]{Jac16} H. Jacquet, {\it Germs for Kloosterman integrals, a review}. Contemp. Math. Vol. 664 (2016), 182-195.
\bibitem[JY96]{JY96} H. Jacquet, Y. Ye, {\it Distinguished representation and quadratic base change for $\GL(3)$}. Transactions American Math. Society, 348, no.3, (1996), 913-939.
\bibitem[Jan03]{Jan03} J. C. Jantzen {\it Representations of algebraic groups}. Mathematical Surveys and Monographs, Vol. 107. American Mathematical Society, Providence, RI, second edition.
\bibitem[JY99]{JY99} H. Jacquet, Y. Ye, {\it Germs of Kloosterman integrals for $\GL(3)$}. Transactions American Math. Society, 351, no.3, (1999), 1227-1255.
\bibitem[Kal22]{Kal22} T. Kaletha, {\it On the local Langlands conjectures for disconnected groups}, 2022, arXiv:2210.02519.
\bibitem[LM13]{LM13} E. Lapid, Z. Mao, {\it Stability of certain oscillatory integrals.} International Mathematics Research Notices, (2013), no.3, 525-547.
\bibitem[Lin26]{Lin26} J. Linn, {\it Bounds for Kloosterman Sums for $\GL_n$}, Adv. in Math., vol.499, 2026.
\bibitem[Man22]{Man22} S. Man, {\it Symplectic Kloosterman sums and Poincare series.} Ramanujan J. 57 (2022), 707-753.
\bibitem[Man24]{Man24} S. Man. Fourier coefficients of Sp(4) Eisenstein series. Acta Arith., 213(3):227–271, 2024.
\bibitem[Mia24]{Mia24} Xinchen Miao. {\it Bessel functions and Kloosterman integrals on $GL(n)$.} Journal of Functional Analysis. 286 (2024), 110267
\bibitem[Mil17]{Mil17} James S. Milne, {\it Algebraic groups: The theory of group schemes of finite type over a field}, Cambridge Studies in Advanced Mathematics, vol. 170, Cambridge
University Press, Cambridge, 2017.
\bibitem[RR]{RR} C. Rader and S. Rallis. {\it Spherical characters on $p$-adic symmetric spaces.} Amer. Jour. Math., 118 (1996), 91-178.
\bibitem[Spr98]{Spr98} T. A. Springer, {\it Linear algebraic groups}, 2nd ed., Progress in Mathematics, vol. 9, Birkh\"auser Boston, Inc., Boston, MA, 1998, DOI 10.1007/978-0-8176-4840-4. MR1642713
\bibitem[Ste87]{Ste87} G. Stevens, {\it Poincar\'e series on $\GL(r)$ and Kloostermann sums}. Math. Annalen 277 (1987), 25-51.
\bibitem[Ste68]{Ste68} R. Steinberg, {\it Lectures on Chevalley Groups}. New Haven, CT: Yale University, 1968. Notes prepared by John Faulkner and Robert Wilson.
\bibitem[SV17]{SV17} Y. Sakellaridis and A. Venkatesh, {\it Periods and Harmonic Analysis on Spherical Varieties}. Astérisque 396, Société Mathématique de France, Paris, 2017.
\end{thebibliography}
\end{document}